\theoremstyle{plain}
\newtheorem{theorem}{Theorem}[section]
\newtheorem{proposition}[theorem]{Proposition}
\newtheorem{lemma}[theorem]{Lemma}
\newtheorem{corollary}[theorem]{Corollary}
\newtheorem*{claim}{Claim}
\theoremstyle{definition}
\newtheorem{remark}[theorem]{Remark}
\newtheorem*{theorem*}{Theorem}
\numberwithin{equation}{section}
\newcommand{\AAA}{\color{black}}
\newcommand{\EEE}{\color{black}}
\def\R{\mathbb{R}}
\def\S{\mathbb{S}^{n-1}}
\def\N{\mathbb{N}}
\def\t{\nabla_{T}}
\def\Dt{\Delta_{\S}}
\def\L2{L^2(\S)}
\def\s{\mathbb{S}^2}
\def\ds{\dashint_{\mathbb{S}^{n-1}}}
\def\d2{\dashint_{\mathbb{S}^2}}
\def\Xint#1{\mathchoice
{\XXint\displaystyle\textstyle{#1}}%
{\XXint\textstyle\scriptstyle{#1}}%
{\XXint\scriptstyle\scriptscriptstyle{#1}}%
{\XXint\scriptscriptstyle\scriptscriptstyle{#1}}%
\!\int}
\def\XXint#1#2#3{{\setbox0=\hbox{$#1{#2#3}{\int}$ }
\vcenter{\hbox{$#2#3$ }}\kern-.6\wd0}}
\def\dashint{\Xint-}
\def\ds{\dashint_{\S}}
\begin{document}

\centerline{{\Large {\bf Rigidity estimates for isometric and conformal maps from $\S$ to $\R^n$}}}
%centerline{{\large {\bf Part I: The case $n=3$}}}

\vspace*{0.5  cm}

\centerline{{\large{\bf Stephan Luckhaus$^1$, \ Konstantinos Zemas$^2$}}}
 
\vspace*{0.5  cm}

\begin{abstract}
\footnotesize We investigate both linear and nonlinear stability aspects of rigid motions (resp. Möbius transformations) of $\S$ among Sobolev maps from $\S$ into $\R^n$. Unlike similar in flavour results for maps defined on domains of $\R^n$ and mapping into $\R^n$, not only an isometric (resp. conformal) deficit is necessary in this more flexible setting, but also a deficit measuring the distortion of $\S$ under the maps in consideration. The latter is defined as an associated isoperimetric type of deficit. The focus is mostly on the case $n=3$ (where it is explained \EEE why the estimates are optimal in their corresponding settings), but we also address the necessary adaptations for the results in higher dimensions. %In the isometric case the estimate holds true also when $n=2$ and generalizes in dimensions $n\geq 4$ as well, if one requires apriori boundedness in a certain higher Sobolev norm. 
\EEE We also obtain linear stability estimates for both cases in all dimensions. These can be regarded as Korn-type inequalities for the combination of the quadratic form associated with the isometric (resp. conformal) deficit on $\S$ and the isoperimetric one.\\[-10pt]
\end{abstract}

{\bf{2010 MSC Classification:}} 26D10, 30C70, 49Q20

{\bf{Keywords}}: Liouville's theorem, rigid motions, Möbius transformations, isometric deficit,

\ \ \ \ \ \  \ \ \ \ \ \ \  \ \ \ \  conformal-isoperimetric deficit, stability

%\tableofcontents
\section{Introduction}\label{Introduction}

In this paper we examine stability issues of isometric and conformal maps from $\S$ into $\R^n$ of relatively low regularity, focusing mostly, but not solely, on the case $n=3$. Since the starting domain is of codimension 1 in $\R^n$, these maps exhibit of course more flexibility than their analogues from open subdomains of $\R^n$ into $\R^n$. On the one hand, isometric and conformal maps are actually \textit{rigid} \EEE when considered from $\S$ into itself, as the following version of the well known theorem by {\sc J. Liouville} asserts.

\begin{theorem}\label{spherical Liouville's Theorem}
\textbf{$\rm{(}$Liouville's Theorem on $\S$\rm{)}}\label{Liouville_on_the_sphere_isometric_conformal_case} \\
$(i)$ Let $n\geq 2$ and $p\in [1,+\infty]$. A generalized orientation-preserving $\rm{(}\backslash$-reversing$\rm{)}$ $u\in W^{1,p}(\S;\S)$ is isometric iff it is a rigid motion of $\S$, i.e., iff there exists $O\in O(n)$ so that for every $x\in \S$, 
\begin{equation}\label{isometric_group_of_S_representation}
u(x)=Ox\,. 
\end{equation}
$(ii)$ Let $n\geq 3$. A generalized orientation-preserving $\rm{(}\backslash$-reversing$\rm{)}$ $u\in W^{1,n-1}(\S;\S)$ of degree 1  $\rm{(}\backslash$-1$\rm{)}$ is conformal iff it is a Möbius transformation of $\S$, i.e., iff there exist $O\in O(n)$, $\xi\in\S$ and $\lambda>0$ so that for every $x\in \S$, 
\begin{equation}\label{conformal_group_of_S_representation}
u(x)=O\phi_{\xi,\lambda}(x)\,. 
\end{equation}
Here, $\phi_{\xi,\lambda}:=\sigma_\xi^{-1}\circ i_\lambda\circ\sigma_\xi$, where $\sigma_\xi$ is the stereographic projection of $\S$ onto ${T_\xi\S}\cup\{\infty\}$, and \linebreak $i_\lambda:T_\xi\S\mapsto T_\xi\S$ is the dilation in $T_ \xi\S$ by factor $\lambda>0$.
\end{theorem}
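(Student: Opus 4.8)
The plan is to prove each direction of each equivalence, in both cases pushing the spherical problem onto a Euclidean one where a classical Liouville/rigidity theorem applies, and then reading the result back into the stated normal form. The two converse implications are routine: if $u(x)=Ox$ with $O\in O(n)$ then $\t u(x)$ is the restriction of $O$ to $T_x\S$, an isometry onto $T_{Ox}\S$, so $u$ is isometric; and $\sigma_\xi$, $i_\lambda$, $\sigma_\xi^{-1}$ are all conformal, so $\phi_{\xi,\lambda}$ and hence $O\phi_{\xi,\lambda}$ are conformal, with degree $1$ since $\phi_{\xi,\lambda}$ is isotopic to the identity. The orientation sign in each case is read off from $\det O$.

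For the forward direction of $(i)$, the first step is to extract the pointwise structure of an isometric $u\in W^{1,p}(\S;\S)$. Differentiating $|u|^2\equiv 1$ tangentially gives $u(x)\cdot\t u(x)=0$ a.e., so the image of $\t u(x)$ is orthogonal to $u(x)$; being $(n-1)$-dimensional it must equal $T_{u(x)}\S$, and $\t u(x)\colon T_x\S\to T_{u(x)}\S$ is a linear isometry for a.e.\ $x$. The second step is to pass to the $1$-homogeneous extension $U\colon\R^n\setminus\{0\}\to\R^n$, $U(y):=|y|\,u(y/|y|)$. Writing points in polar form one checks directly that $U\in W^{1,1}_{\mathrm{loc}}(\R^n\setminus\{0\};\R^n)$ (using $W^{1,p}(\S)\subset W^{1,1}(\S)$), that $DU(y)$ maps the radial unit vector to $u(y/|y|)$ and acts as $\t u(y/|y|)$ on the orthogonal hyperplane, and hence — by Step 1 — that $DU(y)\in O(n)$ for a.e.\ $y$; moreover $\det DU=\pm1$ is a well-defined bounded function on bounded annuli, and the orientation hypothesis on $u$ fixes its sign, placing $DU$ a.e.\ in the single connected component $SO(n)$ (resp.\ $O(n)\setminus SO(n)$). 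The classical rigidity of the differential inclusion into $SO(n)$ — a $W^{1,1}$ map on a connected open set whose gradient lies a.e.\ in $SO(n)$ is affine (Reshetnyak; cf.\ Friesecke--James--M\"uller) — then forces $U(y)=Oy+c$, and $1$-homogeneity gives $c=0$, so $u(x)=Ox$ on $\S$. The only subtle points are checking that $U$ is genuinely Sobolev across the radial direction and invoking Euclidean rigidity in the borderline class $W^{1,1}$; it is precisely here that the orientation assumption is indispensable, since the inclusion into the \emph{disconnected} set $O(n)$ is not rigid. (For smooth $u$ one may instead note that a local isometry of $\S$ is totally geodesic, hence harmonic, so $\Dt u=-|\t u|^2u=-(n-1)u$ and each component of $u$ is a first spherical harmonic, i.e.\ the restriction of a linear function.)

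For the forward direction of $(ii)$, when $n\geq 4$ I would fix a pole $p\in\S$ and conjugate by stereographic projection: $v:=\sigma\circ u\circ\sigma^{-1}$ is then a weakly conformal map between open subsets of $\R^{n-1}$ of class $W^{1,n-1}_{\mathrm{loc}}$ (legitimacy rests on the conformal invariance of the $W^{1,n-1}$-condition under the conformal change $\sigma$), indeed a $1$-quasiregular map in the natural Sobolev class, so the Reshetnyak--Gehring version of Liouville's theorem in dimension $n-1\geq 3$ identifies $v$ with the restriction of a M\"obius transformation; conjugating back, $u\in\mathrm{Conf}(\S)$. The case $n=3$ must be handled separately, since the Euclidean Liouville theorem fails in the plane: here weak conformality in $W^{1,2}$ makes $u$ a finite-energy weakly conformal map of the $2$-sphere $\S$, hence weakly harmonic, hence (by two-dimensional regularity theory for weakly conformal/harmonic maps) smooth and $\pm$-holomorphic for the standard complex structure on $\S$, and the degree-$1$ constraint then forces it to be holomorphic of degree one, i.e.\ an element of $\mathrm{Conf}^+(\S)$. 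Finally, in all dimensions one invokes the classification of the conformal group of $\S$: every element of $\mathrm{Conf}(\S)$ has the form $O\phi_{\xi,\lambda}$ with $O\in O(n)$, $\xi\in\S$, $\lambda>0$ — a $KAK$-type decomposition of $O(n,1)$ acting on the ball, confirmed by the dimension count $\binom{n}{2}+n=\tfrac12 n(n+1)$ on both sides — and the degree hypothesis selects the orientation-preserving subfamily. The main obstacle throughout is the borderline regularity: ensuring the reductions (the homogeneous extension in $(i)$, the stereographic conjugation in $(ii)$) stay inside the relevant Sobolev class and trigger exactly the hypotheses of the classical theorems, and, for $n=3$, supplying the harmonicity-plus-degree input that substitutes for the failed Euclidean Liouville theorem.
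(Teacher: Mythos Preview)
Your argument is correct, but the paper takes a genuinely different, intrinsic route that avoids any reduction to Euclidean rigidity theorems. For part $(i)$, instead of your $1$-homogeneous extension plus $SO(n)$-rigidity, the paper observes that an orientation-preserving isometry pulls back the volume form, so the change-of-variables formula applied to the vector-valued form $x\,dv_g$ gives $\ds u=0$; then the sharp Poincar\'e inequality on $\S$ yields
\[
1=\ds\frac{|\t u|^2}{n-1}\ \geq\ \ds|u|^2=1,
\]
and the equality case forces $u(x)=Rx$ (first-order spherical harmonics), with $R\in O(n)$ because $u(\S)=\S$. For part $(ii)$ the paper proceeds uniformly in $n\geq 3$ (no split between $n=3$ and $n\geq 4$): a topological argument shows one can precompose with a M\"obius map $\phi_{\xi_0,\lambda_0}$ so that $\tilde u:=u\circ\phi_{\xi_0,\lambda_0}$ has $\ds\tilde u=0$; conformality and the degree-$1$ formula give $\ds(|\t\tilde u|^2/(n-1))^{(n-1)/2}=1$, and then Jensen plus sharp Poincar\'e again force equality and hence $\tilde u(x)=Rx$.

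Your reduction-to-Euclidean strategy is the classical route and is perfectly sound (the only mild technicality, which you flag, is keeping the stereographic conjugate in $W^{1,n-1}_{\mathrm{loc}}$ via conformal invariance). What the paper's argument buys is, first, a proof it explicitly advertises as new and self-contained, using nothing beyond the Poincar\'e constant on $\S$; and second---more importantly for the paper's purposes---its two ingredients (the sharp Poincar\'e inequality and the ``center the mean via a M\"obius precomposition'' trick) are \emph{exactly} the mechanisms that get perturbed quantitatively to prove the main stability theorems (Lemmata~\ref{isometries_compactness_general}, \ref{compactness_of_degree_one_almost_conformal_maps_on_S} and the proofs in Sections~\ref{sec: isom}--\ref{Section 4}). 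Your approach, relying on black-box Euclidean Liouville/rigidity, does not feed into that pipeline.
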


On the other hand however, there is a wide variety of such maps from $\S$ into $\R^n$. In contrast to the classical rigidity in the \textit{Weyl problem for isometric embeddings}, according to which the only $C^2$ (or even $C^{1,\alpha}$ for $\alpha>\frac{2}{3}$) isometric embedding of $\S$ into $\R^n$ is the standard one modulo rigid motions (cf.~\cite{borisov1959},\cite{cohn1927zwei}, \cite{conti2012h}, \cite{herglotz1943starrheit}), as a consequence of the celebrated \textit{Nash-Kuiper theorem} (cf.~\cite{kuiper1955c1},\cite{nash1954c1}), the following paradox happens for less regular, say $C^1$ isometric embeddings.

Given any $\delta\in(0,1)$, in an arbitrarily small $C^0$-neighbourhood of the short homothety $u_\delta:\S\mapsto\R^n$, $u_\delta(x):=\delta x$, there exist $C^1$ isometric embeddings, which can be visualized as wrinkling isometrically $\S$ inside the small ball $B_\delta(0)$ in a way that produces continuously changing tangent planes. For the more general case of conformal maps from $\S$ to $\R^n$, at least when $n=3$, other examples that are not Möbius transformations are provided by the Uniformization Theorem and some of them have often been used in cartography, for instance the inverse of Jacobi's conformal map projection that smoothly and conformally maps $\mathbb{S}^2$ onto the surface of an ellipsoid.

Therefore, \textit{Liouville's rigidity theorem on $\S$} on the one hand, and the aforementioned \textit{flexibility phenomena} on the other, indicate the following fact. When one seeks stability of the isometry (resp. the conformal) group of $\S$ among Sobolev maps $u\colon\S\mapsto\R^n$, apart from an \textit{isometric} (resp. \textit{conformal}) deficit, an extra deficit measuring the deviation of $u(\S)$ from being a round sphere is necessary. \EEE In this paper we make a connection between stability aspects for these two classes of mappings and the isoperimetric inequality, and this extra deficit should be interpreted in both cases as an \textit{isoperimetric type of deficit} produced by the maps in consideration.

%For the most part of this work we focus on the case $n=3$. 
With the notations that we adopt in Section \ref{Notation}, our main result in the \textit{isometric case} is the following.\EEE
\begin{theorem}\label{main_thm_isometric_case}
There exists $c_1>0$ so that for every $u\in W^{1,2}(\s;\R^3)$ there exists $O\in O(3)$ such that
\begin{equation}\label{main_estimate_isometric_case}
\d2 \big|\t u-OP_T\big|^2\leq c_1\Big(\big\|\big(\sigma_{2}-1\big)_+\big\|_{L^2(\s)}+\Big(1-\big|V_3(u)\big|\Big)_+\Big)\,,
\end{equation}
where $0\leq \sigma_1\leq \sigma_{2}$ are the principal stretches of u, i.e., the eigenvalues of $\sqrt{\t u^t \t u}$, and
\begin{equation}\label{signed_volume_3_dim}
V_3(u):=\d2\big\langle u, \partial_{\tau_1}u\wedge\partial_{\tau_2}u\big\rangle
\ \text{is the signed volume of } u\,.
\end{equation}
\end{theorem}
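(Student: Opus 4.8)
The plan is to combine the sharp isoperimetric inequality for closed surfaces, a compactness argument anchored at Liouville's theorem on $\s$, and a linearization near the rigid motions, following the Friesecke--James--M\"uller paradigm. I would begin with two reductions. Both sides of \eqref{main_estimate_isometric_case} are invariant under $u\mapsto u+c$ --- the principal stretches are unchanged, and $V_3$ is unchanged because $\int_{\s}\partial_{\tau_1}u\wedge\partial_{\tau_2}u=0$, this being the integral of the outer unit normal over a closed surface --- so I may assume $\d2 u=0$. Since the isometric deficit controls the energy, $\d2|\t u|^2=\d2(\sigma_1^2+\sigma_2^2)\le 2\d2\sigma_2^2\le C\bigl(1+\|(\sigma_{2}-1)_+\|_{L^2(\s)}^{2}\bigr)$, the left-hand side is at most $C(1+\text{r.h.s.})$, which settles \eqref{main_estimate_isometric_case} whenever its right-hand side exceeds a fixed $\e_0>0$; so it suffices to work in the small-deficit regime. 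The structural input is the isoperimetric inequality in the sharp form $|V_3(u)|^{2/3}\le\d2\sigma_1\sigma_2$, whose equality case (for $|V_3(u)|=1$) forces $u(\s)$ to be a once-covered unit round sphere. Combined with $\sigma_2\le 1+(\sigma_2-1)_+$ and Cauchy--Schwarz, it already gives $\d2\sigma_1^2,\ \d2\sigma_1\sigma_2,\ \d2\sigma_2^2\to 1$ as the deficit $\to 0$, hence $\d2|\t u|^2\to 2$; and rewriting the left-hand side as $\d2|\t u|^2+2-2\|\d2\t u\|_{*}$ (with $\|\cdot\|_*$ the nuclear norm, using $\t u\,P_T=\t u$) exhibits it as a rigidity statement coupling the energy of $u$ to its first spherical-harmonic block.

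Next I would extract qualitative rigidity by compactness: if $\d2 u_k=0$ and the deficit of $u_k$ tends to $0$, then along a subsequence $u_k\to u$ strongly in $W^{1,2}(\s;\R^3)$ with $u$ a rigid motion. Indeed $\t u_k$ is bounded in $L^2$, so $u_k\rightharpoonup u$; convexity of $F\mapsto(\sigma_2(F)-1)_+$ forces $\sigma_2(u)\le 1$ a.e.; the signed volume is continuous along the sequence --- there is no bubbling, as $\d2|\t u_k|^2\to 2$ is the minimal energy level, and one may also see this through convergence of the push-forward currents --- so $|V_3(u)|=1$; by the equality case above $u$ is then, modulo a translation, a Lipschitz isometry of $\s$ onto itself, hence a rigid motion by Theorem~\ref{spherical Liouville's Theorem}$(i)$; and $\d2|\t u_k|^2\to 2=\d2|\t u|^2$ upgrades the weak convergence to strong. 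Consequently $\min_{O\in O(3)}\d2|\t u-OP_T|^2\to 0$ with the deficit.

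For the quantitative estimate I would let $O_0$ realize the minimum on the left of \eqref{main_estimate_isometric_case} and write $u=O_0(x+w)$; then $\d2|\t w|^2=\min_{O\in O(3)}\d2|\t u-OP_T|^2=:\eta_0^2$ is small, and optimality of $O_0$ makes the antisymmetric moment $\d2(w\otimes x-x\otimes w)$ vanish. Expanding $\sigma_2(u)$ and $V_3(u)$ to second order in $w$, the isometric deficit controls $\d2\bigl((\lambda_{\max}\,\mathrm{sym}\,\t w)_+\bigr)^2$ (the symmetrized tangential part of $\t w$), and the isoperimetric deficit controls the negative part of a quadratic form $-3\d2\langle w,x\rangle-q(w)$, each modulo a super-quadratic remainder. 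A Korn-type inequality for the sum of these two quadratic forms --- the paper's linear stability estimate, applicable precisely because the vanishing of the antisymmetric moment makes $w$ transversal to the infinitesimal rotations constituting their joint kernel --- then bounds $\d2|\t w|^2$ by the deficit, up to the remainders.

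The hard part, I expect, is entirely concentrated in the borderline regularity $W^{1,2}$ and appears in two guises. First, one must make rigorous both the weak continuity of $V_3$ and the equality/rigidity case of the isoperimetric inequality for surfaces of this low regularity, via compensated compactness and the theory of integral currents. Second, one must carry out the absorption of the super-quadratic remainders in the linearization with genuine care: the Sobolev embeddings that would make terms like $\d2|\t w|^3$ automatically negligible fail at this regularity, so one has to exploit the one-sided structure of the isometric deficit --- for instance that $\sigma_2(u)^2\gtrsim|\t w|^2$ wherever $|\t w|$ is large, so that such remainders are controlled by the isometric deficit on the bad set and by $\eta_0^2$ elsewhere --- through a careful good-set/bad-set splitting.
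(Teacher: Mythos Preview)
Your broad architecture---reduce to mean-zero, small-deficit maps, establish qualitative compactness, then prove a local quantitative estimate---matches the paper's. The execution differs in two essential ways, and the second hides a genuine gap.

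First, the paper's opening move is a \emph{Lipschitz truncation} (Lemmata~\ref{Lipschitz_truncation_lemma} and~\ref{good_behaviour_of_deficits_in_terms_of_Lipschitz_truncation}): for $M=2\sqrt2$ one replaces $u$ by $u_M$ with $\|\t u_M\|_{L^\infty}\le cM$ and verifies $\delta(u_M)\lesssim\delta(u)$ and $\varepsilon(u_M)\lesssim\delta(u)+\varepsilon(u)$ (the latter via an explicit expansion of $V_3(u)-V_3(u_M)$). This single device dissolves both of the difficulties you flag at the end. With a uniform Lipschitz bound, the compactness lemma (Lemma~\ref{isometries_compactness_general}) needs no weak continuity of $V_3$, no bubbling analysis, and no currents: the chain $(1-\varepsilon(u_k))\le(\d2|u_k|^2)^{1/2}(1+c\delta(u_k))^{1/2}$ forces $\d2|u_k|^2\to1$, the Poincar\'e equality case then gives $u=Ax$ for some $A\in\R^{3\times3}$, and $V_3(u_k)\to V_3(u)$ follows by dominated convergence since the integrands are pointwise bounded; likewise every cubic remainder in subsequent expansions is automatically $\lesssim M\d2|\t w|^2$ and hence absorbable. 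Your proposed good-set/bad-set splitting is in the same spirit but is not needed once one truncates.

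Second, and more seriously, the paper's local estimate (Proposition~\ref{local_version_main_theorem_isometric_case}) does \emph{not} proceed by linearizing the two deficits and invoking the Korn-type inequality of Theorem~\ref{coercivity_estimate_for_Q_isom_Q_isop}. Instead it bounds the Poincar\'e deficit $\tfrac12\d2|\t u|^2-\d2|u|^2$ directly by $\delta(u)+\varepsilon(u)$, again through the chain $(1-\varepsilon(u))^2\le(\d2|u|^2)(1+c\delta(u))$ (Cauchy--Schwarz in $V_3(u)$, using the Lipschitz bound to control $|\partial_{\tau_1}u\wedge\partial_{\tau_2}u|$ pointwise); the spectral gap of $-\Delta_{\s}$ turns this into $\d2|\t u-AP_T|^2\lesssim\delta+\varepsilon$ with $A:=\nabla u_h(0)$, and a separate determinant estimate $|\det A-1|\lesssim\delta+\varepsilon$ (Lemma~\ref{separate_determinant_estimate}) upgrades $A$ to a rotation via polar decomposition. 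Your route runs into an obstacle the paper's sidesteps: the one-sided deficit $\delta(u)=\|(\sigma_2-1)_+\|_{L^2}$ linearizes only to $\|(\lambda_{\max}(P_T^t\t w)_{\mathrm{sym}})_+\|_{L^2}$, not to the full $Q_{n,\mathrm{isom}}(w)=\d2|(P_T^t\t w)_{\mathrm{sym}}|^2$ that Theorem~\ref{coercivity_estimate_for_Q_isom_Q_isop} actually requires. You have not said how the negative eigenvalue of $(P_T^t\t w)_{\mathrm{sym}}$ is to be controlled, and this is precisely the information the deficit $\delta(u)$ discards (cf.\ Remark~\ref{examples}$(ii)$, where locally orientation-reversing maps make the missing piece as large as the full gradient).
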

The first term on the right hand side of \eqref{main_estimate_isometric_case} is an \textit{$L^2$-isometric deficit of $u$ penalizing local stretches}, while the second term (in the definition of which in \eqref{signed_volume_3_dim} we use the identification between a $2$-simple vector and its Hodge dual) represents in this setting the \textit{isoperimetric deficit} of $u$. Since isometric maps preserve the surface area of $\s$, %Due to the nature of the problem, 
the latter reduces in this situtation to \textit{the positive part of the excess in the signed volume} produced by $u$. The exact analogue of Theorem \ref{main_thm_isometric_case} holds true also in dimension $n=2$ (see Proposition \ref{main_estimate_isometric_case_n=2} in Section \ref{sec: isom}) and, as long as $u$ satisfies an apriori bound on its homogeneous $W^{1,2(n-2)}$-seminorm, also in dimensions $n\geq 4$, as stated in the following.

\begin{theorem}\label{main_thm_isometric_case_n_greater_than_3}
Let $n\geq 4$ and $M>0$. There exists $c_{n,M}>0$ so that for every $u\in \dot{W}^{1,2(n-2)}(\S;\R^n)$ with $\|\t u\|_{L^{2(n-2)}(\S)}\leq M$, there exists $O\in O(n)$ such that
\begin{equation}\label{main_estimate_isometric_case_n_greater_than_3}
\ds \left|\t u-OP_T\right|^2\leq c_{n,M}\left(\big\|(\sigma_{n-1}-1)_+\big\|_{L^2(\S)}+\big(1-|V_n(u)|\big)_+\right),
\end{equation}
where $0\leq \sigma_1\leq\dots\leq \sigma_{n-1}$ are again the eigenvalues of $\sqrt{\t u^t \t u}$,
and the signed volume of $u$ is now 
\begin{equation}\label{definition_of_signed_volume}
V_n(u):=\ds \Big\langle u, \bigwedge_{i=1}^{n-1} \partial_{\tau_i}u\Big\rangle\,.
\end{equation} 
\end{theorem}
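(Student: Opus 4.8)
The plan is to run the argument of Theorem~\ref{main_thm_isometric_case}, with the a~priori bound $\|\t u\|_{L^{2(n-2)}(\S)}\le M$ playing the role that the critical Sobolev embedding plays only when $n=3$. Write $\e^2$ for the quantity in parentheses on the right-hand side of the asserted estimate and set $\rho:=(\sigma_{n-1}-1)_+$, so $\|\rho\|_{L^2(\S)}\le\e^2$ and $\|\rho\|_{L^{2(n-2)}(\S)}\le M$. Since $\ds|\t u-OP_T|^2\le 2\ds|\t u|^2+2(n-1)\le C(n)M^2+2(n-1)$ always, the inequality is trivial once $\e$ exceeds a fixed threshold, so assume $\e\le 1$; composing $u$ with a reflection of $\R^n$ we may assume $V_n(u)\ge0$. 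As $V_n$ is invariant under translations of $\R^n$ — equivalently $\ds\bigwedge_{i=1}^{n-1}\partial_{\tau_i}u=0$, the vector area of the closed hypersurface $u(\S)$ vanishing — we may normalise $\ds u=0$, and then, because $2(n-2)>n-1$ for $n\ge4$, Morrey's embedding $\dot W^{1,2(n-2)}(\S)\hookrightarrow C^0(\S)$ gives $\|u\|_{L^\infty(\S)}\le C_{n,M}$. This is the single essential use of the hypothesis, and precisely what fails in the borderline case $n=3$.

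First I would extract an effective $L^2$ isometric deficit. The $(n-1)$-Jacobian of $u$ is $Ju=\sigma_1\cdots\sigma_{n-1}=|\bigwedge_{i=1}^{n-1}\partial_{\tau_i}u|$ and satisfies $Ju\le(1+\rho)^{n-1}$ and $(1-\sigma_j)_+\le(1+\rho)^{n-1}-Ju$ pointwise, for every $j$. Integrating, using the binomial bound $\ds(1+\rho)^{n-1}\le1+C_{n,M}\e^2$ — the linear term is $\le\ds\rho\le\|\rho\|_{L^2}\le\e^2$ and the powers $\ds\rho^k$, $2\le k\le n-1$, are controlled by interpolating between $\|\rho\|_{L^2}\le\e^2$ and $\|\rho\|_{L^{2(n-2)}}\le M$ — and the isoperimetric inequality $V_n(u)\le(\ds Ju)^{n/(n-1)}$ (the inequality behind the discussion of the isoperimetric deficit in the Introduction; it follows from the Euclidean isoperimetric inequality applied to the boundaryless integral current $u_\#[\S]$), one obtains $|\ds Ju-1|\le C_{n,M}\e^2$, hence $\ds(1-\sigma_j)_+\le C_{n,M}\e^2$ for every $j$. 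Since also $(\sigma_j-1)_+\le\rho$ and, because $\sigma_j\ge0$, $(1-\sigma_j)_+^2\le(1-\sigma_j)_+$, it follows that $\ds\sum_{j=1}^{n-1}(\sigma_j-1)^2\le C_{n,M}\e^2$; equivalently, with $R(x)\in O(n)$ the measurable orthogonal factor of the pointwise polar decomposition of $\t u$, $\ds|\t u-R(x)P_T|^2\le C_{n,M}\e^2$.

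It remains to replace $R(x)$ by a single $O\in O(n)$. Integration by parts on $\S$, using $\Dt\,\mathrm{id}_\S=-(n-1)\,\mathrm{id}_\S$, gives $\ds\langle\t u,OP_T\rangle=(n-1)\ds\langle u,Ox\rangle$, so $\ds|\t u-OP_T|^2=\ds|\t u|^2+(n-1)-2(n-1)\ds\langle u,Ox\rangle$; since $\ds|\t u|^2\le(n-1)+C_{n,M}\e^2$, it suffices to find $O$ with $\ds\langle u,Ox\rangle\ge1-C_{n,M}\e^2$, i.e.\ with $\|B\|_*\ge1-C_{n,M}\e^2$, where $B:=\ds u\otimes x$ and $\|\cdot\|_*$ is the nuclear norm (so that $\sup_{O\in O(n)}\langle O,B\rangle=\|B\|_*$). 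Using the spectral gap of $\Dt$ (the coordinate functions span the first eigenspace), $\ds|\t u|^2\le(n-1)+C_{n,M}\e^2$, and the bounds already obtained, one gets $\ds|u|^2\le1+C_{n,M}\e^2$ and $\ds|u|\ge V_n(u)-\|u\|_{L^\infty(\S)}\ds|1-Ju|\ge1-C_{n,M}\e^2$, hence $u$ is within $C_{n,M}\e^2$ of its first-harmonic projection $\Pi_1u=nBx$ in $W^{1,2}$, and in particular $\big|\,|B|^2-\tfrac1n\,\big|\le C_{n,M}\e^2$.

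The crux, and where the isoperimetric deficit is used a second time and decisively, is to show that the singular values $\tau_1,\dots,\tau_n$ of $B$ are balanced, $\sum_{i<j}(\tau_i-\tau_j)^2\le C_{n,M}\e^2$: this together with $|B|^2=\tfrac1n+O(\e^2)$ gives $\|B\|_*^2=n|B|^2-\sum_{i<j}(\tau_i-\tau_j)^2\ge1-C_{n,M}\e^2$, finishing the proof. Here one uses the null-Lagrangian structure of $V_n$: since $\bigwedge_{i}\partial_{\tau_i}(nBx)=n^{n-1}\mathrm{cof}(B)x$ and $B^{t}\mathrm{cof}(B)=(\det B)I$, one has $V_n(\Pi_1u)=n^n\det B$, and — crucially — the first variation of $V_n$ at $\Pi_1u$ along $w:=u-\Pi_1u$ vanishes, being $\ds\langle w,n^{n-1}\mathrm{cof}(B)x\rangle=0$ because $w$ is $L^2$-orthogonal to the linear functions. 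Expanding $V_n(u)$ multilinearly in $\Pi_1u$ and $w$, every term other than $V_n(\Pi_1u)$ that does not vanish for the same orthogonality reason carries at least two factors among $w$ and $\t w$ — each of size $O(\e)$ in $L^2$ — while the remaining factors are $\Pi_1u$ or $\t\Pi_1u=nBP_T$, bounded by $C_{n,M}$; this gives $|V_n(u)-n^n\det B|\le C_{n,M}\e^2$, hence $\det B\ge\tfrac1{n^n}(1-C_{n,M}\e^2)$, and by stability of the arithmetic–geometric mean inequality each $\tau_j$ lies within $C_{n,M}\e$ of $\tfrac1n$. The main obstacle is the control of the top-order terms of this expansion — $\ds\langle\Pi_1u,\bigwedge_i\partial_{\tau_i}w\rangle$ and $V_n(w)=\ds\langle w,\bigwedge_i\partial_{\tau_i}w\rangle$ — for which a crude Hölder estimate, given only $W^{1,2}$-smallness of $w$ and an $L^{2(n-2)}$ bound on $\t w$, produces a power of $\e$ below the one needed once $n\ge4$; these are treated exactly as the corresponding terms in the proof of Theorem~\ref{main_thm_isometric_case}, by applying the isoperimetric inequality directly to the maps $w$ and $\Pi_1u+w$ and invoking the a~priori bound $M$ (if necessary after a short bootstrap of the estimate), and this is where essentially all the work lies.
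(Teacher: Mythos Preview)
Your reduction to the ``crux'' is essentially correct and pleasantly direct---the pointwise bound $(1-\sigma_j)_+\le(1+\rho)^{n-1}-Ju$ together with Almgren's isoperimetric inequality yields $\ds\sum_j(\sigma_j-1)^2\le C_{n,M}\e^2$ without any truncation, and the spectral-gap step giving $\|\t(u-\Pi_1u)\|_{L^2}^2\le C_{n,M}\e^2$ is the same as the paper's \eqref{prefinal_estimate_general}. But the final paragraph is where the argument breaks, and you have essentially said so yourself. In the multilinear expansion of $V_n(\Pi_1u+w)$, the terms carrying $k\ge3$ factors of $\t w$ (in particular those of type $\ds\langle\Pi_1u,\cdots\partial_{\tau_i}w\cdots\rangle$ with $k$ derivatives of $w$) are bounded only by $C_{n,M}\ds|\t w|^k$; interpolating $\|\t w\|_{L^2}\le C\e$ against $\|\t w\|_{L^{2(n-2)}}\le C_{n,M}$ gives $\ds|\t w|^k\le C_{n,M}\e^{(2(n-2)-k)/(n-3)}$, which is strictly worse than $\e^2$ for every $k\ge3$. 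The isoperimetric inequality applied to $w$ yields only $|V_n(w)|\le C_{n,M}\|\t w\|_{L^{n-1}}^n\le C_{n,M}\e^{n/(n-1)}$, again short of $\e^2$; your appeal to ``exactly as in Theorem~\ref{main_thm_isometric_case}'' is misleading, since there $n-1=2$ and Wente's inequality is genuinely in terms of $\|\t w\|_{L^2}$. A bootstrap does not help either, because $w=u-\Pi_1u$ is fixed and its higher $L^p$ norms do not improve.

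The paper avoids this obstacle by a different organisation: it performs the Lipschitz truncation \emph{first} (Lemma~\ref{Lipschitz_truncation_lemma} at level $M_n=2\sqrt{n-1}$), and uses the hypothesis $\|\t u\|_{L^{2(n-2)}}\le M$ \emph{only} to show that the signed volume is stable under truncation, namely $|V_n(u)-V_n(u_{M_n})|\lesssim_M\delta(u)$ (estimates \eqref{difference_in_volume_higher_d_first_estimates}--\eqref{final_volume_estimate}, where Morrey is used for $\|u-\bar u\|_{L^\infty}$ and H\"older with exponent $2(n-2)$ controls the remainder $R_n$). Once one works with the truncated map, $\|\t w\|_{L^\infty}$ is uniformly bounded and every higher-order term in the expansion of $V_n$ around $Ax$ is trivially $\le C\ds|\t w|^2$. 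So the role you assign to the $L^{2(n-2)}$ bound---``the single essential use'' being Morrey for $\|u\|_{L^\infty}$---is not the one that actually closes the argument; the bound is needed precisely to make the truncation step compatible with the isoperimetric deficit, and the exponent $2(n-2)$ is sharp for this purpose (see the remark after \eqref{final_volume_estimate}).
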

Let us clarify that here we are using the identification 
\begin{equation}\label{determinant_of_signed_volume}
\Big\langle u,\bigwedge_{i=1}^{n-1}\partial_{\tau_i}u\Big\rangle:=\Big\langle u,\ast\bigwedge_{i=1}^{n-1}\partial_{\tau_i}u\Big\rangle=\mathrm{det}\begin{pmatrix}
u^1 & \dots & u^n\\
\partial_{\tau_1}u^1 & \dots & \partial_{\tau_1}u^n\\
&\vdots&\\
\partial_{\tau_{n-1}}u^1 & \dots & \partial_{\tau_{n-1}}u^n
\end{pmatrix}\,.
\end{equation}  

The constant in \eqref{main_estimate_isometric_case_n_greater_than_3} depends in principle now both on the dimension and on the apriori bound in the $L^{2(n-2)}$-norm of the gradient. The reason why this particular condition is introduced will be explained in Subsection \ref{Subsection 3.4}. As we also justify by examples in Remark \ref{examples}, the estimate is optimal in this setting, in the sense that \textit{the exponents with which the two deficits appear cannot generically be improved.}

For the \textit{conformal case}, due to the scaling invariant nature of the problem, the correct notions for the average conformal deficit and the isoperimetric one can be combined together. The main result when $n=3$ in this case is the following.

\begin{theorem}\label{main_thm_conformal_case}
There exists a constant $c_2>0$ so that for every $u\in W^{1,2}(\s;\R^3)$ with $V_3(u)\neq 0$ there exist a Möbius transformation $\phi$ of $\s$  and $\lambda >0$ such that
\begin{equation}\label{main_estimate_conformal_case}
\d2 \Big|\frac{1}{\lambda}\t u-\t \phi\Big|^2\leq c_2\left(\frac{\Big[D_2(u)\Big]^{\frac{3}{2}}}{|V_3(u)|}-1\right)\,,
\end{equation}
where $D_2(u):=\frac{1}{2}\d2|\t u|^2$ is the Dirichlet energy of $u$, and $V_3(u)$ is again its signed volume, \AAA as in \eqref{signed_volume_3_dim}\EEE.
\end{theorem}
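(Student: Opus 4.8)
The plan is to reduce the conformal statement to the isometric one of Theorem \ref{main_thm_isometric_case} by an appropriate normalization, and to control the combined conformal-isoperimetric deficit on the right-hand side of \eqref{main_estimate_conformal_case} from below by a sum of an $L^2$-type conformal deficit and an excess-volume deficit. First I would record the scaling structure: replacing $u$ by $u/\lambda$ changes $D_2(u)$ by $\lambda^{-2}$ and $V_3(u)$ by $\lambda^{-3}$, so the quantity $[D_2(u)]^{3/2}/|V_3(u)|$ is scale invariant, as is the left-hand side after the factor $1/\lambda$; hence without loss of generality I may fix the scale, say by requiring $D_2(u)=\tfrac12\,\d2|\t u|^2 = 1$ (equivalently $\d2|\t u|^2 = 2$), which is exactly the Dirichlet energy of the identity map $\iota:\s\hookrightarrow\R^3$, for which $\t\iota = P_T$ and $V_3(\iota)=\tfrac{4\pi/3}{4\pi}=\tfrac13\cdot\tfrac{1}{1}$ — more precisely one normalizes so that $D_2$ matches that of a standard conformal representative. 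After this normalization the target inequality becomes $\d2|\t u-\t\phi|^2 \le c_2\big(1/|V_3(u)| - 1\big)$ up to the constant, for a suitable choice of $\lambda$ absorbed into the normalization.

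The key analytic input is a pointwise/integrated algebraic inequality comparing $[D_2(u)]^{3/2}$ with $|V_3(u)|$. Writing the principal stretches $0\le\sigma_1\le\sigma_2$ of $u$, one has $|\t u|^2 = \sigma_1^2+\sigma_2^2$ pointwise, while the signed-volume integrand is bounded by the area element $\sigma_1\sigma_2$, so $|V_3(u)| \le \d2 \sigma_1\sigma_2 \le \tfrac12\d2(\sigma_1^2+\sigma_2^2) = D_2(u)$ by AM–GM, with equality (in the second step) forcing $\sigma_1=\sigma_2$ a.e., i.e. conformality, and equality in the first step forcing $u(\s)$ to bound its volume efficiently, i.e. the isoperimetric equality. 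Quantitatively I would prove a stability version: $[D_2(u)]^{3/2} - |V_3(u)| \gtrsim \d2(\sigma_2-\sigma_1)^2 \;+\; \big(\text{isoperimetric excess of } u\big)$, where the conformality defect $\d2(\sigma_2-\sigma_1)^2$ comes from quantifying AM–GM, and the isoperimetric excess (deviation of $u(\s)$ from a round sphere of the appropriate radius, which after normalization is volume deficit plus an $L^2$ control on the unit normal, measured through $1-|V_3(u)|$) comes from the quantitative isoperimetric inequality applied to the image current $u_\#[\s]$. The Dirichlet-energy normalization is what lets one pass from "$[D_2(u)]^{3/2}/|V_3(u)|-1$ small" to "both defects small."

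Once both defects are small, I would invoke Theorem \ref{main_thm_isometric_case}. Concretely, the smallness of $\d2(\sigma_2-\sigma_1)^2$ together with $D_2(u)=1$ forces $\d2\big||\t u| - \sqrt{2}\big|^2$ small, so after dividing by the (nearly constant) conformal factor the rescaled map $\tilde u$ has principal stretches close to $1$, i.e. $\big\|(\tilde\sigma_2-1)_+\big\|_{L^2}$ is controlled by the conformal defect, and simultaneously $\big(1-|V_3(\tilde u)|\big)_+$ is controlled by the isoperimetric excess; feeding $\tilde u$ into \eqref{main_estimate_isometric_case} yields $O\in O(3)$ with $\d2|\t\tilde u - OP_T|^2 \lesssim$ (conformal $+$ isoperimetric defect) $\lesssim [D_2(u)]^{3/2}/|V_3(u)|-1$. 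Undoing the normalization turns $OP_T$ into $\t\phi$ for the Möbius map $\phi = O\circ(\text{dilation})$ restricted to $\s$ — here one uses that dilations of $T_\xi\s$ conjugated by stereographic projection are exactly the $\phi_{\xi,\lambda}$ of Theorem \ref{spherical Liouville's Theorem}, and that their gradients on $\s$ are conformal with the prescribed conformal factor $\lambda$ — giving \eqref{main_estimate_conformal_case}. A separate, softer argument handles the regime where the right-hand side is bounded away from $0$ by a dimensional constant: there the left-hand side is a priori bounded (since $\d2|\t u|^2$ is comparable to $D_2(u)$, which after the scale fixing is controlled, and $\t\phi$ has bounded energy), so the estimate holds trivially after enlarging $c_2$.

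The main obstacle I anticipate is the quantitative step linking $[D_2(u)]^{3/2}/|V_3(u)|-1$ to a genuine isoperimetric excess that is strong enough to drive Theorem \ref{main_thm_isometric_case}: the signed volume $V_3(u)$ is only the algebraic (degree-weighted) volume enclosed by the possibly non-embedded image, so one cannot directly cite the classical isoperimetric inequality for sets, but must instead work with the image as an integer-multiplicity current or rectifiable varifold and use a quantitative isoperimetric inequality in that generality (or exploit the structure $V_3(u)=\int u^*(\text{volume form})$ and the fact that $u_\#[\s]=\partial(\text{something})$). Controlling the distortion of $\s$ — i.e. making sure the normalized map $\tilde u$ really does land in the hypotheses of Theorem \ref{main_thm_isometric_case} with the two deficits quantitatively small in the right norms — is the technical heart of the argument, and is presumably where the choice of $\lambda$ (as, say, the average of $|\t u|/\sqrt2$, or as the radius extracted from the isoperimetric comparison) has to be made with care.
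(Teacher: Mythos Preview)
Your reduction to Theorem \ref{main_thm_isometric_case} has a genuine gap at the step where you claim that smallness of $\d2(\sigma_2-\sigma_1)^2$ together with $D_2(u)=1$ forces $\d2\big||\t u|-\sqrt{2}\big|^2$ small. This is false, and the counterexamples are precisely the M\"obius transformations themselves: any $\phi_{\xi,\lambda}$ with $\lambda\neq 1$ satisfies $\sigma_1=\sigma_2$ identically (conformal deficit zero) and $D_2(\phi_{\xi,\lambda})=1$ (conformal invariance of the Dirichlet energy), yet $|\t\phi_{\xi,\lambda}|$ is far from constant---indeed as $\lambda\to 0$ or $\infty$ the conformal factor concentrates at a single point and $\d2\big||\t\phi_{\xi,\lambda}|-\sqrt{2}\big|^2$ stays bounded away from zero. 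Consequently, after only a \emph{constant} rescaling you cannot expect $\tilde u$ to land in the small-isometric-deficit regime required by Theorem \ref{main_thm_isometric_case}, and Theorem \ref{main_thm_isometric_case} only produces a rotation $O\in O(3)$, not a full M\"obius map with non-constant conformal factor. Your final identification ``$\phi=O\circ(\text{dilation})$'' confuses a constant scaling of $\R^3$ (whose gradient on $\s$ is $\lambda OP_T$) with the maps $\phi_{\xi,\lambda}$, which have genuinely non-constant $|\t\phi_{\xi,\lambda}|$.

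The underlying issue is the non-compactness of $Conf(\s)$: before any local/quantitative argument can run, one must \emph{precompose} $u$ with a well-chosen M\"obius transformation to kill the concentration. The paper does this in two stages: first a Brezis--Coron type compactness lemma (Lemma \ref{compactness_lemma_for_conformal_n_3}) reduces, via contradiction, to the regime $\|\t u-P_T\|_{L^2}\le\theta\ll 1$; then, crucially, even in that regime the nearest M\"obius map need not be the identity, so a further precomposition is found by an Inverse Function Theorem/degree argument (Lemma \ref{fixingMobius}) that exploits the identification of the kernel of the linearized deficit $Q_3$ with the Lie algebra of $Conf(\s)$. The core of the proof is then the coercivity of $Q_3$ (Theorem \ref{main_coercivity_estimate_3_d_conformal}), obtained by a spherical-harmonic eigendecomposition with respect to the operator $A$ of \eqref{Aoperator}. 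None of this machinery is replaceable by a direct call to the isometric theorem; the isoperimetric-for-currents difficulty you flag in your last paragraph is not the real obstruction.
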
		
Of course the question is void when $n=2$, since conformality is a trivial notion for maps from $\mathbb{S}^1$ to $\R^2$. One can directly check that the estimate \eqref{main_estimate_conformal_case} is again optimal in its setting, by considering the sequence of maps $u_{\sigma}(x):=A_\sigma x:\s\mapsto \R^3,\ \mathrm{where}\ A_\sigma:=\mathrm{diag}(1,1,1+\sigma)\in \R^{3\times 3}$ as $\sigma\to 0^+$.

The use of this combined conformal-isoperimetric deficit is very natural in this framework. Indeed, generalizing to any dimension $n\geq 3$ (for $n=3$ cf.~\cite[Theorem 2.4]{wente1969existence}), for $u\in W^{1,n-1}(\S;\R^n)$ the following inequalities, sometimes referred to as \textit{Wente's isoperimetric inequality for mappings}, are known to hold.
\begin{align}\label{AM-GM-isoperimetric-inequality}
\begin{split}
&\ds\left(\frac{|\t u|^2}{n-1}\right)^\frac{n-1}{2}\geq\ds\sqrt{\mathrm{det}\left(\t u^t\t u\right)}\geq \left|\ds \Big\langle u,\bigwedge_{i=1}^{n-1}\partial_{\tau_i}u\Big\rangle\right|^{\frac{n-1}{n}}\\[5pt]
\iff& \Big[D_{n-1}(u)\Big]^{\frac{n}{n-1}}\ \ \ \ \ \ \ \geq \Big[P_{n-1}(u)\Big]^{\frac{n}{n-1}} \ \ \ \ \ \ \ \ \ \ \ \ \   \geq \big|V_n(u)\big|\,,
\end{split}
\end{align}
where $D_{n-1}(u)$, $P_{n-1}(u)$ are the first two integral quantities in the first line of the above inequalities, i.e., the $(n-1)$-Dirichlet energy and the generalized area produced by $u$ respectively. 
The first inequality in \eqref{AM-GM-isoperimetric-inequality} follows from the arithmetic mean-geometric mean inequality for the eigenvalues of $\sqrt{\t u^t\t u}$ and equality is achieved iff these eigenvalues coincide for $\mathcal{H}^{n-1}$-a.e. $x\in \S$, or equivalently, iff
\begin{equation*}
(\t u)^t\t u=\frac{|\t u|^2}{n-1}I_x\quad \mathcal{H}^{n-1}\text{-a.e. on } \S,
\end{equation*}
i.e., iff $u$ is a generalized conformal map from $\S$ to $\R^n$. The second inequality in \eqref{AM-GM-isoperimetric-inequality} is \textit{the functional form of the isoperimetric inequality} (cf.~\cite[inequality (2)]{almgren1986optimal}), which can be proven first for smooth maps and can then be extended by density in $W^{1,n-1}(\S;\R^n)$. Equality is achieved \EEE iff the image of $u$ is another round sphere in the $\mathcal{H}^{n-1}$-a.e. sense. In the case of a $C^1$ embedding, the inequality reduces of course to the classical Euclidean isoperimetric inequality for the open bounded set in $\R^n$ whose boundary is $u(\S)$.

Based on these simple observations, \textit{the combined conformal-isoperimetric deficit}
\begin{equation}\label{def_of_combined_deficit}
\mathcal{E}_{n-1}(u):=\frac{\Big[D_{n-1}(u)\Big]^{\frac{n}{n-1}}}{\big|V_n(u)\big|}-1\,,
\end{equation}
considered among maps $u\in W^{1,n-1}(\S;\R^n)$ for which $V_n(u)\neq 0$, provides a correct notion of deficit when one seeks stability of the conformal group of $\S$ among maps from $\S$ into $\R^n$. Indeed, it is immediate that $\mathcal{E}_{n-1}$ is translation, rotation and scaling invariant, as well as invariant under precompositions with Möbius transformations of $\S$. Moreover, as we have discussed above, $\mathcal{E}_{n-1}$ is nonnegative and vanishes iff $u$ is a generalized conformal map from $\S$ onto another round sphere, which after translation and scaling can be taken to be $\S$ again. If $d\in \mathbb{Z}$ would denote the degree of $u\in W^{1,n-1}(\S;\S)$ (following the definitions in \cite{brezis1995degree}), then  
$$|d|=|V_n(u)|=[D_{n-1}(u)]^{\frac{n}{n-1}}\geq |V_n(u)|^{\frac{n}{n-1}}=|d|^{\frac{n}{n-1}}\,.$$
Since \EEE the degree (for maps from $\S$ to itself) takes integer values, we would have that either $d=0$ or $d=\pm 1$, with the first case being excluded automatically, since by assumption $V_n(u)\neq 0$. Hence, absolute minimizers of $\mathcal{E}_{n-1}$ are degree $\pm 1$ conformal maps from $\S$ into itself, up to a translation vector and a scaling factor, i.e., according to Theorem \ref{spherical Liouville's Theorem}, Möbius transformations of $\S$ up to translation and scaling.
In this respect, Theorem \ref{main_thm_conformal_case} can be thought of as a \textit{sharp quantitative version} of the previous statements for $n=3$. At the core of its proof lies the study of the linearized version of the problem, since by the use of a \textit{contradiction$\backslash$compactness argument} it is enough to show the theorem for maps that are sufficiently close to the $\mathrm{id}_{\s}$ in the $W^{1,2}$-topology. In this regime, and \textit{after a correct rescaling of $u$}, if $w:=u-\mathrm{id}_{\s}$ is the corresponding displacement field, one obtains the formal Taylor expansion
\begin{equation}\label{intro_formal_expansion_conf_3}
\mathcal{E}_2(u)=Q_3(w)+o\left(\d2 |\t w|^2\right)\,,
\end{equation}
where $Q_3(w)$ is the associated quadratic form, i.e., the second derivative of $\mathcal{E}_2$ at the $\mathrm{id}_{\s}$, defined explicitely later in \eqref{quadratic_forms_without_proofs_}. The next and main step of the proof is to examine the coercivity properties of the quadratic form $Q_3$. This is something that can actually be done in every dimension $n\geq 3$, the main ingredient for doing so being the fine interplay between the Fourier decomposition of a $W^{1,2}(\S;\R^n)$-vector field into $\R^n$-valued spherical harmonics and the properties of the linear first order differential operator 
associated to the second derivative of %the signed volume 
$V_n$ at the $\mathrm{id}_{\S}$.

To be more precise, as we thoroughly examine in Subsection \ref{Subsection 4.2} for the case $n=3$, and in Subsection \ref{Subsection 5.1.} for the higher dimensional case, if one rescales $u$ properly, sets $w:=u-\mathrm{id}_{\S}$ and expands $\mathcal{E}_{n-1}(u)$ \AAA in \eqref{def_of_combined_deficit} \EEE around the $\mathrm{id}_{\S}$, then the resulting quadratic form 
\begin{equation}\label{Q_n_definition}
Q_n(w):=\frac{n}{2(n-1)}\ds \Big(|\t w|^2+\frac{n-3}{n-1}(\mathrm{div}_{\S}w)^2\Big)-\frac{n}{2}\ds \Big\langle w, (\mathrm{div}_{\S}w)x- \sum_{j=1}^nx_j\t w^j\Big\rangle
\end{equation}
has finite-dimensional kernel and its dimension actually coincides with that of the conformal group of $\S$. Moreover, when considered in the correct space (see the definitions of the spaces $H_n$, $(H_{n,k,i})_{k\geq 1, i=1,2,3}$ in equation \eqref{H_space} and Theorem  \ref{eigenvalue_decomposition} in Subsection  \ref{Subsection 4.2}), the form $Q_n$ satisfies the following coercivity estimate.

\begin{theorem}\label{main_coercivity_estimate_conformal_general_dimension}
Let $n\geq 3$. There exists a constant $C_n>0$ such that for every $w\in H_n$,
\begin{equation}\label{Korn_type_inequality_for_Q_n}
Q_n(w)\geq C_n\ds \big|\t w-\t(\Pi_{n,0} w)\big|^2\,,
\end{equation}
where $H_{n,0}:=H_{n,1,2}\oplus H_{n,2,3}$ is the kernel of $Q_n$ in $H_n$, and $\Pi_{n,0}\colon H_n\mapsto H_{n,0}$ is the $W^{1,2}$-orthogonal projection of $H_n$ onto $H_{n,0}$.
\end{theorem}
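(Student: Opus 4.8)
The plan is to prove \eqref{Korn_type_inequality_for_Q_n} by a complete spectral analysis of the form $Q_n$ in \eqref{Q_n_definition}. The key structural fact is that $Q_n$ is invariant under precomposition with rotations: it is built from the $O(n)$-equivariant operators $\t$ and $\divt$, from $|\cdot|^2$, and from the bilinear form attached to the second variation $d^2V_n(\mathrm{id}_\S)$, all of which commute with the $O(n)$-action. Hence, after expanding $w\in H_n$ along the $W^{1,2}$-orthogonal splitting $H_n=\bigoplus_{k\geq 1}\bigoplus_{i=1}^3 H_{n,k,i}$ of \eqref{H_space} into the three natural families of $\R^n$-valued spherical harmonics of degree $k$ --- gradients of scalar harmonics, tangential divergence-free fields, and normal fields of the form (scalar harmonic)$\,\cdot\, x$ --- one finds that the spaces $H_{n,k,i}$ are in fact the eigenspaces of $Q_n$, with
\begin{equation*}
Q_n(w)=\sum_{k\geq 1}\sum_{i=1}^3\lambda_{n,k,i}\,\ds|\t w_{k,i}|^2
\end{equation*}
for explicit real numbers $\lambda_{n,k,i}$ depending only on $n$, $k$, $i$. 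This is the content of Theorem \ref{eigenvalue_decomposition}; its only genuinely delicate ingredient is the evaluation of the cross term $-\frac{n}{2}\ds\langle w,(\divt w)x-\sum_j x_j\t w^j\rangle$, since the first-order operator attached to $d^2V_n(\mathrm{id}_\S)$ involves multiplication by the coordinate functions $x_j$, which shifts the spherical-harmonic degree; controlling how it acts between the three families requires the recursion relations for vector spherical harmonics together with identities such as $\divt x=n-1$ and $\t x_j=e_j-x_j x$.

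Granting the decomposition, \eqref{Korn_type_inequality_for_Q_n} follows from three facts about the scalars $\lambda_{n,k,i}$, all read off from their explicit formulas. First, $\lambda_{n,k,i}\geq 0$ for every admissible $(k,i)$, i.e.\ $Q_n\geq 0$ on $H_n$; this is not automatic --- it requires the cross term not to overwhelm the positive quadratic part --- and for the finitely many small values of $k$ it has to be checked by hand. Second, $\lambda_{n,k,i}=0$ exactly when $(k,i)\in\{(1,2),(2,3)\}$, so that the kernel of $Q_n$ in $H_n$ is precisely $H_{n,0}=H_{n,1,2}\oplus H_{n,2,3}$; since $\dim H_{n,1,2}+\dim H_{n,2,3}=\binom{n}{2}+n=\binom{n+1}{2}$, this matches the dimension of the conformal group of $\S$, as anticipated above. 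Third --- the quantitative input --- $C_n:=\inf\{\lambda_{n,k,i}:(k,i)\neq(1,2),(2,3)\}>0$. Indeed the ``principal'' term $\frac{n}{2(n-1)}\ds|\t w|^2$ scales exactly like $\ds|\t w|^2$, while on a degree-$k$ harmonic the $(\divt w)^2$-term carries only a bounded coefficient relative to $|\t w|^2$ and the cross term, being a product of $w$ with a first derivative of $w$, is of lower order than $\ds|\t w|^2$ as $k\to\infty$; consequently $\lambda_{n,k,i}$ converges as $k\to\infty$ to a strictly positive limit depending only on $n$ and $i$, so only finitely many of the $\lambda_{n,k,i}$ can lie below any fixed positive number, and $C_n$ may be taken as the minimum of those finitely many strictly positive values.

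The estimate is then assembled by orthogonality. Given $w\in H_n$, write $w=\Pi_{n,0}w+w^{\perp}$ with $w^{\perp}:=w-\Pi_{n,0}w$; since $Q_n$ is block-diagonal along the $H_{n,k,i}$ and vanishes identically on $H_{n,0}$, all mixed contributions drop out and $Q_n(w)=Q_n(w^{\perp})$. Moreover $\t(w-\Pi_{n,0}w)=\t w^{\perp}=\sum_{(k,i)\neq(1,2),(2,3)}\t w_{k,i}$, and the summands are mutually $L^2$-orthogonal (the splitting \eqref{H_space} being orthogonal both in $L^2$ and in $W^{1,2}$). Therefore
\begin{equation*}
Q_n(w)=Q_n(w^{\perp})=\sum_{(k,i)\neq(1,2),(2,3)}\lambda_{n,k,i}\,\ds|\t w_{k,i}|^2\;\geq\;C_n\sum_{(k,i)\neq(1,2),(2,3)}\ds|\t w_{k,i}|^2=C_n\,\ds\big|\t w-\t(\Pi_{n,0}w)\big|^2\,,
\end{equation*}
which is exactly \eqref{Korn_type_inequality_for_Q_n}.

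I expect the main obstacle to be the spectral computation underlying Theorem \ref{eigenvalue_decomposition}: producing the explicit $\lambda_{n,k,i}$ --- above all evaluating the bilinear form coming from $d^2V_n(\mathrm{id}_\S)$, the only term that can couple the gradient and normal families and whose computation requires the action of multiplication by the $x_j$'s on vector spherical harmonics --- and then checking both the nonnegativity at low degrees and the uniform positive lower bound away from the two exceptional eigenspaces. Everything downstream of that is routine bookkeeping resting on the $W^{1,2}$-orthogonality of the decomposition \eqref{H_space}.
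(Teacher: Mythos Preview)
Your approach is correct for $n=3$ and matches the paper's argument there (Theorem~\ref{main_coercivity_estimate_3_d_conformal}), but it has a genuine gap for $n\geq 4$. The central claim --- that the $H_{n,k,i}$ are eigenspaces of $Q_n$, so that $Q_n(w)=\sum_{k,i}\lambda_{n,k,i}\ds|\t w_{k,i}|^2$ --- is false in higher dimensions. Theorem~\ref{eigenvalue_decomposition} diagonalizes the operator $A$ associated with $Q_{V_n}$, not $Q_n$ itself. When $n=3$, $Q_3=\tfrac{3}{4}\ds|\t w|^2-Q_{V_3}(w)$ is a combination of two forms that both diagonalize on the $H_{3,k,i}$, so $Q_3$ does too. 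For $n\geq 4$ the extra term $\tfrac{n(n-3)}{2(n-1)^2}\ds(\divt w)^2$ in \eqref{Q_n_definition} does \emph{not} diagonalize: the associated Euler--Lagrange operator (see Remark~\ref{Remark_for_L}) contains $\t\divt w-(n-1)(\divt w)x$, which neither commutes with $A$ nor leaves $H_{n,k,\mathrm{sol}}$ and $H_{n,k,\mathrm{sol}}^\perp$ invariant.

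Concretely, the paper shows (Lemma~\ref{bd_lemma}) that expanding $Q_n$ along the $A$-eigenspaces produces genuine off-diagonal terms of the form $\ds\divt w_{n,k,1}\,\divt w_{n,k+2,3}$ and $\ds\divt w_{n,k,3}\,\divt w_{n,k+2,1}$, coupling blocks two degrees apart. These cannot be made to vanish and must be absorbed: the paper does this by Cauchy--Schwarz with carefully chosen weights $\varepsilon_{n,k}=\tfrac{k+n}{k}$ (see \eqref{mixed_terms_first_estimate}--\eqref{Q_n_new_estimate}), arriving at a lower bound with modified constants $\tilde C_{n,k,i}$ whose positivity and asymptotics then have to be verified separately. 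Your sketch, which treats the diagonal constants as the whole story and asserts that ``all mixed contributions drop out,'' misses precisely this step; without it you cannot even conclude $Q_n\geq 0$ on $H_n$ for $n\geq 4$, let alone establish the uniform gap $C_n>0$.
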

When $n=3$, \textit{the optimal constant} in \eqref{Korn_type_inequality_for_Q_n} can actually be calculated explicitely. Since $H_{n,0}$ turns out to be \textit{isomorphic to the Lie algebra of infinitesimal Möbius transformations of $\S$}, an application of the Inverse Function Theorem together with a topological argument (given in Lemma  \ref{fixingMobius}) will finally allow us to infer the \textit{nonlinear estimate} \eqref{main_estimate_conformal_case} from the \textit{linear one} \eqref{Korn_type_inequality_for_Q_n} in the \textit{$W^{1,2}$-close to the $\mathrm{id}_{\s}$-regime}, and hence conclude with Theorem  \ref{main_thm_conformal_case}.

It is maybe worth remarking here that\AAA, in contrast to \eqref{intro_formal_expansion_conf_3}, \EEE in dimensions $n\geq 4$ a formal expansion of the combined conformal-isoperimetric deficit around the $\mathrm{id}_{\S}$ yields 
\begin{equation}\label{expansion_conf_higher_dim}
\mathcal{E}_{n-1}(u)=Q_n(w)+\mathcal{O}\left(\ds |\t w|^3\right)\,.
\end{equation} 
Since the higher order term is now cubic in $\t w$, the linear estimate \eqref{Korn_type_inequality_for_Q_n} alone would only imply the nonlinear one (following exactly the same steps of proof as those described in Subsections \ref{Subsection 4.1}, \ref{Sec: completion_of_proof_conformal_case_n_3} and \ref{subsec: 4.4} for the case $n=3$) only in the \textit{$W^{1,\infty}$-close to the $\mathrm{id}_{\S}$-regime} \AAA (see Remark \ref{from_linear_to_nonlinear_n_bigger_than_3})\EEE, as stated in the following.
	
\begin{corollary}\label{fake_nonlinear_estimate_higher_dim}
Let $n\geq 4$. There exist constants $\theta \in (0,1)$ 
$\rm{(}$sufficiently small$\rm{)}$ and $c_{n-1}>0$ such that the following statement holds. For every $u\in W^{1,\infty}(\S;\R^n)$ with $\|\t u-P_T\|_{L^{\infty}(\S)}\leq \theta\ll 1$, there exist a Möbius transformation $\phi$ of $\S$ and $\lambda >0$ such that
\begin{equation}\label{fake_nonlinear_estimate_conformal_case_high_dimensions}
\ds \Big|\frac{1}{\lambda}\t u-\t \phi\Big|^2\leq c_{n-1}\mathcal{E}_{n-1}(u)\,,
\end{equation}
where $\mathcal{E}_{n-1}$ is defined in \eqref{def_of_combined_deficit}. 
\end{corollary}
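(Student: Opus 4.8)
The plan is to run, \emph{mutatis mutandis}, the argument carried out for $n=3$ in Subsections \ref{Subsection 4.1}, \ref{Sec: completion_of_proof_conformal_case_n_3} and \ref{subsec: 4.4}: one passes from the linear estimate \eqref{Korn_type_inequality_for_Q_n} to a nonlinear one through the Inverse Function Theorem together with the topological argument of Lemma \ref{fixingMobius}. The single structural difference lies in the size of the Taylor remainder of $\mathcal E_{n-1}$ at $\mathrm{id}_{\S}$: by \eqref{expansion_conf_higher_dim} it is now \emph{cubic} in $\t w$, hence it can be absorbed into the coercive term $Q_n(w)$ only once $\|\t w\|_{L^{\infty}(\S)}$ is small, whereas $Q_n$ controls merely $\ds|\t w|^{2}$. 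This is exactly why a preliminary contradiction/compactness reduction to the close-to-$\mathrm{id}_{\S}$ regime — which would supply only $W^{1,2}$-closeness — is of no use here, and why the statement is confined to the $W^{1,\infty}$-regime, the hypothesis $\|\t u-P_T\|_{L^{\infty}(\S)}\le\theta$ taking over the role that $W^{1,2}$-closeness to $\mathrm{id}_{\S}$ plays when $n=3$.

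\smallskip
\noindent\textbf{Normalisation and fixing the Möbius transformation.} Using that $\mathcal E_{n-1}$ is invariant under translations, rotations and positive dilations of the image, replace $u$ by $\lambda_0^{-1}(u-c)$ with $c:=\ds(u-\mathrm{id}_{\S})\in\R^n$ and a suitable $\lambda_0>0$; since $\|\t u-P_T\|_{L^{\infty}}\le\theta$ forces $\ds|\t u|^{2}$ to lie within $\mathcal O(\theta)$ of $n-1$, one can take $\lambda_0=1+\mathcal O(\theta)$, and the resulting map — still called $u$ — is then $\mathcal O(\theta)$-close to $\mathrm{id}_{\S}$ in $W^{1,\infty}(\S;\R^{n})$ (the Poincaré inequality together with the Lipschitz bound controlling the map itself and not just its gradient) and has a zero-mean displacement field, hence one lying in $H_n$. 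Next, following Lemma \ref{fixingMobius}, apply the Inverse Function Theorem to the map that sends the parameters of the Möbius group of $\S$, together with a dilation factor, to the $H_{n,0}$-component of the displacement field of the correspondingly precomposed and rescaled map; since $H_{n,0}=H_{n,1,2}\oplus H_{n,2,3}$ is isomorphic to the Lie algebra of infinitesimal Möbius transformations of $\S$, the differential at the identity is an isomorphism onto $H_{n,0}$, and for $\theta$ small there are a Möbius transformation $\phi$ of $\S$ and $\mu>0$ — both $\mathcal O(\theta)$-close to the identity, resp. to $1$ — such that $v:=\mu^{-1}u\circ\phi$ has displacement field $w:=v-\mathrm{id}_{\S}\in H_n$ with $\Pi_{n,0}w=0$ and $\|\t w\|_{L^{\infty}(\S)}\le C_n\theta$ (the chain rule being harmless since $\t\phi$, being conformal and $C^0$-close to $P_T$, is bounded above and below). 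By the invariances of $\mathcal E_{n-1}$ one has $\mathcal E_{n-1}(v)=\mathcal E_{n-1}(u)$.

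\smallskip
\noindent\textbf{Linear estimate, absorption of the cubic term, and transfer back.} As $\Pi_{n,0}w=0$, Theorem \ref{main_coercivity_estimate_conformal_general_dimension} yields $Q_n(w)\ge C_n\ds|\t w|^{2}$, while \eqref{expansion_conf_higher_dim} gives
\[
\mathcal E_{n-1}(u)=\mathcal E_{n-1}(v)=Q_n(w)+\mathcal O\!\left(\ds|\t w|^{3}\right).
\]
Since $\big|\mathcal O(\ds|\t w|^{3})\big|\le C_n\|\t w\|_{L^{\infty}(\S)}\ds|\t w|^{2}\le C_n^{2}\theta\ds|\t w|^{2}$, choosing $\theta$ with $C_n^{2}\theta\le C_n/2$ gives $\ds|\t w|^{2}\le\frac{2}{C_n}\,\mathcal E_{n-1}(u)$. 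Finally, writing out $w=\mu^{-1}u\circ\phi-\mathrm{id}_{\S}$ and performing the change of variables $y=\phi(x)$ — legitimate and harmless since $\phi$, $\phi^{-1}$ and $\mu$ are uniformly controlled for $\theta$ small — the conformality of $\phi$ turns $\ds|\t w|^{2}$ into $\ds\big(\rho^{\,n-3}\,|\mu^{-1}\t u-\t(\phi^{-1})|^{2}\big)$ for a conformal factor $\rho$ that is $C^0$-close to $1$; hence $\ds|\t w|^{2}\ge\tfrac12\ds|\mu^{-1}\t u-\t(\phi^{-1})|^{2}$. Combining the two bounds, folding the initial dilation $\lambda_0$ into $\lambda$ and relabelling $\phi^{-1}$ as $\phi$, we obtain \eqref{fake_nonlinear_estimate_conformal_case_high_dimensions} with $c_{n-1}$ depending only on $n$.

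\smallskip
The main obstacle — and the structural reason the corollary is restricted to the $W^{1,\infty}$-regime — is the absorption step: for $n\ge 4$ the remainder in the expansion of $\mathcal E_{n-1}$ around $\mathrm{id}_{\S}$ is genuinely cubic in $\t w$ (see \eqref{expansion_conf_higher_dim} and Remark \ref{from_linear_to_nonlinear_n_bigger_than_3}), so its absorption demands $\|\t w\|_{L^{\infty}(\S)}$ small, whereas the coercive form $Q_n$ supplies only control of $\ds|\t w|^{2}$; the $n=3$ device of first reducing to the close regime via a $W^{1,2}$-compactness argument is therefore unavailable. A secondary — routine but necessary — point is to check that the Inverse Function Theorem step can be carried out with uniform $W^{1,\infty}$ estimates, so that the Möbius correction it produces, and with it $\t w$, remains $L^{\infty}$-small of order $\theta$.
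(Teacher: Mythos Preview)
Your proposal is correct and follows essentially the same route as the paper, which does not spell out a separate proof but simply points (in Remark \ref{from_linear_to_nonlinear_n_bigger_than_3} and Remark \ref{higher_order_terms_negligible}) to the $n=3$ scheme of Section \ref{Section 4}, with the single modification that the now-cubic remainder in \eqref{expansion_conf_higher_dim} is absorbed via the $L^{\infty}$-smallness of $\t w$ rather than its $L^{2}$-smallness --- precisely the mechanism you describe. Two small bookkeeping points are worth tightening: as phrased, your Inverse Function Theorem step sends M\"obius parameters \emph{plus} a dilation (dimension $\tfrac{n(n+1)}{2}+1$) to $H_{n,0}$ alone (dimension $\tfrac{n(n+1)}{2}$), so either fix the scale separately beforehand as in Lemma \ref{fixing_center_scale} and apply the IFT only over $Conf_+(\S)$ (this is what the paper does), or add the scalar constraint $\ds\langle w,x\rangle=0$ to the target; and after precomposition with $\phi$ one must re-center to keep $w\in H_n$, as in Subsection \ref{subsec: 4.4}. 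Neither point affects the substance of the argument.
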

\begin{remark}
An interesting question would be if the local statement of the above Corollary can be improved to a global one, possibly via a PDE argument. However, in the case of maps $u:\mathbb{S}^{n-1}\mapsto \mathbb{R}^n$, one cannot perform something like an \textit{$n$-harmonic replacement trick}, as for instance in \cite{reshetnyak1970stability} (or $F$-harmonic, harmonic in the setting of \cite{faraco2005geometric},\cite{friesecke2002theorem} respectively), since $\mathbb{S}^{n-1}$ is boundaryless, and there are of course no boundary conditions to relate to the replacement map. It seems that a penalization argument in the spirit of the \textit{selection principle} devised in \cite{cicalese2012selection} (for the optimal quantitative isoperimetric inequality) could be more promising in that direction, which is an interesting question for future investigation. 
\end{remark}
As one can easily notice, and for convenience of the reader we provide the details in Appendix \ref{sec:B},
\begin{equation*}%\label{Q_n_sum}
Q_{n}(w)=Q_{n,\mathrm{conf}}(w)+Q_{n,\mathrm{isop}}(w)\,,
\end{equation*}
where 
\begin{equation*}%\label{Q_n_conformal}
Q_{n,\mathrm{conf}}(w):=\frac{n}{n-1}\ds\left|(P_T^t\t w)_{\mathrm{sym}}-\frac{\mathrm{div}_{\S}w}{n-1}I_x \right|^2
\end{equation*}
is the quadratic form associated to the \textit{nonlinear conformal deficit} $\left[\frac{D_{n-1}(u)}{P_{n-1}(u)}\right]^{\frac{n}{n-1}}-1\geq 0$, and 
\begin{equation}\label{Q_isop_sphere_intro}
\hspace{-0.8em}Q_{n,\mathrm{isop}}(w):=\frac{n}{2(n-1)}\left[\ds|\t w|^2+(\mathrm{div}_{\S}w)^2-2\left|(P_T^t\t w)_{\mathrm{sym}}\right|^2\right]-Q_{V_n}(w)
\end{equation} 	
is the one associated to the \textit{nonlinear isoperimetric deficit} $\frac{\big[P_{n-1}(u)\big]^{\frac{n}{n-1}}}{|V_{n}(u)|}-1\geq 0$. Actually, an estimate like \eqref{Korn_type_inequality_for_Q_n} holds true for every positive combination of the two forms $Q_{n,\mathrm{conf}}$ and $Q_{n,\mathrm{isop}}$.

Finally, as we mention in Subsection \ref{Subsection 5.3.}, a similar \textit{linear stability phenomenon} holds true in the isometric case as well, namely one can prove the following. 
\begin{theorem}\label{coercivity_estimate_for_Q_isom_Q_isop}
Let $n\geq 2$. For every $\alpha>0$ there exists a constant $C_{n,\alpha}>0$ such that for every map \linebreak $w\in W^{1,2}(\S;\R^n)$,
\begin{equation}\label{alpha_Q_isom_Q_isop_coercivity_estimate}
\alpha Q_{n,\mathrm{isom}}(w)+Q_{n,\mathrm{isop}}(w)\geq C_{n,\alpha}\ds \big|\t w-[\nabla w_h(0)]_{\mathrm{skew}}P_T\big|^2\,,
\end{equation}
where,
\begin{equation*}%\label{Qisom_nonnegative_intro}
Q_{n,\mathrm{isom}}(w):=\ds\left|(P_T^t\t w)_{\mathrm{sym}}\right|^2
\end{equation*}
is the quadratic form associated to the full $L^2$-isometric deficit $\ds |\sqrt{\t u^t\t u}-I_x|^2$, $Q_{n,\mathrm{isop}}$ \AAA is as in \eqref{Q_isop_sphere_intro}\EEE, and $w_h:\overline{B_1}\mapsto \R^n$ denotes the (componentwise) \textit{harmonic continuation} of $w$ in the interior of $B_1$.
\end{theorem}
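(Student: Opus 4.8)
\textbf{Proof plan for Theorem \ref{coercivity_estimate_for_Q_isom_Q_isop}.}

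The plan is to reduce the coercivity of the combined form $\alpha Q_{n,\mathrm{isom}}+Q_{n,\mathrm{isop}}$ to a spectral computation on spherical harmonics, exactly in the spirit of Theorem \ref{main_coercivity_estimate_conformal_general_dimension}, but now with the isometric (full symmetric gradient) contribution in place of the conformal (trace-free symmetric gradient) one. First I would expand an arbitrary $w\in W^{1,2}(\S;\R^n)$ into $\R^n$-valued spherical harmonics, $w=\sum_{k\geq 0}w_k$, and recall (from the analysis already carried out in Subsection \ref{Subsection 4.2} and the appendices) that $Q_{n,\mathrm{isom}}$, $Q_{n,\mathrm{isop}}$ — being quadratic forms built from $\t w$, $\mathrm{div}_\S w$ and the bilinear form $Q_{V_n}$, all of which are local and rotation-equivariant — diagonalize with respect to the decomposition into the irreducible pieces $H_{n,k,i}$, $i=1,2,3$. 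Thus the problem splits into: (a) identifying the joint kernel of the pair $(Q_{n,\mathrm{isom}},Q_{n,\mathrm{isop}})$, and (b) producing, mode by mode, a uniform lower bound for $\alpha Q_{n,\mathrm{isom}}(w_k)+Q_{n,\mathrm{isop}}(w_k)$ in terms of $\ds|\t w_k|^2$ on the complement of that kernel.

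For (a): $Q_{n,\mathrm{isom}}(w)=0$ forces $(P_T^t\t w)_{\mathrm{sym}}=0$, i.e. $w$ is an infinitesimal isometry of $\S$; by the linearized version of Theorem \ref{spherical Liouville's Theorem}(i), the tangential part of such a $w$ is of the form $Ax|_{\S}$ with $A\in\mathfrak{so}(n)$, living in the low modes $k=0,1$, while the normal component is free. Imposing in addition $Q_{n,\mathrm{isop}}(w)=0$ cuts this down further; the claim to verify is that the joint kernel is exactly $\{\,[\nabla w_h(0)]_{\mathrm{skew}}P_T\,\}$, i.e. the field $x\mapsto Ax$ with $A=[\nabla w_h(0)]_{\mathrm{skew}}$ — here the identification of $[\nabla w_h(0)]_{\mathrm{skew}}$ with the ``rotational part'' of $w$ via the harmonic continuation is the natural bookkeeping device, since for $w=Ax$ one has $w_h(x)=Ax$ and $[\nabla w_h(0)]_{\mathrm{skew}}=A$, while for genuine spherical harmonics of degree $k\neq 1$ the quantity $\nabla w_h(0)$ sees only the degree-$1$ part. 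So the right-hand side of \eqref{alpha_Q_isom_Q_isop_coercivity_estimate} is precisely $\ds|\t w|^2$ minus the contribution of this one finite-dimensional kernel direction, and the estimate is the assertion that the form is coercive transversally to it.

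For (b): since only finitely many low modes $k\in\{0,1,\dots,k_0\}$ can possibly be degenerate, I would first treat the infinitely many modes $k>k_0$ by a crude bound — on those, $Q_{n,\mathrm{isop}}$ already controls $\ds|\t w_k|^2$ up to a constant (this is essentially contained in the coercivity of $Q_n=Q_{n,\mathrm{conf}}+Q_{n,\mathrm{isop}}$ from Theorem \ref{main_coercivity_estimate_conformal_general_dimension}, together with $Q_{n,\mathrm{conf}}\le \frac{n}{n-1}Q_{n,\mathrm{isom}}$, so that $\alpha Q_{n,\mathrm{isom}}+Q_{n,\mathrm{isop}}\gtrsim_\alpha Q_n$), and then handle the remaining finite list of modes by explicit diagonalization: on each $H_{n,k,i}$ both forms act as explicit scalars (or small matrices, on the two- or three-dimensional spaces mixing the $i$-components), computable from the formulas for $\t$, $\divt$ and $Q_{V_n}$ on spherical harmonics; one checks that for every such mode other than the kernel direction at least one of the two scalars is strictly positive, whence for \emph{every} $\alpha>0$ the combination is strictly positive on that mode, with a constant depending on $\alpha$ (and $n$) through a finite minimum. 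Taking $C_{n,\alpha}$ to be the minimum over this finite set together with the crude tail constant finishes the proof.

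The main obstacle I anticipate is (a), specifically pinning down the joint kernel and showing nothing else degenerates: $Q_{n,\mathrm{isom}}$ is a much weaker penalization than $Q_{n,\mathrm{conf}}$ (it vanishes on the whole infinite-dimensional space of infinitesimal isometries, which on $\S$ includes arbitrary normal displacements after adjusting tangentially — indeed any normal field $f\nu$ can be corrected), so the burden of coercivity falls almost entirely on $Q_{n,\mathrm{isop}}$, and one must check carefully that $Q_{n,\mathrm{isop}}$ is positive on all these ``soft'' directions except the genuine rotations. Equivalently: the linearized isoperimetric deficit must be nondegenerate on the space of infinitesimal isometries modulo rotations — this is the content of the classical infinitesimal rigidity of the round sphere (Jellett/Liebmann), and recasting it in the present Fourier-analytic language, mode by mode, is where the real work lies; the two-dimensional mixing among $i$-components at low $k$ is the only genuinely computational point, and I would isolate it in a short lemma.
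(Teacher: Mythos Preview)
Your high-level plan --- decompose into the spaces $H_{n,k,i}$, identify the joint kernel, and check coercivity mode by mode --- is the paper's strategy. But there is a genuine conceptual error in your part (a) that, if followed, would send you in the wrong direction.

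You write that $Q_{n,\mathrm{isom}}$ ``vanishes on the whole infinite-dimensional space of infinitesimal isometries, which on $\S$ includes arbitrary normal displacements after adjusting tangentially'', and that therefore ``the burden of coercivity falls almost entirely on $Q_{n,\mathrm{isop}}$''. For $n\geq 3$ this is backwards. A purely normal field $w=\phi x$ has $(P_T^t\t w)_{\mathrm{sym}}=\phi I_x$, which vanishes only if $\phi\equiv 0$; more generally, the infinitesimal rigidity of $\S$ (which you cite but mis-state) says precisely that $\ker Q_{n,\mathrm{isom}}\simeq\mathfrak{so}(n)$ is \emph{already} finite-dimensional. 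It is $Q_{n,\mathrm{conf}}$ whose kernel contains all normal fields, and it is $Q_{n,\mathrm{isop}}$ whose kernel is infinite-dimensional --- the paper shows $\bigoplus_{k\geq 1}H_{n,k,2}\subseteq\ker Q_{n,\mathrm{isop}}$. So on the directions $H_{n,k,2}$ the burden falls entirely on $Q_{n,\mathrm{isom}}$, not the other way around. Your anticipated ``main obstacle'' is thus not an obstacle for $n\geq 3$, while the actual issue --- controlling the infinitely many $H_{n,k,2}$ modes via $Q_{n,\mathrm{isom}}$ --- is not addressed by your plan. (Your picture \emph{is} correct for $n=2$, where $\ker Q_{2,\mathrm{isom}}$ is infinite-dimensional.)

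A second, more technical, gap: the forms do \emph{not} fully diagonalize on the $H_{n,k,i}$. The term $\ds(\mathrm{div}_\S w)^2$ (present in $Q_{n,\mathrm{isop}}$, and in $Q_{n,\mathrm{isom}}$ via Korn's identity \eqref{Korn's identity}) produces cross terms coupling $H_{n,k,1}$ with $H_{n,k+2,3}$, exactly as in Lemma~\ref{bd_lemma}. These are not ``mixing among $i$-components at low $k$'' but an infinite chain, and must be absorbed by the weighted Cauchy--Schwarz argument of \eqref{mixed_terms_first_estimate}--\eqref{mixed_terms_second_estimate}.

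The paper handles both issues by first reducing to the single value $\alpha=\tfrac{n}{n-1}$: for $\alpha\geq\tfrac{n}{n-1}$ one has $Q_{n,\alpha}\geq Q_{n,n/(n-1)}$ trivially, and for $0<\alpha<\tfrac{n}{n-1}$ one has $Q_{n,\alpha}\geq\tfrac{(n-1)\alpha}{n}Q_{n,n/(n-1)}$. For this special $\alpha$ the combined form collapses (via Korn's identity) to
\[
Q_{n,\frac{n}{n-1}}(w)=\frac{n}{2(n-1)}\ds\big(|\t w|^2+(\mathrm{div}_\S w)^2\big)-Q_{V_n}(w),
\]
which has the same structure as $Q_n$ in \eqref{Q_n_definition} but with a larger $(\mathrm{div}_\S w)^2$ coefficient. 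The proof of Theorem~\ref{main_coercivity_estimate_conformal_general_dimension} is then rerun verbatim with new diagonal constants $C'_{n,k,i}\geq C_{n,k,i}$, and one checks that now only $H_{n,1,2}$ (not $H_{n,2,3}$) lies in the kernel. Your shortcut $\alpha Q_{n,\mathrm{isom}}+Q_{n,\mathrm{isop}}\gtrsim_\alpha Q_n$ combined with Theorem~\ref{main_coercivity_estimate_conformal_general_dimension} only gives control modulo $H_{n,1,2}\oplus H_{n,2,3}$; you would still owe a separate bound on the $H_{n,2,3}$ component, and because of the cross-$k$ coupling this cannot be obtained by simply restricting the form to that subspace.
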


The structure of the paper is the following. In Section \ref{Notation} we introduce some notations that we are going to use in the subsequent sections. In Section \ref{sec: isom} we give in steps the proof of Theorem  \ref{main_thm_isometric_case} and remark on the adaptations needed to prove its generalization in higher dimensions, i.e., Theorem \ref{main_thm_isometric_case_n_greater_than_3}. In Section \ref{Section 4} we give again in steps the proof of Theorem \ref{main_thm_conformal_case}. Building upon the analysis that we perform in Subsection \ref{Subsection 4.2}, in Section \ref{sec: 5 linear_stability} we prove the linear stability estimates stated in Theorems \ref{main_coercivity_estimate_conformal_general_dimension} and \ref{coercivity_estimate_for_Q_isom_Q_isop} in all dimensions. In Appendix \ref{New_proof_of_Liouville_Appendix_A} we first exhibit \textit{a short, intrinsic and to our knowledge, new proof of Liouville's Theorem} \ref{spherical Liouville's Theorem}, as well as a related compactness result that can be proven by a slight perturbation of the idea. In Appendix \ref{sec:B} we include just for the convenience of the reader a detailed derivation of \AAA some integral identities for Jacobians, \EEE as well as the Taylor expansions of the geometric quantities that appear in the main body of the paper. \AAA Finally, in Appendix \ref{sec:C} we collect some basic facts from the theory of spherical harmonics that we are using. \EEE

\section{Notation}\label{Notation}

The following standard notation will be adopted throughout the paper.
\begin{tabbing} \hspace*{3.5cm}\=\kill 
$\{e_i\}_{i=1}^n,\  \langle \cdot, \cdot \rangle,\ |\cdot| $ \> the Euclidean orthonormal basis, inner product, norm in $\R^n$\\
$A^t$ \> the transpose of a matrix or the adjoint of the corresponding linear map \\
$Sym(n)$, $Skew(n)$ \> the space of $n\times n$ symmetric, skew-symmetric matrices respectively\\
$A_{\mathrm{sym}}$, $A_{\mathrm{skew}}$ \> the symmetric, skew-symmetric part of a matrix $A\in \R^{n\times n}$ respectively\\
$\{\tau_1,\dots,\tau_{n-1}\}$ \> a positively oriented local orthonormal frame for $T_x\S$, so that for every $x\in \S$\\
\> $\{\tau_1(x),\dots,\tau_{n-1}(x),x\}$ is a positively oriented orthonormal system of $n$ vectors in $\R^n$\\
$\omega_n$ \> the volume of the unit ball $B_1$ in $\R^n$ \EEE\\
$dv_g$ \> the standard $(n-1)$-volume form on $\S$\\
$\mathcal{H}^k$ \> the $k$-dimensional Hausdorff measure\\
$O(n),\ SO(n)$ \> the orthogonal, special orthogonal group of $\R^n$ respectively\\
$Isom_{(+)}(\S)$ \>  the group of rigid motions of $\S$ (the orientation-preserving ones respectively)\\
$Conf_{(+)}(\S)$ \>  the group of Möbius transformations of $\S$ (the orientation-preserving ones respectively)\\
$I_x$\> the identity transformation on $T_x\S$ \\
$\t u$ \> the tangential gradient of $u:\S\mapsto \R^n$, represented in local coordinates by the\\
\> $n\times(n-1)$ matrix with entries $(\t u)_{ij}=\langle \t u^i,\tau_j\rangle$\\
$P_T$ \> $\t\mathrm{id}_{\S}$\\
$d_x u$ \> the intrinsic gradient of a map $u:\S\mapsto\S$, viewed as a linear map\\
\> $d_x u:T_x\S\mapsto T_{u(x)}\S$ with respect to the frame $\{\tau_1,\dots,\tau_{n-1}\}$\\
$\partial_{\vec \nu}f$ \> the radial derivative of a map $f:\overline{B_1}\mapsto\R^m$ on $\S$\\
$\mathrm{div}_{\S}u$, $\Dt u$\> the tangential divergence, Laplace-Beltrami operator of a map $u:\S\to \R^n$\\
$C^k$ \> the space of $k$-times continuously differentiable  maps, $k\in \mathbb{N}$\\
$L^p, W^{1,p}$ \> the standard Lebesgue or Sobolev spaces (on $\S$) respectively, for $1\leq p<\infty$.\\
\> \textit{The norms are taken with respect to the normalized $\mathcal{H}^{n-1}$-measure}, to simplify some \\
\>dimensional constants appearing later in the content\\
$W^{1,\infty}(\S;\R^n)$ \> the space of Lipschitz maps from $\S$ to $\R^n$; $\|u\|_{W^{1,\infty}}:=\mathrm{max}\left\{\|u\|_{L^{\infty}},\|\t u\|_{L^{\infty}}\right\}$\\
%$a_+, a_-$ \> the positive, negative part of $a\in \R$, i.e. $a_+:=\max\{a,0\} a_-:=-\min\{a,0\}$,\\[4pt] 
$\sim_{M_1,M_2,\dots}, \lesssim_{M_1,M_2,\dots}$ \> the corresponding equality, inequality is valid up to a constant that is allowed\\ \> to vary from line to line but depends only on the parameters $M_1, M_2,\dots$, or only on the \\
\> dimension when the subscripts are absent.\\
$c,C>0$ \> universal constants whose value is allowed to vary from line to line and place to place\\
\> but depend in any case only on the dimension.
\end{tabbing}

\section{The isometric case: Proof of Theorem \ref{main_estimate_isometric_case}}\label{sec: isom}
In what follows, the \textit{$L^2$-isometric deficit} of a map $u\in W^{1,2}(\s;\R^3)$ that we are using is denoted by
\begin{equation}\label{def_isometric_deficit}
\delta(u):=\big\|\big(\sigma_{2}-1\big)_+\big\|_{L^2(\s)}\,,
\end{equation}
where $0\leq \sigma_1\leq \sigma_{2}$ are the principal stretches of $u$, i.e., the eigenvalues of $\sqrt{\t u^t \t u}$.\\[3pt]
Note that $\delta(u)=0$ whenever $u$ is a \textit{short map}, i.e., $u\in W^{1,\infty}(\s;\R^3)$ with
$\t u^t\t u\leq I_x\,,$ $\mathcal{H}^2$-a.e. on $\s$ in the sense of quadratic forms. In general,
\begin{equation}\label{comparison_of_two_deficits}
\delta(u)\leq \big\|\sigma_{2}-1\big\|_{L^2(\s)}\leq \Big\|\sqrt{\t u^t\t u}-I_x \Big\|_{L^2(\s)}\leq \sqrt{2} \big\|\sigma_{2}-1\big\|_{L^2(\s)}\,,
\end{equation}
so that (having in mind the {\sc Nash-Kuiper} Theorem, cf.~\cite{kuiper1955c1},\cite{nash1954c1}) the deficit $\delta(u)$ is sharper than the \textit{full $L^2$-isometric deficit} 
\begin{equation}\label{full_isometric_deficit}
\delta_{\mathrm{isom}}(u):=\Big\|\sqrt{\t u^t\t u}-I_x \Big\|_{L^2(\s)}\,,
\end{equation}
 since it only \textit{penalizes local stretches} under $u$. The isoperimetric deficit (or the positive part of the excess in volume) in this setting is denoted by
\begin{equation}\label{def_isoperimetric_deficit}
\varepsilon(u):=\Big(1-\big|V_3(u)\big|\Big)_+\,.
\end{equation}
Before presenting the proof of the result, let us make some preliminary remarks.

\begin{remark}
$(i)$ If $u\in W^{1,2}(\s;\R^3)$ is a \textit{globally short map}, then $\delta(u)=0$. Moreover, since in this case $|\partial_{\tau_1}u\wedge\partial_{\tau_2}u|\leq 1$ and $|\t u|\leq \sqrt2$\ $\mathcal{H}^2$-a.e. on $\s$, by the Cauchy-Schwarz inequality and the \textit{sharp Poincare inequality on $\s$} (equality in which is achieved for restrictions on $\s$ of affine maps of $\R^3$\AAA, see \eqref{Poincare} in Appendix \hyperref[sec:C]{C})\EEE,
\begin{equation}\label{isoperimetric_inequality_direct_for_isometries}
%\begin{split}
\hspace{-0.5em}|V_3(u)|=\left|\d2 \Big\langle u-\d2 u, \partial_{\tau_1}u\wedge\partial_{\tau_2}u\Big\rangle \right|\leq \d2 \Big|u-\d2 u\Big|
\leq \left(\d2 \Big|u-\d2 u\Big|^2\right)^{\frac{1}{2}}\leq \left(\frac{1}{2}\d2 |\t u|^2\right)^{\frac{1}{2}}\leq 1,
%\end{split}
\end{equation}
that is, $\varepsilon(u)=1-|V_3(u)|$. This is \EEE something that could also be seen just by using the isoperimetric inequality in this case. Hence, for \textit{globally short maps} only the \textit{excess in volume} is present in the right hand side of the stability estimate \eqref{main_estimate_isometric_case}.

$(ii)$ On the other hand, if $u\in W^{1,2}(\s;\R^3)$ is \textit{volume-increasing} in the sense that $|V_3(u)|\geq 1$, then $\varepsilon(u)=0$, and only the isometric deficit $\delta(u)$ is present in the right hand side of \eqref{main_estimate_isometric_case}.

$(iii)$ In all other cases, i.e., \EEE if $u\in W^{1,2}(\s;\R^3)$ is not globally short and not volume-increasing, both deficits are present in the estimate. It is also immediate that \textit{one cannot have simultaneously a globally short map $u$ that is volume-increasing, unless $u$ is a rigid motion of $\s$}, something that can be directly verified by checking the equality cases in \eqref{isoperimetric_inequality_direct_for_isometries}.
\end{remark}

As we also mentioned in the Introduction, \textit{\eqref{main_estimate_isometric_case} is optimal in the norm appearing on the left hand side and the deficits on the right hand side, i.e., the exponent $1$ with which $\delta(u)$ and $\varepsilon(u)$ appear in the estimate cannot generically be improved}. Examples showing the optimality of the exponents can easily be constructed even in dimension $n=2$, where the exact analogue of Theorem  \ref{main_thm_isometric_case} becomes

\begin{proposition}\label{main_estimate_isometric_case_n=2}
There exists a constant $c_0>0$ so that for every $u\in W^{1,2}(\mathbb{S}^1;\R^2)$ there exists $O\in O(2)$ such that
\begin{equation}\label{main_estimate_isometric_case_n_2}
\dashint_{\mathbb{S}^1} \big|\partial_{\tau} (u-O\mathrm{id}_{\mathbb{S}^1})\big|^2\leq c_0\Big(\big\| \big(|\partial_{\tau}u|-1\big)_+\big\|_{L^2(\mathbb{S}^1)}+\Big(1-\Big|\dashint_{\mathbb{S}^1}\big\langle u,(\partial_\tau u)^{\bot}\big\rangle\Big|\Big)_+\Big)\,.
\end{equation}	
\end{proposition}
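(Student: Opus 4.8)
The plan is to prove \eqref{main_estimate_isometric_case_n_2} by a direct Fourier computation on $\mathbb{S}^1$, which here plays the role that the linearized (Korn-type) analysis plays for the higher-dimensional statements. First I would identify $\R^2\cong\C$, parametrize $\mathbb{S}^1$ by arclength so that $\partial_\tau=\partial_\theta$, and write $u(\theta)=\sum_{k\in\Z}c_ke^{ik\theta}$ with $c_k\in\C$ and $a_k:=|c_k|^2$. By Parseval, $\dashint_{\mathbb{S}^1}|\partial_\tau u|^2=\sum_k k^2a_k=:E$, and a termwise computation of $\langle u,(\partial_\tau u)^{\bot}\rangle$ gives, for the orientation of Section \ref{Notation} (normalized so that $V_2(\mathrm{id}_{\mathbb{S}^1})=1$), the identity $V_2(u):=\dashint_{\mathbb{S}^1}\langle u,(\partial_\tau u)^{\bot}\rangle=\sum_k k\,a_k$. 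Since precomposing $u$ with a reflection sends $a_k\mapsto a_{-k}$ (hence $V_2(u)\mapsto -V_2(u)$), leaves $|\partial_\tau u|$ unchanged, and only conjugates $O$ on the left-hand side, I may assume $V_2(u)\ge 0$; the relevant competitor is then an orientation-preserving rigid motion $x\mapsto e^{i\alpha}x$, and minimizing $\dashint_{\mathbb{S}^1}|\partial_\tau(u-e^{i\alpha}e^{i\theta})|^2=|c_1-e^{i\alpha}|^2+\sum_{k\ne1}k^2a_k$ over $\alpha$ bounds the left-hand side of \eqref{main_estimate_isometric_case_n_2} by $(|c_1|-1)^2+\sum_{k\ne1}k^2a_k=E-2\sqrt{a_1}+1$.

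It then remains to show $E-2\sqrt{a_1}+1\lesssim\delta(u)+\varepsilon(u)$, where $\delta(u):=\|(|\partial_\tau u|-1)_+\|_{L^2(\mathbb{S}^1)}$ and $\varepsilon(u):=(1-|V_2(u)|)_+$. I would split this into: (i) controlling the ``high-mode remainder'' $\sum_{k\ne1}k^2a_k$, and (ii) deducing that $E$ and $a_1$ are both $1+O(\delta(u)+\varepsilon(u))$. For (i), the elementary inequalities $k^2\le 2k(k-1)$ for $k\ge2$ and $k^2\le k(k-1)$ for $k\le-1$ give $\sum_{k\ne1}k^2a_k\le 2\sum_k(k^2-k)a_k=2\big(E-V_2(u)\big)$, and the right-hand side is $\ge 0$ by the one-dimensional case of Wente's inequality \eqref{AM-GM-isoperimetric-inequality}. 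So everything reduces to an upper bound $E-V_2(u)\lesssim\delta(u)+\varepsilon(u)$ together with $E=1+O(\delta(u)+\varepsilon(u))$; then $a_1=E-\sum_{k\ne1}k^2a_k=1+O(\delta(u)+\varepsilon(u))$ and the desired estimate follows. Expanding $|\partial_\tau u|^2=(1+(|\partial_\tau u|-1))^2$ and using $\dashint(|\partial_\tau u|-1)_+\le\delta(u)$ (Cauchy--Schwarz, normalized measure), the only terms not directly absorbed by $\delta(u)$ are those carrying the negative part $(|\partial_\tau u|-1)_-$, i.e. the contribution of the set where $u$ strictly shrinks.

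Handling that shrinking set is the crux, and the main obstacle: the one-sided deficit $\delta(u)$ is blind to shrinking, so by itself it cannot prevent $u$ from collapsing onto a small loop. The point is that the one-dimensional Wente inequality $|V_2(u)|\le\big(\dashint_{\mathbb{S}^1}|\partial_\tau u|\big)^2$, together with $V_2(u)\ge 1-\varepsilon(u)$, forces $\dashint_{\mathbb{S}^1}|\partial_\tau u|\ge 1-\varepsilon(u)$; since $\dashint_{\mathbb{S}^1}|\partial_\tau u|=1+\dashint(|\partial_\tau u|-1)_+-\dashint(|\partial_\tau u|-1)_-$, this yields $\dashint(|\partial_\tau u|-1)_-\le\delta(u)+\varepsilon(u)$ and hence, using $0\le(|\partial_\tau u|-1)_-\le 1$, also $\dashint(|\partial_\tau u|-1)_-^2\le\delta(u)+\varepsilon(u)$. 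Plugging back one gets $E\le 1+O(\delta(u)+\varepsilon(u))$, while also $E\ge\big(\dashint_{\mathbb{S}^1}|\partial_\tau u|\big)^2\ge 1-\varepsilon(u)$, so $E=1+O(\delta(u)+\varepsilon(u))$ and $E-V_2(u)\lesssim\delta(u)+\varepsilon(u)$. Combining with step (i) gives $\sum_{k\ne1}k^2a_k\lesssim\delta(u)+\varepsilon(u)$, then $a_1=1+O(\delta(u)+\varepsilon(u))$, and finally $E-2\sqrt{a_1}+1=O(\delta(u)+\varepsilon(u))$, which is \eqref{main_estimate_isometric_case_n_2}.

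Thus the conceptual heart of the argument is exactly the interplay just described — it is the isoperimetric deficit $\varepsilon(u)$, through the isoperimetric inequality, that restores the lower bound on $\dashint_{\mathbb{S}^1}|\partial_\tau u|$ which the isometric deficit cannot provide — and this is the feature genuinely absent from the classical domain-to-$\R^n$ theory. Two further points that require some care are the reduction to $V_2(u)\ge 0$ (which legitimizes comparing $u$ to a rotation rather than a reflection) and the bookkeeping of the elementary scalar inequalities linking $\dashint(|\partial_\tau u|-1)_{\pm}$ and $1-\dashint_{\mathbb{S}^1}|\partial_\tau u|$. Equivalently, and in line with the method used for the higher-dimensional theorems, one may first reduce by a contradiction/compactness argument to $u$ close to $\mathrm{id}_{\mathbb{S}^1}$ in $W^{1,2}(\mathbb{S}^1;\R^2)$ and then run the Fourier computation above as the linearized estimate. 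Finally, the optimality of the exponent $1$ on each deficit is read off from explicit one-parameter families — e.g. the globally short, volume-deficient maps obtained by replacing a short arc of $\mathrm{id}_{\mathbb{S}^1}$ by its chord, and the dilations $u(\theta)=\rho\,e^{i\theta}$ — for which the left-hand side of \eqref{main_estimate_isometric_case_n_2} is comparable to the corresponding deficit and cannot be bounded by any higher power of it.
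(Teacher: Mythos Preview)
Your route is genuinely different from the paper's. The paper does not argue Proposition~\ref{main_estimate_isometric_case_n=2} separately; it says the proof of Theorem~\ref{main_thm_isometric_case} adapts, i.e.\ Lipschitz truncation (Subsection~\ref{Reduction_to_Lipschitz_mappings}), then a contradiction/compactness reduction to a $W^{1,2}$-neighbourhood of $\mathrm{id}_{\mathbb{S}^1}$ (Subsection~\ref{Further reduction to maps W1,2-close to the identity}), then the local Poincar\'e-gap computation of Subsection~\ref{proof_of_local_isometric_case}. You replace all of this by a single Fourier computation on $\mathbb{S}^1$: the spectral inequality $\sum_{k\neq 1}k^2 a_k\le 2\big(E-V_2(u)\big)$ plays the role of the Poincar\'e gap \eqref{maps_w_o_linear_part_estimate}, and the planar isoperimetric bound $|V_2(u)|\le\big(\dashint|\partial_\tau u|\big)^2$ is exactly what forces $\dashint(|\partial_\tau u|-1)_-\lesssim \delta(u)+\varepsilon(u)$ and thus compensates for the blindness of $\delta(u)$ to shrinking. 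This is elegant and, once one is in the small-deficit regime, cleaner than the paper's argument.

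There is, however, a real gap in your claim $E\le 1+O(\delta(u)+\varepsilon(u))$. Expanding $|\partial_\tau u|^2=(1+(|\partial_\tau u|-1))^2$ produces the term $\dashint(|\partial_\tau u|-1)_+^2=\delta(u)^2$, which is \emph{not} $O(\delta(u))$ unless $\delta(u)\lesssim 1$; your sentence ``the only terms not directly absorbed by $\delta(u)$ are those carrying the negative part'' quietly assumes this. Concretely, for $u(\theta)=Me^{i\theta}$ with $M$ large one has $a_1=E=V_2(u)=M^2$, $\delta(u)=M-1$, $\varepsilon(u)=0$, and your bound $E-2\sqrt{a_1}+1=(M-1)^2$ is not $\lesssim M-1$. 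This is not a defect of your method: the same example shows that \eqref{main_estimate_isometric_case_n_2} as literally stated cannot hold with a universal $c_0$, and the paper's own reduction via Lipschitz truncation incurs the identical $\delta(u)^2$ loss when one passes from $u_M$ back to $u$. What both arguments actually prove is the bound with right-hand side $\delta(u)+\delta(u)^2+\varepsilon(u)$, or equivalently \eqref{main_estimate_isometric_case_n_2} in the regime $\delta(u)\lesssim 1$. Your suggested fix---reduce by compactness to $u$ close to $\mathrm{id}_{\mathbb{S}^1}$---is right in spirit, but note that Lemma~\ref{isometries_compactness_general} is stated under an a~priori Lipschitz bound, so you still need the truncation step of Subsection~\ref{Reduction_to_Lipschitz_mappings} before invoking it; once that is in place your Fourier computation finishes the local case.

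One minor correction on optimality: the dilations $u=\rho e^{i\theta}$ do \emph{not} witness sharpness of either exponent. For $\rho<1$ they give $\delta=0$, $\varepsilon\sim 1-\rho$ and left-hand side $\sim(1-\rho)^2\sim\varepsilon^2$; for $\rho>1$ they give $\varepsilon=0$, $\delta=\rho-1$ and left-hand side $=\delta^2$. Your chord example (and the paper's flip \eqref{example_for_optimality_isometric_case}) are correct for the $\varepsilon$-exponent; for the $\delta$-exponent use the back-and-forth map \eqref{example_for_optimality_isoperimetric_case}.
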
	
Here, $\partial_\tau u$ denotes the tangential derivative of $u$ along $\mathbb{S}^1$. The previous proposition can be proven in exactly the same way as Theorem \ref{main_thm_isometric_case}, following the arguments of the next subsections. As the reader might observe later, the Lipschitz truncation argument of Subsection \ref{Reduction_to_Lipschitz_mappings} is even simpler in the case $n=2$, because the signed volume $V_2(u):=\dashint_{\mathbb{S}^1}\big\langle u,(\partial_\tau u)^{\bot}\big\rangle $ is of first order in $\partial_\tau u$. 

Keeping the notation $\delta(u)$ and $\varepsilon(u)$ for the isometric and the isoperimetric deficit also when $n=2$, two instructive examples for the optimality of the exponents are given in the next remark.

\begin{remark}\label{examples}
$(i)$ For $0<\sigma \ll 2\pi$, let $u_{\sigma}:\mathbb{S}^1\mapsto \R^2$ be defined in polar coordinates via
\begin{equation}\label{example_for_optimality_isometric_case}
u_{\sigma}(\theta):=\left\{
\begin{array}{lr}
(\cos\theta,\sin\theta);\quad 0\leq \theta<\frac{3\pi}{2}-\frac{\sigma}{2}\,,\\
\left(\cos\theta,2\sin\left(\frac{3\pi}{2}-\frac{\sigma}{2}\right)-\sin\theta\right);\quad \frac{3\pi}{2}-\frac{\sigma}{2}\leq \theta<\frac{3\pi}{2}+\frac{\sigma}{2}\ \\
(\cos\theta,\sin\theta);\quad \frac{3\pi}{2}+\frac{\sigma}{2}\leq \theta<2\pi
\end{array}\right\}\,.
\end{equation}
For each $\sigma\in [0,2\pi)$ the map $u_{\sigma}$ is isometric, being essentially the identity transformation, except for a small circular arc of angle $\sigma$, where it is a \textit{flip} with respect to the horizontal line at height $y_0=\sin\left(\frac{3\pi}{2}-\frac{\sigma}{2}\right)$. Hence, $\delta(u_\sigma)=0$ for every $\sigma\in [0,2\pi)$. Obviously, $\partial_\tau u_{\sigma}\rightarrow \partial_\tau \mathrm{id}_{\mathbb{S}^1}$  strongly in $L^2(\mathbb{S}^1;\R^2)$ as $\sigma\to 0^+$, and one can easily obtain that
\begin{align*}
\begin{split}
\dashint_{\mathbb{S}^1}\big|\partial_\tau u_{\sigma}-\partial_\tau \mathrm{id}_{\mathbb{S}^1}\big|^2&\sim \int_{0}^{2\pi}\big|\partial_\theta u_{\sigma}-\partial_\theta\mathrm{id}_{\mathbb{S}^1}\big|^2=\int_{\frac{3\pi}{2}-\frac{\sigma}{2}}^{\frac{3\pi}{2}+\frac{\sigma}{2}}\big|(-\sin\theta,-\cos\theta)-(-\sin\theta,\cos\theta)\big|^2\\
&=4\int_{\frac{3\pi}{2}-\frac{\sigma}{2}}^{\frac{3\pi}{2}+\frac{\sigma}{2}}\cos^2(\theta)=2(\sigma-\sin{\sigma})=\mathcal{O}(\sigma^3), \ \ \mathrm{for\ } 0<\sigma\ll 2\pi\,.
\end{split}
\end{align*}
On the other hand, using elementary plane-geometry formulas for the area of circular triangles, we can compute the area of the double arc-region of the unit disc missed by $u_{\sigma}$, so that also
\begin{equation*}
\varepsilon(u_{\sigma})=\frac{2}{\pi}\left(\pi\cdot\frac{\sigma}{2\pi}-\frac{1}{2}\sin{\sigma}\right)=\frac{1}{\pi}(\sigma-\sin{\sigma})=\mathcal{O}({\sigma}^3), \quad \mathrm{for\ } 0<\sigma\ll 2\pi\,,
\end{equation*}
which reveals the optimality of the exponent of $\varepsilon(u)$ in the estimate \AAA \eqref{main_estimate_isometric_case_n_2}\EEE. \\
$(ii)$ Identify now $\mathbb{S}^1$ with the interval $[0,1]$ by identifying the endpoints. For $0<\sigma\ll 1$, consider the maps $f_{\sigma}:[0,1]\mapsto[0,1]$, defined as follows.
\begin{equation}\label{example_for_optimality_isoperimetric_case}
f_{\sigma}(t):=\left\{
\begin{array}{lr}
t;\quad 0\leq t<\sigma\,,\\
2{\sigma}-t; \quad \sigma \leq t<2\sigma\,, \\
-\frac{2\sigma}{1-2\sigma}+\frac{1}{1-2\sigma}t;\quad 2\sigma\leq t<1
\end{array}\right\}\,,
\end{equation}
and let $u_{\sigma}:\mathbb{S}^1\mapsto\mathbb{S}^1$ be the corresponding maps defined on the unit circle. Obviously, $\varepsilon(u_{\sigma})=0$ for every $\sigma\in [0,2\pi)$. Geometrically, the maps $u_{\sigma}$ \textit{travel back and forth, and produce a triple cover of a small ${\sigma}$-arc, locally stretching $\mathbb{S}^1$}. With similar calculations as before,
\begin{align*}
\begin{split}
\dashint_{\mathbb{S}^1}\big|\partial_\tau u_{\sigma}-\partial_\tau \mathrm{id}_{\mathbb{S}^1}\big|^2&\sim \int_{0}^{1}\big|f'_{\sigma}(t)-1\big|^2=\int^{2\sigma}_{\sigma}(-2)^2+\int_{2{\sigma}}^{1}\left(\frac{1}{1-2\sigma}-1\right)^2\\
&\sim 4\sigma+\frac{4{\sigma}^2}{1-2\sigma}\sim\sigma+\mathcal{O}({\sigma}^2)=\mathcal{O}({\sigma}), \quad \mathrm{for\ } 0<\sigma\ll 1\,.
\end{split}
\end{align*}
Moreover,
\begin{align*}
\begin{split}
\delta^2(u_{\sigma})&\sim \int_{0}^{1}\Big|\big(|f'_{\sigma}(t)|-1\big)_+\Big|^2=\int_{2\sigma}^{1}\left(\frac{1}{1-2\sigma}-1\right)^2\sim\frac{4{\sigma}^2}{1-2{\sigma}}\\
&\sim {\sigma}^2\big(1+\mathcal{O}(\sigma)\big)=\mathcal{O}({\sigma}^2), \quad \mathrm{for\ } 0<\sigma\ll 1\,,
\end{split}
\end{align*}
which reveals the optimality of the exponent of $\delta(u)$ in the estimate  \AAA \eqref{main_estimate_isometric_case_n_2} \EEE in the generic setting. The geometric reason behind this, is the fact that the deficit $\delta(u)$ (as well as the full $L^2$-isometric deficit $\delta_{\mathrm{isom}}(u)$) \textit{does not penalize changes in the orientation neither extrinsically}, i.e., flips in ambient space, \textit{nor intrinsically}, when $u$ is seen as a map from the sphere onto its image.
\end{remark}
\vspace{-1em}
When $n=3$ (and also in higher dimensions) one can construct similar examples \AAA as in \eqref{example_for_optimality_isometric_case}, \eqref{example_for_optimality_isoperimetric_case}\EEE. For instance, in the first case one can consider maps that are the identity outside a small geodesic ball of $\S$ and inside being again flips in $\R^n$ with respect to the appropriate affine hyperplane. In the second case, one can rotate the previous one-dimensional example around a fixed axis.

We are now ready to present the proof of Theorem \ref{main_thm_isometric_case} in steps. For the most part, by straightforward modifications that mainly regard the change of some dimensional constants in the estimates and of some purely algebraic expressions, the arguments are valid in all dimensions, and can be used to prove Theorem  \ref{main_thm_isometric_case_n_greater_than_3} as well. We will come back to that issue in Subsection \ref{Subsection 3.4}.

\subsection{Reduction to Lipschitz mappings}\label{Reduction_to_Lipschitz_mappings}
As in the pioneering geometric rigidity result of {\sc G. Friesecke, R.D. James} and {\sc S. Müller} (cf.~\cite[Theorem 3.1]{friesecke2002theorem}), the first step is to justify why it suffices to work with maps with a universal upper bound on their Lipschitz constant. This is achieved through the use of the following standard \textit{truncation lemma}.

\begin{lemma}\label{Lipschitz_truncation_lemma}	
There exists $c>0$ so that for every $u\in W^{1,2}(\s;\R^3)$ and every $M>0$, there exists $u_M\in W^{1,\infty}(\s;\R^3)$ such that
\begin{itemize}	
\item[$\mathrm{(}i\mathrm{)}$] $\|\t u_M\|_{L^{\infty}}\leq cM$\,,
\item[$\mathrm{(}ii\mathrm{)}$] $\mathcal{H}^{2}\big(\{x\in \s\colon u(x)\neq u_M(x)\}\big)\leq \frac{c}{M^2}\int_{\{|\t u|>M\}}|\t u|^2$\,,
\item[$\mathrm{(}iii\mathrm{)}$] $\d2|\t u-\t u_M|^2\leq c\int_{\{|\t u|>M\}}|\t u|^2$\,.
\end{itemize}
\end{lemma}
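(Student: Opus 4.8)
The assertion is the familiar Lipschitz‑truncation lemma for Sobolev maps (cf.~\cite{friesecke2002theorem} for the Euclidean version), transplanted to $\s$. The plan is to realize $u_M$ as a Lipschitz extension of the restriction of $u$ to a sublevel set of a maximal function. First I would recall that $\s$, being a compact Riemannian manifold, is a doubling metric measure space supporting a $(1,1)$-Poincaré inequality on geodesic balls; consequently the Hardy–Littlewood maximal operator $\mathcal{M}$ (defined with geodesic balls) is of weak type $(1,1)$ and strong type $(p,p)$ for $p>1$, and a Hajłasz‑type pointwise estimate
\begin{equation*}
|u(x)-u(y)|\leq C\, d(x,y)\big(\mathcal{M}|\t u|(x)+\mathcal{M}|\t u|(y)\big)
\end{equation*}
holds for $\mathcal{H}^2$-a.e.\ $x,y\in\s$, where $d$ is the geodesic distance. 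This is the technical core of the lemma; it follows from the standard telescoping argument over a chain of geodesic balls joining $x$ to $y$, using the Poincaré inequality on each ball and that $\mathcal{H}^2$-a.e.\ point is a Lebesgue point of $u$.

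\textbf{Construction of $u_M$.} Given $M>0$, set $g:=|\t u|\in L^2(\s)$ and $E_M:=\{x\in\s:\mathcal{M}g(x)\leq 2M\}$. By the displayed inequality, $u|_{E_M}$ agrees $\mathcal{H}^2$-a.e.\ with a map that is $CM$-Lipschitz for the geodesic metric of $\s$. Extending it componentwise by the McShane formula (equivalently, by Kirszbraun's theorem) produces $u_M\in W^{1,\infty}(\s;\R^3)$ with $u_M=u$ $\mathcal{H}^2$-a.e.\ on $E_M$ and $\|\t u_M\|_{L^\infty}\leq cM$, which is item $\mathrm{(}i\mathrm{)}$.

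\textbf{Measure and energy bounds.} For $\mathrm{(}ii\mathrm{)}$, split $g=g_1+g_2$ with $g_1:=\min(g,M)$ and $g_2:=(g-M)_+$; since $\mathcal{M}g_1\leq M$, we have $\{u\neq u_M\}\subseteq\s\setminus E_M\subseteq\{\mathcal{M}g_2>M\}$ up to an $\mathcal{H}^2$-null set, so the weak-$(1,1)$ bound for $\mathcal{M}$ gives
\begin{equation*}
\mathcal{H}^2\big(\{u\neq u_M\}\big)\leq \frac{c}{M}\|g_2\|_{L^1(\s)}=\frac{c}{M}\int_{\{g>M\}}(g-M)\leq\frac{c}{M^2}\int_{\{|\t u|>M\}}|\t u|^2\,,
\end{equation*}
using $g\leq g^2/M$ on $\{g>M\}$ (the passage between the normalized $\dashint$ of the statement and the unnormalized $\mathcal{H}^2$ used here only affects the value of $c$). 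For $\mathrm{(}iii\mathrm{)}$, recall that $\t u=\t u_M$ $\mathcal{H}^2$-a.e.\ on $\{u=u_M\}$, so it suffices to bound $\int_{\{u\neq u_M\}}\big(|\t u|^2+|\t u_M|^2\big)$. The term with $|\t u_M|^2$ is at most $(cM)^2\mathcal{H}^2(\{u\neq u_M\})$, controlled by $\mathrm{(}ii\mathrm{)}$; the term with $|\t u|^2$ is split over $\{g\leq M\}$ (where $|\t u|^2\leq M^2$, again invoking $\mathrm{(}ii\mathrm{)}$ for the measure of the set) and over $\{g>M\}$. Both contributions are $\lesssim \int_{\{|\t u|>M\}}|\t u|^2$, which is $\mathrm{(}iii\mathrm{)}$.

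\textbf{Main obstacle and remarks.} The only nontrivial ingredient is the Hajłasz–Poincaré estimate on $\s$; once it is available everything reduces to the weak-type properties of $\mathcal{M}$ and elementary truncations. If one prefers to bypass metric-space machinery, an equivalent route is to cover $\s$ by finitely many geodesic balls, pull back to Euclidean balls via normal coordinates, apply the Euclidean Lipschitz-truncation lemma there, and glue the pieces with a partition of unity, at the price of slightly heavier bookkeeping for the constants. Finally, the argument goes through verbatim on $\S$ for every $n\geq 2$, with $\mathcal{H}^{n-1}$ in place of $\mathcal{H}^2$ and the same exponents, which is exactly what is needed for the higher-dimensional Theorem~\ref{main_thm_isometric_case_n_greater_than_3}.
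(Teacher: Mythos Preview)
Your proof is correct. The paper itself does not give a detailed argument for this lemma: it simply remarks that the proof ``can be performed as for the corresponding statement in the bulk (cf.~\cite[Proposition A.1]{friesecke2002theorem}), since it relies basically on a partition of unity argument.'' Thus the route the paper has in mind is precisely the alternative you sketch at the end: cover $\s$ by charts, pull back to Euclidean balls, apply the Euclidean truncation lemma of Friesecke--James--M\"uller, and glue with a partition of unity. Your primary argument instead works intrinsically on $\s$ via the Haj\l asz pointwise inequality and the weak-$(1,1)$ maximal bound, which avoids the chart bookkeeping and makes the constants more transparent; the partition-of-unity route, by contrast, lets one quote the bulk statement as a black box without redoing the maximal-function machinery on the sphere. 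Both are standard and yield the same conclusion with dimensional constants.
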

The proof of this lemma can be performed as for the corresponding statement in the bulk (cf.~\cite[Proposition A.1]{friesecke2002theorem}), since it relies basically on a partition of unity argument. With the use of it we can now prove the following.
\begin{lemma}\label{good_behaviour_of_deficits_in_terms_of_Lipschitz_truncation}
For $u\in W^{1,2}(\s;\R^3)$ let $u_M$ be its Lipschitz truncation of Lemma \ref{Lipschitz_truncation_lemma} for $M:=2\sqrt 2$. Then,
\begin{equation}\label{deficits_behave_well_estimate}
\delta(u_M)\lesssim \delta(u)\quad \mathrm{and} \quad \varepsilon(u_M)\lesssim \delta(u)+\varepsilon(u)\,. 
\end{equation} 
\end{lemma}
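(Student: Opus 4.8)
The plan is to use the three properties of the Lipschitz truncation $u_M$ from Lemma~\ref{Lipschitz_truncation_lemma} with the specific choice $M = 2\sqrt 2$, and to control each deficit separately. The key observation is that for this choice of $M$, the ``bad set'' $B := \{x \in \s : |\t u| > M\}$ carries all the relevant information: on its complement, $|\t u| \leq 2\sqrt 2$, so $u$ is already as regular as $u_M$ there (indeed $u = u_M$ outside a set contained in $B$ up to null sets, by property (ii), which only matters in measure).

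First I would handle $\delta(u_M) \lesssim \delta(u)$. The pointwise inequality $(\sigma_2(u_M) - 1)_+ \leq (\sigma_2(u_M) - \sigma_2(u))_+ + (\sigma_2(u) - 1)_+$ together with Lipschitz continuity of the map $A \mapsto \sigma_2(A) = \|\sqrt{A^t A}\|_{\mathrm{op}}$ (as a function of the $3\times 2$ matrix $\t u$, with Lipschitz constant $1$ in the Frobenius norm) gives $(\sigma_2(u_M)-1)_+ \leq |\t u_M - \t u| + (\sigma_2(u)-1)_+$ pointwise. Squaring, integrating, and applying Cauchy--Schwarz reduces matters to the $L^2$-bound $\d2 |\t u - \t u_M|^2 \lesssim \int_B |\t u|^2$ from property (iii). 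It then suffices to see that $\int_B |\t u|^2 \lesssim \delta^2(u)$: on $B$ we have $|\t u|^2 \leq 2\sigma_2^2(u)$ (since $\sigma_1 \leq \sigma_2$) and $\sigma_2(u) > M/\sqrt 2 = 2 > 1$, hence $\sigma_2(u) \leq 2(\sigma_2(u) - 1) = 2(\sigma_2(u)-1)_+$ on $B$, so $\int_B |\t u|^2 \lesssim \int_B (\sigma_2(u)-1)_+^2 \leq \delta^2(u)$. Combining, $\delta^2(u_M) \lesssim \delta^2(u) + \int_B|\t u|^2 \lesssim \delta^2(u)$, which gives $\delta(u_M) \lesssim \delta(u)$.

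Next, for $\varepsilon(u_M) \lesssim \delta(u) + \varepsilon(u)$, since $\varepsilon(v) = (1 - |V_3(v)|)_+$ is $1$-Lipschitz in $|V_3(v)|$, it is enough to bound $|V_3(u_M) - V_3(u)|$. Writing $V_3$ as in \eqref{signed_volume_3_dim} and using multilinearity, the difference $V_3(u_M) - V_3(u)$ is a sum of terms each containing at least one factor of $\t u_M - \t u$ or $u_M - u$; on the set where $u = u_M$ these vanish, so the integrals are over (a set contained in, up to null sets) $B$. One then estimates these terms by Hölder's inequality, using $\|\t u_M\|_{L^\infty} \leq cM$, the $L^2$-closeness of gradients (property (iii)), the $L^2$-closeness of $u_M$ to $u$ (a Poincaré-type consequence once the gradients are close, together with the fact that one may normalize the averages), and the smallness of $|B|$ from property (ii). The cleanest route is probably: $|V_3(u_M) - V_3(u)| \lesssim (\|\t u\|_{L^2(B)} + \|\t u_M\|_{L^2(B)})\cdot(\text{bounded factors}) \lesssim \|\t u\|_{L^2(B)} \lesssim \delta(u)$ by the estimate from the previous paragraph, possibly picking up an additional $\varepsilon(u)$-sized term from the normalization of the volume. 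Hence $\varepsilon(u_M) \leq \varepsilon(u) + |V_3(u_M) - V_3(u)| \lesssim \delta(u) + \varepsilon(u)$.

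The main obstacle I expect is the bookkeeping in the second estimate: $V_3$ is a cubic (trilinear) expression, so controlling $|V_3(u_M) - V_3(u)|$ requires carefully isolating which factors are ``small'' ($L^2$-close, or supported on the small bad set $B$) and which are merely ``bounded,'' and making sure the bounded factors really are bounded uniformly --- for the $u_M$-factors this is property (i), but for the $u$-factors one must use that $\t u \in L^2$ only, so Hölder must be balanced so that at most one genuinely large factor (in $L^2$) appears per term, with the rest carried by $L^\infty$ bounds on $u_M$, $\t u_M$ or by $|B|$ being small. By contrast, the first estimate ($\delta(u_M) \lesssim \delta(u)$) is essentially immediate from Lipschitz continuity of $\sigma_2$ and property (iii), with the only subtlety being the elementary observation $\sigma_2 > 1$ on $B$ that lets one absorb $\int_B|\t u|^2$ into $\delta^2(u)$.
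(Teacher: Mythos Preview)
Your approach is essentially the paper's. For $\delta(u_M)\lesssim\delta(u)$ you use the pointwise $1$-Lipschitz bound on $\sigma_2$ together with property (iii), whereas the paper splits the integral over $\{u=u_M\}$ and $\{u\neq u_M\}$; both routes reduce to the same key estimate $\int_{\{|\t u|>M\}}|\t u|^2\lesssim\delta^2(u)$, which you establish correctly via $\sigma_2>2$ on that set.

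For the volume estimate, two small points to tighten. First, your ``bounded factors'' implicitly include $\|\t u\|_{L^2}$: for a term like $\d2\langle (u-u_M)-\overline{(u-u_M)},\,\partial_{\tau_1}u_M\wedge\partial_{\tau_2}u\rangle$, Cauchy--Schwarz and Poincar\'e give a bound $\lesssim M\,\|\t u-\t u_M\|_{L^2}\,\|\t u\|_{L^2}$, and $\|\t u\|_{L^2}$ is not a priori bounded. The paper disposes of this by first handling the trivial case $\delta(u)>1$ (both deficits are at most $1$), after which $\delta(u)\leq 1$ forces $\d2|\t u|^2\leq 10$. You should insert this reduction. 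Second, the set $\{u\neq u_M\}$ is not literally contained in $B=\{|\t u|>M\}$ for a generic truncation; what you actually use (and what property (ii) gives) is that its measure is $\lesssim\delta^2(u)$. The paper's explicit expansion into $V_3(u-u_M)+R_1+\cdots+R_4$ is arranged so that each $R_i$ carries at most one genuine $\partial_{\tau_i}u$ factor, the rest being $\partial_{\tau_i}u_M\in L^\infty$ --- exactly the balancing you anticipate in your last paragraph.
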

\begin{proof}
If $\delta(u)>1$, \AAA recalling the definitions of the deficits in  \eqref{comparison_of_two_deficits}-\eqref{def_isoperimetric_deficit}, \EEE we trivially have
\begin{align*}
\delta^2(u_M)\leq \delta^2_{\mathrm{isom}}(u_M) \leq2\d2\Big(\Big|\sqrt{\t u_M^t\t u_M}\Big|^2+|I_x|^2\Big)\leq 2\big(c^2M^2+2\big)\lesssim \delta^2(u)\,,
\end{align*}
and
\begin{align*}
\varepsilon(u_M)\leq 1\leq \delta(u)\leq \delta (u)+\varepsilon(u)\,,
\end{align*}
so we may assume without loss of generality that $0\leq \delta(u)\leq 1$. With the notation we have employed \AAA in \eqref{def_isometric_deficit}\EEE, $|\t u|^2=\sigma_1^2+\sigma_2^2\leq 2\sigma_2^2$, and therefore in this case we also have the upper bound
\begin{align}\label{these_u_s_bounded_in_w_1_2}
\begin{split}
\d2 |\t u|^2&=\frac{1}{3\omega_3}\int_{\{0\leq\sigma_2\leq 1\}} \big|\t u\big|^2+\frac{1}{3\omega_3}\int_{\{\sigma_2>1\}}\big|\t u\big|^2\leq 2+\frac{2}{3\omega_3}\int_{\{\sigma_2>1\}}\sigma_2^2\\[3pt]
&\leq 2+\frac{4}{3\omega_3}\int_{\{\sigma_2>1\}}\big((\sigma_2-1)^2+1\big)\leq 6+4\delta^2(u)\leq 10\,.
\end{split}
\end{align} 
By a standard argument, \AAA using Lemma \ref{Lipschitz_truncation_lemma}, \EEE we also obtain 
\begin{align}\label{size_of_the_set_where they_differ}
\begin{split}
M^2\mathcal{H}^{2}\big(\{x\in \s: u(x)\neq u_M(x)\}\big)\leq c\int_{\{|\t u|>M\}}|\t u|^2\lesssim \delta^2(u)\,,\\
\d2|\t u-\t u_M|^2\leq c\int_{\{|\t u|>M\}}|\t u|^2\lesssim\delta^2(u)\,.
\end{split}
\end{align}
Indeed, in the set $\{x\in \mathbb{S}^2\colon |\t u(x)|>M:=2\sqrt{2}\}$ we have $\sigma_2\geq \frac{1}{\sqrt 2}|\t u|> 2$, so in this set we can estimate pointwise,
\begin{align*}
(\sigma_2-1)_+=\sigma_2-1\geq \frac{|\t u|}{\sqrt 2}-1> \frac{|\t u|}{\sqrt 2}-\frac{|\t u|}{2\sqrt 2}>\frac{|\t u|}{2\sqrt{2}}\,,
\end{align*}
and then the final estimates in \eqref{size_of_the_set_where they_differ} follow immediately.

For the first estimate in \eqref{deficits_behave_well_estimate} the argument is now elementary. Labelling $0\leq \sigma_{M,1}\leq\sigma_{M,2}$ the eigenvalues of $\sqrt{\t u_M^t\t u_M}$, using \eqref{size_of_the_set_where they_differ} and the fact that $\{u_M=u\}\subseteq\{\t u_M=\t u\}$ in the $\mathcal{H}^2$-a.e. sense, we can estimate
\begin{align}\label{isometric_deficits_well_controlled}
\hspace{-1em}\begin{split}
\delta^2(u_M)&\sim\int_{\{\sigma_{M,2}>1\}}(\sigma_{M,2}-1)^2=\int_{\{u=u_M\}\cap\{\sigma_{M,2}>1\}}(\sigma_{M,2}-1)^2+\int_{\{u\neq u_M\}\cap\{\sigma_{M,2}>1\}}(\sigma_{M,2}-1)^2\\
&\leq\int_{\{u=u_M\}\cap\{\sigma_{M,2}>1\}}(\sigma_2-1)^2+\int_{\{u\neq u_M\}\cap\{\sigma_{M,2}>1\}}(\sigma_{M,2}^2+1)\\
&\leq \int_{\{\sigma_2>1\}}(\sigma_2-1)^2+(c^2M^2+1)\mathcal{H}^{2}\big(\{u_M\neq u\}\big)\lesssim \delta^2(u)\,.
\end{split}
\end{align}

For the second desired estimate in \eqref{deficits_behave_well_estimate} we observe that if $\big|V_3(u_M)\big|>1$ then $\varepsilon(u_M)=0\leq \varepsilon(u)$, so we may assume without loss of generality that $\big|V_3(u_M)\big|\leq 1$. Then,  
\begin{align}\label{1st_easy_estimate_for_difference_of_isoperimetric_deficits}
\varepsilon(u_M)&=1-\big|V_3(u_M)\big|\leq \varepsilon(u)+\big|V_3(u)\big|-\big|V_3(u_M)\big|\leq\varepsilon(u)+\big|V_3(u)-V_3(u_M)\big|\,, 
\end{align}
i.e., it suffices to control the absolute value of the difference between the corresponding signed volumes. Towards this end, denoting by 
\begin{equation}\label{notation_for_mean}
\overline{v}:=\d2 v, \quad \text{for } v\in W^{1,2}(\mathbb{S}^2;\R^3)\,,
\end{equation}
one can easily verify that
\begin{equation}\label{identity_difference_in_volumes}
V_3(u)-V_3(u_M)=V_3(u-u_M)+R_1(u,u_M)+R_2(u,u_M)+R_3(u,u_M)+R_4(u,u_M)\,,
\end{equation}
where 
\begin{align}\label{remainders_in_difference_in_volumes}
\begin{split}
R_1(u,u_M)&:=\d2\left\langle (u-u_M)-\overline{(u-u_M)},\partial_{\tau_1}u_M\wedge\partial_{\tau_2}u\right\rangle\,,\\
R_2(u,u_M)&:=\d2 \left\langle (u-u_M)-\overline{(u-u_M)},\partial_{\tau_1}(u-u_M)\wedge\partial_{\tau_2}u_M\right\rangle\,,\\
R_3(u,u_M)&:=\d2\Big\langle u_M-\overline{u_M},\partial_{\tau_1}u_M\wedge\partial_{\tau_2}(u-u_M)\Big\rangle\,,\\
R_4(u,u_M)&:=\d2\Big\langle u_M-\overline{u_M},\partial_{\tau_1}(u-u_M)\wedge\partial_{\tau_2}u\Big\rangle\,.
\end{split}
\end{align}
We can now estimate each term on the right hand side of \eqref{identity_difference_in_volumes} separately. For the first one, by the isoperimetric inequality \AAA (see \eqref{AM-GM-isoperimetric-inequality} for $n=3$) \EEE and the second estimate in \eqref{size_of_the_set_where they_differ}, we obtain
\begin{align}\label{bound_of_first_summand}
\big|V_3(u-u_M)\big|\leq\left(\frac{1}{2}\d2|\t u-\t u_M|^2 \right)^{\frac{3}{2}} \lesssim \delta^3(u)\,.
\end{align}
To estimate the terms $(R_i(u,u_M))_{i=1,\dots,4}$ we can now use the properties of the Lipschitz truncation $u_M$ provided by Lemma \ref{Lipschitz_truncation_lemma}, the Cauchy-Schwarz inequality and the sharp Poincare inequality on $\s$ \AAA(see \eqref{Poincare} in Appendix \ref{sec:C})\EEE, as well as the estimates \eqref{these_u_s_bounded_in_w_1_2} and \eqref{size_of_the_set_where they_differ}, in order to estimate each of the remaining terms  in \eqref{identity_difference_in_volumes} as follows.
\begin{align}\label{bound_of_R_1_2_3_4}
\begin{split}
\big|R_1(u,u_M)\big|&\lesssim M\big\|(u-u_M)-\overline{(u-u_M)}\big\|_{L^2}\left\|\t u\right\|_{L^2}\lesssim \left\|\t u-\t u_M\right\|_{L^2}\lesssim \delta(u)\,,\\
\big|R_2(u,u_M)\big|&\lesssim M\big\|(u-u_M)-\overline{(u-u_M)}\big\|_{L^2}\left\|\t u-\t u_M\right\|_{L^2}\lesssim \left\|\t u-\t u_M\right\|^2_{L^2}\lesssim \delta^2(u)\,,\\
\big|R_3(u,u_M)\big|&\lesssim M^2\int_{\{u\neq u_M\}}|\t u-\t u_M|\lesssim \sqrt{\mathcal{H}^2\big(\{u\neq u_M\}\big)}\left\|\t u-\t u_M\right\|_{L^2}\lesssim \delta^2(u)\,,\\
\big|R_4(u,u_M)\big|&\lesssim \|u_M-\overline{u_M}\|_{L^\infty}\left\|\t u\right\|_{L^2}\left\|\t u-\t u_M\right\|_{L^2}\lesssim M\delta(u)\lesssim \delta(u)\,.
\end{split}
\end{align}
By \eqref{remainders_in_difference_in_volumes}-\eqref{bound_of_R_1_2_3_4}, and since we have assumed without loss of generality that $0\leq \delta(u)\leq 1$, the expansion \eqref{identity_difference_in_volumes} implies that 
\begin{equation}\label{difference_in_signed_volumes_behaves_well_in_dimension_3}
\big|V_3(u)-V_3(u_M)\big|\AAA\lesssim \|\t u-\t u_M\|_{L^2}\EEE\lesssim \delta(u)\,,
\end{equation} 
and then \eqref{1st_easy_estimate_for_difference_of_isoperimetric_deficits} yields the desired estimate \AAA \eqref{deficits_behave_well_estimate} \EEE for the isoperimetric deficit.
\end{proof}
In view of Lemma \ref{good_behaviour_of_deficits_in_terms_of_Lipschitz_truncation}\EEE, fixing from now on $M:=2\sqrt{2}$, we easily see that if the estimate \eqref{main_estimate_isometric_case} holds true for the Lipschitz map $u_M$ for some $O\in O(3)$, then it also holds true for $u$ with the same $O$, up to changing the constant in its right hand side. It therefore suffices to prove Theorem  \ref{main_thm_isometric_case} for maps $u\in W^{1,\infty}(\s;\R^3)$ whose Lipschitz constant is apriori bounded from above by $cM$, where $c>0$ is the constant of Lemma \ref{Lipschitz_truncation_lemma}.

\subsection{Further reduction to maps $W^{1,2}$-close to the $\mathrm{id}_{\s}$}\label{Further reduction to maps W1,2-close to the identity}
Having reduced our attention to maps that enjoy an apriori Lipschitz bound, we show in this subsection that for our purposes, we can further assume without loss of generality that the maps in consideration are sufficiently close to the $\mathrm{id}_{\s}$ in the $W^{1,2}$-topology. To do so, we first prove a qualitative analogue of Theorem \ref{main_thm_isometric_case}. \AAA Recalling the notations introduced in \eqref{def_isometric_deficit}, \eqref{def_isoperimetric_deficit} and \eqref{notation_for_mean}, we have. \EEE

\begin{lemma}\label{isometries_compactness_general}
Let $(u_k)_{k\in \mathbb{N}}\subset W^{1,\infty}(\s;\R^3)$ be such that $\underset{k\in \mathbb{N}}{\mathrm{sup}}\ \|\t u_k\|_{L^{\infty}}\leq cM$, and suppose that 
\begin{equation}\label{both deficits go to 0}
\lim_{k\to \infty}\big(\delta(u_k)+\varepsilon(u_k)\big)=0\,.
\end{equation}
Then, there exists $O\in O(3)$ so that up to a non-relabeled subsequence, 
\begin{equation}
u_k-\overline{u_k}\rightarrow O\mathrm{id}_{\s} \mathrm{\ \ strongly \  in \ } W^{1,2}(\s;\R^3)\,.
\end{equation}
\end{lemma}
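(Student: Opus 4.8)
The plan is a standard direct-method / compactness argument exploiting the uniform Lipschitz bound together with the vanishing of the two deficits, and then invoking Liouville's Theorem \ref{spherical Liouville's Theorem} $(i)$ to identify the limit as a rigid motion. First I would normalize: since $\sup_k\|\t u_k\|_{L^\infty}\leq cM$, the maps $v_k:=u_k-\overline{u_k}$ satisfy $\overline{v_k}=0$ and $\|\t v_k\|_{L^\infty}=\|\t u_k\|_{L^\infty}\leq cM$, so by the Poincaré inequality on $\s$ (the one recalled in Appendix \ref{sec:C}) the sequence $(v_k)$ is bounded in $W^{1,\infty}(\s;\R^3)$, hence (up to a non-relabeled subsequence) $v_k\rightharpoonup v$ weakly-$*$ in $W^{1,\infty}$, $v_k\to v$ strongly in $W^{1,2}(\s;\R^3)$ and uniformly, with $\overline v=0$ and $\|\t v\|_{L^\infty}\leq cM$.

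Next I would show the limit $v$ is an isometry of $\s$, i.e. $\t v^t\t v=I_x$ $\mathcal H^2$-a.e. From $\delta(u_k)\to 0$ we get $(\sigma_{k,2}-1)_+\to 0$ in $L^2(\s)$, and since $0\leq\sigma_{k,1}\leq\sigma_{k,2}$ with $\sigma_{k,2}\leq cM$ uniformly, this forces $\sigma_{k,2}\leq 1+o(1)$ in an $L^2$-sense; combined with the uniform bound this gives, after passing to a further subsequence, $\sigma_2(v)\leq 1$ a.e., so $v$ is a \emph{short} map, $\t v^t\t v\leq I_x$. To upgrade ``short'' to ``isometric'' I would use the isoperimetric deficit: by the chain of inequalities in the Introduction (the $n=3$ case of \eqref{AM-GM-isoperimetric-inequality} or the direct estimate \eqref{isoperimetric_inequality_direct_for_isometries}) a short map has $|V_3|\leq 1$ with equality only for a rigid motion, and $\varepsilon(u_k)\to 0$ together with the translation-invariance of $V_3$ and the convergence $v_k\to v$ (strong $W^{1,2}$ convergence suffices to pass to the limit in the cubic-in-gradient but $L^\infty$-bounded integrand $\langle v,\partial_{\tau_1}v\wedge\partial_{\tau_2}v\rangle$, using $v_k\to v$ in $L^2$, $\t v_k\to\t v$ in $L^2$, and $\|\t v_k\|_{L^\infty}\leq cM$) gives $|V_3(v)|=1$; tracking the equality cases in \eqref{isoperimetric_inequality_direct_for_isometries} then forces $\t v^t\t v=I_x$ a.e. Hence $v\in W^{1,\infty}(\s;\s)$ is a generalized isometric map (that its image lies on $\s$ is exactly the content of the equality case), so by Theorem \ref{spherical Liouville's Theorem}$(i)$ there is $O\in O(3)$ with $v=O\,\mathrm{id}_\s$.

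It remains to promote the strong $W^{1,2}$ convergence — already in hand from the Rellich compactness above — and record that the limit is this particular $O\,\mathrm{id}_\s$; since every subsequence has a further subsequence converging strongly in $W^{1,2}$ to \emph{some} rigid motion, and the argument above shows each such limit is an isometry of $\s$, the whole (relabeled) sequence converges. I expect the main obstacle to be the passage from ``short'' to ``isometric'': one must argue carefully that the $L^2$-smallness of $(\sigma_{k,2}-1)_+$ plus the uniform $L^\infty$ bound yields, in the limit, the \emph{pointwise} bound $\sigma_2(v)\leq 1$ a.e. (this is where an extra subsequence extraction, or a lower-semicontinuity argument for the convex-in-$\t v$ quantity $(\sigma_2-1)_+$, is needed), and then that the equality case in Wente's isoperimetric inequality is rigid enough to force full isometry rather than merely $|V_3(v)|=1$ — this is precisely where one reads off that $v$ maps into a round sphere and applies Liouville. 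The rest (Poincaré bound, Rellich, passing to the limit in $V_3$) is routine given the uniform Lipschitz bound.
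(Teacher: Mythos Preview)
Your proposal has a genuine gap at the very first step: a uniform bound in $W^{1,\infty}(\s;\R^3)$ does \emph{not} give a subsequence converging strongly in $W^{1,2}$. The embedding $W^{1,\infty}\hookrightarrow W^{1,2}$ is continuous but not compact (think of $v_k(x)=k^{-1}\sin(k\theta)$ on $\mathbb S^1$: bounded Lipschitz constant, but $\partial_\tau v_k=\cos(k\theta)$ has $L^2$-norm $\equiv 1/\sqrt 2$ and converges only weakly to $0$). What Arzel\`a--Ascoli plus Banach--Alaoglu give you is uniform convergence and weak-$*$ convergence of the gradients in $L^\infty$, hence weak $W^{1,2}$ convergence---nothing more. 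You invoke the erroneous strong convergence twice: first to pass to the limit in $V_3$, and then as the conclusion of the lemma itself (``already in hand from the Rellich compactness above''), so both uses collapse.

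The paper's argument handles this differently: it starts from weak $W^{1,2}$ convergence only, uses the estimate $\tfrac12\d2|\t u_k|^2\le 1+c\,\delta(u_k)$ together with the chain $1-\varepsilon(u_k)\le|V_3(u_k)|\le\big(\d2|u_k|^2\big)^{1/2}(1+c\,\delta(u_k))^{1/2}$ to force $\d2|u|^2=1$ in the limit, and then reads off from $1\ge\tfrac12\d2|\t u|^2\ge\d2|u|^2=1$ both that $u(x)=Ax$ (equality in Poincar\'e) and that equality holds in the lower-semicontinuity inequality for the Dirichlet energy---which is exactly what upgrades weak to strong $W^{1,2}$ convergence. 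Only \emph{after} strong convergence is established does the paper pass to the limit in $V_3$ (via dominated convergence) to get $|\det A|\ge 1$, and then AM--GM on the singular values of $A$ yields $A\in O(3)$. Your route via ``short $\Rightarrow$ isometric $\Rightarrow$ Liouville'' could in principle be repaired (the shortness of the limit does follow from weak lower semicontinuity of the convex functional $\int(\sigma_2-1)_+^2$, and passing to the limit in $V_3$ can be done via weak continuity of Jacobian minors under the uniform Lipschitz bound), but you would still need a separate argument for strong $W^{1,2}$ convergence; the equality case in \eqref{isoperimetric_inequality_direct_for_isometries} already hands you $u=Ax$ and norm convergence of the gradients, which is the cleanest way to get it.
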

\begin{proof}
We can obviously assume without loss of generality that $\overline{u_k}:=\d2 u_k=0$ for all $k\in \mathbb{N}$. Hence, the sequence $(u_k)_{k\in \mathbb{N}}$ is uniformly bounded in $W^{1,2}(\s;\R^3)$, and up to passing to a non-relabeled subsequence, converges weakly in $W^{1,2}(\s;\R^3)$ and also pointwise $\mathcal{H}^{2}$-a.e. to a map $u\in W^{1,2}(\s;\R^3)$ with $\overline{u}=0$. By lower semicontinuity of the Dirichlet energy under weak $W^{1,2}$-convergence, we further have that 
\begin{equation}\label{lower_semicontinuity_inequality_for_L_2_gradient_of_u}
\frac{1}{2}\d2 |\t u|^2\leq\liminf_{k\to\infty}\frac{1}{2}\d2 |\t u_k|^2\leq 1\,.
\end{equation}
The last inequality \AAA in \eqref{lower_semicontinuity_inequality_for_L_2_gradient_of_u} \EEE is justified by the following estimates.
\begin{align}\label{norm_of_grad_u_k_estimate_from_isometric_deficit}
\begin{split}
\frac{1}{2}\d2|\t u_k|^2&=\frac{\int_{\{0\leq \sigma_{k,2}\leq 1\}}|\t u_k|^2+\int_{\{\sigma_{k,2}> 1\}}|\t u_k|^2}{6\omega_3}\leq \frac{\mathcal{H}^2\big(\{0\leq \sigma_{k,2}\leq 1\}\big)+\int_{\{\sigma_{k,2}>1\}}\sigma_{k,2}^2}{3\omega_3}\\
&=1+\frac{1}{3\omega_3}\int_{\{\sigma_{k,2}>1\}}(\sigma_{k,2}^2-1)\leq 1+\frac{\big(\|\sigma_{k,2}\|_{L^\infty}+1\big)}{3\omega_3}\int_{\{\sigma_{k,2}>1\}}(\sigma_{k,2}-1)_+\\
&\leq 1+(cM+1)\left(\d2 (\sigma_{k,2}-1)_+^2\right)^{\frac{1}{2}}\leq 1+c\delta(u_k)\,.
\end{split}
\end{align}
	
In a similar manner, we can use again the assumption that $\underset{k\in \mathbb{N}}{\mathrm{sup}}\ \|\t u_k\|_{L^{\infty}}\leq cM$, and the fact that the determinant is a Lipschitz function, to estimate also
\begin{equation}\label{determinant_estimate_by_isometric_deficit}
\d2 \big|\partial_{\tau_1}u_k\wedge\partial_{\tau_2}u_k\big|^2\leq 1+ c\delta(u_k)\,.
\end{equation}

Since $0\leq 1-\varepsilon(u_k)\leq\big|V_3(u_k)\big|$ and $\overline{u_k}=0$,
by the sharp Poincare inequality on $\s$ \AAA(see again \eqref{Poincare} in Appendix \ref{sec:C}) \EEE and the estimates \eqref{norm_of_grad_u_k_estimate_from_isometric_deficit}, \eqref{determinant_estimate_by_isometric_deficit}, we obtain
\begin{align}\label{basic_trick_isometric_estimate_qualitative}
\begin{split}
0\leq 1-\varepsilon(u_k)&\leq\left|\d2\Big\langle u_k,\partial_{\tau_1}u_k\wedge\partial_{\tau_2}u_k\Big\rangle\right|\leq \left(\d2 |u_k|^2\right)^{\frac{1}{2}}\cdot\left(\d2 \big|\partial_{\tau_1}u_k\wedge\partial_{\tau_2}u_k\big|^2\right)^{\frac{1}{2}}\\
&\leq \left(\d2 |u_k|^2\right)^{\frac{1}{2}}\cdot\Big(1+c\delta(u_k)\Big)^{\frac{1}{2}}\leq \left(\frac{1}{2}\d2 |\t u_k|^2\right)^{\frac{1}{2}}\cdot\Big(1+c\delta(u_k)\Big)^{\frac{1}{2}}\\
&\leq 1+c\delta(u_k)\,.
\end{split}
\end{align}
By the assumption \eqref{both deficits go to 0}, and since $u_k\to u$ strongly in $L^2(\s;\R^3)$,  we can let $k\to\infty$ in \eqref{basic_trick_isometric_estimate_qualitative}, to obtain 
\begin{equation}\label{norm_L2_of_limit_is_1}
\d2 |u|^2=\lim_{k\to\infty}\d2 |u_k|^2=1\,.
\end{equation} 
Hence, the limiting map $u$ is such that $\d2 u=0$, and \AAA by \eqref{lower_semicontinuity_inequality_for_L_2_gradient_of_u} and \eqref{norm_L2_of_limit_is_1} it also satisfies \EEE
\begin{equation}\label{inequality_chain_for_characterization_of_u}
1\geq \frac{1}{2}\d2|\t u|^2 \geq \d2 |u|^2=1\,.
\end{equation}
By the equality case in the sharp Poincare inequality on $\s$ (since the first nontrivial eigenfunctions of $-\Delta_{\s}$ are the coordinate functions\AAA, cf.~Appendix \ref{sec:C}\EEE), we deduce \AAA from \eqref{inequality_chain_for_characterization_of_u} \EEE that $u(x)=Ax$ for some $A\in \R^{3\times 3}$ with $|A|^2=3$. In particular, equalities are achieved in \eqref{lower_semicontinuity_inequality_for_L_2_gradient_of_u}, and therefore $u_k\to u:=A\mathrm{id}_{\s}$ in the strong $W^{1,2}$-topology.
	
To show that $A\in O(3)$, we argue as follows. Having established the strong $W^{1,2}$-convergence of $(u_k)_{k\in\mathbb{N}}$ towards $u$, up to a further non-relabeled subsequence we can assume now that $\t u_k\to \t u$ also pointwise $\mathcal{H}^{2}$-a.e. on $\s$ and therefore, using the assumption that $\|\t u_k\|_{L^\infty}\leq cM$ for every $k\in \mathbb{N}$, 
\begin{align}\label{pointwise_convergence}
\begin{split}
\big\langle u_k, \partial_{\tau_1}u_k\wedge\partial_{\tau_2}u_k \big\rangle&\to \big\langle u, \partial_{\tau_1}u\wedge\partial_{\tau_2}u \big\rangle \ \mathrm{\ pointwise \ } \mathcal{H}^{2}\text{-a.e.}\,,\\
\big|\big\langle u_k, \partial_{\tau_1}u_k\wedge\partial_{\tau_2}u_k \big\rangle\big|&\leq \big|\partial_{\tau_1}u_k\wedge\partial_{\tau_2}u_k\big||u_k|\leq \frac{1}{2}|\t u_k|^2|u_k|\lesssim|u_k|\quad \forall k\in \mathbb{N}\,,\\
|u_k|\to  |u|& \quad \mathrm{\ pointwise \ } \mathcal{H}^{2}\text{-a.e.}\,,\\
\underset{k\in \mathbb{N}}{\mathrm{sup}}\ \|u_k\|_{L^1}&\leq\underset{k\in \mathbb{N}}{\mathrm{sup}}\ \|u_k\|_{L^2}\ll +\infty,\quad \mathrm{\ since\ } \|u_k\|_{L^2}\to 1\,.
\end{split}
\end{align}
Using a variant of Lebesgue's Dominated Convergence Theorem in the assumption that $\lim_{k\to\infty}\varepsilon(u_k)=0$ \AAA and \eqref{pointwise_convergence}\EEE, allows us to conclude. Indeed,
\begin{align*}
0&=\lim_{k\to \infty} \varepsilon(u_k)=\lim_{k\to \infty} \left(1-\left|\d2 \big\langle u_k,\partial_{\tau_1}u_k\wedge\partial_{\tau_2}u_k\big\rangle\right|\right)_+=\left(1- \left|\d2 \lim_{k\to \infty}\big\langle u_k,\partial_{\tau_1}u_k\wedge\partial_{\tau_2}u_k\big\rangle\right|\right)_+\\
&=\left(1-\left|\d2 \big\langle u,\partial_{\tau_1}u\wedge\partial_{\tau_2}u
\big\rangle\right|\right)_+=\left(1-\left|\d2 \big\langle Ax,\partial_{\tau_1}(Ax)\wedge\partial_{\tau_2}(Ax)
\big\rangle\right|\right)_+=\big(1-\big|\mathrm{det}A\big|\big)_+\,,
\end{align*}	
i.e., $|\mathrm{det}A|\geq 1$. If we now perform the polar decomposition $A=O\sqrt{A^tA}$, where $O\in O(3)$, and label $0\leq\alpha_1\leq\alpha_2\leq \alpha_3$ the eigenvalues of $\sqrt{A^tA}$, by the arithmetic mean-geometric mean inequality we get 
\begin{equation*}
1=\left(\frac{|A|^2}{3}\right)^\frac{3}{2}\geq |\mathrm{det}A|\geq 1\,,
\end{equation*}
and equality in this algebraic inequality implies that $\alpha_1=\alpha_2=\alpha_3=1$, i.e., $O:=A\in O(3)\EEE$.
\end{proof}	
As an immediate consequence of the \AAA Lemmata \ref{good_behaviour_of_deficits_in_terms_of_Lipschitz_truncation} and \ref{isometries_compactness_general}\EEE, we obtain the following.
\begin{corollary}\label{it_suffices_to_prove_the_local_isometric_case}
It suffices to prove Theorem \ref{main_thm_isometric_case} for maps 
\begin{equation}\label{local_family_of_maps_sufficient_for_isometric_case}
u\in \mathcal{A}_{M,\theta}:=\left\{u \in W^{1,\infty}(\s;\R^3)\colon
\begin{array}{lr}
(i)\ \ \ \d2 u=0\,\\
(ii)\ \ \big\|\t u\big\|_{L^\infty}\leq cM\,\\
(iii)\ \big\| \t u-P_T\big\|_{L^2}\leq \theta
\end{array}\right\}\,,
\end{equation}
where $c>0$ is the constant of Lemma \ref{Lipschitz_truncation_lemma}, $M:=2\sqrt{2}$ and $0<\theta\ll 1$ is a sufficiently small constant that will be suitably chosen later.
\end{corollary}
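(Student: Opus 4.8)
The plan is to deduce the corollary from Lemmata \ref{good_behaviour_of_deficits_in_terms_of_Lipschitz_truncation} and \ref{isometries_compactness_general} by a routine contradiction/compactness argument. I do not expect a serious difficulty here; the only genuine bookkeeping is to check that the two deficits and the left-hand side of \eqref{main_estimate_isometric_case} are insensitive both to subtracting the mean of $u$ and to postcomposing $u$ with an element of $O(3)$, and that the smallness threshold introduced below may legitimately depend on the constants $M$ and $\theta$ (the latter being fixed only later in the argument).

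First I would record these invariances. Given $a\in\R^3$ and $O\in O(3)$, set $\tilde u:=O(u-a)$; then $\t\tilde u=O\,\t u$, so $\t\tilde u^{\,t}\t\tilde u=(O\t u)^t(O\t u)=\t u^t\t u$ and hence $\delta(\tilde u)=\delta(u)$. Moreover, since $\d2\big(\partial_{\tau_1}u\wedge\partial_{\tau_2}u\big)=0$ for every $u\in W^{1,2}(\s;\R^3)$ (a null-Lagrangian identity on the closed surface $\s$, already used in \eqref{isoperimetric_inequality_direct_for_isometries}) and $(Ov)\wedge(Ow)=(\det O)\,O(v\wedge w)$, one obtains $V_3(\tilde u)=(\det O)\,V_3(u)$ and thus $\varepsilon(\tilde u)=\varepsilon(u)$. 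Finally $\d2\big|\t\tilde u-O'P_T\big|^2=\d2\big|\t u-O^{-1}O'P_T\big|^2$ for every $O'\in O(3)$, so \eqref{main_estimate_isometric_case} for $\tilde u$ (with some rotation and some constant) is equivalent to \eqref{main_estimate_isometric_case} for $u$ (with another rotation and the same constant). Combining this with Lemma \ref{good_behaviour_of_deficits_in_terms_of_Lipschitz_truncation} and the discussion following it, one reduces the theorem to maps $u\in W^{1,\infty}(\s;\R^3)$ satisfying conditions $(i)$ and $(ii)$ of \eqref{local_family_of_maps_sufficient_for_isometric_case}.

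Next I would introduce a threshold $\eta_0=\eta_0(M,\theta)>0$, to be fixed momentarily, and treat separately the regimes $\delta(u)+\varepsilon(u)\ge\eta_0$ and $\delta(u)+\varepsilon(u)<\eta_0$. In the first regime, taking $O:=I$ and using $(ii)$ gives $\d2\big|\t u-P_T\big|^2\le 2(c^2M^2+2)\le\frac{2(c^2M^2+2)}{\eta_0}\big(\delta(u)+\varepsilon(u)\big)$, so \eqref{main_estimate_isometric_case} holds. For the second regime, the key point — this is where Lemma \ref{isometries_compactness_general} enters — is the claim that, for $\eta_0$ small enough, every such $u$ admits $O\in O(3)$ with $O^{-1}u\in\mathcal{A}_{M,\theta}$, i.e.\ with $\|\t(O^{-1}u)-P_T\|_{L^2}\le\theta$. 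I would prove this by contradiction: otherwise there is a sequence $(u_k)$ satisfying $(i)$, $(ii)$ with $\delta(u_k)+\varepsilon(u_k)\to 0$ but $\inf_{O\in O(3)}\|\t(O^{-1}u_k)-P_T\|_{L^2}>\theta$; Lemma \ref{isometries_compactness_general} then produces $O\in O(3)$ and a subsequence along which $u_k\to O\,\mathrm{id}_{\s}$ strongly in $W^{1,2}(\s;\R^3)$, whence $\|\t(O^{-1}u_k)-P_T\|_{L^2}=\|\t u_k-OP_T\|_{L^2}\to 0<\theta$, a contradiction.

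Finally, for $u$ in the second regime I would apply the assumed validity of Theorem \ref{main_thm_isometric_case} on the class $\mathcal{A}_{M,\theta}$ to $v:=O^{-1}u$: there exist a constant $c_1>0$ (independent of $u$) and $O'\in O(3)$ with $\d2|\t v-O'P_T|^2\le c_1\big(\delta(v)+\varepsilon(v)\big)=c_1\big(\delta(u)+\varepsilon(u)\big)$, and by the invariances recorded above this is exactly \eqref{main_estimate_isometric_case} for $u$ with rotation $OO'$ and constant $c_1$. Merging the two regimes yields \eqref{main_estimate_isometric_case} for all maps obeying $(i)$ and $(ii)$ with constant $\max\{2(c^2M^2+2)/\eta_0,\,c_1\}$, and then the Lipschitz reduction of Subsection \ref{Reduction_to_Lipschitz_mappings} upgrades this to all of $W^{1,2}(\s;\R^3)$, up to a harmless change of constant. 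The main obstacle, such as it is, is purely notational: keeping straight the interplay between mean subtraction, the orthogonal factors $O$ and $O'$, and the dependence of $\eta_0$ on $\theta$.
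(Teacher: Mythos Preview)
Your proposal is correct and follows essentially the same contradiction/compactness route as the paper. The only cosmetic difference is that you split explicitly into a large-deficit regime (handled trivially by the Lipschitz bound) and a small-deficit regime (handled via Lemma \ref{isometries_compactness_general}), whereas the paper runs a single contradiction argument on the supremum of the ratio $\big(\min_{O}\|\t u-OP_T\|_{L^2}^2\big)/\big(\delta(u)+\varepsilon(u)\big)$, which implicitly forces the deficits to vanish along the contradicting sequence and then invokes the same compactness lemma; your explicit recording of the invariances under mean subtraction and postcomposition with $O\in O(3)$ makes transparent what the paper does with the single phrase ``up to considering $O_0^t u_k$ instead of $u_k$''.
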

\begin{proof}
The proof is a standard \textit{contradiction argument}. Indeed, suppose that we have proven Theorem \ref{main_thm_isometric_case} for maps in $\mathcal{A}_{M,\theta}$ for some $\theta\in (0,1)$ sufficiently small. According to the \textit{Lipschitz truncation argument} provided by Lemma \ref{good_behaviour_of_deficits_in_terms_of_Lipschitz_truncation}, for the general case it suffices to prove that
\begin{align*}%\label{desired_optimal_alternative_isometric}
\mathrm{sup} \left\{\frac{\underset{O\in O(3)}{\mathrm{min}}\d2\big|\t u-OP_T\big|^2}{\delta(u)+\varepsilon(u)},\quad \text{where } u\in W^{1,\infty}(\s;\R^3)\colon \d2 u=0,\ \|\t u\|_{L^\infty}\leq cM\right\}\ll +\infty\,,
\end{align*}
whenever the denominator above is non-zero. Arguing by contradiction, suppose that the latter is false. Then, for every $k\in \mathbb{N}$ there exist $u_k$ with mean value 0, Lipschitz norm bounded by $cM$, $\delta(u_k)+\varepsilon(u_k)>0$, and $O_k\in \underset{O\in O(3)}{\mathrm{Argmin}}\ \d2\big|\t u_k-OP_T\big|^2$, such that
\begin{equation}\label{contradictory_estimate_1_isometric}
\d2\big|\t u_k-O_kP_T\big|^2 \geq k\big(\delta(u_k)+\varepsilon(u_k)\big)\,.
\end{equation}
In particular,
\begin{equation*}
\delta(u_k)+\varepsilon(u_k)\leq \frac{1}{k}\d2\big|\t u_k-O_kP_T\big|^2\leq \frac{2\big(c^2M^2+2\big)}{k}\,,
\end{equation*}
and letting $k\to\infty$ we see that along this sequence, $\lim_{k\to\infty}(\delta(u_k)+\varepsilon(u_k))=0$. By Lemma \ref{isometries_compactness_general} and up to passing to a subsequence, we can find $O_0\in O(3)$ so that $u_k\to O_0\mathrm{id}_{\s}$ strongly in $W^{1,2}(\s;\R^3)$. Without loss of generality (up to considering $O_0^tu_k$ instead of $u_k$ if necessary) we can also suppose that $O_0=I_3$, so there exists $k_0:=k_0(\theta)\in \mathbb{N}$ such that 
\begin{equation*}
\big\|\t u_k-P_T\big\|_{L^2}\leq \theta\quad \forall  k\geq k_0\,,
\end{equation*}
i.e., $u_k\in \mathcal{A}_{M,\theta}$ for all $k\geq k_0$. Therefore, by assumption, there should exist $(R_k)_{k\geq k_0}\subset O(3)$ (and actually in $SO(3)$) such that
\begin{equation*}%\label{final_contradiction_isometric}
\d2 \big|\t u_k-O_kP_T\big|^2\leq \d2 \big|\t u_k-R_kP_T\big|^2\lesssim \delta(u_k)+\varepsilon(u_k) \quad \forall k\geq k_0\,,
\end{equation*}
which contradicts \eqref{contradictory_estimate_1_isometric}. \qedhere 
\end{proof}  
  
\subsection{Proof of the local version of Theorem \ref{main_thm_isometric_case}}\label{proof_of_local_isometric_case}
By the reductions we have performed in the previous two subsections, we are left with proving a \textit{local version} of Theorem  \ref{main_thm_isometric_case}. This will be done by perturbing quantitatively the idea of proof of Lemma  \ref{isometries_compactness_general}.
\begin{proposition}\label{local_version_main_theorem_isometric_case}
There exists a constant $\theta\in (0,1)$ so that for every $u\in \mathcal{A}_{M,\theta}$ \AAA $\rm{(}$defined in \eqref{local_family_of_maps_sufficient_for_isometric_case}$\rm{)}$, \EEE there exists $R\in SO(3)$ such that
\begin{equation}\label{local_version_of_main_estimate_isometric}
\d2 \big|\t u-RP_T\big|^2\lesssim \delta(u)+\varepsilon(u)\,.
\end{equation}
\end{proposition}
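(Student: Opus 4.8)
The plan is to linearize the two deficits $\delta(u)$ and $\varepsilon(u)$ around the $\mathrm{id}_{\s}$ in the regime where $w:=u-\mathrm{id}_{\s}$ has small $W^{1,2}$-norm, reduce the nonlinear estimate \eqref{local_version_of_main_estimate_isometric} to a coercivity (Korn-type) estimate for the associated quadratic form, and then close the argument with the Inverse Function Theorem / a topological device that selects the correct rotation $R\in SO(3)$. First I would write $u=\mathrm{id}_{\s}+w$ with $\overline{w}=0$ and $\|\t w\|_{L^2}\le\theta$, and carefully estimate $\t u^t\t u - I_x = 2(P_T^t\t w)_{\mathrm{sym}} + (\t w)^t\t w$, so that the principal stretch deficit controls $\|(P_T^t\t w)_{\mathrm{sym}}\|_{L^2}$ up to quadratic error: concretely, $(\sigma_2-1)_+$ pointwise dominates $\tfrac12$ of the largest eigenvalue of $\t u^t\t u-I_x$ from above when that eigenvalue is positive, and since $\|\t w\|_{L^\infty}\lesssim M$ and $\|\t w\|_{L^2}\le\theta$, the quadratic term $(\t w)^t\t w$ has $L^2$-norm $\lesssim M\|\t w\|_{L^2}\lesssim M\theta$; one must be slightly careful because the deficit only penalizes \emph{positive} stretches, so the symmetric part estimate only survives in the one-sided sense, which is precisely why the isoperimetric term is needed.

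Next I would expand the signed volume. Using the integral identities for Jacobians from Appendix \ref{sec:B}, one gets the formal expansion $V_3(u)=1+Q_{V_3}(w)+\mathcal{O}(\|\t w\|_{L^2}^2\cdot\|\t w\|_{L^\infty})$, where the linear part vanishes (it is a null Lagrangian-type term against $\overline w=0$) and $Q_{V_3}$ is the quadratic form $\tfrac12\d2\langle w,(\mathrm{div}_{\s}w)x-\sum_j x_j\t w^j\rangle$ appearing in \eqref{Q_n_definition}. Hence $\varepsilon(u)=(1-|V_3(u)|)_+ \ge (1-V_3(u))_+ \gtrsim -Q_{V_3}(w) + \mathcal{O}(M\theta\|\t w\|_{L^2}^2)$ when $V_3(u)\le 1$ (and $\varepsilon(u)=0$ otherwise, in which case $-Q_{V_3}(w)$ is already controlled). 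Combining, for $V_3(u)\le1$,
\begin{equation*}
\delta(u)+\varepsilon(u)\ \gtrsim\ \big\|(P_T^t\t w)_{\mathrm{sym}}\big\|_{L^2} + \big(-Q_{V_3}(w)\big) - C M\theta\,\d2|\t w|^2 ,
\end{equation*}
and I would bound $\|(P_T^t\t w)_{\mathrm{sym}}\|_{L^2}\ge Q_{n,\mathrm{isom}}(w)$ trivially by its square when the left side is $\le1$. The quadratic combination $\alpha Q_{3,\mathrm{isom}}(w)+Q_{3,\mathrm{isop}}(w)$ that appears is exactly the object controlled by Theorem \ref{coercivity_estimate_for_Q_isom_Q_isop}, which gives $\alpha Q_{3,\mathrm{isom}}(w)+Q_{3,\mathrm{isop}}(w)\ge C\,\d2|\t w-[\nabla w_h(0)]_{\mathrm{skew}}P_T|^2$; here $[\nabla w_h(0)]_{\mathrm{skew}}P_T$ is the tangential gradient of an infinitesimal rotation, so the right-hand side measures the distance of $\t u=\t w+P_T$ from $\t(O_\varepsilon\,\mathrm{id}_{\s})$ for the rotation $O_\varepsilon=\exp([\nabla w_h(0)]_{\mathrm{skew}})$ up to higher order.

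The last step is to upgrade the linear estimate to the stated nonlinear one with an honest $R\in SO(3)$. For this I would apply the Inverse Function Theorem / degree argument (the analogue of Lemma \ref{fixingMobius} in the isometric setting): the map sending a skew matrix $S$ near $0$ to the projection of $\t(e^{S}\mathrm{id}_{\s})$ onto the span of the infinitesimal rotations is a local diffeomorphism, so one can choose $R=e^{S}$ with $S$ of size $\lesssim\|\t w\|_{L^2}$ making the "rotational component" of $\t u-RP_T$ vanish; then $\d2|\t u-RP_T|^2$ is controlled by the coercive right-hand side above plus a quadratic error absorbable for $\theta$ small. Rearranging (since all quantities are $\le1$, squares are dominated by first powers) yields \eqref{local_version_of_main_estimate_isometric}. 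The main obstacle I expect is the bookkeeping of the one-sided nature of the deficit $\delta$ together with the sign condition in $\varepsilon$: one must make sure that in \emph{every} case (short, volume-increasing, mixed) the combination $\alpha Q_{\mathrm{isom}}+Q_{\mathrm{isop}}$ genuinely appears with a positive $\alpha$, which forces a careful choice of how much of $\|(P_T^t\t w)_{\mathrm{sym}}\|_{L^2}$ to "spend" versus keep, and a correspondingly careful choice of $\theta$ relative to the constant $C_{3,\alpha}$ from Theorem \ref{coercivity_estimate_for_Q_isom_Q_isop}.
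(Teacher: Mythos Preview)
Your plan has a genuine gap at its core: the combination you extract from $\delta(u)+\varepsilon(u)$ is \emph{not} the quadratic form $\alpha Q_{3,\mathrm{isom}}+Q_{3,\mathrm{isop}}$ to which Theorem~\ref{coercivity_estimate_for_Q_isom_Q_isop} applies. From the one-sided deficit $\delta(u)=\|(\sigma_2-1)_+\|_{L^2}$ you cannot recover $Q_{3,\mathrm{isom}}(w)=\d2|(P_T^t\t w)_{\mathrm{sym}}|^2$: for a short map $\delta(u)=0$ while $Q_{3,\mathrm{isom}}(w)>0$. You acknowledge this (``one-sided sense''), but then the pairing with the volume expansion does not rescue it. The expansion of $\varepsilon(u)$ only yields $-Q_{V_3}(w)$ plus higher order, whereas $Q_{3,\mathrm{isop}}$ (see \eqref{Q_isop_sphere_intro}) contains in addition the positive term $\tfrac{3}{4}\d2\big(|\t w|^2+(\mathrm{div}_{\s}w)^2-2|(P_T^t\t w)_{\mathrm{sym}}|^2\big)$; without it the form $Q_{3,\mathrm{isom}}-Q_{V_3}$ need not be coercive on $H_3$, so Theorem~\ref{coercivity_estimate_for_Q_isom_Q_isop} is not applicable. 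A second slip: the linear term in the volume expansion is $3\d2\langle w,x\rangle$, which does \emph{not} vanish merely from $\overline w=0$; in the isometric setting there is no scaling normalization analogous to Lemma~\ref{fixing_center_scale}, and indeed the paper handles this by subtracting the full linear part $Ax$ with $A=\nabla u_h(0)$ (see \eqref{defintition_of_w}--\eqref{linear_term_vanishing_isometry}).

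The paper's route is quite different and sidesteps all of this. Instead of linearizing $\delta(u)$ it exploits the \emph{Poincar\'e deficit} directly: from the Cauchy--Schwarz/Poincar\'e chain \eqref{basic_trick_isometric_estimate_qualitative} one gets $\tfrac12\d2|\t u|^2-\d2|u|^2\lesssim\delta(u)+\varepsilon(u)$, and the spectral gap $\lambda_2=6>2=\lambda_1$ on $\s$ immediately gives $\d2|\t u-AP_T|^2$ on the left with $A=\nabla u_h(0)$ (estimate \eqref{prefinal_estimate_general}). No Korn-type inequality, no linearization of $\delta$, no one-sided issues. The remaining work is to show $\mathrm{dist}^2(A,SO(3))\lesssim\delta(u)+\varepsilon(u)$, which the paper does by a determinant estimate (Lemma~\ref{separate_determinant_estimate}): writing $V_3(u)=\det A\cdot\dashint_{B_1}\det(I_3+\nabla w_h)$ for $w=A^{-1}(u-Ax)$ and expanding, one obtains $|\det A-1|\lesssim\delta(u)+\varepsilon(u)$; combined with $|A|^2\le 3+c\delta(u)$ this forces the eigenvalues of $\sqrt{A^tA}$ close to $1$ by elementary algebra. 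The rotation $R$ is then simply the polar factor of $A$, so no Inverse Function Theorem or degree argument is needed in this part of the paper.
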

\begin{proof}  First of all, it obviously suffices to prove \AAA \eqref{local_version_of_main_estimate_isometric} \EEE in the regime where both deficits are sufficiently small, say 
\begin{equation}\label{isometric_isoperimetric_deficits_small}	
\AAA 0\leq \delta(u)\leq\delta_0\ll 1, \quad 0\leq \varepsilon(u)\leq\varepsilon_0\ll 1\,,
\end{equation} 
for some absolute constants $\delta_0, \varepsilon_0>0$ which will also be chosen sufficiently small later. By using \eqref{norm_of_grad_u_k_estimate_from_isometric_deficit} with $u$ instead of $u_k$, we have
\begin{equation*}\label{auxest_general}
\d2 |u|^2=\frac{1}{2}\d2|\t u|^2-\left(\frac{1}{2}\d2|\t u|^2-\d2|u|^2\right)\leq 1+c\delta(u)-\left(\frac{1}{2}\d2|\t u|^2-\d2|u|^2\right)\,,
\end{equation*}
and therefore \eqref{basic_trick_isometric_estimate_qualitative}, with $u$ instead of $u_k$, would now give us
\begin{equation*}\label{basic_estimates}
\Big(1-\varepsilon(u)\Big)^2\leq \left(\d2 |u|^2\right)\Big(1+c\delta(u)\Big)\leq \left(1+c\delta(u)-\left(\frac{1}{2}\d2|\t u|^2-\d2|u|^2\right)\right)\Big(1+c\delta(u)\Big)\,.
\end{equation*}
Since by \AAA \eqref{Poincare} we have \EEE $\frac{1}{2}\d2|\t u|^2-\d2|u|^2\geq 0$, we can rearrange the terms %, discard the nonpositive terms appearing on the right hand side 
and use \eqref{isometric_isoperimetric_deficits_small} % the fact that $0\leq\delta(u)\leq 1$, 
to arrive at the estimate \EEE
\begin{equation}\label{prefinal_estimate_general}
\d2 \big|\t u-\nabla u_h(0)P_T\big|^2\lesssim\ \frac{1}{2}\d2|\t u|^2-\d2 |u|^2 \lesssim \ \delta(u)+\varepsilon(u)\,,
\end{equation}
the first inequality in which, is justified as follows. Let %As we have mentioned in the \hyperref[Introduction]{Introduction}, 
$u_h:\overline{B_1}\mapsto \R^3$ be the harmonic continuation of $u$ in the interior of $B_1$, being taken componentwise. The quantity in the middle \AAA of \eqref{prefinal_estimate_general} \EEE is the deficit of $u$ in the $L^2$-Poincare inequality for maps with zero average on $\s$. For every $k\in\mathbb{N}$, let $H_{k}$ be the subspace of $W^{1,2}(\s;\R^3)$ consisting of vector fields whose components are all $k$-th order spherical harmonics \AAA(see also Appendix \ref{sec:C})\EEE, so that one has the orthogonal (with respect to the $W^{1,2}$-inner product) decomposition $W^{1,2}(\s;\R^3)=\bigoplus_{k=0}^\infty H_{k}$. Let also $\Pi_{k}$ be the corresponding orthogonal projection.
In our case of consideration, $\Pi_{0} u=\d2 u =0$, and it is straightforward to check that $\Pi_{1} u=\nabla u_h(0)x$. Since the first non-trivial eigenvalue of the Laplace-Beltrami operator on $\s$ is $\lambda_{1}=2$ and the second one is $\lambda_{2}=6$ \AAA(see \eqref{properties})\EEE, by orthogonally decomposing $u=\Pi_1u+(u-\Pi_1u)$, we have
\begin{align}\label{maps_w_o_linear_part_estimate}
\begin{split}
\frac{1}{2}\d2|\t u|^2-\d2|u|^2&= \frac{1}{2}\d2\big|\t u-\nabla u_h(0)P_T\big|^2-\d2\big|u-\nabla u_h(0)x\big|^2\\
&\geq\frac{1}{2}\d2\big|\t u-\nabla u_h(0)P_T\big|^2-\frac{1}{6}\d2\big|\t u-\nabla u_h(0)P_T\big|^2\\
&=\frac{1}{3} \d2\big|\t u-\nabla u_h(0)P_T\big|^2\,.
\end{split}
\end{align}
Hence, the only thing that is left to be justified \AAA in order to prove \eqref{local_version_of_main_estimate_isometric}, \EEE is why in \eqref{prefinal_estimate_general} the matrix 
\begin{equation}\label{def_of_A}
\AAA A:=\nabla u_h(0) 
\end{equation}
can be replaced by a matrix $R\in SO(3)$. In that respect, \AAA observe that by the mean-value property of harmonic functions, the basic $L^2$-estimate \ref{basic_harmonic_estimates} (whose simple proof is given at the end of Appendix \hyperref[sec:C]{C}) applied to the function $u-\mathrm{id}_{\s}$, and \eqref{local_family_of_maps_sufficient_for_isometric_case}, we obtain \EEE 
\begin{equation}\label{linear_part_theta_close_to_I}
\big|A-I_3\big|^2=\left|\dashint_{B_1}\nabla u_h-I_3\right|^2\leq \dashint_{B_1} \big|\nabla u_h-I_3\big|^2\leq\frac{3}{2}\d2\big|\t u-P_T\big|^2\leq \frac{3}{2}\theta^2\,.
\end{equation}
In particular, if $\theta\in (0,1)$ is sufficiently small,  \eqref{linear_part_theta_close_to_I} directly implies that
\begin{equation}\label{A_close_to_I_in_all_respects}
2\leq|A|^2\leq 4,\quad 2\leq|A^{-1}|^2\leq 4,\quad \frac{1}{2}\leq\mathrm{det}A\leq \frac{3}{2}\,.
\end{equation}
\EEE Using the polar decomposition $A=R_0\sqrt{A^tA}$ for some $R_0\in SO(3)$, \AAA the last inequality in \eqref{A_close_to_I_in_all_respects} (in particular the fact that $\mathrm{det}A>0$) and \eqref{linear_part_theta_close_to_I} yield \EEE
\begin{equation}\label{dist_linear_part_to_SO(3)_apriori_small}
\mathrm{dist}^2\big(A,SO(3)\big)=\big|A-R_0\big|^2\leq \big|A-I_3\big|^2\leq \frac{3}{2}\theta^2\,.
\end{equation}  
Labelling $0<\alpha_1\leq \alpha_2\leq \alpha_3$ the eigenvalues of $\sqrt{A^tA}$, and setting 
\begin{equation}\label{Lambda_definitions}
\lambda_i:=\alpha_i-1, \quad \lambda:=\sum_{i=1}^3\lambda_i, \quad \Lambda:=\left(\sum_{i=1}^3\lambda_i^2\right)^{\frac{1}{2}}\, \ \mathrm{for}\ i=1,2,3\,,
\end{equation} 
the inequality \eqref{dist_linear_part_to_SO(3)_apriori_small} \EEE can be rewritten as
\begin{equation}\label{square_L_inequality_isometric_case}
\Lambda^2=\AAA\mathrm{dist}^2(A,SO(3))=\EEE\sum_{i=1}^3(\alpha_i-1)^2=\left|\sqrt{A^tA}-I_3\right|^2\leq \frac{3}{2}\theta^2\ll 1\,.
\end{equation}
The key observation now is that when $\theta\in (0,1)$ is sufficiently small, a map $u\in \mathcal{A}_{M,\theta}$ satisfies the estimate
\begin{equation}\label{determinant_estimate_general}
|\mathrm{det}A-1|\lesssim \delta(u)+\varepsilon(u)\,.
\end{equation}
\AAA The proof of \eqref{determinant_estimate_general} is a bit more involved, and is therefore presented separately in Lemma \ref{separate_determinant_estimate}. \EEE Let us assume for the moment its validity, and see how to finish the proof of \AAA \eqref{local_version_of_main_estimate_isometric}. With the notations introduced in \eqref{def_of_A} and \eqref{Lambda_definitions}, \EEE we can write $$\mathrm{det}A-1=\prod_{i=1}^3\alpha_i-1=\prod_{i=1}^3 (\lambda_i+1)-1\,,$$
\EEE expand the polynomial in the eigenvalues \AAA and use \eqref{square_L_inequality_isometric_case}, \EEE to obtain
\begin{equation}\label{determinant_polynomial_isometric}
\mathrm{det}A-1=\lambda+\frac{1}{2}\big(\lambda^2-\Lambda^2\big)+\lambda_1\lambda_2\lambda_3\leq \lambda+\frac{1}{2}\big(\lambda^2-\Lambda^2\big)+3^{-3/2}\Lambda^3\leq \lambda+\frac{1}{2}\big(\lambda^2-\Lambda^2\big)+\tfrac{\theta}{3\sqrt{2}}\Lambda^2\,.
\end{equation}
\AAA Since $0<\tfrac{\theta}{3\sqrt2}<\tfrac{1}{3\sqrt{2}}<\tfrac{1}{4}$, after  rearranging terms in \eqref{determinant_polynomial_isometric} and using \eqref{determinant_estimate_general}, we get \EEE
\begin{equation}\label{L2_first_estimate_isometric}
\frac{\Lambda^2}{4}\leq \left(\lambda+\frac{\lambda^2}{2}\right)+|\mathrm{det}A-1|\implies \frac{\Lambda^2}{4}\leq \left(\lambda+\frac{\lambda^2}{2}\right)+c\Big(\delta(u)+\varepsilon(u)\Big)\,.
\end{equation}
In order to handle the term $\left(\lambda+\frac{\lambda^2}{2}\right)$, we proceed as follows. \AAA Using again the mean value property of harmonic functions, \eqref{basic_harmonic_estimates} applied to $u$ now, \EEE and the outcome of \eqref{norm_of_grad_u_k_estimate_from_isometric_deficit} with $u$ instead of $u_k$ here, we can estimate 
\begin{equation}\label{small_lambda_estimate_general}
\frac{|A|^2}{3}\AAA=\frac{1}{3}\left|\dashint_{B_1}\nabla u_h\right|^2\leq \frac{1}{3}\dashint_{B_1}|\nabla u_h|^2 \EEE\leq \frac{1}{2}\d2 |\t u|^2 \leq 1+c\delta(u)\,.
\end{equation}
With the notations introduced in \eqref{def_of_A} and \eqref{Lambda_definitions} we have
$$|A|^2=\sum_{i=1}^3\alpha_i^2=\sum_{i=1}^3(1+\lambda_i)^2=3+2\lambda+\Lambda^2\,,$$
and the last identity, together with \eqref{small_lambda_estimate_general}, implies that
\begin{equation}\label{small_lambda_estimate}
\lambda\leq -\frac{\Lambda^2}{2}+c\delta(u)\leq c\delta(u)\,.
\end{equation}
\EEE
Since $\lambda$ does not necessarily have a sign, we distinguish two cases:
\vspace{-0.5em}
\begin{itemize}
\item [$\mathrm{(}i\mathrm{)}$]  In the case $\lambda\leq0$, and since \AAA by \eqref{square_L_inequality_isometric_case} \EEE $|\lambda|\leq \sqrt{3}{\Lambda}\leq \frac{3}{\sqrt{2}} \theta\ll 1$, the term in the first parenthesis on the right hand side of \AAA \eqref{L2_first_estimate_isometric} \EEE is estimated by
\begin{equation*}
\lambda+\frac{\lambda^2}{2}\leq \lambda+\frac{3\theta}{2\sqrt{2}}|\lambda|=\left(1-\frac{3\theta}{2\sqrt{2}}\right)\lambda\leq 0\,,
\end{equation*}
since by choosing $\theta\in (0,1)$ even smaller if necessary, we can also achieve $1-\frac{3\theta}{2\sqrt{2}}>0$. The term $(\lambda+\frac{\lambda^2}{2})$ is therefore nonpositive in this case, and \eqref{L2_first_estimate_isometric} gives
\begin{equation*}\label{linear_part_is_close_to_so3_isometric}
\mathrm{dist}^2\big(A,SO(3)\big)=\Lambda^2\leq 4c\Big(\delta(u)+\varepsilon(u)\Big).
\end{equation*}	
\vspace{-2.8em}
\item [$\mathrm{(}ii\mathrm{)}$] In the case $\lambda> 0$, by \eqref{small_lambda_estimate} we have  $0<\lambda\leq c\delta(u)$, so again \AAA \eqref{L2_first_estimate_isometric} together with \eqref{isometric_isoperimetric_deficits_small} imply that \EEE
\begin{equation*}\label{case1}
\mathrm{dist}^2\big(A,SO(3)\big)=\Lambda^2 \lesssim \delta(u) +\delta^2(u)+c\big(\delta(u)+\varepsilon(u)\big)\lesssim \delta(u)+\varepsilon(u)\,.
\end{equation*}
\end{itemize}
\vspace{-1em}
\AAA In both cases, we obtain 
\begin{equation}\label{A_from_SO3}
|A-R_0|^2=\mathrm{dist}^2\big(A,SO(3)\big)\lesssim \delta(u)+\varepsilon(u)\,,
\end{equation}
and combining \eqref{A_from_SO3} with \eqref{prefinal_estimate_general} allows us to deduce \eqref{local_version_of_main_estimate_isometric} with $R:=R_0\in SO(3)$, and conclude.
\end{proof}

\AAA To complete the arguments, we finally give the proof of the estimate \eqref{determinant_estimate_general}, which for convenience of the reader we recall in the next lemma.

\begin{lemma}\label{separate_determinant_estimate}
\EEE Let $u\in \mathcal{A}_{M,\theta}$, defined in \eqref{local_family_of_maps_sufficient_for_isometric_case}. Then, the matrix $A:=\nabla u_h(0)$ $\rm{(}$see \eqref{def_of_A}$\rm{)}$ satisfies \begin{equation}\label{det_estmate_restated}
|\mathrm{det}A-1|\lesssim \delta(u)+\varepsilon(u)\,,
\end{equation}
where $\delta(u)$ and $\varepsilon(u)$ are as always defined by \eqref{def_isometric_deficit} and \eqref{def_isoperimetric_deficit} respectively, and are here supposed to further satisfy \eqref{isometric_isoperimetric_deficits_small}\,. 
\end{lemma}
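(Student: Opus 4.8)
The goal is to control $|\mathrm{det}A-1|$, where $A=\nabla u_h(0)$, by the two deficits. The natural idea is to relate $\mathrm{det}A$ to the signed volume $V_3(u)$, since $V_3$ is exactly the quantity the isoperimetric deficit $\varepsilon(u)$ penalizes, and then separately to relate $\mathrm{det}A$ to the local stretches controlled by $\delta(u)$. More precisely, I would prove a two-sided bound: (a) an upper bound $\mathrm{det}A \le 1 + c\,\delta(u)$, coming from comparing $\mathrm{det}A$ with $\tfrac12\d2|\t u|^2$ via the mean-value property and the computation already used in \eqref{norm_of_grad_u_k_estimate_from_isometric_deficit}, combined with the AM--GM inequality $|\mathrm{det}A|\le(|A|^2/3)^{3/2}$ applied to $A$ as in \eqref{small_lambda_estimate_general}; and (b) a lower bound $\mathrm{det}A \ge 1 - c(\delta(u)+\varepsilon(u))$, which is the harder direction and must bring in $V_3(u)$.

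For the lower bound, the key point is that $|V_3(u)|\ge 1-\varepsilon(u)$ by definition, so it suffices to show $\mathrm{det}A \ge |V_3(u)| - c\,\delta(u)$, i.e.\ that the linear part $Ax$ of (the harmonic extension of) $u$ does not produce less signed volume than $u$ itself, up to the isometric deficit. I would write $u = \Pi_1 u + (u-\Pi_1 u) = Ax + v$ with $\d2 v = 0$ and $v$ orthogonal to the linear spherical harmonics, and expand $V_3(u)=V_3(Ax+v)$ multilinearly, just as in the expansion \eqref{identity_difference_in_volumes}--\eqref{remainders_in_difference_in_volumes} of Subsection \ref{Reduction_to_Lipschitz_mappings}. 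The leading term is $V_3(Ax)=\mathrm{det}A$, and the remaining (at least one factor of $v$ or $\t v$) terms must be bounded by $\delta(u)$. Here one uses: the Cauchy--Schwarz and sharp Poincaré inequalities on $\s$; the apriori Lipschitz bound $\|\t u\|_{L^\infty}\le cM$ and hence $\|\t v\|_{L^\infty}\lesssim 1$ (since $\|A\|\lesssim 1$ by \eqref{A_close_to_I_in_all_respects}); the fact that, on the set where $\sigma_2\le 1$, the map $u$ is "almost short" so that $\tfrac12\d2|\t u|^2 - \d2|u|^2$ is $\lesssim\delta(u)$ (from \eqref{norm_of_grad_u_k_estimate_from_isometric_deficit}); and the Poincaré-deficit estimate \eqref{maps_w_o_linear_part_estimate}, which gives precisely $\d2|\t v|^2 = \d2|\t u - AP_T|^2 \lesssim \tfrac12\d2|\t u|^2-\d2|u|^2 \lesssim \delta(u)+\varepsilon(u)$. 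So $\|\t v\|_{L^2}^2 \lesssim \delta(u)+\varepsilon(u)$, and since we only need a bound by $\delta(u)+\varepsilon(u)$ this closes the estimate; combined with $|\mathrm{det}A|\ge |V_3(u)| - (\text{remainders}) \ge (1-\varepsilon(u)) - c(\delta(u)+\varepsilon(u))$ and the positivity $\mathrm{det}A>0$ from \eqref{A_close_to_I_in_all_respects}, we get $\mathrm{det}A \ge 1 - c(\delta(u)+\varepsilon(u))$.

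The main obstacle I anticipate is the bookkeeping in the multilinear expansion of $V_3(Ax+v)$: one has several cross terms involving one or two copies of $v$, and for each of them one must decide whether to spend an $L^\infty$ bound (on $A$, on $\t u$, or on $v-\bar v$ via $\|\t v\|$, noting $v$ has zero mean) or an $L^2$ bound, so that every term ends up being $\lesssim \|\t v\|_{L^2} + \|\t v\|_{L^2}^2 \lesssim \sqrt{\delta(u)+\varepsilon(u)} + (\delta(u)+\varepsilon(u))$ — but one must be careful that the \emph{linear-in-$\sqrt{\cdot}$} terms actually cancel or are absent. In fact they are: the first-order term in $v$ in the expansion of $V_3$ around $Ax$ is the first variation of $V_3$ at the linear map $Ax$ in the direction $v$, and since $v$ is $W^{1,2}$-orthogonal to the linear spherical harmonics $H_1$ while the first variation of $V_3$ at $Ax$ is a linear functional whose representative lies (after the relevant integration by parts, as in the identities of Appendix \ref{sec:B}) in $H_0\oplus H_1$, this term vanishes — leaving only quadratic-and-higher terms in $v$, each controlled by $\|\t v\|_{L^2}^2\lesssim\delta(u)+\varepsilon(u)$ or by $\|\t v\|_{L^2}\cdot\|\t v\|_{L^\infty}\lesssim \sqrt{\delta(u)+\varepsilon(u)}\cdot\sqrt{\delta(u)+\varepsilon(u)}$. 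If this orthogonality/vanishing of the first-order term is not quite clean, the fallback is to absorb a linear-in-$\sqrt{\delta(u)+\varepsilon(u)}$ term using \eqref{maps_w_o_linear_part_estimate} together with the upper bound (a) and a Young-inequality argument, which still yields \eqref{det_estmate_restated} possibly after shrinking $\theta$, $\delta_0$, $\varepsilon_0$ further.
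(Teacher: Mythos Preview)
Your proposal is correct and follows essentially the same route as the paper: expand $V_3(u)$ around the linear map $Ax$, show the first-order term vanishes because $v=u-Ax$ is $L^2$-orthogonal to $H_1$ (equivalently $\nabla v_h(0)=0$), and bound the quadratic and cubic remainders by $\|\t v\|_{L^2}^2\lesssim\delta(u)+\varepsilon(u)$ via \eqref{prefinal_estimate_general}. The only cosmetic difference is that the paper first substitutes $w:=A^{-1}(u-Ax)$ and uses the bulk identity $V_3(u)=\dashint_{B_1}\det\nabla u_h=\det A\cdot\dashint_{B_1}\det(I_3+\nabla w_h)$ from Appendix~\ref{sec:B}, so the expansion is literally the determinant around $I_3$; it then handles the two directions by a case split on $V_3(u)\gtrless 1$ rather than by your separate AM--GM upper bound, but the content is the same and your fallback argument is not needed.
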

\EEE\vspace{-1em}
\begin{proof}
The main trick is to  write the signed-volume in the isoperimetric deficit $\varepsilon(u)$ as \AAA the corresponding \textit{bulk integral} in $B_1$. In particular, using the identity \eqref{determinant_bulk_surface} (which we prove in Appendix \ref{sec:B}), we have \EEE
\begin{equation}\label{volume_term_1st_expansion_isometry}
V_3(u)=\dashint_{B_1} \mathrm{det}\nabla u_h=\mathrm{det}A\dashint_{B_1}  \mathrm{det}\big(I_3+\nabla w_h\big)\,,
\end{equation}
where 
\begin{equation}\label{defintition_of_w}
w(x):=A^{-1}\big(u(x)-Ax\big)\,.
\end{equation}
\AAA By the fact that $u\in \mathcal{A}_{M,\theta}$ and \eqref{A_close_to_I_in_all_respects}, \EEE the map $w$ also satisfies
\begin{equation}\label{w_average_0_tilde_w_Lipschitz_isometric}
\overline{w}:=\d2 w=0,\quad \|\t w\|_{L^\infty}=\|A^{-1}\t u-P_T\|_{L^{\infty}}\leq \tilde c:=2cM+\sqrt{2}\,,
\end{equation}
and we will not distinguish further between the universal constants $c$ and $\tilde c$.
Because of \eqref{prefinal_estimate_general}, \AAA \eqref{def_of_A} and \eqref{A_close_to_I_in_all_respects}\EEE, we actually get
\begin{equation}\label{prefinal_estimate_general_for_w}
\d2 |\t w|^2\leq |A^{-1}|^2\d2 \big|\t u-AP_T\big|^2\lesssim \delta(u)+\varepsilon(u)\,.
\end{equation}
Now, in the rightmost hand side of \eqref{volume_term_1st_expansion_isometry} we can use the expansion of the determinant around $I_3$, \AAA i.e., the identity \eqref{determinant_expansion_around_I} which is proved in Appendix \ref{sec:B}, \EEE according to which,
\begin{equation}\label{expansion_of_volume}
\dashint_{B_1} \mathrm{det}(I_3+\nabla w_h)=1+3\d2 \langle w,x\rangle+Q_{V_3}(w)+\d2 \big\langle w,\partial_{\tau_1}w\wedge\partial_{\tau_2}w\big\rangle\,, 
\end{equation}
where \AAA the quadratic form $Q_{V_3}(w)$ is explicitly given in \eqref{quadratic_form_Q_V_3}. \EEE % as we compute later in \eqref{quadratic_term_V3},
%\begin{equation*}
%Q_{V_3}(w):=\frac{3}{2}\d2\Big\langle w,(\mathrm{div}_{\s}w)x-\sum_{j=1}^3x_j\t w^j\Big\rangle. 
%\end{equation*}
Notice that the linear term is vanishing, because \AAA the definitions of $A:=\nabla u_h(0)$ and $w$ in \eqref{defintition_of_w}, \EEE together with the mean value property of harmonic functions, imply that \EEE
\begin{equation}\label{linear_term_vanishing_isometry}
3\d2 \langle w,x\rangle=\dashint_{B_1} \mathrm{div} w_h=\dashint_{B_1} \mathrm{Tr}(\nabla w_h)=\mathrm{Tr}\dashint_{B_1} \nabla w_h
=\mathrm{Tr}\left[A^{-1}\left(\dashint_{B_1} \nabla u_h(x)-A\right)\right]=0\,.
\end{equation}
For the higher order terms one can argue as follows. \AAA Recalling the notation $\Pi_k$ for the projections onto the subspaces $H_k$ of the $k$-th order spherical harmonics (see the comments after \eqref{prefinal_estimate_general}), we note that \eqref{defintition_of_w} and \eqref{w_average_0_tilde_w_Lipschitz_isometric} directly imply that $\Pi_0w=\Pi_1w=0$. \EEE Hence, by the Cauchy-Schwarz inequality and the sharp Poincare inequality on $\s$ for $w$ \AAA (see the comment just below \eqref{Poincare})\EEE, we obtain 
\begin{align}\label{Q_vol3_estimated_Dirichlet_energy}
\big|Q_{V_3}(w)\big|&=\frac{3}{2}\Bigg|\d2\Big\langle w,(\mathrm{div}_{\s}w)x-\sum_{j=1}^3x_j\t w^j\Big\rangle\Bigg|
\leq\frac{3}{2}\Big(\d2 |w|^2\Big)^{\frac{1}{2}}\Bigg(\d2 \big|(\mathrm{div}_{\s}w)x\big|^2+\Big|\sum_{j=1}^3x_j\t w^j\Big|^2\Bigg)^{\frac{1}{2}}\notag\\
&\leq \frac{3}{2}\Big(\frac{1}{6}\d2|\t w|^2\Big)^{\frac{1}{2}}\Bigg(\d2 |\t w:P_T|^2+\Big(\sum_{j=1}^3x_j^2\Big)\Big(\sum_{j=1}^3|\t w^j|^2 \Big)\Bigg)^{\frac{1}{2}}\leq \frac{3}{2\sqrt{2}}\d2 |\t w|^2\,,
\end{align} 
and by \textit{Wente's isoperimetric inequality} \AAA(see \eqref{AM-GM-isoperimetric-inequality} for $n=3$)\EEE,
\begin{equation}\label{highest_order_remainder_estimated_Dirichlet_energy}
\left|\d2 \big\langle w,\partial_{\tau_1}w\wedge\partial_{\tau_2}w\big\rangle\right|\leq\left(\frac{1}{2}\d2 |\t w|^2\right)^{\frac{3}{2}}.
\end{equation} 
Therefore, by \AAA \eqref{Q_vol3_estimated_Dirichlet_energy}, \eqref{highest_order_remainder_estimated_Dirichlet_energy} and \eqref{prefinal_estimate_general_for_w}, together with the assumption \eqref{isometric_isoperimetric_deficits_small}, \EEE we estimate
\begin{align}\label{quadratic_of_volume_isometric}
\begin{split}
\Big|Q_{V_3}(w)+\d2 \big\langle w,\partial_{\tau_1}w\wedge\partial_{\tau_2}w\big\rangle\Big|&\lesssim \d2 |\t w|^2+ \left(\d2 |\t w|^2\right)^{\frac{3}{2}}\\
&\lesssim \big(\delta(u)+\varepsilon(u)\big)+\big(\delta(u)+\varepsilon(u)\big)^{\frac{3}{2}}\\
&\lesssim \delta(u)+\varepsilon(u)\ll 1\,.
\end{split}
\end{align}

In particular, since $\mathrm{det}A>0$ \AAA(see \eqref{A_close_to_I_in_all_respects}), by \eqref{volume_term_1st_expansion_isometry}, \EEE the expansion in \eqref{expansion_of_volume}, \AAA \eqref{linear_term_vanishing_isometry} \EEE and \eqref{quadratic_of_volume_isometric}, we deduce that $V_3(u)>0$, and we can finally consider two cases:
\begin{itemize}
\item[$(i)$] If $V_3(u)>1\implies \varepsilon(u)=0$, then by \AAA combining \eqref{volume_term_1st_expansion_isometry} with \eqref{expansion_of_volume} and \eqref{linear_term_vanishing_isometry}, \EEE rearranging terms, and then using again \AAA \eqref{A_close_to_I_in_all_respects}, \eqref{quadratic_of_volume_isometric}, the isoperimetric inequality and \eqref{norm_of_grad_u_k_estimate_from_isometric_deficit} with $u$ instead of $u_k$ here, \EEE we obtain the estimate
\begin{align*}\label{detgradu_at_0-1_first}
\left|\mathrm{det}A-1\right|&\leq \big(V_3(u)-1\big)+\mathrm{det}A\Big|Q_{V_3}(w)+\d2 \big\langle w,\partial_{\tau_1}w\wedge \partial_{\tau_2}w\big\rangle\Big|\\
&{\leq}  \left(\frac{1}{2}\d2 |\t u|^2\right)^{\frac{3}{2}}-1+c\Big(\delta(u)+\varepsilon(u)\Big)\\
&\leq \big(1+c\delta(u)\big)^{\frac{3}{2}}-1+c\delta(u)\lesssim \delta(u)\,.\\[-30pt]
\end{align*}
\item[$(ii)$] If $0\leq V_3(u)\leq 1$, then we can again similarly estimate,
\begin{align*}
|1-\mathrm{det}A|&\leq \big|1-V_3(u)\big|+\mathrm{det}A\Big|Q_{V_3}(w)+\d2 \big\langle w,\partial_{\tau_1}w\wedge \partial_{\tau_2}w\big\rangle\Big|\\
&\leq \big(1-V_3(u)\big)_++c\big(\delta(u)+\varepsilon(u)\big)\\
&\lesssim \delta(u)+\varepsilon(u)\,.
\end{align*}
This finishes the proof of \eqref{det_estmate_restated} in both cases, and allows us to conclude. \qedhere
\end{itemize}
\end{proof}
\vspace{-1em}
\subsection{The generalization to dimensions $n\geq 4$: Proof of Theorem \ref{main_thm_isometric_case_n_greater_than_3}}\label{Subsection 3.4}

By following closely the steps of proof of Theorem \ref{main_thm_isometric_case}, one can also prove its generalization in dimensions $n\geq 4$, i.e., Theorem \ref{main_thm_isometric_case_n_greater_than_3}. Regarding \textit{the extra assumption on an apriori bound in the $L^{2(n-2)}$-norm of $\t u$ in the latter}, let us first make the following short remark. When $n=3$, $n-1=2(n-2)=2$ and the assumption that $\t u$ is apriori bounded in $L^2$ is obsolete in this case, since we have anyway seen that it suffices to prove Theorem \ref{main_thm_isometric_case} for maps $u\in W^{1,2}(\s;\R^3)$ for which $0<\delta(u)\ll 1$, which trivially implies the bound $\|\t u\|_{L^2}\leq \sqrt {10}$ (recall \eqref{these_u_s_bounded_in_w_1_2}). \textit{In higher dimensions, the assumption is imposed by the growth behaviour of the signed-volume term with respect to $\t u$}. 

Indeed,  as we will see next, apart from the obvious differences in the proof due to the change in dimension, the only essential difference appears when we are trying to implement the Lipschitz truncation argument of Subsection  \ref{Reduction_to_Lipschitz_mappings}, in order to control both the isometric and the isoperimetric deficit of the Lipschitz truncated map in terms of the ones of the original map $u$.

\begin{proof}[\AAA Proof of Theorem \ref{main_thm_isometric_case_n_greater_than_3}]

Let $n\geq 4$, $M>0$ and $u\in \dot{W}^{1,2(n-2)}(\S;\R^n)$ with $\|\t u\|_{L^{2(n-2)}}\leq M$. Applying the analogue of Lemma  \ref{Lipschitz_truncation_lemma} in $W^{1,2}(\S;\R^n)$ for $M_n:=2\sqrt{n-1}$, we obtain again $u_{M_n}\in W^{1,\infty}(\S;\R^n)$ with $\|\t u_{M_n}\|_{L^\infty}\lesssim M_n$ %\AAA(recall the notational convention for $\lesssim$ in the section of \hyperref[Notation]{Notation}) 
\EEE and for which, exactly as in the estimates \eqref{isometric_deficits_well_controlled} (with $\sigma_{M_n,n-1}$ in the place of $\sigma_{M,2}$ now) and \eqref{1st_easy_estimate_for_difference_of_isoperimetric_deficits} of Lemma  \ref{good_behaviour_of_deficits_in_terms_of_Lipschitz_truncation},
\begin{equation}\label{relation_of_deficits_general_n}
\delta(u_{M_n})\lesssim \delta(u) \ \ \mathrm{and}\ \ \varepsilon(u_{M_n})\leq \varepsilon(u)+\big|V_n(u)-V_n(u_{M_n})\big|\,.
\end{equation}

Since $V_n$ is now of order $n-1>2$ in $\t u$, it is of course not expected that one can have an estimate of the form of \eqref{difference_in_signed_volumes_behaves_well_in_dimension_3} without any further assumption, since $V_n(u)$ is not even finite if $u$ does not belong to $W^{1,n-1}(\S;\R^n)$ at least. Nevertheless, under the imposed assumption \AAA that $\|\t u\|_{L^{2(n-2)}}\leq M$, \EEE the difference of the corresponding signed volumes \AAA in \eqref{relation_of_deficits_general_n} \EEE can be controlled as follows. Assuming again without loss of generality that $0<\delta(u)\leq 1$, \AAA adopting the notation in \eqref{notation_for_mean} \EEE and using \AAA the fact that $\|\t u_{M_n}\|_{L^\infty}\lesssim M_n:=2\sqrt{n-1}$, \EEE as well as \eqref{size_of_the_set_where they_differ} (in dimension $n\geq 4$ now), \EEE  we can estimate 
\begin{align}\label{difference_in_volume_higher_d_first_estimates}
\Big|V_n(u)-V_n(u_{M_n})\Big|&=\left|\ds\Big\langle u-\overline{u},\bigwedge_{i=1}^{n-1}\partial_{\tau_i}u\Big\rangle-\ds\Big\langle u_{M_n}-\overline{u_{M_n}} ,\bigwedge_{i=1}^{n-1}\partial_{\tau_i}u_{M_n}\Big\rangle\right|  \nonumber \\
&\leq\left|\ds\Big\langle (u_{M_n}-u)-\overline{(u_{M_n}-u)} ,\bigwedge_{i=1}^{n-1}\partial_{\tau_i}u_{M_n}\Big\rangle\right|+\left|\ds\Big\langle u-\overline{u} ,\bigwedge_{i=1}^{n-1}\partial_{\tau_i}u_{M_n}-\bigwedge_{i=1}^{n-1}\partial_{\tau_i}u\Big\rangle\right| \nonumber \\
&\lesssim \ds\Big|(u_{M_n}-u)-\overline{(u_{M_n}-u)}\Big|+ R_n(u) %\left|\ds\Big\langle u-\overline{ u},\bigwedge_{i=1}^{n-1}\partial_{\tau_i}u_{M_n}-\bigwedge_{i=1}^{n-1}\partial_{\tau_i}u\Big\rangle\right| \nonumber 
\nonumber\\
&\lesssim\left(\ds\Big|\t u_{M_n}-\t u\Big|^2\right)^{\frac{1}{2}} + R_n(u)%\left|\ds\Big\langle u-\overline{ u},\bigwedge_{i=1}^{n-1}\partial_{\tau_i}u_{M_n}-\bigwedge_{i=1}^{n-1}\partial_{\tau_i}u\Big\rangle\right| \nonumber 
\nonumber\\
&\lesssim\delta(u)+ R_n(u)\EEE\,,
\end{align}
where 
\begin{equation}\label{remainder_term_in_volume_higher_dim}
R_n(u):= \left|\ds\Big\langle u-\overline{u},\bigwedge_{i=1}^{n-1}\partial_{\tau_i}u_{M_n}-\bigwedge_{i=1}^{n-1}\partial_{\tau_i}u\Big\rangle\right|\,. 
\end{equation}
\EEE
By the Sobolev embedding and our assumption, we further have 
\begin{equation}\label{Linfty_estimate}
\AAA \|u-\overline{u} \|_{L^\infty}\lesssim\left\|\t u\right\|_{L^{2(n-2)}}\lesssim M\,.
\end{equation}
Therefore, \AAA by the fact that $\{u_{M_n}=u\}\subseteq\{\t u_{M_n}=\t u\}$ $\mathcal{H}^{n-1}$-a.e., \AAA \eqref{Linfty_estimate}, \EEE the first inequality in \eqref{size_of_the_set_where they_differ} (in dimension $n\geq 4$), \AAA and the assumption that $\|\t u\|_{L^{2(n-2)}}\leq M$,  the remainder term in \AAA \eqref{remainder_term_in_volume_higher_dim} \EEE can be estimated further by
\begin{align}\label{R_n_estimate}
\begin{split}
R_n(u)&\sim\left|\int_{\{u\neq u_{M_n}\}}\Big\langle u-\overline{ u},\bigwedge_{i=1}^{n-1}\partial_{\tau_i}u_{M_n}-\bigwedge_{i=1}^{n-1}\partial_{\tau_i}u\Big\rangle\right|\lesssim M\int_{\{u\neq u_{M_n}\}}\left|\bigwedge_{i=1}^{n-1}\partial_{\tau_i}u_{M_n}-\bigwedge_{i=1}^{n-1}\partial_{\tau_i}u\right| \\
&\lesssim M\left(\mathcal{H}^{n-1}\Big(\{u\neq u_{M_n}\}\Big)+\int_{\{u\neq u_{M_n}\}}\big|\t u\big|^{n-1}\right)  \\
&\lesssim_M\left(\delta^2(u)+\left\|\t u\right\|_{L^{2(n-2)}}^{n-2}\left(\int_{\{u\neq u_{M_n}\}}|\t u|^2\right)^{\frac{1}{2}}\right) \\
%&\lesssim_{M} \delta^2(u)+\left(\int_{\{u\neq u_{M_n}\}\cap\{|\t u|\leq {M_n}\}}|\t u|^2+\int_{\{u\neq u_{M_n}\}\cap\{|\t u|> {M_n}\}}|\t u|^2\right)^{\frac{1}{2}} \nonumber\\
&\lesssim_{M}\delta^2(u)+\left(M_n^2\mathcal{H}^{n-1}\big(\{u\neq u_{M_n}\}\big)+\int_{\{|\t u|> {M_n}\}}|\t u|^2\right)^{\frac{1}{2}}\lesssim_{M}\delta^2(u)+\delta(u)\ \lesssim_{M}\delta(u)\,.
\end{split}
\end{align} 
Therefore, under this extra assumption for $n\geq 4$, \AAA in view of \eqref{difference_in_volume_higher_d_first_estimates} and \eqref{R_n_estimate}, \eqref{relation_of_deficits_general_n} \EEE implies again that
\begin{equation}\label{final_volume_estimate}
\big|V_n(u)-V_n(u_{M_n})\big|\lesssim_{M} \delta(u)\implies \varepsilon(u_{M_n})\lesssim_{M} \varepsilon(u)+\delta(u)\,. 
\end{equation} 
Hence, under the assumption that $\|\t u\|_{L^{2(n-2)}}\leq M$ when $n\geq 4$, we can again reduce to proving Theorem \ref{main_thm_isometric_case_n_greater_than_3} for Lipschitz maps that enjoy an apriori dimensional upper bound on their Lipschitz constant. The proof can then be continued exactly as in Subsections \ref{Further reduction to maps W1,2-close to the identity} and \ref{proof_of_local_isometric_case}, with the obvious modifications in the dimensional constants and the algebraic expressions involved.

For instance, and just for the sake of clarity, we note that in this higher dimensional setting, $\frac{1}{2}\d2 |\t u|^2$ should be replaced in the corresponding arguments by  $\frac{1}{n-1}\ds |\t u|^2$, but the arguments go through exactly in the same way, since \textit{$n-1$ is both the norm of the gradient of isometric maps from $\S$ to $\R^n$ and also the first nontrivial eigenvalue of $-\Delta_{\S}$} (the second being $2n$, \AAA see Appendix \ref{sec:C} and \eqref{maps_w_o_linear_part_estimate}\EEE). In this respect, the estimate  \eqref{prefinal_estimate_general} should of course be replaced by 
\begin{equation*}
\frac{n+1}{2n(n-1)}\ds \big|\t u-\nabla u_h(0)P_T\big|^2\lesssim \frac{1}{n-1}\ds|\t u|^2-\ds |u|^2 \lesssim_{M} \delta(u)+\varepsilon(u)\,,
\end{equation*}
and analogously to \eqref{expansion_of_volume}, the expansion of the signed-volume around the identity is now
\begin{equation*}
\dashint_{B_1} \mathrm{det}(I_n+\nabla w_h)=1+n\ds \langle w,x\rangle+Q_{V_n}(w)+o\left(\Big(\ds |\t w|^2\Big)\right).\\[3pt] 
\end{equation*}
Modulo these changes, the proof remains essentially unchanged, which is left to the reader to verify. \qedhere
\end{proof}

\begin{remark}
An interesting question would be whether for $n\geq 4$ the apriori bound on the $L^{2(n-2)}$-norm of $\t u$, imposed as an assumption in Theorem  \ref{main_thm_isometric_case_n_greater_than_3}, can be replaced by one in an $L^p$-norm for some $p\in (n-1,2(n-2))$. \textit{The previous approach indicates however that the exponent $2(n-2)=(n-1)+(n-3)$ in the assumption is the sharpest one}.

Indeed, let us assume that $\left\| \t u \right\|_{L^{n-1+\gamma}}\leq M$ for some $\gamma\in (0,n-3)$ and $M>0$. Then, for $\alpha \in (0,n-1)$ and $p>1$, we can apply Hölder's inequality and use again the analogues of the estimates in \eqref{size_of_the_set_where they_differ} for $n\geq 4$, to deduce as before that  
\begin{align*}\label{alternative_est_leading_nowhere}
\begin{split}
\int_{\{u\neq u_{M_n}\}}\big|\t u\big|^{n-1}&\leq \left(\int_{\{u\neq u_{M_n}\}}|\t u|^{\frac{p(n-1-\alpha)}{p-1}}\right)^{\frac{p-1}{p}}\left(\int_{\{u\neq u_{M_n}\}}|\t u|^{\alpha p}\right)^{\frac{1}{p}}\\
&\lesssim_p\big\|\t u\big\|_{L^{\frac{p(n-1-\alpha)}{p-1}}}^{n-1-\alpha}\left(M_n^{\alpha p}\mathcal{H}^{n-1}\big(\{u\neq u_{M_n}\}\big)+\int_{\{u\neq u_{M_n}\}\cap\{|\t u|>M_n\}}|\t u|^{\alpha p}\right)^{\frac{1}{p}}\,.
\end{split}
\end{align*}
As long as $\alpha p\leq 2$, by Hölder's inequality again, 
\begin{align*}
\int_{\{u\neq u_{M_n}\}\cap\{|\t u|>M_n\}}|\t u|^{\alpha p}\leq \Big(\mathcal{H}^{n-1}\big(\{u\neq u_{M_n}\}\big)\Big)^{\frac{2-\alpha p}{2}}\left(\int_{\{|\t u|>M_n\}}|\t u|^2\right)^{\frac{\alpha p}{2}}\,,
\end{align*}
and \eqref{size_of_the_set_where they_differ} (for $n\geq 4$) would finally give us 
\begin{equation}\label{end_exponent}
\int_{\{u\neq u_{M_n}\}}\big|\t u\big|^{n-1}\lesssim_{\alpha,p}\big\|\t u\big\|_{L^{\frac{p(n-1-\alpha)}{p-1}}}^{n-1-\alpha}\cdot\delta^{\frac{2}{p}}(u)\lesssim_{\alpha,p}M^{n-1-\alpha}\delta^{\frac{2}{p}}(u)\,,
\end{equation}
as long as $\AAA 1\leq \EEE\frac{p(n-1-\alpha)}{p-1}\leq n-1+\gamma$. Therefore, \AAA by \eqref{end_exponent} and following closely the estimates used to arrive at \eqref{difference_in_volume_higher_d_first_estimates} and \eqref{R_n_estimate}, \EEE we deduce that the optimal exponent with which $\delta(u)$ can appear \AAA in \eqref{final_volume_estimate} \EEE through these estimates is exactly
\begin{equation*}
\frac{2}{p'}=\frac{2(\gamma+\alpha)}{n-1+\gamma},\quad \mathrm{\ achieved\ for\ } p':=\frac{n-1+\gamma}{\gamma+\alpha}. 
\end{equation*}
But this value should also satisfy the inequality $\alpha p'\leq 2$, which implies that
\begin{equation*}
\alpha\leq \frac{2\gamma}{n-3+\gamma}\implies% \mathrm{\ which \ in \ its \ turn\ implies\ that\ } 
\frac{2}{p'}\leq \frac{2\gamma}{n-3+\gamma}<1\,,
\end{equation*}
for $0<\gamma <n-3$, i.e., the exponent of $\delta(u)$ \AAA in \eqref{final_volume_estimate} \EEE would become suboptimal.
\end{remark}
\vspace{-0.5em}

\section{Proof of Theorem \ref{main_thm_conformal_case}}\label{Section 4}
\subsection{Reduction to maps $W^{1,2}$-close to the $\mathrm{id}_{\s}$ and linearization of the problem}\label{Subsection 4.1}
We recall that for a map $u\in W^{1,2}(\s;\R^3)$ with $V_3(u)\neq 0$, its \textit{combined conformal-isoperimetric deficit} is denoted by
\begin{equation}\label{notation_for_conformal_isoperimetric_deficit}
\mathcal{E}_2(u):=\frac{\Big[D_2(u)\Big]^{\frac{3}{2}}}{|V_3(u)|}-1\geq 0\,,
\end{equation}
where $D_2(u)$, $V_3(u)$ are as in the statement of Theorem \ref{main_thm_conformal_case}, and that $\mathcal{E}_2(u)=0$ iff $u$ is a Möbius transformation of $\s$, up to a translation vector and a dilation factor.
 
To pass from the nonlinear deficit $\mathcal{E}_2$ to its linearized version, we make use of the following compactness result, whose proof can be found for instance in \cite{brezis1985convergence} or \cite[Lemma 2.1]{caldiroli2006dirichlet} (stated on $\R^2$ rather than $\s$ therein).

\begin{lemma}\label{compactness_lemma_for_conformal_n_3}
Let $(u_k)_{k\in \mathbb{N}}\subset W^{1,2}(\s;\R^3)$ be such that $V_3(u_k)\neq 0$, and for which
\begin{equation*}
\mathcal{E}_2(u_k)\to 0 \mathrm{\ \ as\ } k\to\infty\,.
\end{equation*}
Then, there exist Möbius transformations $(\phi_k)_{k\in \mathbb{N}}$ of $\s$, $(\lambda_k)_{k\in \mathbb{N}}\subset \R_+$ and $O\in O(3)$, such that
\begin{equation*}
\frac{1}{\lambda_k}\left(u_k\circ\phi_k-\d2 u_k\circ\phi_k\right)\to O\mathrm{id}_{\s} \ \ \mathrm{strongly\ in \ } W^{1,2}(\s;\R^3)\,.
\end{equation*}
\end{lemma}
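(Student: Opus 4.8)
\textbf{Proof plan for Lemma \ref{compactness_lemma_for_conformal_n_3}.}

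The plan is to exploit the full invariance group of the functional $\mathcal{E}_2$ to normalize the sequence, and then to extract a strong limit using the rigidity of the equality cases in \eqref{AM-GM-isoperimetric-inequality}. First I would recall that $\mathcal{E}_2$ is invariant under precomposition with Möbius transformations of $\s$, under translations of the target, and under positive scalings of the target. Hence, for each $k$, I may replace $u_k$ by a suitably chosen $\frac{1}{\lambda_k}(u_k\circ\phi_k-\overline{u_k\circ\phi_k})$ without changing the value of $\mathcal{E}_2(u_k)$, and so it suffices to produce, after such a normalization, a subsequence converging strongly in $W^{1,2}$ to $O\mathrm{id}_{\s}$. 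The natural normalization is to use the Möbius group to balance the ``conformal mass'' of $u_k$ (for instance to arrange that the pushforward measure $\frac{1}{3\omega_3}|\t u_k|^2\,dv_g$ has its ``center of mass'' at the origin in $\R^3$, in the sense used for concentration-compactness on $\s$), and then to rescale by $\lambda_k^2 := D_2(u_k)$ so that $D_2$ of the new map equals $1$, and to subtract the mean so that the new map has zero average. After this normalization the sequence (still called $u_k$) satisfies $\overline{u_k}=0$, $D_2(u_k)=\tfrac{1}{2}\d2|\t u_k|^2=1$, $|V_3(u_k)|\to 1$, and is therefore bounded in $W^{1,2}(\s;\R^3)$.

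Next I would pass to a subsequence converging weakly in $W^{1,2}$ and strongly in $L^2$ (by Rellich) to some $u$ with $\overline{u}=0$. The key point is to upgrade this to strong $W^{1,2}$-convergence and to identify $u$, which I would do exactly as in the proof of Lemma \ref{isometries_compactness_general}: by weak lower semicontinuity $\tfrac12\d2|\t u|^2\le 1$, while the chain of inequalities
\begin{equation*}
1\geq \frac{1}{2}\d2|\t u|^2\geq \sqrt{\d2\det(\t u^t\t u)}^{\,2/3}\cdot(\cdots)\geq |V_3(u)|^{2/3}
\end{equation*}
together with $|V_3(u_k)|\to 1$ and a continuity argument for $V_3$ under the available convergence forces $|V_3(u)|=1$ and hence equality throughout \eqref{AM-GM-isoperimetric-inequality}. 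The Poincaré inequality on $\s$ then pins $u$ down to $u=A\,\mathrm{id}_{\s}$ for some $A\in\R^{3\times3}$ with $|A|^2=3$, equality in the AM--GM step forces the singular values of $A$ to coincide, and equality in $\det\le(|A|^2/3)^{3/2}$ together with $|\det A|=1$ forces them all to equal $1$, i.e. $A=O\in O(3)$. Finally, since $\tfrac12\d2|\t u|^2=1=\lim_k\tfrac12\d2|\t u_k|^2$, the weak $W^{1,2}$-convergence with convergence of norms yields strong $W^{1,2}$-convergence $u_k\to O\mathrm{id}_{\s}$, which is the claim.

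The main obstacle is the continuity (or at least upper semicontinuity of $|V_3|$) under only weak $W^{1,2}$-convergence: $V_3$ is cubic in the gradient, so it is not weakly continuous on $W^{1,2}(\s;\R^3)$ without further input. This is precisely where the normalization via the Möbius group is essential — one must rule out the concentration of Dirichlet energy at points of $\s$ (bubbling), since a bubble would carry a full unit of $|V_3|$ away in the limit and make $|V_3(u)|<1$. Here I would invoke the concentration-compactness analysis underlying the cited references \cite{brezis1985convergence}, \cite{caldiroli2006dirichlet}: the conformal normalization spreads the energy so that no concentration occurs, the Jacobian (signed volume) integrand is then equi-integrable, and one may pass to the limit in $V_3$. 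With that in hand the rest is the rigidity argument above, which is routine given Lemma \ref{isometries_compactness_general}.
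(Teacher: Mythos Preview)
The paper does not give its own proof of this lemma; it simply quotes the result from \cite{brezis1985convergence} and \cite[Lemma~2.1]{caldiroli2006dirichlet}. Your plan is precisely the standard concentration-compactness route those references take: use the M\"obius invariance to balance the Dirichlet measure so that no bubbling occurs, pass $V_3$ to the limit, and invoke rigidity of equality in \eqref{AM-GM-isoperimetric-inequality}. You have correctly isolated the only nontrivial step (continuity of $V_3$ under weak $W^{1,2}$-convergence, obstructed by bubbling) and correctly attributed it to the concentration-compactness input.

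One small correction on the identification step: the line ``the Poincar\'e inequality then pins $u$ down to $u=Ax$'' is lifted from the proof of Lemma~\ref{isometries_compactness_general}, but there it relied on the pointwise bound $|\partial_{\tau_1}u\wedge\partial_{\tau_2}u|\le 1$ coming from the isometric deficit, which you do not have here. What you actually get from $D_2(u)=1=|V_3(u)|$ is $\mathcal E_2(u)=0$, hence (by the discussion after \eqref{def_of_combined_deficit}) $u$ is a M\"obius transformation of $\s$ up to translation and scaling; under the constraints $\overline u=0$ and $D_2(u)=1$ alone this could still be any centered $O(\phi-\overline\phi)$ with $\phi\in Conf(\s)$. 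You close this either by one further adjustment of the $\phi_k$ (precompose with the inverse of the limiting $\phi$, which preserves strong $W^{1,2}$-convergence), or by observing that your balancing condition on $|\t u_k|^2\,dv_g$ passes to the limit and forces the limiting $\phi$ to be an isometry (cf.\ the argument in Appendix~\ref{New_proof_of_Liouville_Appendix_A}).
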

Using this compactness lemma and the invariances of the combined conformal-isoperimetric deficit, with a contradiction argument as the one we used in the proof of Corollary \ref{it_suffices_to_prove_the_local_isometric_case}, one can now prove the following.

\begin{corollary}\label{it_suffices_to_prove_the_local_coformal_case}
It suffices to prove the $W^{1,2}$-local version of Theorem \ref{main_thm_conformal_case}, i.e., to prove it for maps 
\begin{equation}\label{local_family_of_maps_sufficient_for_conformal_case}
u\in \mathcal{B}_{\theta,\varepsilon_0}:=\left\{u \in W^{1,2}(\s;\R^3):
\begin{array}{lr}
(i)\ \ \ \d2 u=0\\
(ii)\ \big\| \t u-P_T\big\|_{L^2}\leq \theta\\
(iii)\ \ \mathcal{E}_2(u)\leq \varepsilon_0
\end{array}\right\},
\end{equation}
where $\theta, \varepsilon_0\in (0,1)$ are sufficiently small constants that will be suitably chosen later.
\end{corollary}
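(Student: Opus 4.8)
The plan is to argue by contradiction exactly as in the proof of Corollary \ref{it_suffices_to_prove_the_local_isometric_case}, but now exploiting the full set of invariances of the combined conformal-isoperimetric deficit $\mathcal{E}_2$ (translation, rotation, scaling, and precomposition with Möbius transformations of $\s$) to reduce an arbitrary map to one lying in the class $\mathcal{B}_{\theta,\varepsilon_0}$. First I would observe that $\mathcal{E}_2$, the Dirichlet energy $D_2$, and the signed volume $V_3$ all behave well under these operations: precisely, if $\psi$ is a Möbius transformation of $\s$, $\mu>0$, $R\in O(3)$ and $b\in\R^3$, then $\mathcal{E}_2\big(\mu R(u\circ\psi)+b\big)=\mathcal{E}_2(u)$, while the left-hand side of \eqref{main_estimate_conformal_case}, namely $\d2\big|\tfrac1\lambda\t u-\t\phi\big|^2$, transforms compatibly when one simultaneously replaces $\phi$ by $\mu R(\phi\circ\psi)$ and $\lambda$ by $\mu\lambda$ (and the additive constant $b$ drops out upon differentiation). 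Hence proving \eqref{main_estimate_conformal_case} for the normalized map is equivalent to proving it for the original map, with the same right-hand side.

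Next I would set up the contradiction. Suppose the local version holds for all maps in $\mathcal{B}_{\theta,\varepsilon_0}$ for suitable small $\theta,\varepsilon_0$, but the global statement of Theorem \ref{main_thm_conformal_case} fails. Then there is a sequence $(u_k)\subset W^{1,2}(\s;\R^3)$ with $V_3(u_k)\neq 0$ and, for each pair $(\phi,\lambda)$,
\begin{equation*}
\inf_{\phi,\lambda}\ \d2\Big|\tfrac{1}{\lambda}\t u_k-\t\phi\Big|^2 > k\,\mathcal{E}_2(u_k)\,,
\end{equation*}
which forces $\mathcal{E}_2(u_k)\to 0$ (the left-hand side being bounded, e.g. by choosing $\lambda$ the right scale and $\phi=\mathrm{id}_{\s}$, using that the infimum is finite since $u_k\in W^{1,2}$). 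Now apply Lemma \ref{compactness_lemma_for_conformal_n_3}: there are Möbius transformations $\phi_k$, scalars $\lambda_k>0$ and $O\in O(3)$ with $\tfrac{1}{\lambda_k}\big(u_k\circ\phi_k-\d2(u_k\circ\phi_k)\big)\to O\,\mathrm{id}_{\s}$ strongly in $W^{1,2}$. Replacing $u_k$ by the normalized map $\tilde u_k:=\tfrac{1}{\lambda_k}O^t\big(u_k\circ\phi_k-\d2(u_k\circ\phi_k)\big)$—which by the invariances above changes neither $\mathcal{E}_2$ nor the validity of the desired estimate (the optimal $(\phi,\lambda)$ for $u_k$ corresponds to one for $\tilde u_k$)—we obtain a sequence with $\d2\tilde u_k=0$, $\mathcal{E}_2(\tilde u_k)=\mathcal{E}_2(u_k)\to 0$, and $\tilde u_k\to\mathrm{id}_{\s}$ strongly in $W^{1,2}$. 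Thus for all $k$ large, $\tilde u_k\in\mathcal{B}_{\theta,\varepsilon_0}$, so by hypothesis there exist Möbius maps $\tilde\phi_k$ and $\tilde\lambda_k>0$ with $\d2\big|\tfrac{1}{\tilde\lambda_k}\t\tilde u_k-\t\tilde\phi_k\big|^2\lesssim\mathcal{E}_2(\tilde u_k)=\mathcal{E}_2(u_k)$. Undoing the normalization produces, for $u_k$, a competitor $(\phi,\lambda)$ achieving $\d2\big|\tfrac1\lambda\t u_k-\t\phi\big|^2\lesssim\mathcal{E}_2(u_k)$, contradicting the displayed inequality for $k$ large.

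The main point requiring care—the analogue of the subtlety that is absent in the isometric case—is the bookkeeping of how the left-hand side $\d2\big|\tfrac1\lambda\t u-\t\phi\big|^2$ transforms under the Möbius precomposition and rescaling used in the normalization, since here one must track both the rescaling factor $\lambda$ and the Möbius parameter $\phi$ simultaneously, and verify that an optimal (or near-optimal) pair for the normalized map pulls back to an admissible pair for the original map without loss in the constant. Once this equivariance is checked, the rest is the standard compactness-contradiction machinery, verbatim as in Corollary \ref{it_suffices_to_prove_the_local_isometric_case}. I do not anticipate any genuinely hard analytic step beyond invoking Lemma \ref{compactness_lemma_for_conformal_n_3}.
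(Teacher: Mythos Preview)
Your proposal is correct and follows essentially the same approach as the paper: reduce to the small-deficit regime by the trivial bound with $\phi=\mathrm{id}_{\s}$ and a suitable $\lambda$, run a contradiction argument, invoke Lemma~\ref{compactness_lemma_for_conformal_n_3}, and normalize via $\tilde u_k:=\tfrac{1}{\lambda_k}O^t\big(u_k\circ\phi_k-\d2 u_k\circ\phi_k\big)$ to land in $\mathcal{B}_{\theta,\varepsilon_0}$. The paper's proof is terser (it simply points back to Corollary~\ref{it_suffices_to_prove_the_local_isometric_case}), whereas you spell out the equivariance bookkeeping; your identification of the conformal invariance of the Dirichlet energy on $\s$ as the mechanism that makes the left-hand side transform correctly under M\"obius precomposition is exactly right.
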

\begin{proof}
The fact that without loss of generality we can assume $(i)$ is obvious because \eqref{main_estimate_conformal_case} is translation invariant. That we can assume property $(iii)$ is also immediate, because for every $u\in W^{1,2}(\s;\R^3)$ with $V_3(u)\neq 0$, choosing $\lambda:=\|\t u\|_{L^2}>0$ and $\phi:=\mathrm{id}_{\s}$ we have that  $\d2\big|\frac{1}{\|\t u\|_{L^2}}\t u-P_T\big|^2\leq 6$. Therefore, it suffices to prove the desired estimate in the \textit{small-deficit regime}.\\[-15pt]

Suppose now that we have proven Theorem \ref{main_thm_conformal_case} for maps in $\mathcal{B}_{\theta,\varepsilon_0}$, but for the sake of contradiction the theorem fails to hold globally. Then, for every $k\in \mathbb{N}$ there exists $u_k\in W^{1,2}(\s;\R^3)$ with $V_3(u_k)\neq 0$ such that for every pair $(\lambda,\phi)\in \R_+\times Conf(\s)$;
\begin{equation*}
\d2\left|\frac{1}{\lambda}\t u_k-\t \phi\right|^2\geq k\mathcal{E}_2(u_k)\,.
\end{equation*}
Choosing $(\lambda,\phi):=(\|\t u_k\|_{L^2},\mathrm{id}_{\s})$ we obtain
$\mathcal{E}_2(u_k)\leq \frac{6}{k}\to 0$, as $k\to \infty$. We can then use Lemma \ref{compactness_lemma_for_conformal_n_3}, and argue as in the end of the proof of Corollary \ref{it_suffices_to_prove_the_local_isometric_case}, to arrive at a contradiction for the corresponding maps \linebreak $\tilde u_k:=\frac{1}{\lambda_k}O^t\left(u_k\circ\phi_k-\d2 u_k\circ\phi_k\right)$.
\end{proof}

\begin{remark}\label{we_can_assume_volume_is_positive}
Note that $V_3(u)>0$ whenever $u\in \mathcal{B}_{\theta,\varepsilon_0}$ with $\theta\in (0,1)$ sufficiently small. Indeed, recalling the expansions and the estimates \eqref{identity_difference_in_volumes}-\eqref{bound_of_R_1_2_3_4}, with the $\mathrm{id}_{\s}$ in place of $u_M$ here, we can arrive exactly as in the second half of the proof of Lemma \ref{Lipschitz_truncation_lemma} \AAA(recall \eqref{difference_in_signed_volumes_behaves_well_in_dimension_3}) \EEE at the estimate
\begin{equation*}
|V_3(u)-1|=|V_3(u)-V_3(\mathrm{id}_{\s})|\lesssim \|\t u-P_T\|_{L^2}\leq \theta\ll 1\,.
\end{equation*}
\end{remark}

Having now reduced to showing Theorem \ref{main_thm_conformal_case} for mappings in $ \mathcal{B}_{\theta,\varepsilon_0}$, where $V_3(u)>0$, we can \textit{linearize the initial problem}, by making use of the following two lemmata.

\begin{lemma}\label{fixing_center_scale}
Given $\theta,\varepsilon_0 \in (0,1)$ sufficiently small, there exists $\tilde\theta\in (0,1)$ sufficiently small accordingly, so that after possibly replacing $\theta$ with $\tilde\theta$, we can assume that every $u\in \mathcal{B}_{\theta,\varepsilon_0}$ has the additional property that
\begin{equation}\label{fixing_scale}
\d2 \langle u,x\rangle= 1\,.
\end{equation}  
\end{lemma}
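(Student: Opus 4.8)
The plan is to use the scaling and rotational invariance of the combined conformal–isoperimetric deficit $\mathcal{E}_2$, together with the already-established $W^{1,2}$-closeness of $u$ to $\mathrm{id}_{\s}$, to normalize the ``scale'' functional $\d2\langle u,x\rangle$ to the value $1$ by composing $u$ with a suitable dilation $i_\lambda$ (equivalently, multiplying $u$ by a scalar) and possibly a rotation. First I would observe that, since $u\in\mathcal{B}_{\theta,\varepsilon_0}$ has $\d2 u=0$ and $\|\t u-P_T\|_{L^2}\le\theta$, the sharp Poincaré inequality on $\s$ gives $\d2|u|^2\le\tfrac12\d2|\t u|^2\le 1+O(\theta)$, while the quantity $\d2\langle u,x\rangle=\tfrac13\dashint_{B_1}\operatorname{div}u_h=\operatorname{Tr}\nabla u_h(0)=\operatorname{Tr}A$ (in the notation $A:=\nabla u_h(0)$ of Subsection \ref{proof_of_local_isometric_case}) is, by the harmonic-estimate $|A-I_3|^2\le\tfrac32\d2|\t u-P_T|^2\le\tfrac32\theta^2$, within $O(\theta)$ of $3$. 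Rescaling $u$ to $\lambda u$ multiplies $\d2\langle u,x\rangle$ by $\lambda$, so one can pick $\lambda:=1/\d2\langle u,x\rangle$, which is well-defined and close to $\tfrac13$ for $\theta$ small, to force \eqref{fixing_scale}.

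The key point to check is that this rescaling does not destroy the defining properties of $\mathcal{B}_{\theta,\varepsilon_0}$: property $(i)$ ($\d2 u=0$) is preserved under multiplication by a scalar; property $(iii)$ ($\mathcal{E}_2(u)\le\varepsilon_0$) is preserved exactly, because $\mathcal{E}_2$ is scaling invariant (it is homogeneous of degree $0$, as $D_2$ scales like $\lambda^2$, hence $[D_2]^{3/2}$ like $\lambda^3$, and $V_3$ like $\lambda^3$); and property $(ii)$, the $W^{1,2}$-closeness, is the only one that genuinely changes. Since $\lambda=1/\d2\langle u,x\rangle$ is $O(\theta)$-close to $1/3$, the rescaled map $\lambda u$ has $\|\t(\lambda u)-\tfrac13 P_T\|_{L^2}$ small, but its gradient is then close to $\tfrac13 P_T$, not to $P_T$. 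To fix this discrepancy one should build the additional normalization into a pre/post-composition with a Möbius transformation of $\s$ rather than a naive scalar multiple: the combined deficit is also invariant under precomposition with elements of $\mathrm{Conf}(\s)$, and the family $\phi_{\xi,\lambda}$ of Theorem \ref{spherical Liouville's Theorem} includes dilations $i_\lambda$ that, conjugated through stereographic projection, act on $\s$ and change $\d2\langle u,x\rangle$ continuously. So the cleaner route is: given $u\in\mathcal{B}_{\theta,\varepsilon_0}$, consider the one-parameter family $u\circ\phi_{e_3,t}$ (or the analogous family realizing an infinitesimal dilation), show by an Inverse/Implicit Function Theorem argument — using that $\frac{d}{dt}\big|_{t=1}\d2\langle u\circ\phi_{e_3,t},x\rangle$ is bounded away from $0$ near $u=\mathrm{id}_\s$ — that there is a choice of $t=t(u)$, close to the identity parameter, with $\d2\langle u\circ\phi_{e_3,t},x\rangle=1$; then absorb the resulting $O(\theta)$ perturbation of $\|\t u-P_T\|_{L^2}$ by passing from $\theta$ to a slightly larger $\tilde\theta$, exactly as the statement allows ("after possibly replacing $\theta$ with $\tilde\theta$").

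The main obstacle I expect is the bookkeeping around property $(ii)$: one must verify quantitatively that the Möbius (or scalar) normalization moves $\|\t u-P_T\|_{L^2}$ by at most $C\theta$, so that choosing $\tilde\theta:=C'\theta$ keeps the normalized map inside $\mathcal{B}_{\tilde\theta,\varepsilon_0}$, while simultaneously confirming that the continuous functional $u\mapsto \d2\langle u,x\rangle$ has the right sign of derivative along the normalizing family (this is where one uses that, at $u=\mathrm{id}_\s$, $\d2\langle\mathrm{id}_\s,x\rangle=1$ and the derivative along an infinitesimal dilation is a nonzero constant computable from the spherical-harmonic decomposition). Everything else — scaling invariance of $\mathcal{E}_2$, preservation of the zero-average condition, smallness of $|\d2\langle u,x\rangle-1|$ — is routine given the estimates already assembled in Subsections \ref{proof_of_local_isometric_case} and \ref{Subsection 4.1} and Remark \ref{we_can_assume_volume_is_positive}.
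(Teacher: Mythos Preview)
Your first instinct --- replace $u$ by $\lambda u$ with $\lambda:=1/\d2\langle u,x\rangle$ --- is exactly what the paper does, and it works without any further complication. The place where your proposal goes wrong is a factor-of-three slip in the identity
\[
\d2\langle u,x\rangle=\tfrac13\dashint_{B_1}\operatorname{div}u_h=\tfrac13\operatorname{Tr}\nabla u_h(0)=\tfrac13\operatorname{Tr}A\,,
\]
not $\operatorname{Tr}A$ as you wrote. Since $|A-I_3|\lesssim\theta$, we get $\operatorname{Tr}A=3+O(\theta)$ and hence $\d2\langle u,x\rangle=1+O(\theta)$, i.e.\ $\lambda$ is close to $1$, not to $\tfrac13$. (The paper obtains this even more directly: $\big|\d2\langle u,x\rangle-1\big|=\big|\d2\langle u-x,x\rangle\big|\le\big(\tfrac12\d2|\t u-P_T|^2\big)^{1/2}\le\theta/\sqrt2$, by Cauchy--Schwarz and the sharp Poincar\'e inequality.) With $\lambda$ close to $1$, the rescaled map $\tilde u:=\lambda u$ plainly keeps $\d2\tilde u=0$ and $\mathcal{E}_2(\tilde u)=\mathcal{E}_2(u)$, and
\[
\|\t\tilde u-P_T\|_{L^2}\le\lambda\|\t u-P_T\|_{L^2}+|\lambda-1|\,\|P_T\|_{L^2}\le C\theta=:\tilde\theta\,,
\]
which is the whole bookkeeping for property~(ii).

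In other words, the ``main obstacle'' you anticipate does not exist once the arithmetic is corrected; the scalar rescaling already lands back in $\mathcal{B}_{\tilde\theta,\varepsilon_0}$. Your fallback plan --- precomposition with a M\"obius map $\phi_{\xi,t}$ and an Inverse Function Theorem argument --- is unnecessary here and actually more delicate than you suggest: precomposition does \emph{not} preserve the zero-mean condition $(i)$ (so you would have to re-center afterwards), and the functional $t\mapsto\d2\langle u\circ\phi_{e_3,t},x\rangle$ does not act on $u$ by a simple scalar, so the derivative computation is not as clean as the scalar one. That machinery is saved for the genuinely non-scalar normalization in Lemma~\ref{fixingMobius}, where one has to kill the full $H_{3,0}$-projection.
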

\begin{proof}
Let $u\in \mathcal{B}_{\theta,\varepsilon_0}$ \AAA(defined in \eqref{local_family_of_maps_sufficient_for_conformal_case}) \EEE with $0<\theta,\varepsilon_0\ll 1$, and set 
\begin{equation}\label{choice_of_scale}
\lambda_u:=\d2 \langle u,x\rangle\,.
\end{equation} By the cancellation property $\d2 x=0$ and the sharp Poincare inequality on $\s$ \AAA(see \eqref{Poincare}) \EEE we have,
\begin{align*}
\begin{split}
\big|\lambda_u-1\big|&=\left|\d2 \big\langle (u-x)-\overline{(u-x)},x\big\rangle\right|\leq \d2 \Big|(u-x)-\overline{(u-x)}\Big|\\[3pt]
&\leq\left(\d2\Big|(u-x)-\overline{(u-x)}\Big|^2\right)^{\frac{1}{2}}\leq \left(\frac{1}{2}\d2 |\t u-P_T|^2\right)^{\frac{1}{2}}\leq \frac{\theta}{\sqrt 2}\,,
\end{split}
\end{align*}
i.e.,
\begin{equation}\label{lambda_u_estimate}
0<1-\frac{\theta}{\sqrt{2}}\leq\lambda_u\leq1+\frac{\theta}{\sqrt{2}}\,.
\end{equation}
Hence, setting $\tilde{u}:=\frac{u}{\lambda_u}$, \AAA by \eqref{local_family_of_maps_sufficient_for_conformal_case} and \eqref{choice_of_scale} \EEE we have $$\d2 \tilde{u}=0,\quad \d2 \langle \tilde{u},x\rangle =1,\quad \mathcal{E}_2(\tilde{u})\leq \varepsilon_0\,,$$ and by using \eqref{lambda_u_estimate},
\begin{align*}
\big\|\t \tilde{u}-P_T\big\|_{L^2}\leq \frac{1}{\lambda_u}\big\|\t u-P_T\big\|_{L^2}+\left|\frac{1}{\lambda_u}-1\right|\big\|P_T\big\|_{L^2}\leq\tilde\theta:=\left(\frac{2}{1-\frac{\theta}{\sqrt 2}}\right)\theta\,.
\end{align*} 
Although the precise value of the new constant $\tilde \theta>0$ is not of major importance, what is more important is that $\lim_{\theta\to 0^+}\tilde \theta =0$, so that when we will finally choose $\theta>0$ sufficiently small, $\tilde \theta>0$ will be sufficiently small accordingly.\qedhere 
\end{proof}
%From now on, we always assume that \eqref{fixing_scale} holds and set $w:=u-\mathrm{id}_{\s}$, as if the closest Möbius map to $u$ in terms of its combined conformal-isoperimetric deficit is really the identity map on $\s$. 
\begin{lemma}\label{quadratization_of_conformal_deficit}
There exists a constant $\beta:=\beta(\theta,\varepsilon_0)>0$ that tends to $0$ as  $(\theta,\varepsilon_0)\to (0,0)$, such that the following holds. If $u\in \mathcal{B}_{\theta,\varepsilon_0}$ satisfies \eqref{fixing_scale} and one sets $w:=u-\mathrm{id}_{\s}$, then 
\begin{equation}\label{reduction_to_quadratic_estimate}
Q_3(w)\leq \mathcal{E}_2(u)+\beta\d2 |\t w|^2\,,
\end{equation} 
where 
\begin{equation}\label{quadratic_forms_without_proofs_}
Q_3(w):=\frac{3}{4}\d2 |\t w|^2-\frac{3}{2}\d2 \Big\langle w, (\mathrm{div}_{\s}w)x-\sum_{j=1}^3 x_j\t w^j\Big\rangle\,. 
\end{equation}
\end{lemma}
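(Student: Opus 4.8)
The plan is to derive \eqref{reduction_to_quadratic_estimate} by Taylor-expanding the nonlinear deficit $\mathcal E_2(u)=[D_2(u)]^{3/2}/V_3(u)-1$ about $w=0$ (note that $V_3(u)>0$ by Remark~\ref{we_can_assume_volume_is_positive}, so the modulus in its definition may be dropped), identifying its quadratic part with $Q_3(w)$, and absorbing every remaining term into $\beta\,\d2|\t w|^2$ with $\beta\to 0$ as $\theta\to 0$. The structural point used throughout is that the two normalizations $\d2 u=0$ and $\d2\langle u,x\rangle=1$ — the latter being exactly \eqref{fixing_scale}, equivalently $\d2\langle w,x\rangle=0$ — make all first-order terms of both $D_2(u)$ and $V_3(u)$ cancel. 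Moreover, since $u\in\mathcal B_{\theta,\varepsilon_0}$, we have the smallness bound $D_2(w):=\tfrac12\d2|\t w|^2=\tfrac12\d2|\t u-P_T|^2\le\tfrac12\theta^2$ at our disposal.

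First I would treat the numerator. Writing $\t u=P_T+\t w$ and expanding the square, $2D_2(u)=\d2|P_T|^2+2\,\d2\langle P_T,\t w\rangle+\d2|\t w|^2$. Since $|P_T|^2\equiv n-1=2$ on $\s$, while $\langle P_T,\t w\rangle=\mathrm{div}_{\s}w$ pointwise and $\d2\mathrm{div}_{\s}w=2\,\d2\langle w,x\rangle=0$, the cross term drops, giving $D_2(u)=1+D_2(w)$ \emph{exactly} (the Dirichlet energy being a quadratic functional); hence $[D_2(u)]^{3/2}=1+\tfrac32D_2(w)+O(D_2(w)^2)$ with an absolute implied constant for $\theta$ small. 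For the denominator I would pass to the bulk via \eqref{determinant_bulk_surface}: since the componentwise harmonic extension of $\mathrm{id}_{\s}$ is $\mathrm{id}_{B_1}$, we have $u_h=\mathrm{id}_{B_1}+w_h$, so $\nabla u_h=I_3+\nabla w_h$, and the expansion \eqref{determinant_expansion_around_I} yields
\[
V_3(u)=\dashint_{B_1}\det\!\big(I_3+\nabla w_h\big)=1+3\,\d2\langle w,x\rangle+Q_{V_3}(w)+V_3(w)=1+Q_{V_3}(w)+V_3(w),
\]
the linear term vanishing by \eqref{fixing_scale}, as in \eqref{linear_term_vanishing_isometry}. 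Here $Q_{V_3}$ is the quadratic form of \eqref{quadratic_form_Q_V_3} and $V_3(w)=\d2\langle w,\partial_{\tau_1}w\wedge\partial_{\tau_2}w\rangle$; by Wente's inequality \eqref{AM-GM-isoperimetric-inequality} one has $|V_3(w)|\le[D_2(w)]^{3/2}\le\tfrac{\theta}{\sqrt2}D_2(w)$, and by Cauchy--Schwarz together with the sharp Poincar\'e inequality on $\s$ (exactly as in \eqref{Q_vol3_estimated_Dirichlet_energy}) $|Q_{V_3}(w)|\lesssim D_2(w)$; in particular the denominator equals $1+O(\theta^2)$ and stays $\ge\tfrac12$ once $\theta\ll 1$.

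Finally I would combine the two expansions. With $a:=[D_2(u)]^{3/2}-1$ and $b:=V_3(u)-1$, so that $|a|,|b|\lesssim D_2(w)\le\tfrac12\theta^2$, we get $\mathcal E_2(u)=\frac{1+a}{1+b}-1=(a-b)+\frac{b^2-ab}{1+b}$, with the last summand $O(D_2(w)^2)$. Since $a-b=\tfrac32D_2(w)-Q_{V_3}(w)-V_3(w)+O(D_2(w)^2)$ and $Q_3(w)=\tfrac34\d2|\t w|^2-Q_{V_3}(w)=\tfrac32D_2(w)-Q_{V_3}(w)$ (the subtracted term being precisely $Q_{V_3}(w)$, as used in \eqref{Q_vol3_estimated_Dirichlet_energy}), we obtain
\[
\mathcal E_2(u)=Q_3(w)-V_3(w)+O\!\big(D_2(w)^2\big),\qquad \big|{-}V_3(w)+O(D_2(w)^2)\big|\lesssim[D_2(w)]^{3/2}\le\tfrac{\theta}{2\sqrt2}\,\d2|\t w|^2,
\]
which is \eqref{reduction_to_quadratic_estimate} with $\beta=\beta(\theta)\lesssim\theta$ (one may write it as $\beta(\theta,\varepsilon_0)$ for consistency, although $\varepsilon_0$ does not actually enter). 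The only slightly delicate part is the bookkeeping — verifying that \emph{every} first-order term truly cancels, which is where both normalizations are essential, and that $Q_3$ as written in \eqref{quadratic_forms_without_proofs_} is exactly $\tfrac32D_2(w)-Q_{V_3}(w)$. There is no genuine analytic obstacle, since the hard inputs (the bulk identities of Appendix~\ref{sec:B}, Wente's inequality, and the sharp Poincar\'e inequality) are all already in hand.
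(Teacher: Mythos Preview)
Your argument is correct and follows the same Taylor-expansion strategy as the paper: both expand $D_2(u)$ and $V_3(u)$ around the identity, use the two normalizations to kill the linear terms, and control the remainders via Poincar\'e and Wente. The only genuine difference is in the final algebraic step. The paper rewrites the deficit multiplicatively as $[D_2(u)]^{3/2}=(1+\mathcal E_2(u))\,V_3(u)$ and rearranges, which produces the cross term $\mathcal E_2(u)\cdot Q_{V_3}(w)$; this is then bounded using $\mathcal E_2(u)\le\varepsilon_0$, so the resulting constant is $\beta=\sqrt{27/8}\,\varepsilon_0+2^{-3/2}(1+\varepsilon_0)\theta+\tfrac{\theta^2}{2}$. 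Your rearrangement $\tfrac{1+a}{1+b}-1=(a-b)+\tfrac{b^2-ab}{1+b}$ keeps everything in terms of $a,b=O(D_2(w))$, so no appeal to the smallness of $\mathcal E_2(u)$ is needed and $\beta$ depends on $\theta$ alone. This is a mild but genuine simplification; the paper's version is not wrong, just slightly less economical in its bookkeeping.
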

\begin{proof}
For $u$ as in the statement of the lemma, property \eqref{fixing_scale} can be rewritten as
\begin{equation}\label{linear_term_in_the_expansions_is_vanishing}
\d2 \mathrm{div}_{\s}w=2\d2 \langle w,x\rangle= 2\left(\d2 \langle u,x\rangle -1\right)=0\,.
\end{equation}
Then,
\begin{align}\label{D_3_expansion}
\begin{split}
\big[D_2(u)\big]^{\frac{3}{2}}&=\left(1+\d2 \mathrm{div}_{\s}w+\frac{1}{2}\d2 |\t w|^2\right)^{\frac{3}{2}}=\left(1+\frac{1}{2}\d2|\t w|^2\right)^{\frac{3}{2}}\\
&=1+\frac{3}{4}\d2 |\t w|^2+\mathcal{O}\Big(\Big(\d2 |\t w|^2\Big)^2\Big)\,.
\end{split}
\end{align}
Since $\frac{d^2}{dt^2}\Big|_{t=0}(1+t)^{\frac{3}{2}}=\frac{3}{4}$, we can take $\theta\in(0,1)$ small enough so that \AAA (by \eqref{local_family_of_maps_sufficient_for_conformal_case}$(ii)$) \EEE the higher-order term in the expansion \AAA\eqref{D_3_expansion} \EEE is estimated by
\begin{equation}\label{D_3_remainder}
\left|\mathcal{O}\Big(\Big(\d2 |\t w|^2\Big)^2\Big)\right|\leq \frac{1}{2}\left(\d2 |\t w|^2\right)^2\leq \frac{\theta^2}{2}\d2 |\t w|^2\,.
\end{equation}
\AAA Regarding the expansion of the signed volume term $V_3(u)$, as we calculate in detail in Lemma \ref{integral_identities} and by using \eqref{linear_term_in_the_expansions_is_vanishing}, we have \EEE
\begin{equation}\label{surface_volume_term_for_conformal}
V_3(u)=V_3(\mathrm{id}_{\s}+w)=1+Q_{V_3}(w)+V_3(w)\,,
\end{equation}
where 
\begin{equation}\label{Q_v_3_again}
Q_{V_3}(w):=\frac{3}{2}\d2 \Big\langle w,(\mathrm{div}_{\s}w)x-\sum_{j=1}^3x_j\nabla_Tw^j\Big\rangle\,.
\end{equation}
\AAA Hence, by using the expansions \eqref{D_3_expansion} and \eqref{surface_volume_term_for_conformal} in the definition \eqref{notation_for_conformal_isoperimetric_deficit} of the deficit, we obtain \EEE
\begin{equation*}%\label{expansion_rewritten}
%\begin{split}
1+\frac{3}{4}\d2 |\t w|^2+\mathcal{O}\Big(\Big(\d2 |\t w|^2\Big)^2\Big)= \Big(1+\mathcal{E}_2(u)\Big)\Big(1+Q_{V_3}(w)+\d2 \big\langle w,\partial_{\tau_1}w\wedge\partial_{\tau_2}w\big\rangle\Big)\,,
\end{equation*}
\AAA and after rearranging terms, \EEE
\begin{equation}\label{expansion_rewritten}
Q_3(w)= \mathcal{E}_2(u)+\mathcal{E}_2(u)\cdot Q_{V_3}(w)+\big(1+\mathcal{E}_2(u)\big)\d2 \big\langle w,\partial_{\tau_1}w\wedge\partial_{\tau_2}w\big\rangle-\mathcal{O}\Big(\Big(\d2 |\t w|^2\Big)^2\Big)\,.
\end{equation}
Arguing exactly as in \eqref{Q_vol3_estimated_Dirichlet_energy} (with the Poincare inequality being applied with constant $\frac{1}{2}$ \AAA instead of $\frac{1}{6}$ \EEE in this case) and \eqref{highest_order_remainder_estimated_Dirichlet_energy}, \AAA and using again \eqref{local_family_of_maps_sufficient_for_conformal_case}$(ii)$, \EEE we have 
\begin{equation}\label{remainder_terms_estimate}
\big|Q_{V_3}(w)\big|\leq \sqrt{\frac{27}{8}}\d2 |\t w|^2\,, \quad \left|\d2 \big\langle w,\partial_{\tau_1}w\wedge\partial_{\tau_2}w\big\rangle\right|\leq\left(\frac{1}{2}\d2 |\t w|^2\right)^{\frac{3}{2}}\leq 2^{-3/2}\theta\d2 |\t w|^2\,.
\end{equation} 
\AAA Therefore, \eqref{expansion_rewritten}, the estimates \eqref{D_3_remainder} and \eqref{remainder_terms_estimate} for the remainder terms, and \eqref{local_family_of_maps_sufficient_for_conformal_case}$(iii)$ imply that\EEE
\begin{align*}
\begin{split}
Q_3(w)&\leq \mathcal{E}_2(u)+\mathcal{E}_2(u)| Q_{V_3}(w)|+\big(1+\mathcal{E}_2(u)\big)\Big|\d2 \big\langle w,\partial_{\tau_1}w\wedge\partial_{\tau_2}w\big\rangle\Big|+\Big|\mathcal{O}\Big(\Big(\d2 |\t w|^2\Big)^2\Big)\Big|\\
%&\leq \mathcal{E}_2(u)+\varepsilon_0\big|Q_{V_3}(w)\big|+\big(1+\varepsilon_0\big)\left(\frac{1}{2}\d2 |\t w|^2\right)^{\frac{3}{2}}+\left|\mathcal{O}\Big(\Big(\d2 |\t w|^2\Big)^2\Big)\right|\\[3pt]
&\leq \mathcal{E}_2(u)+\beta\d2|\t w|^2\,,
\end{split}
\end{align*}
where the precise value of the constant is $\beta:=\sqrt{\frac{27}{8}}\varepsilon_0+2^{-3/2}(1+\varepsilon_0)\theta+ \frac{\theta^2}{2}$.
\end{proof}
\AAA In view of Lemma \ref{quadratization_of_conformal_deficit}, \EEE if we thus choose $\varepsilon_0\in (0,1)$ sufficiently small and then $\theta\in (0,1)$ sufficiently small accordingly, the last term on the right hand side of \eqref{reduction_to_quadratic_estimate} can be set to be a sufficiently small multiple of the Dirichlet energy of $w$. Therefore, we can move our focus of attention on the coercivity properties of the resulting quadratic form $Q_3$ \AAA defined in \eqref{quadratic_forms_without_proofs_}\EEE, which is just the second derivative of the nonlinear combined conformal-isoperimetric deficit $\mathcal{E}_2(u)$ at the $\mathrm{id}_{\s}$. This will be the content of the next subsection. 
\subsection{On the coercivity of the quadratic form $Q_3$}\label{Subsection 4.2}
\textit{For the most part of this subsection the results hold true in every dimension $n\geq 3$}. Since we will use them also in Section \ref{sec: 5 linear_stability}, where we prove linear stability estimates in all dimensions, \textit{\AAA we also denote here the ambient dimension $3$ with the general letter $n$\EEE} (in order to avoid the repetition of the arguments in \AAA Section \ref{sec: 5 linear_stability}\EEE), and hope that no confusion will be caused to the reader. Our goal is to examine the coercivity properties of the quadratic form $Q_n$ \AAA in \eqref{quadratic_forms_without_proofs_}\EEE. By the reductions we have performed \AAA(see \eqref{local_family_of_maps_sufficient_for_conformal_case} and \eqref{fixing_scale})\EEE, this can be considered in the space 
\begin{equation}\label{H_space}
H_n:=\left\{ w\in W^{1,2}(\S;\mathbb{R}^n)\colon  \ds w = 0,\  \ds \langle w,x\rangle= 0\ \right\}\,.
\end{equation}
Similarly to \AAA the notation introduced in the proof of Proposition \ref{local_version_main_theorem_isometric_case} \EEE in Subsection \ref{proof_of_local_isometric_case}, for every $k\geq 1$ we define  \textit{$H_{n,k}$ to be the linear subspace of $H_n$ consisting of those maps in $H_n$, all the components of which are $k$-th order spherical harmonics} \AAA(cf.~also Appendix \ref{sec:C})\EEE, %It is well known that every element of $H_{n,k}$ is the restriction on $\S$ of an ($\R^n$-valued) homogeneous harmonic polynomial of degree $k$, the subspaces $(H_{n,k})_{k=1}^\infty$ are pairwise orthogonal with respect to the $L^2$-inner product, each one is finite dimensional and $H_n$ admits the $L^2$-orthogonal decomposition $H_n=\bigoplus\limits_{k=1}^\infty H_{n,k}$.
and also define
\begin{equation}\label{Hksol_def}
\tilde H_{n,k}:=\left\{ w_h:\overline {B_1}\mapsto \R^n :
\begin{array}{lr}
\ \Delta w_h\ =\ 0 \mathrm{\ in \ } B_1 \\
\ w_h|_{\S} \in H_{n,k}
\end{array}\right\},
\end{equation}
so that $\bigoplus\limits_{k=1}^{\infty} \tilde H_{n,k}$ is a $W^{1,2}$-orthogonal decomposition of the vector space of harmonic maps $w_h:\overline {B_1}\mapsto\R^n$ for which $w_h(0)=0$ and $\mathrm{Tr}\nabla w_h(0)=0$, \AAA the last identities following immediately from their equivalent formulation on $\S$ in \eqref{H_space}\EEE. Actually, for every $k\geq 1$ we can further consider the $W^{1,2}(B_1)$-Helmholtz decomposition
\begin{equation}\label{Hktilde_decomposition}
\tilde H_{n,k}=\tilde H_{n,k,\mathrm{sol}}\oplus \tilde H_{n,k,\mathrm{sol}}^{\bot}\,,
\end{equation}
where 
\begin{equation}\label{tildeHksol_definition}
\tilde H_{n,k,\mathrm{sol}}:=\left\{w_h\in \tilde H_{n,k}\colon \mathrm{div}w_h\equiv 0 \mathrm{\ in \ } B_1 \right\}\,,
\end{equation}
and $\tilde {H}_{n,k,\mathrm{sol}}^{\bot}$ is its orthogonal complement in $W^{1,2}(B_1;\R^n)$. In view of the $k$-homogeneity of the maps in $\tilde{H}_{n,k}$ \AAA in \eqref{Hksol_def}\EEE, we can write \AAA the equivalent to \eqref{Hktilde_decomposition} \EEE $W^{1,2}$-decomposition also on $\S$, namely
\begin{equation}\label{Hk_decomposition}
H_{n,k}=H_{n,k,\mathrm{sol}}\oplus H_{n,k,\mathrm{sol}}^{\bot}\,,
\end{equation} 
where
\begin{equation}\label{Hksol_definition}
H_{n,k,\mathrm{sol}}:=\left\{ w\in H_{n,k}: w_h\in \tilde H_{n,k,\mathrm{sol}}\right\},
\end{equation}
and $H_{n,k,\mathrm{sol}}^{\bot}$ is its $W^{1,2}(\S;\R^n)$-orthogonal complement. \AAA Hence, adopting from now on all these notations introduced in \eqref{H_space}-\eqref{Hksol_definition}, \EEE let us also denote 
\begin{equation}\label{def_dimensions}
N_{n,k}:=\mathrm{\ dim}H_{n,k}<\infty,\ N_{1,n,k}:=\mathrm{\ dim}H_{n,k,\mathrm{sol}},\ N_{2,n,k}:=\mathrm{\ dim}H_{n,k,\mathrm{sol}}^{\bot}\,,
\end{equation}
so that $N_{n,k}\ = N_{1,n,k}+N_{2,n,k}$.

Recall also that the second derivative of the signed-volume term $V_n$ at the $\mathrm{id}_{\S}$ corresponds to the bilinear form
\begin{equation}\label{quadratic_form_of_volume}
Q_{V_n}(v,w):=\frac{n}{2}\ds \big\langle v,A(w)\big\rangle \mathrm{\ \ \ for\ }v,w\in H_n\,,
\end{equation}
where the associated linear first-order differential operator $A$ is defined as
\begin{equation}\label{Aoperator}
A(w):=(\mathrm{div}_{\S}w)x-\sum_{j=1}^n x_j\nabla_Tw^j \mathrm{\ \ \ for\ } w\in H_n\,.
\end{equation}
\AAA For $n=3$, the expression \eqref{quadratic_form_of_volume} for the bilinear form $Q_{V_n}(\cdot,\cdot):H_n\times H_n\mapsto \R$ is essentially derived in the proof of Lemma \ref{integral_identities}, see \eqref{determinant_expansion_around_I},\eqref{quadratic_form_Q_V_3} and \eqref{quadratic_term_V3} therein.  Another \textit{intrinsic calculation} for its computation in any dimension is also given in Lemma \ref{expansions_n} at the end of Appendix \ref{sec:B}. \EEE 

\textit{The main feature that we are going to use in this subsection is the fine interplay between the operator $A$ and the above defined spaces, as it is properly described in the following.}
\begin{lemma}\label{A_invariances}
For every $k\geq 1$, the operator $A$ \AAA defined in \eqref{Aoperator} \EEE is a linear self-adjoint isomorphism of the spaces $H_{n,k,\mathrm{sol}}$ and $H_{n,k,\mathrm{sol}}^{\bot}$ \AAA defined in and after \eqref{Hksol_definition}, \EEE with respect to the $W^{1,2}$-inner product.
\end{lemma}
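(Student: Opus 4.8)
The plan is to prove Lemma~\ref{A_invariances} by reducing everything to an identity valid on individual spherical harmonics. First I would show that $A$ maps $H_{n,k}$ into $W^{1,2}(\S;\R^n)$ with all components being $k$-th order spherical harmonics again, and that its image still lies in $H_n$, i.e.\ that $\ds A(w)=0$ and $\ds\langle A(w),x\rangle=0$. The first of these follows by integrating $A(w)=(\divt w)x-\sum_j x_j\t w^j$ against constants and using the divergence theorem on $\S$ together with the fact that $\Delta_\S$ has no kernel on mean-zero functions; the second follows similarly from the constraint $\ds\langle w,x\rangle=0$ defining $H_n$. To see that $A$ preserves $H_{n,k}$ componentwise, the cleanest route is to recall that the entries of $A(w)$ are built linearly from $w^j$, $x_j$, and $\t w^j$, and that the product of a degree-one harmonic polynomial (a coordinate function) with a degree-$k$ spherical harmonic, after projecting back to $\S$ via the radial structure, stays in the span of degree-$(k-1)$ and degree-$(k+1)$ harmonics; however — and this is the key algebraic cancellation — the specific combination in $A$ is exactly the one that kills the degree-$(k\pm1)$ pieces, leaving only the degree-$k$ component. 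I would verify this by writing $w_h$ for the harmonic extension, noting $A(w)$ is (up to normalization) the tangential part of $\mathrm{div}(w_h\otimes x - x\otimes w_h)$-type expressions, and using homogeneity: a $k$-homogeneous harmonic vector field gets sent to a $k$-homogeneous (not necessarily harmonic) vector field whose restriction to $\S$ is then decomposed — but the point is that on $\S$ the operator $A$ commutes with $\Delta_\S$, hence preserves each eigenspace, hence preserves $H_{n,k}$.

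Next I would establish self-adjointness. This amounts to showing $Q_{V_n}(v,w)=Q_{V_n}(w,v)$ for $v,w\in H_n$, which is itself a symmetry statement for the Hessian of $V_n$ and follows from the fact that $V_n$ is a smooth functional (its second derivative is automatically a symmetric bilinear form), or can be checked directly by an integration-by-parts computation on $\S$: expanding $\ds\langle v, (\divt w)x-\sum_j x_j\t w^j\rangle$ and moving derivatives off $w$ onto $v$ and $x$, the boundary-free nature of $\S$ and the symmetry of the second fundamental form of $\S$ (which is $I_x$) produce exactly the transposed expression. Since $A$ preserves $H_{n,k}$ and is self-adjoint on $H_n$ with respect to $\langle\cdot,\cdot\rangle_{W^{1,2}}$ — here one must be a little careful: self-adjointness with respect to the $W^{1,2}$-inner product rather than the $L^2$-one uses that on each fixed eigenspace $H_{n,k}$ the two inner products differ only by the scalar factor $1+\lambda_k$, so self-adjointness in $L^2$ on $H_{n,k}$ upgrades to self-adjointness in $W^{1,2}$ on $H_{n,k}$. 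Then $A$ restricted to the finite-dimensional space $H_{n,k}$ is a self-adjoint endomorphism, and to conclude it is an isomorphism of $H_{n,k,\mathrm{sol}}$ and of $H_{n,k,\mathrm{sol}}^\bot$ separately it suffices to show (a) $A$ maps each of these two subspaces into itself, and (b) $A$ has trivial kernel on each.

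For (a), the invariance of $H_{n,k,\mathrm{sol}}$ under $A$ should follow from a direct computation showing $\mathrm{div}\,(A(w))_h\equiv 0$ in $B_1$ whenever $\mathrm{div}\,w_h\equiv 0$ — i.e.\ $A$ intertwines with taking the solenoidal harmonic extension — after which invariance of the orthogonal complement is automatic from self-adjointness with respect to $W^{1,2}$. For (b), triviality of the kernel is where I expect the main obstacle to lie: one must rule out $A(w)=0$ for nonzero $w$ in each piece. The natural approach is to diagonalize $A$ explicitly on $H_{n,k}$ using the theory of vector spherical harmonics — decomposing $H_{n,k}$ into the three standard types (gradient, curl, and ``normal'' pieces, matching the indexing $i=1,2,3$ appearing in $H_{n,k,i}$ in the paper) and computing the eigenvalue of $A$ on each type as an explicit rational function of $n$ and $k$. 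One then checks these eigenvalues are nonzero for all $k\geq 1$ except precisely on the two distinguished subspaces $H_{n,1,2}$ and $H_{n,2,3}$ that constitute $H_{n,0}$, the kernel of $Q_n$ — consistent with Theorem~\ref{main_coercivity_estimate_conformal_general_dimension}. Since Lemma~\ref{A_invariances} as stated asserts $A$ is an isomorphism on \emph{each} $H_{n,k,\mathrm{sol}}$ and $H_{n,k,\mathrm{sol}}^\bot$ for \emph{every} $k\geq 1$, I would need the eigenvalue computation to show nonvanishing on all of $H_{n,k,\mathrm{sol}}$ and all of $H_{n,k,\mathrm{sol}}^\bot$ for each fixed $k$ — the $k=1$, $k=2$ special behavior being absorbed into the fact that the kernel subspaces $H_{n,1,2}\subset H_{n,1}$ and $H_{n,2,3}\subset H_{n,2}$ are \emph{not} of solenoidal/non-solenoidal type but rather sit inside one of those two pieces, so that the decomposition into $\mathrm{sol}$ and $\mathrm{sol}^\bot$ does not see them as kernel directions. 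Making this bookkeeping precise via the explicit spectral decomposition of $A$ is the crux; everything else is integration by parts on a closed manifold.
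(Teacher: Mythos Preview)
Your overall architecture matches the paper's: establish self-adjointness, show $A$ preserves $H_{n,k}$, show it preserves $H_{n,k,\mathrm{sol}}$ (hence also its orthogonal complement by self-adjointness), then establish trivial kernel. But there is a genuine conceptual error in your kernel argument that would derail the proof.

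You write that the eigenvalues of $A$ are ``nonzero for all $k\geq 1$ except precisely on the two distinguished subspaces $H_{n,1,2}$ and $H_{n,2,3}$''. This is false: $H_{n,1,2}\oplus H_{n,2,3}=H_{n,0}$ is the kernel of the quadratic form $Q_n$, \emph{not} of $A$. The eigenvalues of $A$ (computed in the paper's Theorem~\ref{eigenvalue_decomposition}) are $\sigma_{n,k,1}=-k$, $\sigma_{n,k,2}=1$, $\sigma_{n,k,3}=k+n-2$, all nonzero for every $k\geq 1$; in particular $A$ acts on $H_{n,1,2}$ as the identity and on $H_{n,2,3}$ as multiplication by $n$. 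There is no ``$k=1$, $k=2$ special behavior'' to absorb. Your attempted reconciliation --- that $H_{n,1,2}$ and $H_{n,2,3}$ ``are not of solenoidal/non-solenoidal type'' --- is also incorrect: by definition $H_{n,1,2}\subset H_{n,1,\mathrm{sol}}$ and $H_{n,2,3}=H_{n,2,\mathrm{sol}}^\bot$.

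For the solenoidal part the paper avoids the eigenvalue machinery entirely and gives a short direct argument you are missing: if $w\in H_{n,k,\mathrm{sol}}$ with $A(w)=0$, then since the normal and tangential components of $A(w)$ are pointwise orthogonal, both $\divt w=0$ and $\sum_j x_j\t w^j=0$ on $\S$. Combining $\mathrm{div}\,w_h\equiv 0$ in $B_1$ with $\divt w=0$ on $\S$ and $k$-homogeneity forces $\langle w,x\rangle\equiv 0$. Testing the tangential equation against $w$ itself and integrating by parts then yields $\ds|w|^2=0$. The isomorphism on $H_{n,k,\mathrm{sol}}^\bot$ is indeed deferred to the subsequent eigenvalue computation, where $\sigma_{n,k,3}=k+n-2\neq 0$.

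One smaller point: your route to ``$A$ preserves $H_{n,k}$'' via commutation with $\Delta_\S$ is circular as written (commutation \emph{follows from} eigenspace preservation, not the other way around, unless you verify it independently). The paper instead rewrites $A(w)$ on $\S$ in terms of the full gradient and divergence of the harmonic extension $w_h$, observes that $[A(w)]_h=(\mathrm{div}\,w_h)\,x-\sum_j x_j\nabla w_h^j$ is again a $k$-homogeneous harmonic polynomial in $B_1$, and checks the two integral constraints defining $H_n$ directly.
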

\vspace{-1em}
\begin{proof}
First of all, it is immediate that $A$ is self-adjoint with respect to the $L^2$-inner product in $H_n$, \AAA since it arises as the second derivative of $V_n$ at the $\mathrm{id}_{\S}$, but it is also easy to verify directly after integrating by parts  that for any $v,w\in H_n$,
\begin{equation*}
\ds \langle v,A(w)\rangle=\ds \langle A(v),w\rangle\,.
\end{equation*} 
Note also that, since $(H_{n,k,\mathrm{sol}})_{k\geq 1}$ and $(H_{n,k,\mathrm{sol}}^{\bot})_{k\geq 1}$ are subspaces of the $k$-th order spherical harmonics, the $W^{1,2}$- and the $L^2$-inner products restricted on these subspaces are equivalent (see \eqref{properties}).
\EEE It is also easy to check that for every $k\geq 1$,
\begin{equation}\label{AleavesH_nk_invariant}
w\in H_{n,k}\implies A(w)\in H_{n,k}\,.
\end{equation}
Indeed, \AAA as we mention in the beginning of Appendix \ref{sec:C}, \EEE for $k\geq 1$ fixed and $w \in H_{n,k}$, its harmonic extension  $w_h$ in $\overline {B_1}$ is an $\R^n$-valued homogeneous harmonic polynomial of degree $k$, and $ \forall j=1,2,\dots,n$, 
\begin{equation*}
\nabla w_h^j\ =\ \t w^j+kw^jx,  \ \ \ \mathrm{div}w_h\ =\mathrm{div}_{\S} w+k\langle w,x\rangle \ \mathrm{\ on\ } \S\,.
\end{equation*}
Therefore, the operator $A$ can alternatively be rewritten as
\begin{align}\label{Equation_for_full_gradient_general}
A(w)&=(\mathrm{div}_{\S}w)x-\sum_{j=1}^n x_j\nabla_Tw^j =(\mathrm{div}w_h)x-\sum_{j=1}^n x_j\nabla w_h^j \ \ \mathrm{on} \ \S\,.
\end{align}
Writing $A$ in terms of the full gradient and divergence operators on $\S$ \AAA as in \eqref{Equation_for_full_gradient_general}, we see that \EEE 
\begin{align}\label{mean_value_A_is_0}
\ds A(w)&=\ds \Big((\mathrm{div}w_h)x-\sum_{j=1}^n x_j\nabla w_h^j\Big)=\frac{1}{n}\dashint_{B_1} \Big(\nabla\mathrm{div}w_h-\sum_{j=1}^{n}\partial_j\nabla w_h^j\Big)=0\,,
\end{align}
and \AAA by \eqref{H_space}, \EEE
\begin{align}\label{mean_value_normal_part_of_A_is_0}
\ds \big\langle A(w),x\big\rangle&=\ds \mathrm{div}_{\S}w=(n-1)\ds \langle w,x\rangle=0\,.
\end{align}
It is also straightforward to verify that 
\begin{equation}\label{A(w)_h_formula}
[A(w)]_h=(\mathrm{div}w_h)x-\sum_{j=1}^n x_j\nabla w_h^j \ \ \ \mathrm{in\ } \overline{B_1}\,,
\end{equation} 
so $[A(w)]_h$ is also an $\R^n$-valued homogeneous harmonic polynomial of degree $k$, and therefore its restriction on $\S$ is an $\R^n$-valued $k$-th order spherical harmonic. \AAA In total, \eqref{mean_value_A_is_0}-\eqref{A(w)_h_formula} yield the implication in \eqref{AleavesH_nk_invariant}. \EEE
Directly from \eqref{A(w)_h_formula} one can also verify that $A$ leaves $H_{n,k,\mathrm{sol}}$ invariant, i.e.,
\begin{equation*}
w\in H_{n,k,\mathrm{sol}}\implies \mathrm{div}[A(w)]_h\equiv 0 \ \ \mathrm{in\ }B_1\,, 
\end{equation*}
as well. It remains to be checked that 
\begin{equation}\label{zero_kernel_in_H_n_sol}
\mathrm{ker}A=\{0\}\ \ \mathrm{in\ } H_{n,k,\mathrm{sol}}\,. 
\end{equation}
Indeed, let $w\in H_{n,k,\mathrm{sol}}$ be such that 
\begin{equation*}\label{w_in_kernel_of_A}
\hspace{-0.3em}A(w):=(\mathrm{div}_{\S}w)x-\sum_{j=1}^n x_j\nabla_Tw^j=0\AAA\iff |A(w)|^2=(\mathrm{div}_{\S}w)^2+\Big|\sum_{j=1}^n x_j\nabla_Tw^j\Big|^2\equiv 0\EEE \ \ \mathrm{on} \ \S\,.
\end{equation*}
\AAA Note that since $w\in H_{n,k,\mathrm{sol}}$ is the restriction on $\S$ of an $\R^n$-valued homogeneous harmonic polynomial of degree $k$ (hence smooth up to the boundary), the above equation holds true in the classical sense. Hence, by orthogonality, \EEE both the normal and the tangential part of $A(w)$ would have to vanish identically, namely,
\begin{equation}\label{normal_tangential_parts_vanishing}
\mathrm{div}_{\S}w=0 \ \ \mathrm{and} \ \ \sum_{j=1}^nx_j\t w^j=0 \ \ \mathrm{on}\ \S\,.
\end{equation}
By the definition of $H_{n,k,\mathrm{sol}}$ \AAA in \eqref{Hksol_definition} \EEE we have that $\mathrm{div}w_h\equiv0$ in $B_1$, and therefore \AAA \eqref{normal_tangential_parts_vanishing} \EEE implies that
\begin{equation}\label{normal_part_vanishing_identically}
\langle w,x\rangle=\frac{1}{k}(\mathrm{div}w_h-\mathrm{div}_{\S}w)=0 \ \ \mathrm{on} \ \S\,.
\end{equation}
Testing now the second one of the equations in \eqref{normal_tangential_parts_vanishing} with the vector field $w$ itself, integrating  by parts on $\S$ and using \eqref{normal_part_vanishing_identically}, we obtain 
\begin{align}\label{very_same_calculation}
\begin{split}
0 &\ =\ - \ds\Big\langle \sum_{j=1}^nx_j\nabla_Tw^j,w\Big\rangle=-\sum_{j=1}^n \ds\Big\langle \nabla_Tw^j,x_jw^T\Big\rangle\\
%&\ =\ds \langle w,x\rangle\Big(\mathrm{div}_{\S}w-\mathrm{div}_{\S}\big(\langle w,x\rangle x\big)\Big)+\sum_{j=1}^n\ds w^j\Big\langle e_j-x_jx,w\Big\rangle \\[3pt]
&\ =\ds |w|^2-n\ds \langle w,x\rangle^2=\ds |w|^2\,,
\end{split}
\end{align}
i.e., $w\equiv 0$ on $\S$. This concludes the proof \AAA of \eqref{zero_kernel_in_H_n_sol}, \EEE and thus the proof of the fact that $A$ is a self-adjoint linear isomorphism of $H_{n,k,\mathrm{sol}}$. Hence, $A$ leaves $H_{n,k,\mathrm{sol}}^{\bot}$ invariant as well, and is actually also an isomorphism of it, as we will see next. \qedhere
\end{proof}
As a consequence of Lemma \ref{A_invariances}, each one of the finite-dimensional  subspaces $(H_{n,k,\mathrm{sol}})_{k\geq 1}$ and $(H_{n,k,\mathrm{sol}}^{\bot})_{k\geq 1}$ admit an eigenvalue decomposition with respect to $A$ \AAA(cf.~\cite[Chapter 8, Theorem 4.3]{lang1987linear}). \EEE

\begin{theorem}\label{eigenvalue_decomposition} 
The following statements hold.
\vspace{-0.75em}
\begin{itemize}	
\item[$\rm{(}$i$\rm{)}$] For every $k\geq 1$, the subspace $H_{n,k,\mathrm{sol}}$ \AAA in \eqref{Hksol_definition}, \EEE has an eigenvalue decomposition with respect to the operator A, \AAA defined in \eqref{Aoperator}, \EEE  as
\begin{equation}\label{H_n_k_sol_eigenvalue_decomposition}
H_{n,k,\mathrm{sol}}=H_{n,k,1}\oplus H_{n,k,2}\,,
\end{equation}
where $H_{n,k,1}$ is the eigenspace of $A$ corresponding to the eigenvalue $\sigma_{n,k,1}:=-k$ and $H_{n,k,2}$ is the one corresponding to the eigenvalue $\sigma_{n,k,2}:=1$.
\vspace{-0.75em}
\item[$\rm{(}$ii$\rm{)}$]  For every $k\geq 1$, the subspace $H_{n,k,3}:=H_{n,k,\mathrm{sol}}^{\bot}$ is an eigenspace with respect to A corresponding to the eigenvalue $\sigma_{n,k,3}:=k+n-2$\,.
\end{itemize}
\end{theorem}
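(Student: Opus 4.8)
The plan is to read off the three eigenvalues directly from the formula \eqref{A(w)_h_formula} for the harmonic extension of $A(w)$, after rewriting it in a more transparent form. For $w\in H_{n,k}$ with harmonic extension $w_h$ (a homogeneous harmonic polynomial of degree $k$), the elementary identity $\sum_{j}x_j\partial_iw_h^j=\partial_i\langle x,w_h\rangle-w_h^i$ converts \eqref{A(w)_h_formula} into
\begin{equation}\label{Aw_rewritten}
[A(w)]_h=p\,x-\nabla q+w_h,\qquad p:=\mathrm{div}\,w_h,\quad q:=\langle x,w_h\rangle,
\end{equation}
where $p$ is a homogeneous harmonic polynomial of degree $k-1$ and $q$ a homogeneous polynomial of degree $k+1$ with $\Delta q=2p$. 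I would also recall that on $H_{n,k}$ the $W^{1,2}$- and $L^2$-inner products are proportional (cf.\ the proof of Lemma \ref{A_invariances}), so orthogonality need not be tracked with care.

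For $(i)$, on $H_{n,k,\mathrm{sol}}$ one has $p\equiv0$, hence $q$ is harmonic and \eqref{Aw_rewritten} reads $[A(w)]_h=w_h-\nabla q$. I would decompose $w_h=\tfrac{1}{k+1}\nabla q+v_h$ with $v_h:=w_h-\tfrac{1}{k+1}\nabla q$: by Euler's identity $\langle x,\nabla q\rangle=(k+1)q$ one gets $[A(\tfrac{1}{k+1}\nabla q)]_h=-\tfrac{k}{k+1}\nabla q$, while $v_h$ is solenoidal with $\langle x,v_h\rangle=q-q=0$, so $[A(v)]_h=v_h$ by \eqref{Aw_rewritten}. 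Hence, with $H_{n,k,1}:=\{(\nabla q)|_{\S}\colon q\text{ homogeneous harmonic of degree }k+1\}$ (eigenvalue $-k$) and $H_{n,k,2}:=\{v\in H_{n,k,\mathrm{sol}}\colon\langle v,x\rangle\equiv0\text{ on }\S\}$ (eigenvalue $1$), one has $H_{n,k,\mathrm{sol}}=H_{n,k,1}+H_{n,k,2}$; the sum is direct (if $\nabla q\in H_{n,k,2}$ then $0=\langle x,\nabla q\rangle=(k+1)q$) and orthogonal, being a splitting into eigenspaces of the self-adjoint $A$ for the distinct eigenvalues $-k<1$.

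For $(ii)$, the idea I would pursue is to exhibit an explicit family of $(k+n-2)$-eigenvectors filling out $H_{n,k,\mathrm{sol}}^{\bot}$. For a homogeneous harmonic polynomial $Z$ of degree $k-1$, consider
\begin{equation}\label{Wz_def}
w_h^{(Z)}:=xZ-\tfrac{1}{\,n+2k-4\,}\,|x|^2\nabla Z\qquad(n+2k-4\geq n-2>0).
\end{equation}
A short Laplacian computation (using $\Delta(|x|^2\nabla Z)=(2n+4k-8)\nabla Z$) shows that the coefficient in \eqref{Wz_def} is exactly the one making $w_h^{(Z)}$ harmonic, so it defines an element of $H_{n,k}$. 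Then $\mathrm{div}\,w_h^{(Z)}$ is a scalar multiple of $Z$ and $\langle x,w_h^{(Z)}\rangle$ a scalar multiple of $|x|^2Z$, with coefficients $\tfrac{(n+2k-2)(n+k-3)}{n+2k-4}$ and $\tfrac{n+k-3}{n+2k-4}$ respectively; substituting into \eqref{Aw_rewritten} and using $(n+2k-4)(n+k-3)+(n+2k-4)=(n+2k-4)(n+k-2)$, everything collapses to $[A(w^{(Z)})]_h=(k+n-2)\,w_h^{(Z)}$. Hence the span $\mathcal{R}_{n,k}$ of these fields lies in the $(k+n-2)$-eigenspace of $A$, which — the eigenvalues $-k,1,k+n-2$ being pairwise distinct for $n\geq3$, $k\geq1$ — is orthogonal to $H_{n,k,\mathrm{sol}}=H_{n,k,1}\oplus H_{n,k,2}$, so $\mathcal{R}_{n,k}\subseteq H_{n,k,\mathrm{sol}}^{\bot}$. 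For the reverse inclusion: if $k=1$ then $H_{n,1,\mathrm{sol}}^{\bot}=\{0\}$ (on $H_{n,1}$ one has $\mathrm{div}\,w_h=\mathrm{Tr}\,\nabla w_h(0)=0$ by the defining conditions of $H_n$) and the claim is trivial; if $k\geq2$ and $w\in H_{n,k,\mathrm{sol}}^{\bot}\setminus\{0\}$, then $\mathrm{div}\,w_h\neq0$ (else $w\in H_{n,k,\mathrm{sol}}\cap H_{n,k,\mathrm{sol}}^{\bot}=\{0\}$), and choosing $Z:=\tfrac{n+2k-4}{(n+2k-2)(n+k-3)}\,\mathrm{div}\,w_h$ (a degree $k-1$ harmonic polynomial; the scalar is finite and nonzero since $n+k-3\geq2$) makes $w-w^{(Z)}$ solenoidal, so $w-w^{(Z)}\in H_{n,k,\mathrm{sol}}\cap H_{n,k,\mathrm{sol}}^{\bot}=\{0\}$, i.e.\ $w=w^{(Z)}\in\mathcal{R}_{n,k}$. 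This yields $H_{n,k,\mathrm{sol}}^{\bot}=\mathcal{R}_{n,k}$ and proves $(ii)$.

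The one place where I expect to have to be careful is the computation in $(ii)$: pinning down the coefficient $-\tfrac1{n+2k-4}$ in \eqref{Wz_def} so that $w_h^{(Z)}$ is harmonic, and then the polynomial bookkeeping in $\mathrm{div}\,w_h^{(Z)}$, $\langle x,w_h^{(Z)}\rangle$ and \eqref{Aw_rewritten} through which the eigenvalue $k+n-2$ emerges — the factorizations $(n+2k-2)(n+k-3)$ and $(n+2k-4)(n+k-2)$ are exactly what make the cancellations go through. Everything else (self-adjointness and orthogonality of eigenspaces, directness of the sums, the degenerate case $k=1$) is soft and rests only on Lemma \ref{A_invariances}.
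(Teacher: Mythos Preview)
Your proof is correct and takes a genuinely different route from the paper's. The paper argues \emph{spectrally}: knowing (from Lemma \ref{A_invariances}) that $A$ is a self-adjoint isomorphism of $H_{n,k,\mathrm{sol}}$ and $H_{n,k,\mathrm{sol}}^{\bot}$, it takes an eigenvector $w$, reads off the normal component of the eigenvalue equation to get $\mathrm{div}_{\S}w=\sigma\langle w,x\rangle$, and then case-splits: in $H_{n,k,\mathrm{sol}}$ either $\sigma=-k$ or $\langle w,x\rangle\equiv 0$ (whence testing against $w$ forces $\sigma=1$); in $H_{n,k,\mathrm{sol}}^{\bot}$ it applies $-\Delta_{\S}$ to the identity $\mathrm{div}w_h=(\sigma+k)\langle w,x\rangle$ and solves the resulting scalar equation for $\sigma=k+n-2$. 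Your approach is instead \emph{constructive}: the identity $[A(w)]_h=p\,x-\nabla q+w_h$ lets you exhibit the eigenspaces explicitly --- gradients $\nabla q$ of $(k{+}1)$-harmonics for $\sigma=-k$, solenoidal tangential fields for $\sigma=1$, and the family $w^{(Z)}=xZ-\tfrac{1}{n+2k-4}|x|^2\nabla Z$ for $\sigma=k+n-2$ --- and then a direct dimension/orthogonality count closes the argument. Your route avoids the $\Delta_{\S}$ computation entirely and delivers concrete descriptions of all three eigenspaces in one stroke (the paper only extracts such descriptions later, in Lemma \ref{Projections}, and only for the low-order pieces $H_{n,1,2}$ and $H_{n,2,\mathrm{sol}}$); the paper's route is shorter on the bookkeeping side and stays closer to the surface PDE language used elsewhere. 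Both are clean; yours is somewhat more informative.
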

\vspace{-1.5em}
\begin{proof} As we have just remarked \AAA before the statement of Theorem \ref{eigenvalue_decomposition}\EEE, for every $k\geq 1$ there exists a $W^{1,2}$-orthonormal basis of eigenfunctions $\{w_{n,k,1},\dots,w_{n,k,N_{1,n,k}}\}$ for the subspace $H_{n,k,\mathrm{sol}}$ \AAA(see also \eqref{def_dimensions}) \EEE and similarly, $\{w_{n,k,N_{1,n,k}+1},\dots,w_{n,k,N_{n,k}}\}$ for $H_{n,k,\mathrm{sol}}^{\bot}$, i.e., for $i=1,\dots,N_{n,k}$, the map $w_{n,k,i}$ satisfies the eigenvalue equation
\begin{equation}\label{Eigenvalue_equation_for_A}
A(w_{n,k,i}):=(\mathrm{div}_{\S}w_{n,k,i})x-\sum_{j=1}^n x_j\nabla_Tw_{n,k,i}^j=\sigma_{n,k,i}w_{n,k,i} \mathrm{\ \ on \ } \S\,. 
\end{equation}
For each such eigenvalue $\sigma_{n,k,i}$ we denote its corresponding eigenspace by $H_{n,k,i}$. If in \AAA\eqref{Eigenvalue_equation_for_A} \EEE  we take the inner product with the unit normal vector field on $\S$, we obtain further that each eigenfunction $w_{n,k,i}$ satisfies the equation
\begin{equation}\label{Equation_for_divs}
\mathrm{div}_{\S}w_{n,k,i}=\sigma_{n,k,i}\langle w_{n,k,i},x\rangle \mathrm{ \ \ on\ } \S\,,
\end{equation} 
which in terms of the full divergence can be rewritten as
\begin{equation}\label{Equation_for_full_div}
\mathrm{div}(w_{n,k,i})_h\ =\ \mathrm{div}_{\S} w_{n,k,i}+\langle \partial_{\vec\nu}(w_{n,k,i})_h,x\rangle\ =\ (\sigma_{n,k,i}+k)\ \langle w_{n,k,i},x\rangle\ \mathrm{\ on\ }  \S\,.
\end{equation}

We now fix the index $k\geq 1$ and consider all the different possible cases that will allow us to find the eigenvalues of $A$ in the invariant subspaces $H_{n,k,\mathrm{sol}}$ and $H_{n,k,\mathrm{sol}}^{\bot}$ respectively.
\vspace{-0.5em}
\begin{itemize}%[itemsep=1ex]
\item[($a_1$)] Let $w$ be a non-trivial eigenfunction of $A$ in $H_{n,k,\mathrm{sol}}$, so
\begin{equation}\label{divequal0}
\mathrm{div}w_h= 0 \mathrm{\ in\ } \overline {B_1} \ \ \iff \ \ \mathrm{div}w_h = 0 \ \ \mathrm{on} \  \ \S\,,
\end{equation}
due to \AAA \eqref{Hksol_def}, \eqref{tildeHksol_definition} \EEE and the $(k-1)$-homogeneity of $\mathrm{div}w_h$ in this case. By \eqref{Equation_for_full_div} we see that one possibility for \AAA \eqref{divequal0} \EEE to hold, is for the eigenvalue $\sigma=-k$. We thus set $\sigma_{n,k,1}:=-k$ and label its corresponding eigenspace as
$$H_{n,k,1}:=\mathrm{span}\{w_{n,k,1},\dots,w_{n,k,p_{n,k}}\}\,,$$  where $p_{n,k}:=\mathrm{dim}H_{n,k,1}$.
\item[($a_2$)] Let now $w$ be a non-trivial eigenfunction of $A$ in $H_{n,k,\mathrm{sol}}$, with $w\in H_{n,k,1}^{\bot}$. Then, \AAA in view of \eqref{Equation_for_full_div}, \EEE the only possibility for \AAA\eqref{divequal0} \EEE to hold is iff
\begin{equation}\label{normal_part_0}
\langle w, x \rangle \equiv 0 \ \ \mathrm{on\ } \S.
\end{equation}
In this case, $w$ is a tangential vector field and by \eqref{Equation_for_divs} and \AAA \eqref{normal_part_0}, \EEE we have $\mathrm{div}_{\S}w\equiv 0$ on $\S$, as well. The eigenvalue equation \eqref {Eigenvalue_equation_for_A} reduces then to
\begin{equation}\label{reduced_equation_on_S}
\sigma w=-\sum_{j=1}^n x_j\nabla_Tw^j \mathrm{\ \ on \ } \S\,.
\end{equation} 
With the very same calculations that we performed in the proof of Lemma \ref{A_invariances} \AAA(see \eqref{very_same_calculation})\EEE, we can test \AAA\eqref{reduced_equation_on_S} \EEE with $w$, integrate by parts \AAA and use \eqref{normal_part_0}\EEE, to obtain
\begin{equation*}
\sigma\ \ds |w|^2 = - \ds\Big\langle w, \sum_{j=1}^nx_j\nabla_Tw^j\Big\rangle\ =\ds |w|^2-n\ds\langle w,x\rangle^2 =\ds |w|^2\,.
\end{equation*}
We label this eigenvalue $\sigma_{n,k,2}:= 1$, and its corresponding eigenspace as $$H_{n,k,2}:=\mathrm{span}\{w_{n,k,p_{n,k}+1},\dots,w_{n,k,N_{1,n,k}}\}\,,$$ 
\AAA and in this way we are led to the decomposition \eqref{H_n_k_sol_eigenvalue_decomposition}. \EEE %so that $H_{n,k,\mathrm{sol}}=H_{n,k,1}\bigoplus H_{n,k,2}\,,$.
\vspace{-0.25em}
\item[($b$)] Let us now look at eigenfunctions $w$ of $A$ in $H_{n,k,\mathrm{sol}}^{\bot}$, where the divergence of $w_h\in \tilde{H}_{n,k}$ \AAA in \eqref{Hksol_def} \EEE does not vanish identically in $\overline {B_1}$. Since $w_h$ is an $\R^n$-valued $k$-homogeneous harmonic polynomial, we have that $\mathrm{div}w_h$ is a scalar $(k-1)$-homogeneous harmonic polynomial, and therefore its restriction on $\S$ is a scalar $(k-1)$-spherical harmonic. We can then apply the Laplace-Beltrami operator \AAA(see \eqref{laplace_Beltrami_definition}) \EEE on both sides of \eqref{Equation_for_full_div} \AAA and use again \eqref{Equation_for_divs}\EEE, to obtain
\begin{align*}
(k-1)(k+n-3)\mathrm{div}w_h\ &=\ -\Delta_{\S}(\mathrm{div}w_h)\ = \ -(\sigma+k)\Delta_{\S}\big(\langle w,x\rangle\big)\\
& =\ (\sigma+k)\Big(\langle -\Delta_{\S} w,x\rangle-2\t w:P_T+\langle w, -\Delta_{\S} x\rangle\Big)\\
& =\ \Big(k(k+n-2)-2\sigma+(n-1)\Big)(\sigma+k)\langle w, x\rangle\\
& =\ \Big(k(k+n-2)-2\sigma+(n-1)\Big)\mathrm{div}w_h \ \mathrm{\ on \ } \S\,. 
\end{align*}
Since in this case $\mathrm{div}w_h$ does not vanish identically, we conclude that 
\begin{equation*}
k(k+n-2)-2\sigma+(n-1)\ =\ (k-1)(k+n-3)\iff \sigma=  k+n-2\,.
\end{equation*}
We label this eigenvalue as $\sigma_{n,k,3}:= k+n-2$ and its corresponding eigenspace as $H_{n,k,3}$. In particular, we have found that $H_{n,k,\mathrm{sol}}^{\bot}=H_{n,k,3}$. \qedhere
\end{itemize}
\end{proof}
\vspace{-1em}
\begin{remark}\label{triviality_of_H_1_3}
We have obtained in total the $W^{1,2}$-orthogonal decomposition of our space of interest into eigenspaces of $A$ as
\begin{equation}\label{A_eigenspace_decomposition_of_H}
H_n:=\bigoplus_{k=1}^{\infty}\left(H_{n,k,1}\oplus H_{n,k,2}\oplus H_{n,k,3}\right)\,.
\end{equation}
It is easy to construct examples showing that except for $H_{n,1,3}$, none of these eigenspaces are apriori trivial. The triviality of $H_{n,1,3}$ is a consequence of the fact that we had already scaled properly our initial maps $u$, so that the corresponding maps $w$ satisfy $\ds \langle w,x\rangle=0$ \AAA(recall \eqref{H_space})\EEE. Indeed, let $w(x):=\Lambda x \in H_{n,1,3}$ for some $\Lambda\in\R^{n\times n}$. By assumption,
\begin{equation*}
0=\ds \langle w,x\rangle=\ds \langle \Lambda x,x\rangle=\frac{1}{n}\mathrm{Tr}\Lambda\,.
\end{equation*}
Therefore, $\mathrm{div}w_h\equiv\mathrm{Tr}\Lambda\equiv 0$ in $\overline{B_1}$, i.e., $w\in H_{n,1,\mathrm{sol}}=H_{n,1,3}^{\bot}$, forcing $w\equiv 0$, \AAA and thus, $H_{n,1,3}=\{0\}$.
\end{remark}

\textit{The eigenvalue decomposition \AAA\eqref{A_eigenspace_decomposition_of_H} \EEE of $H_n$ into eigenspaces of $A$ is valid for every $n\geq 3$}. In the case of interest of this subsection, i.e., in dimension $n=3$, it immediately gives the desired coercivity estimate for the quadratic form $Q_3$ \AAA defined in \eqref{quadratic_forms_without_proofs_}, \EEE \textit{with optimal constant}. 
\textit{For the rest of this subsection we switch back to denoting the ambient dimension by the number $3$.} As a consequence of Theorem \ref{eigenvalue_decomposition}, we have
\begin{lemma}\label{Qdiag_in_dim_3}
The following statements hold true.
\begin{itemize}	
\item[$(i)$] The forms $Q_{V_3}$ and $Q_3$\AAA, defined in \eqref{Q_v_3_again} and \eqref{quadratic_forms_without_proofs_} respectively, \EEE diagonalize on each one of the subspaces $(H_{3,k,i})_{k\geq 1,i=1,2,3}$, i.e., there exist explicit constants $(c_{3,k,i})_{i=1,2,3}$ and $(C_{3,k,i})_{i=1,2,3}$ so that for  every $w\in H_{3,k,i}$, 
\begin{equation}\label{Qvol3_Q3}
Q_{V_3}(w) = c_{3,k,i}\d2 |\t w|^2 \quad \mathrm{and\quad }Q_3(w)= C_{3,k,i}\d2 |\t w|^2\,. 
\end{equation}
\item[$(ii)$] For every $k,l\geq 1$ and $i,j=1,2,3$ with $(k,i)\neq (l,j)$, the subspaces $H_{3,k,i}$ and $H_{3,l,j}$ are also $Q_{V_3}$- and $Q_3$-orthogonal, i.e., for every $w_{k,i}\in H_{3,k,i}$ and $w_{l,j}\in H_{3,l,j}$,
\begin{equation}\label{H_k_is_Q_vol3_Q_3_orth}
Q_{V_3}(w_{k,i},w_{l,j})=0 \quad\mathrm{and} \quad Q_3(w_{k,i},w_{l,j})=0\,.
\end{equation} 
\end{itemize} 
\end{lemma}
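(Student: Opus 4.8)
The plan is to read off both statements directly from the eigenvalue decomposition of Theorem \ref{eigenvalue_decomposition}, combined with two elementary facts about spherical harmonics recorded in Appendix \ref{sec:C}: first, if $v$ is a vector field all of whose components are $k$-th order spherical harmonics, then $-\Delta_{\s}v^m=k(k+1)v^m$ for each $m$, and hence, integrating by parts on the closed manifold $\s$, $\d2\,\t v:\t w=k(k+1)\d2\langle v,w\rangle$ for every $w\in W^{1,2}(\s;\R^3)$ (in particular $\d2|\t v|^2=k(k+1)\d2|v|^2$); and second, spherical harmonics of distinct orders are $L^2(\s)$-orthogonal. The remaining inputs, all already established, are that on $H_{3,k,i}$ the operator $A$ of \eqref{Aoperator} acts as multiplication by the scalar $\sigma_{3,k,i}$, with $\sigma_{3,k,1}=-k$, $\sigma_{3,k,2}=1$, $\sigma_{3,k,3}=k+1$ (here $n=3$), and that $A$ is $L^2$-self-adjoint on $H_3$ and preserves each $H_{3,l}$ (by Lemma \ref{A_invariances} and formula \eqref{A(w)_h_formula}). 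Throughout I use that $Q_{V_3}(v,w)=\tfrac32\d2\langle v,A(w)\rangle$ by \eqref{quadratic_form_of_volume}--\eqref{Aoperator}, and that $Q_3(w)=\tfrac34\d2|\t w|^2-Q_{V_3}(w)$, immediate from \eqref{quadratic_forms_without_proofs_} and \eqref{Q_v_3_again}.

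For part (i), I would fix $k\ge1$, $i\in\{1,2,3\}$ and $w\in H_{3,k,i}$. Then $Q_{V_3}(w)=\tfrac32\sigma_{3,k,i}\d2|w|^2=\tfrac{3\sigma_{3,k,i}}{2k(k+1)}\d2|\t w|^2$, which gives the first identity in \eqref{Qvol3_Q3} with
\[
c_{3,k,1}=-\frac{3}{2(k+1)},\qquad c_{3,k,2}=\frac{3}{2k(k+1)},\qquad c_{3,k,3}=\frac{3}{2k}\,,
\]
and, since $Q_3(w)=\tfrac34\d2|\t w|^2-Q_{V_3}(w)$, it gives the second identity with $C_{3,k,i}=\tfrac34-c_{3,k,i}$, i.e.\ $C_{3,k,1}=\tfrac34+\tfrac{3}{2(k+1)}$, $C_{3,k,2}=\tfrac34-\tfrac{3}{2k(k+1)}$ and $C_{3,k,3}=\tfrac34-\tfrac{3}{2k}$. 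The only point to note is that the potential negativity of $C_{3,1,3}$ is vacuous, since $H_{3,1,3}=\{0\}$ by Remark \ref{triviality_of_H_1_3}; this is precisely where the prior normalization $\d2\langle w,x\rangle=0$ enters (and, as a sanity check, these constants vanish exactly on $H_{3,1,2}$ and $H_{3,2,3}$, matching Theorem \ref{main_coercivity_estimate_conformal_general_dimension}).

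For part (ii) I would polarize. The symmetric bilinear form associated with $Q_3$ is $Q_3(v,w)=\tfrac34\d2\,\t v:\t w-\tfrac32\d2\langle v,A(w)\rangle$ (symmetry of the last term being the self-adjointness of $A$), so \eqref{H_k_is_Q_vol3_Q_3_orth} follows once I show that both $\d2\,\t w_{k,i}:\t w_{l,j}$ and $\d2\langle w_{k,i},A(w_{l,j})\rangle$ vanish whenever $(k,i)\neq(l,j)$. By the eigenvalue equation, $\d2\langle w_{k,i},A(w_{l,j})\rangle=\sigma_{3,l,j}\d2\langle w_{k,i},w_{l,j}\rangle$, while the first fact of the first paragraph gives $\d2\,\t w_{k,i}:\t w_{l,j}=l(l+1)\d2\langle w_{k,i},w_{l,j}\rangle$. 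Hence it suffices to check $\d2\langle w_{k,i},w_{l,j}\rangle=0$. When $k\neq l$ this is the $L^2(\s)$-orthogonality of spherical harmonics of distinct orders. When $k=l$ but $i\neq j$, the maps $w_{k,i}$ and $w_{k,j}$ are eigenfunctions of the self-adjoint operator $A$ restricted to $H_{3,k}$, for the eigenvalues $\sigma_{3,k,i}$ and $\sigma_{3,k,j}$, which are distinct since for every $k\ge1$ the three numbers $-k$, $1$, $k+1$ are pairwise distinct; the classical computation $\sigma_{3,k,i}\d2\langle w_{k,i},w_{k,j}\rangle=\d2\langle A(w_{k,i}),w_{k,j}\rangle=\d2\langle w_{k,i},A(w_{k,j})\rangle=\sigma_{3,k,j}\d2\langle w_{k,i},w_{k,j}\rangle$ then forces $\d2\langle w_{k,i},w_{k,j}\rangle=0$. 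In either case both $Q_{V_3}(w_{k,i},w_{l,j})$ and $Q_3(w_{k,i},w_{l,j})$ vanish.

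I do not expect a genuine obstacle here: the lemma is essentially a bookkeeping consequence of Theorem \ref{eigenvalue_decomposition} and the spectral theory of $-\Delta_{\s}$. The only two points deserving a line of care are the verification that $-k$, $1$, $k+1$ are pairwise distinct for every $k\ge1$ (so that the cross-orthogonality argument applies to the three summands sharing a common $k$), and the observation that the sign ambiguity of $C_{3,1,3}$ never materializes because $H_{3,1,3}$ is trivial.
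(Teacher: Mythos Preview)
Your proof is correct and follows essentially the same approach as the paper: part (i) is identical (compute $Q_{V_3}(w)=\tfrac{3\sigma_{3,k,i}}{2}\d2|w|^2=\tfrac{3\sigma_{3,k,i}}{2k(k+1)}\d2|\t w|^2$ and then $C_{3,k,i}=\tfrac34-c_{3,k,i}$), and your constants match the paper's. For part (ii) the paper simply invokes the already-established mutual $W^{1,2}$-orthogonality of the decomposition \eqref{A_eigenspace_decomposition_of_H} in one line, whereas you spell out this orthogonality explicitly (distinct-order spherical harmonics for $k\neq l$, distinct eigenvalues of the self-adjoint $A$ for $k=l$, $i\neq j$); the content is the same.
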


\begin{proof}
The proof is immediate. For part $(i)$ of the lemma, if we denote by $\lambda_{3,k}:=k(k+1)$ the eigenvalues of $-\Delta_{\s}$ \AAA (see \eqref{properties})\EEE, then
\begin{equation*}
\d2 |w|^2=\frac{1}{\lambda_{3,k}}\ \d2|\t w|^2\quad \forall w\in H_{3,k}\,.
\end{equation*}
In particular, for $i=1,2,3$, if $w\in H_{3,k,i}$, \AAA by the definition \eqref{quadratic_form_of_volume} (for $n=3$) and Theorem \ref{eigenvalue_decomposition}, \EEE we have 
\begin{align*}
Q_{V_3}(w)=\frac{3}{2}\d2\big\langle w,A(w)\big\rangle=\frac{3\sigma_{3,k,i}}{2}\d2|w|^2=\frac{3\sigma_{3,k,i}}{2\lambda_{3,k}}\d2|\t w|^2,
\end{align*}
which is precisely the first identity in \eqref{Qvol3_Q3} for $c_{3,k,i}:=\frac{3\sigma_{3,k,i}}{2\lambda_{3,k}}$, and then
\begin{equation*}
Q_3(w)=C_{3,k,i}\d2|\t w|^2\,,
\end{equation*}
where $C_{3,k,i}:=\tfrac{3}{4}-c_{3,k,i}$. We list below the precise values of the constants, which are important in this case, since we will need to sum up the identities for $Q_3$ in the subspaces $(H_{3,k,i})_{k\geq 1, i=1,2,3}$, in order to obtain an estimate on the full space $H_3$.  
\begin{align}\label{constants1}
\begin{split}
&c_{3,k,1}=\frac{-3}{2(k+1)},\quad  
c_{3,k,2}=\frac{3}{2k(k+1)},\quad
c_{3,k,3}=\frac{3}{2k}\,,\\
&C_{3,k,1}=\frac{3(k+3)}{4(k+1)},\quad
C_{3,k,2}=\frac{3(k-1)(k+2)}{4k(k+1)},\quad 
C_{3,k,3}=\frac{3(k-2)}{4k}\,.
\end{split}
\end{align}
Part $(ii)$ of the lemma is immediate by the mutual $W^{1,2}$-orthogonality of $(H_{3,k,i})_{k\geq 1, i=1,2,3}$.\qedhere
\end{proof}
As an immediate consequence of Lemma \ref{Qdiag_in_dim_3} we obtain the desired estimate for the quadratic form $Q_3$ defined in \eqref{quadratic_forms_without_proofs_}.
\begin{theorem}\label{main_coercivity_estimate_3_d_conformal}
For every $w\in H_3$ \AAA\rm{(}given in \eqref{H_space}\rm{)}, \EEE the following \textit{coercivity estimate} holds.
\begin{equation}\label{Korn_type_inequality_for_Q_3}
Q_3(w)\geq \frac{1}{4}\d2 \big|\t w-\t(\Pi_{3,0} w)\big|^2,
\end{equation}
where $H_{3,0}:=H_{3,1,2}\oplus H_{3,2,3}$ is the kernel of $Q_3$ in $H_3$ and $\Pi_{3,0}$ is the $W^{1,2}$-orthogonal projection of $H_3$ onto $H_{3,0}$. The constant $\frac{1}{4}$ in the previous estimate is sharp.
\end{theorem}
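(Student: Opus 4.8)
The plan is to deduce Theorem~\ref{main_coercivity_estimate_3_d_conformal} directly from the diagonalization provided by Lemma~\ref{Qdiag_in_dim_3} together with the explicit constants listed in \eqref{constants1}. First I would decompose an arbitrary $w\in H_3$ along the orthogonal eigenspace decomposition \eqref{A_eigenspace_decomposition_of_H} (specialized to $n=3$), writing $w=\sum_{k\geq 1}(w_{k,1}+w_{k,2}+w_{k,3})$ with $w_{k,i}\in H_{3,k,i}$ and recalling from Remark~\ref{triviality_of_H_1_3} that $H_{3,1,3}=\{0\}$. By part $(ii)$ of Lemma~\ref{Qdiag_in_dim_3}, $Q_3$ has no cross terms across distinct $(k,i)$, so $Q_3(w)=\sum_{k,i}C_{3,k,i}\,\d2|\t w_{k,i}|^2$; likewise the squared Dirichlet norm splits as $\d2|\t w|^2=\sum_{k,i}\d2|\t w_{k,i}|^2$ by $W^{1,2}$-orthogonality. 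The whole estimate thus reduces to the scalar inequality $C_{3,k,i}\geq \tfrac14$ for every pair $(k,i)$ for which $C_{3,k,i}\neq 0$, with the exceptional (kernel) pairs being exactly those where $C_{3,k,i}=0$.

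Next I would simply inspect the constants in \eqref{constants1}. One checks that $C_{3,k,2}=0$ iff $k=1$ (giving the eigenspace $H_{3,1,2}$) and $C_{3,k,3}=0$ iff $k=2$ (giving $H_{3,2,3}$), which identifies $H_{3,0}:=H_{3,1,2}\oplus H_{3,2,3}$ as the kernel of $Q_3$; in all other cases the constants are strictly positive. It then remains to verify that the smallest strictly positive value among $\{C_{3,k,1}:k\geq1\}\cup\{C_{3,k,2}:k\geq2\}\cup\{C_{3,k,3}:k\geq3\}$ equals $\tfrac14$. A short monotonicity analysis does this: $C_{3,k,1}=\tfrac{3(k+3)}{4(k+1)}$ is decreasing in $k$ with limit $\tfrac34$, so $C_{3,k,1}>\tfrac34$ always; $C_{3,k,2}=\tfrac{3(k-1)(k+2)}{4k(k+1)}$ is increasing in $k$ for $k\geq2$ with value $\tfrac{3\cdot1\cdot4}{4\cdot2\cdot3}=\tfrac12$ at $k=2$, so $C_{3,k,2}\geq\tfrac12$ there; and $C_{3,k,3}=\tfrac{3(k-2)}{4k}$ is increasing in $k$ for $k\geq3$ with value $\tfrac{3}{12}=\tfrac14$ at $k=3$, so $C_{3,k,3}\geq\tfrac14$. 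Hence $\min=\tfrac14$, attained on $H_{3,3,3}$, and we obtain $Q_3(w)\geq\tfrac14\sum_{(k,i)\notin\{(1,2),(2,3)\}}\d2|\t w_{k,i}|^2=\tfrac14\,\d2|\t w-\t(\Pi_{3,0}w)|^2$, since $\Pi_{3,0}w=w_{1,2}+w_{2,3}$.

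Finally, for sharpness of the constant $\tfrac14$, I would exhibit a nonzero $w\in H_{3,3,3}$ and note that for such $w$ one has $Q_3(w)=\tfrac14\,\d2|\t w|^2=\tfrac14\,\d2|\t w-\t(\Pi_{3,0}w)|^2$ exactly, so no larger constant is possible. Such a test field exists because $H_{3,3,3}=H_{3,3,\mathrm{sol}}^\perp$ is nontrivial (as noted in Remark~\ref{triviality_of_H_1_3}, only $H_{n,1,3}$ vanishes); concretely one can take the harmonic extension of a degree-$3$ homogeneous harmonic polynomial field with nonvanishing divergence, projected off the solenoidal part, or simply invoke $N_{2,3,3}\geq1$. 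I do not expect any real obstacle here: the only points requiring care are (a) confirming the sign pattern of the $C_{3,k,i}$ and the monotonicities above, which is elementary calculus in the single integer variable $k$, and (b) making sure the kernel is correctly identified, i.e.\ that $C_{3,k,i}=0$ happens only at $(k,i)=(1,2)$ and $(2,3)$ — both of which are immediate from \eqref{constants1}. The genuinely substantive input, namely the diagonalization and the eigenvalue computation, has already been carried out in Theorem~\ref{eigenvalue_decomposition} and Lemma~\ref{Qdiag_in_dim_3}, so this final theorem is essentially a bookkeeping corollary.
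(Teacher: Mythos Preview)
Your proposal is correct and follows essentially the same route as the paper: decompose $w$ along the eigenspace decomposition \eqref{A_eigenspace_decomposition_of_H}, use the $Q_3$-orthogonality from Lemma~\ref{Qdiag_in_dim_3}$(ii)$ to reduce to the scalar problem, and then verify from \eqref{constants1} that the minimum positive coefficient is $C_{3,3,3}=\tfrac14$. Your monotonicity analysis and your explicit sharpness argument (via a nonzero $w\in H_{3,3,3}$) slightly elaborate on what the paper leaves as ``easy to verify'', but the structure and all substantive inputs are identical.
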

\vspace{-1em} 
\begin{proof}
Having the precise values of the constants $(C_{3,k,i})_{k\geq 1,i=1,2,3}$ \AAA in \eqref{constants1}\EEE, we see that $C_{3,1,2}=C_{3,2,3}=0$, but otherwise it is easy to verify that 
\begin{equation}\label{sharp_coercivity_constant}
\tilde C:=\underset{\underset{(k,i)\neq (1,2),(1,3),(2,3)}{k\geq1,\ i\in\{1,2,3\}}}{\mathrm{min}}C_{3,k,i}=C_{3,3,3}=\frac{1}{4}\,.
\end{equation}
Now we can express any $w\in H_3$ as a Fourier series in terms of the eigenspace decomposition \AAA \eqref{A_eigenspace_decomposition_of_H}\EEE, i.e.,
$$w=\sum_{k,i} w_{3,k,i},\ \text{where } w_{3,k,i}\in H_{3,k,i},\ \forall k\geq 1, i=1,2,3\,.$$ 
Note that, as we have justified in Remark \AAA\eqref{triviality_of_H_1_3}\EEE, $w_{3,1,3}=0$. Expanding the quadratic form $Q_3$, \AAA and using \eqref{Qvol3_Q3}, \eqref{H_k_is_Q_vol3_Q_3_orth} and \eqref{sharp_coercivity_constant}, \EEE we indeed obtain
\begin{align}\label{Q_3_eigenspace_decomposition}
\begin{split}
Q_3(w)&=\sum_{(k,i) \in \mathbb{N}^{*}\times\{1,2,3\}}Q_3(w_{3,k,i})=\sum_{(k,i)\neq (1,2),(1,3),(2,3)}C_{3,k,i}\d2 \big|\t w_{3,k,i}\big|^2\\
&\geq \frac{1}{4}\sum_{(k,i) \neq (1,2),(1,3),(2,3)}\d2 \big|\t w_{3,k,i}\big|^2=\frac{1}{4}\d2 \big|\t w-\t(w_{3,1,2}+w_{3,2,3})\big|^2\,,
\end{split}
\end{align}
\AAA which finishes the proof of \eqref{Korn_type_inequality_for_Q_3}. \qedhere\EEE
\end{proof}

\subsection{Proof of the local version of Theorem \ref{main_thm_conformal_case}}\label{Sec: completion_of_proof_conformal_case_n_3}
The presence of the $Q_3$-degenerate space $H_{3,0}$ \AAA in the coercivity estimate \eqref{Korn_type_inequality_for_Q_3} \EEE is a small but natural obstacle to overcome in order to complete the proof of Theorem \ref{main_thm_conformal_case}. As we mentioned \AAA after the statement of Theorem \ref{main_coercivity_estimate_conformal_general_dimension} \EEE in the Introduction, since \textit{$H_{3,0}$ will eventually turn out to be isomorphic to the Lie algebra of infinitesimal Möbius transformations of $\s$}, at an infinitesimal level this basically means the following. Although, by the reductions we have made, the map $u$ is apriori supposed to be $\theta$-close to the $\mathrm{id}_{\s}$ in the $W^{1,2}$-topology \AAA (recall \eqref{local_family_of_maps_sufficient_for_conformal_case}$(ii)$)\EEE, there might be another Möbius transformation of $\s$ that is also $\theta$-close to the $\mathrm{id}_{\s}$ and is a better candidate for the nearest Möbius map to $u$ in terms of its combined conformal-isoperimetric deficit $\mathcal{E}_2(u)$ in  \AAA \eqref{notation_for_conformal_isoperimetric_deficit}\EEE. Similarly to \cite{faraco2005geometric} and \cite{reshetnyak1970stability}, an application of the Inverse Function Theorem and a topological argument will allow us to identify this more suitable candidate\AAA, see the details in the subsequent Lemma \ref{fixingMobius} and its proof. \EEE \textit{For this purpose, we will need the following characterization of the $Q_3$-degenerate subspace $H_{3,0}$, which is valid  in every dimension $n\geq 3$, and that is why we now switch back to denoting the ambient dimension by $n$. }

\begin{lemma}\label{Projections} The following statements hold true.
\vspace{-0.5em}	
\begin{itemize}
\setlength\itemsep{-0.5em}
\item[$\rm{(}$i$\rm{)}$] The subspace $H_{n,1,2}$ \AAA in \eqref{H_n_k_sol_eigenvalue_decomposition} \EEE can be characterized as
\begin{equation}\label{H_n,1,2}
H_{n,1,2}=\big\{w\in H_n: w(x)=\Lambda x,\ \mathrm{where\ } \Lambda\in Skew(n)\big\}\,,
\end{equation}  
and its dimension is $\mathrm{dim}H_{n,1,2}=\frac{n(n-1)}{2}$. The projection on $H_{n,1,2}$ is therefore characterized by
\begin{equation}\label{Projection_on_H_n,1,2_characterization_full}
\Pi_{H_{n,1,2}}w=0\iff \nabla w_h(0)=\nabla w_h(0)^t\,.
\end{equation}
\item[$\rm{(}$ii$\rm{)}$] The subspace $H_{n,2,\mathrm{sol}}$  \AAA in \eqref{Hksol_definition} \EEE can be characterized as
\begin{equation}\label{H_n,sol}
H_{n,2,\mathrm{sol}}=\left\{w\in H_n:
\begin{array}{lr}
\forall k=1,\dots,n:\ \ w^k(x)=\langle \Lambda^kx,x\rangle\,,\\
\Lambda^k\in Sym(n): \mathrm{Tr}\Lambda^k=0,\  \sum_{l=1}^n\Lambda_{lk}^l=0
\end{array}\right\}\,,
\end{equation}
and thus
\begin{equation*}
\mathrm{dim}H_{n,2,3}=\mathrm{dim}H_{n,2}-\mathrm{dim}H_{n,2,\mathrm{sol}}=n.
\end{equation*}
The projection on $H_{n,2,3}$ is therefore characterized by
\begin{equation}\label{Projection_on_H_n,1,2_characterization}
\Pi_{H_{n,2,3}}w=0\iff \ds \big(\mathrm{div}w_h(x)\big)x=0\,.
\end{equation}
\end{itemize}
\end{lemma}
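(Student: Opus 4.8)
The plan is to make everything explicit in terms of the coefficient matrices of the components of $w$, using the formula \eqref{Equation_for_full_gradient_general} for the operator $A$ on homogeneous harmonic polynomials, the eigenvalues found in Theorem \ref{eigenvalue_decomposition}, and the standard moments $\ds x_ix_j=\tfrac1n\delta_{ij}$ on the sphere.

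For part (i), I would first recall that a vector field with first order spherical harmonic components is exactly $w(x)=\Lambda x$ for some $\Lambda\in\R^{n\times n}$, that $\ds\langle w,x\rangle=\tfrac1n\mathrm{Tr}\Lambda$ (so membership in $H_n$ forces $\mathrm{Tr}\Lambda=0$), and that then $\mathrm{div}w_h\equiv\mathrm{Tr}\Lambda\equiv0$, i.e. $H_{n,1}=H_{n,1,\mathrm{sol}}$, consistently with Remark \ref{triviality_of_H_1_3}. By \eqref{Equation_for_full_gradient_general}, $A(w)=(\mathrm{Tr}\Lambda)x-\Lambda^{t}x=-\Lambda^{t}x$, so by Theorem \ref{eigenvalue_decomposition} the eigenspace $H_{n,1,2}$ (eigenvalue $\sigma_{n,1,2}=1$) consists of those $w(x)=\Lambda x$ with $-\Lambda^{t}=\Lambda$, i.e. $\Lambda\in Skew(n)$ (automatically traceless); this is \eqref{H_n,1,2} and gives $\mathrm{dim}H_{n,1,2}=\tfrac{n(n-1)}2$. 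For the projection I would use that the first order part of $w$ equals $\nabla w_h(0)\,x$, and that the splitting $\nabla w_h(0)=(\nabla w_h(0))_{\mathrm{sym}}+(\nabla w_h(0))_{\mathrm{skew}}$ is $L^2$- (hence $W^{1,2}$-) orthogonal on $H_{n,1}$, because $\ds\langle Bx,Cx\rangle=\tfrac1n\mathrm{Tr}(B^{t}C)$ and $\mathrm{Tr}(BC)=0$ whenever $B\in Sym(n)$, $C\in Skew(n)$; hence $\Pi_{H_{n,1,2}}w=(\nabla w_h(0))_{\mathrm{skew}}\,x$, which vanishes iff $\nabla w_h(0)=\nabla w_h(0)^{t}$, i.e. \eqref{Projection_on_H_n,1,2_characterization_full}.

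For part (ii), I would write each component of $w\in H_{n,2}$ as $w^{k}(x)=\langle\Lambda^{k}x,x\rangle$ with $\Lambda^{k}\in Sym(n)$; harmonicity of $w^{k}$ is $\Delta w^{k}=2\mathrm{Tr}\Lambda^{k}=0$, while the constraint $\ds\langle w,x\rangle=0$ is automatic since $\langle w,x\rangle$ is an odd polynomial. A direct computation gives $\partial_{k}w_h^{k}=2(\Lambda^{k}x)_{k}$, hence $\mathrm{div}w_h=2\sum_{j}\big(\sum_{l}\Lambda^{l}_{lj}\big)x_{j}$, which by \eqref{tildeHksol_definition}--\eqref{Hksol_definition} (using homogeneity to pass between $B_1$ and $\S$) vanishes identically iff $\sum_{l}\Lambda^{l}_{lk}=0$ for all $k$; that is \eqref{H_n,sol}. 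For the dimension, since the space of homogeneous harmonic degree $2$ polynomials in $\R^{n}$ has dimension $\tfrac{n(n+1)}2-1$, we get $\mathrm{dim}H_{n,2}=\tfrac{n(n-1)(n+2)}2$; the linear map $D\colon w\mapsto\mathrm{div}w_h$ from $H_{n,2}$ onto the $n$-dimensional space of linear functions has kernel exactly $H_{n,2,\mathrm{sol}}$, and is surjective (for $a=e_1$ take $w^{1}(x)=x_2^2+\dots+x_n^2-(n-1)x_1^2$ and $w^{j}=0$ for $j\geq2$, then rescale), so $\mathrm{dim}H_{n,2,3}=\mathrm{dim}H_{n,2}-\mathrm{dim}H_{n,2,\mathrm{sol}}=\mathrm{rank}\,D=n$. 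Finally, $\Pi_{H_{n,2,3}}w=0$ iff the degree $2$ part of $w$ lies in $H_{n,2,\mathrm{sol}}$, i.e. iff $\mathrm{div}(w_h^{(2)})\equiv0$, where $w_h^{(2)}$ is the degree $2$ homogeneous part of $w_h$; on the other hand, since each $x_i$ is a first order spherical harmonic, $\ds(\mathrm{div}w_h)\,x$ picks out only the first order Fourier mode of $\mathrm{div}w_h|_{\S}$, which is precisely $\mathrm{div}(w_h^{(2)})|_{\S}$, and for a linear map $\ds\langle a,x\rangle x=\tfrac1n a$, so $\ds(\mathrm{div}w_h)\,x=0$ iff $\mathrm{div}(w_h^{(2)})\equiv0$. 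Combining the two gives \eqref{Projection_on_H_n,1,2_characterization}.

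The individual computations are routine. The two spots requiring a bit of care are the exact count $\mathrm{dim}H_{n,2,3}=n$ (one really needs surjectivity of $D$, not just injectivity of $\mathrm{div}$ on $H_{n,2,3}$), and the projection identity \eqref{Projection_on_H_n,1,2_characterization}, where one must isolate the first order Fourier mode of $\mathrm{div}w_h$; for a general $w\in H_n$ the extension $w_h$ is not polynomial, so this last step should be read through the spherical harmonic expansion of $w$, which is the natural setting in view of the decomposition \eqref{A_eigenspace_decomposition_of_H}.
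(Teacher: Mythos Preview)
Your proposal is correct and follows essentially the same route as the paper. The only visible differences are cosmetic: in part (i) you read off skew-symmetry from the eigenvalue equation $A(\Lambda x)=-\Lambda^{t}x=\Lambda x$, whereas the paper uses the equivalent tangentiality criterion $\langle \Lambda x,x\rangle\equiv 0$ on $\S$ (from case $(a_2)$ in the proof of Theorem \ref{eigenvalue_decomposition}); and in part (ii) you phrase the projection identity \eqref{Projection_on_H_n,1,2_characterization} as ``$\ds(\mathrm{div}w_h)x$ extracts the first Fourier mode of $\mathrm{div}w_h|_{\S}$, which is $\mathrm{div}(w_h^{(2)})$'', while the paper writes the same thing via the mean value property, $n\ds(\mathrm{div}w_h)x_k=\dashint_{B_1}\partial_k\mathrm{div}w_h=\sum_l(\nabla^2 w_h^l(0))_{lk}$. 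Your extra care about the surjectivity of $D\colon w\mapsto\mathrm{div}w_h$ onto linear functions (to justify $\mathrm{dim}H_{n,2,3}=n$) is a point the paper simply asserts, so you are in fact slightly more complete there.
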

\vspace{-1.5em}
\begin{proof}
For part $(i)$ of the lemma, if $w\in H_{n,1,2}$ we can write it as $w(x)=\Lambda x$ for some $\Lambda\in \R^{n\times n}$. In this space, \AAA recalling  \eqref{normal_part_0}, \EEE 
\begin{align*}
\langle w,x\rangle \equiv0 \mathrm{\ \ on\ } \S\iff \sum_{1\leq i\leq j\leq n}(\Lambda_{ij}+\Lambda_{ji})x_ix_j\equiv 0 \mathrm{\ \ on\ } \S\iff \Lambda^t=-\Lambda\,.
\end{align*}
The characterization \AAA \eqref{Projection_on_H_n,1,2_characterization_full} \EEE of the projection $\Pi_{H_{n,1,2}}$ is then immediate. For part $(ii)$, let $w\in H_{n,2,\mathrm{sol}}$. \AAA By \eqref{tildeHksol_definition} and \eqref{Hksol_definition}, \EEE its harmonic extension is a homogeneous solenoidal harmonic polynomial of degree 2, so for each $k=1,\dots,n$, there exists $\Lambda^k\in Sym(n)$ such that
\begin{equation*}
w^k_h(x)=\langle \Lambda^kx,x\rangle \ \ \mathrm{in} \ \overline{B_1}\,.
\end{equation*} 
In particular, for each $k,l=1,\dots,n$, we can compute
\begin{equation}\label{computation_for_characterization_of_H_2_sol}
\partial_lw_h^k(x)=2\sum_{i=1}^n\Lambda^k_{li}x_i\implies \left\{\begin{array}{lr} 0=\frac{1}{2}\Delta w^k_h= \mathrm{Tr}\Lambda^k\,,\\
0=\frac{1}{2}\mathrm{div}w_h= \sum_{k=1}^n\left(\sum_{l=1}^n\Lambda_{lk}^l\right)x_k \iff \sum_{l=1}^n\Lambda_{lk}^l=0 \end{array}\right\}\,,
\end{equation}
\AAA so \eqref{H_n,sol} follows directly from \eqref{computation_for_characterization_of_H_2_sol}. \EEE For the last characterization, \AAA i.e., \eqref{Projection_on_H_n,1,2_characterization}, \EEE we have 
\begin{equation*}
\Pi_{H_{n,2,3}}w=0\iff \Pi_{H_{n,2}}w\in H_{n,2,\mathrm{sol}}\iff \sum_{l=1}^n\left(\nabla^2w_h^l(0)\right)_{lk}=0 \ \mathrm{for\ every\ }k=1,\dots,n\,,
\end{equation*}
and by the mean value property of harmonic functions again, we can indeed equivalently write
\begin{align}
n\ds \big(\mathrm{div}w_h(x)\big)x_k=\dashint_{B_1}\partial_k\big(\mathrm{div}w_h(x)\big)=\sum_{l=1}^n\dashint_{B_1}\partial_{lk}w_h^l(x)=\sum_{l=1}^n\left(\nabla^2w_h^l(0)\right)_{lk}=0\,. \quad \quad \quad  \qedhere
\end{align}  
\end{proof}

\begin{remark}
It is worth noticing here that simply by counting dimensions,
\begin{equation*}%\label{dimension_of_conf_group_R_3}
\mathrm{dim}H_{3,0}=\mathrm{dim}H_{3,1,2}+\mathrm{dim}H_{3,2,3}=6\,,
\end{equation*} 
which is also the dimension of $Conf(\s)$ \AAA(recall the notation in Section \ref{Notation} and see also Remark \ref{tangent_space_of_conformal_group_to_identity} for some more details on this Lie group and its corresponding Lie algebra). \EEE We therefore only need to verify that $H_{3,0}$ \AAA(introduced after \eqref{Korn_type_inequality_for_Q_3}) \EEE is actually isomorphic to the Lie algebra of infinitesimal Möbius transformations of $\s$, and then the Inverse Function Theorem can be applied. This is the context of the following.
\end{remark}
\vspace{-0.5em}
\begin{lemma}\label{fixingMobius}
Given $\theta, \varepsilon_0 \in (0,1)$ sufficiently small, there exists $\tilde \theta \in (0,1)$ that depends only on $\theta$ and is sufficiently small as well, so that for every $u\in \mathcal{B}_{\theta,\varepsilon_0}$ \AAA $\rm{(}$as in \eqref{local_family_of_maps_sufficient_for_conformal_case}$\rm{)}$ \EEE there exists $\phi\in Conf_+(\s)$ such that
\begin{equation}\label{identifying_the_correct_Moebius}
\left(u\circ\phi-\d2 u\circ\phi\right) \in \mathcal{B}_{\tilde\theta,\varepsilon_0} \ \ \mathrm{and\ also} \ \Pi_{3,0}(u\circ\phi)=0\,.
\end{equation} 	
\end{lemma}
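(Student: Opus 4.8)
The plan is to realize $\phi$ as an element of the $6$-dimensional orientation-preserving Möbius group $Conf_+(\s)$ via the implicit function theorem, using the identification of $H_{3,0}=H_{3,1,2}\oplus H_{3,2,3}$ with the Lie algebra of infinitesimal Möbius transformations. First I would parametrize $Conf_+(\s)$ near the identity by a chart $\Psi\colon U\subset\R^6\to Conf_+(\s)$ with $\Psi(0)=\mathrm{id}_{\s}$, where the first three coordinates generate rotations (whose infinitesimal generators are $x\mapsto \Lambda x$, $\Lambda\in Skew(3)$, i.e.\ exactly $H_{3,1,2}$ by the characterization \eqref{H_n,1,2}) and the remaining three generate the "boost" directions $\phi_{\xi,\lambda}$ (whose infinitesimal generators at the identity are precisely the degree-$2$ solenoidal-complement fields spanning $H_{3,2,3}$, cf.\ \eqref{H_n,sol} and the discussion of $\phi_{\xi,\lambda}$ after Theorem \ref{spherical Liouville's Theorem}). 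I would verify by a direct computation of $\frac{d}{dt}\big|_{t=0}$ that the differential $d\Psi(0)\colon\R^6\to W^{1,2}(\s;\R^3)$ has image exactly $H_{3,0}$; this is the place where one must actually check that the tangent space to $Conf_+(\s)$ at the identity coincides with $H_{3,0}$, and not merely that the dimensions match.

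Next, define the map $F\colon U\times\mathcal{B}_{\theta,\varepsilon_0}\to H_{3,0}$ by
\begin{equation*}
F(t,u):=\Pi_{3,0}\Big(u\circ\Psi(t)-\d2\big(u\circ\Psi(t)\big)\Big)\,,
\end{equation*}
so that the desired conclusion is $F(t_u,u)=0$ for a suitable $t_u\in U$ with $\Psi(t_u)=\phi$. One checks $F(0,\mathrm{id}_{\s})=\Pi_{3,0}(\mathrm{id}_{\s}-\d2\mathrm{id}_{\s})=\Pi_{3,0}(\mathrm{id}_{\s})=0$, since $\mathrm{id}_{\s}$ corresponds to $w\equiv 0$ in the decomposition. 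The partial differential $\partial_t F(0,\mathrm{id}_{\s})\colon\R^6\to H_{3,0}$ is, by the chain rule and the fact that precomposition with $\Psi(t)$ acts on displacement fields to leading order by adding $t\cdot d\Psi(0)$, exactly the composition $\Pi_{3,0}\circ d\Psi(0)$; since $d\Psi(0)$ is an isomorphism onto $H_{3,0}$ and $\Pi_{3,0}$ restricts to the identity there, $\partial_tF(0,\mathrm{id}_{\s})$ is invertible. The implicit function theorem (in the Banach-space form, with $u$ as parameter, using continuity of $F$ and its $t$-derivative in $u$ with respect to the $W^{1,2}$-topology) then yields, for all $u$ in a $W^{1,2}$-neighbourhood of $\mathrm{id}_{\s}$ — i.e.\ for $\theta$ small enough — a unique small $t_u$ with $F(t_u,u)=0$ and $|t_u|\lesssim\|u-\mathrm{id}_{\s}\|_{W^{1,2}}\lesssim\theta$. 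Set $\phi:=\Psi(t_u)\in Conf_+(\s)$; this gives the second assertion in \eqref{identifying_the_correct_Moebius}.

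It remains to establish the membership $\big(u\circ\phi-\d2 u\circ\phi\big)\in\mathcal{B}_{\tilde\theta,\varepsilon_0}$. Property $(i)$ of \eqref{local_family_of_maps_sufficient_for_conformal_case} holds by construction (we subtracted the mean). Property $(iii)$, $\mathcal{E}_2\leq\varepsilon_0$, is immediate from the invariance of the combined conformal–isoperimetric deficit under precomposition with Möbius transformations, translations and scalings, established in the Introduction. For property $(ii)$ one estimates
\begin{equation*}
\big\|\t\big(u\circ\phi-\d2 u\circ\phi\big)-P_T\big\|_{L^2}\leq \big\|\t(u\circ\phi)-\t(\mathrm{id}_{\s}\circ\phi)\big\|_{L^2}+\big\|\t(\phi)-P_T\big\|_{L^2}\,,
\end{equation*}
where the first term is controlled by $\|\t u-P_T\|_{L^2}\le\theta$ times a factor depending on the (bounded, since $|t_u|\lesssim\theta$) bi-Lipschitz constant and Jacobian of $\phi$, and the second by $|t_u|\lesssim\theta$ via the smoothness of $\Psi$; hence the whole quantity is $\le\tilde\theta$ for a $\tilde\theta=\tilde\theta(\theta)\to 0$ as $\theta\to0$, after possibly shrinking the original $\theta$ (as in Lemmata \ref{fixing_center_scale} and \ref{quadratization_of_conformal_deficit}). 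One should also remark that, after this replacement, the normalization \eqref{fixing_scale} can be restored by a further rescaling exactly as in Lemma \ref{fixing_center_scale}, which does not affect $\Pi_{3,0}$ being zero since rescaling acts within the $H_{3,1,3}$ direction that has already been quotiented out.

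I expect the main obstacle to be the first step: cleanly identifying the image of $d\Psi(0)$ with $H_{3,0}$, i.e.\ computing the infinitesimal generators of $Conf_+(\s)$ and matching them against the eigenspace characterizations of Lemma \ref{Projections}. The rest — the IFT application and the continuity/estimate bookkeeping for membership in $\mathcal{B}_{\tilde\theta,\varepsilon_0}$ — is routine once that identification is in place, modulo being careful that all constants degrade continuously to the identity case as $\theta,\varepsilon_0\to 0$.
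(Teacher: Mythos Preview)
Your approach is correct and shares with the paper the essential computation: identifying the differential at the identity (i.e.\ the Lie algebra of $Conf_+(\s)$, modulo constants) with $H_{3,0}$, which the paper carries out explicitly in \eqref{dPsi_map}--\eqref{Y_h_on_S_identity}. The difference lies in how the passage from $u=\mathrm{id}_{\s}$ to general $u$ is handled. You invoke the \emph{parametric} implicit function theorem, treating $u$ as a Banach-space parameter; this is legitimate here because your $F(t,u)=\Pi_{3,0}(u\circ\Psi(t))$ is, after a change of variables, integration of $u$ against smooth $t$-dependent kernels, hence jointly $C^1$ in $(t,u)\in\R^6\times W^{1,2}$. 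The paper instead applies the Inverse Function Theorem only to $\Psi:=\Psi_{\mathrm{id}}$ to obtain a local diffeomorphism, then shows $\|\Psi_u-\Psi\|_{L^\infty(\mathcal U_0)}\lesssim\theta$ and uses a topological degree/homotopy argument (in the spirit of \cite{faraco2005geometric}) to conclude that $\Psi_u$ has a zero nearby. Your route is more direct and avoids degree theory; the paper's route is more robust in that it only needs $C^0$-closeness of $\Psi_u$ to $\Psi$ rather than joint differentiability. Two minor points: the infinitesimal boost generators $\mu(\langle x,\xi\rangle x-\xi)$ are not literally in $H_{3,2,3}$ but land there exactly after subtracting their mean (one checks $A(w_2)=3w_2$ for the resulting degree-2 part), so your claim that $d\Psi(0)$ maps onto $H_{3,0}$ is correct once the mean is removed; and for the estimate on $\|\t((u-\mathrm{id})\circ\phi)\|_{L^2}$ the paper uses the conformal invariance of the Dirichlet energy on $\s$ (see \eqref{grad_estimate_for_homotopies}), which gives the bound with constant $1$ rather than a bi-Lipschitz factor---either suffices.
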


\begin{proof} Given $u\in\mathcal{B}_{\theta,\varepsilon_0}$, \AAA in view of the characterizations \eqref{Projection_on_H_n,1,2_characterization_full} and \eqref{Projection_on_H_n,1,2_characterization}, \EEE let us introduce the map $\Psi_u:Conf_+(\s)\mapsto\R^{6}$, via
\begin{flalign}\label{Psi_u_def}
\begin{split}
\Psi_u(\phi):\ &=\ \left(  \frac{1}{3}\Big(\partial_j(u\circ\phi)^i_h(0)-\partial_i(u\circ\phi)^j_h(0)\Big)_{1\leq i<j\leq 3},\ \d2 \Big(\mathrm{div}(u\circ\phi)_h(x)\Big)x\right)\\
\ &=\ \left(\left(\d2 \left((u\circ\phi)^ix_j-(u\circ\phi)^jx_i\right)\right)_{1\leq i<j\leq 3},\ \d2 \Big(\mathrm{div}(u\circ\phi)_h(x)\Big)x\right)\,.
\end{split}
\end{flalign}
Our goal \AAA for \eqref{identifying_the_correct_Moebius} \EEE is essentially to show that $0\in \mathrm{Im}(\Psi_u)$. To simplify notation, let us also set 
\begin{equation}\label{Psi_identity}
\Psi:=\Psi|_{\mathrm{id}_{\s}}\,.
\end{equation}
Clearly, $\Psi(\mathrm{id}_{\s})=0$. In order to apply the Inverse Function Theorem, we look at the differential
\begin{equation*}
d\Psi|_{\mathrm{id}_{\s}}:T_{\mathrm{id}_{\s}}Conf_+(\s)\mapsto \R^{6}
\end{equation*}
and prove that it is a non-degenerate linear map. The differential of $\Psi$ at the $\mathrm{id}_{\s}$ is easy to compute. Indeed, by the linearity of all the operations involved, for every $Y \in T_{\mathrm{id}_{\s}}Conf_+(\s)$, defined via
\begin{equation}\label{Lie_algebra_representation}
Y(x):=Sx+\mu\big(\langle x,\xi\rangle x-\xi\big):\s\mapsto \R^3,\ \mathrm{where }\ S^t=-S, \ \xi\in\s,\ \mu\in \R\,,
\end{equation}
(see \AAA \eqref{tangent_space_of_the_conformal_group_at_the_identity_1} in Remark \ref{tangent_space_of_conformal_group_to_identity} for the derivation of this representation),
and with a slight abuse of notation in the domain of definition of $\Psi$ \AAA in \eqref{Psi_identity}\EEE, we can calculate
\begin{align}\label{dPsi_map}
\begin{split}
d\Psi|_{\mathrm{id}_{\s}}(Y):\ &=\frac{d}{dt}\Big|_{t=0}\Psi\left(\mathrm{exp}_{\mathrm{id}_{\s}}(tY)\right)=\Psi\left(\frac{d}{dt}\Big|_{t=0}\mathrm{exp}_{\mathrm{id}_{\s}}(tY)\right)=\Psi\left(Y\right)\\[3pt]
\ &=\left(\left(\d2 \big(Y^i(x)x_j- Y^j(x)x_i\big) \right)_{1\leq i<j\leq 3}, \d2\mathrm{div}Y_h(x)x \right)\,.
\end{split}
\end{align}
It is clear that the harmonic extension in $B_1$ of $Y$ \AAA as in \eqref{Lie_algebra_representation} \EEE is given by the vector field 
\begin{equation}\label{harmonic_extension_of_Y}
Y_h(x)=Sx+\mu\left(\langle x,\xi\rangle x-\left(\frac{|x|^2+2}{3}\right)\xi\right).
\end{equation}
In particular, \AAA \eqref{harmonic_extension_of_Y} directly implies that \EEE
\begin{equation}\label{Y_h_on_S_identity}
\d2 \big(Y^i(x)x_j- Y^j(x)x_i\big) =\frac{2}{3}S_{ij}\ \forall 1\leq i<j\leq 3\,,\quad \d2\mathrm{div}Y_h(x)x=\frac{10}{9}\mu\xi\,.
\end{equation}
Therefore, \AAA in view of \eqref{dPsi_map} and \eqref{Y_h_on_S_identity}, we indeed obtain \EEE $\mathrm{ker}(d\Psi|_{\mathrm{id}_{\s}})=\{0\}$, i.e., 
$$%\begin{equation}\label{dPsi_isomorphism}
d\Psi|_{\mathrm{id}_{\s}} \text{ is a linear isomorphism between } T_{\mathrm{id}_{\s}}Conf_+(\s) \text{ and }\R^{6}\,.
$$%\end{equation}.
\AAA From now on, we denote by $D_\sigma$ the open ball in the 6-dimensional vector space $T_{\mathrm{id}_{\s}}Conf_+(\s)$, centered at $0$ (or better said, at $\mathrm{id}_{\s}$) and of radius $\sigma>0$. \EEE
Since the exponential mapping $\mathrm{exp}_{\mathrm{id}_{\s}}(\cdot)$ is a local diffeomorphism between a neighbourhood of $0$ in $T_{\mathrm{id}_{\s}}Conf_+(\s)$ and a neighbourhood of $\mathrm{id}_{\s}$ in $Conf_+(\s)$, we can use the Inverse Function Theorem to find a sufficiently small \AAA $\sigma_0\in (0,1)$ such that for the \EEE open neighbourhood 
\begin{equation}\label{U_0_nbhd}
\mathcal U_0:=\mathrm{exp}_{\mathrm{id}_{\s}}(D_{\sigma_0})
\end{equation}
of\ the $\mathrm{id}_{\s}$ in $Conf_+(\s)$, the map 
\begin{equation}\label{Psi_local_diffeomorphism}
\Psi|_{\mathcal{U}_0}:\mathcal{U}_0\subseteq Conf_+(\s)\mapsto \Psi(\mathcal{U}_0)\subseteq \R^6 \mathrm{\  is\ a\ } C^1\text{-}\mathrm{diffeomorphism}\implies \mathrm{deg}(\Psi,0;\mathcal{U}_0)=1\,.
\end{equation}
As a next step,  \textit{we justify that $\Psi$ is homotopic to $\Psi_u$ in $\mathcal{U}_0$}. Indeed, for every $\phi\in \mathcal{U}_0$, we can estimate
\begin{align}\label{Psi_u_Psi_homotopic}
\begin{split}
\big|\Psi_u(\phi)-\Psi(\phi)\big|^2 \leq &\ \sum_{i\neq j}\d2 \left(\big[\left(u-\mathrm{id}_{\s}\right)\circ\phi\big]^i\right)^2x_j^2+\d2 \Big(\mathrm{div}\big[\left(u-\mathrm{id}_{\s}\right)\circ\phi\big]_h\Big)^2\\
\leq &\ \d2\big|(u-\mathrm{id}_{\s})\circ\phi\big|^2+6 \d2\big|\t\left[\left(u-\mathrm{id}_{\s}\right)\circ\phi\right]\big|^2\,.
\end{split}
\end{align}
In the last step \AAA of \eqref{Psi_u_Psi_homotopic} \EEE we used the general estimate
\begin{equation*}
\d2 (\mathrm{div}v_h)^2\leq 3\d2 |\nabla v_h|^2\leq 6\d2 |\t v|^2\ \ \ \mathrm{for}\ v\in W^{1,2}(\s;\R^3)\,,
\end{equation*}
\AAA the last inequality following from the second estimate in \hyperref[basic_harmonic_estimates]{(C.7)}, which is proved in Lemma \ref{harmest}.  %which follows from the Cauchy-Schwartz inequality and another basic $L^2$-estimate for harmonic functions that can be proven again by expansion of $v$ in spherical harmonics. 
\EEE Note now that \AAA by the conformal invariance of the Dirichlet energy in two dimensions and \eqref{local_family_of_maps_sufficient_for_conformal_case}$(ii)$, \EEE 
\begin{equation}\label{grad_estimate_for_homotopies} 
\d2\big|\t\big[\left(u-\mathrm{id}_{\s}\right)\circ\phi\big]\big|^2= \d2\big| \t u-P_T\big|^2\leq \theta^2\,.
\end{equation}
By the change of variables formula, \AAA \eqref{local_family_of_maps_sufficient_for_conformal_case} again, \EEE and the Poincare inequality \AAA \eqref{Poincare} \EEE(since $\d2 (u-\mathrm{id}_{\s})=0$), we also have
\begin{equation}\label{function_estimate_for_homotopies} 
\d2\big|(u-\mathrm{id}_{\s})\circ\phi\big|^2\leq C_1(\mathcal{U}_0)\ \d2 \big|\t u-P_T \big|^2\leq  C_1(\mathcal{U}_0)\theta^2\,,
\end{equation}
where
\begin{equation}\label{positive_homotopy_constant}
C_1(\mathcal{U}_0)\sim\underset{\phi\in\mathcal{U}_0}{\mathrm{sup}}\ \underset{x\in\s}{\mathrm{inf}}\big|\t \phi(x)\big|^{-2}>0\,.
\end{equation}
The strict positivity of the constant $C_1(\mathcal{U}_0)$ \AAA in \eqref{positive_homotopy_constant} \EEE is ensured by the fact that we can take the neighbourhood $\mathcal{U}_0$ to be sufficiently small around $\mathrm{id}_{\s}$\AAA, which amounts to choosing $\sigma_0\in (0,1)$ sufficiently small (see \eqref{U_0_nbhd} and \eqref{Psi_local_diffeomorphism})\EEE. Hence, \AAA \eqref{Psi_u_Psi_homotopic}-\eqref{positive_homotopy_constant} imply that \EEE 
\begin{equation}\label{Psi_u_Psi_L_infty}
\|\Psi_u-\Psi\|_{L^{\infty}(\mathcal{U}_0)}\leq C(\mathcal{U}_0) \theta, \ \ \mathrm{where}\  C(\mathcal{U}_0):=\max\left\{\sqrt{6},\sqrt{C_1(\mathcal{U}_0)}\right\}>0\,.
\end{equation}
We can now continue as in \cite[Proposition 4.7]{faraco2005geometric}. For the sake of making the proof self-contained, we present the argument here, adapted to our setting.
	
\AAA Recalling the notation $D_\sigma$ introduced before \eqref{U_0_nbhd}, let us consider the family $(\Gamma_s)_{s\in[0,1]}$ of closed hypersurfaces in $Conf_+(\s)$, defined by
\begin{equation}\label{foliation}
 \Gamma_s:=\mathrm{exp}_{\mathrm{id}_{\s}}(\partial D_{s\sigma_0})\,,
\end{equation} 
\EEE so that $\Gamma_0=\{\mathrm{id}_{\s}\}$, and $\Gamma_1$ is the topological boundary of $\mathcal{U}_0$ inside the manifold $Conf_+(\s)$. Let us define
\begin{equation}\label{def_m_s}
m(s):=\mathrm{min}_{\phi\in\Gamma_s}|\Psi(\phi)| \ \ \mathrm{for\ every}\ s\in[0,1]\,.
\end{equation}
This is obviously a continuous function of $s$. Since $\Psi|_{\Gamma_0}\equiv 0$ and \AAA (by \eqref{Psi_local_diffeomorphism}) \EEE $\Psi|_{\mathcal{U}_0}$ is a homeomorphism onto its image, \AAA by \eqref{def_m_s} we deduce that \EEE
\begin{equation*}%\label{continuity_positivity_of_m_s}
m(s)>0\ \ \mathrm{\forall } s\in(0,1]  \mathrm{\ \ and\ } \lim_{s\to 0^+}m(s)=0\,.
\end{equation*}
Notice that $m(1)>0$ depends only on $\mathcal{U}_0$, \AAA or equivalently only on $\sigma_0$\EEE. We can thus choose $\theta\in (0,1)$ sufficiently small \AAA(depending on $\sigma_0$)\EEE, so that \AAA for the constant $C(\mathcal{U}_0)$ of \eqref{Psi_u_Psi_L_infty}, there holds \EEE
\begin{equation}\label{theta_small}
\big(C(\mathcal{U}_0)+1\big)\theta\leq \frac{m(1)}{2}\,,
\end{equation}
 and then define 
\begin{equation}\label{s_theta}
s_{\theta}:=\mathrm{inf}\left\{s\in[0,1):m(s)\geq\big(C(\mathcal{U}_0)+1\big)\theta\right\}.
\end{equation}
\AAA By \eqref{def_m_s} and \eqref{s_theta} \EEE it is clear that $\lim_{\theta\to 0^+}s_{\theta}=0$. Let us now consider \textit{the linear homotopy between $\Psi$ and $\Psi_u$}. For every $t\in [0,1]$ and $\phi\in \Gamma_{s_\theta}\subseteq\mathcal{U}_0\subseteq Conf_+(\s)$, \AAA by \eqref{def_m_s}, \eqref{s_theta} and \eqref{Psi_u_Psi_L_infty}, \EEE we have 
\begin{align}\label{linear_homotopy_non_zero}
\begin{split}
\big|\big((1-t)\Psi+t\Psi_u\big)(\phi)\big|\
&\ \geq |\Psi(\phi)|-t|(\Psi_u-\Psi)(\phi)| \geq \ \mathrm{min}_{\phi\in\Gamma_{s_\theta}}|\Psi(\phi)|-\|\Psi_u-\Psi\|_{L^{\infty}(\mathcal{U}_0)}\\
&\ \geq m_{s_\theta}-C(\mathcal{U}_0)\theta\ \geq \ \theta>0\,.
\end{split}
\end{align}
In particular, \AAA by \eqref{linear_homotopy_non_zero} we find that\EEE 
\begin{equation*}%\label{linear_homotopy_non_zero_final}
\big((1-t)\Psi+t\Psi_u\big)(\phi)\neq 0 \ \  \forall t\in [0,1] \ \mathrm{and}\ \phi\in \Gamma_{s_\theta}\,.
\end{equation*}
Since the degree around $0$ remains constant through this linear homotopy, if $\mathcal{U}_{s_\theta}\subseteq \mathcal{U}_{0}$ is the open neighbourhood around the $\mathrm{id}_{\s}$ in $Conf_+(\s)$ with $\partial \mathcal{U}_{s_\theta}=\Gamma_{s_\theta}$\AAA, i.e., $\mathcal{U}_{s_\theta}:=\mathrm{exp}_{\mathrm{id}_{\s}}(D_{s_\theta\sigma_0})$ (recall \eqref{foliation}), \EEE then  
\begin{equation}\label{degree_is_still_1}
\mathrm{deg}({\Psi_u},0;\ \mathcal{U}_{s_\theta})=\mathrm{deg}(\Psi,0;\ \mathcal{U}_{s_\theta})=1\,.
\end{equation}
\AAA As a consequence of \eqref{degree_is_still_1}, \eqref{Psi_u_def} and the characterizations \eqref{Projection_on_H_n,1,2_characterization_full}, \eqref{Projection_on_H_n,1,2_characterization},  \EEE there exists a Möbius transformation $\phi\in\mathcal{U}_{s_\theta}\subseteq Conf_+(\s)$, so that
\begin{equation*}
\Psi_u(\phi)=0 \iff \Pi_{3,0}(u\circ \phi)=0\,.
\end{equation*}
In the same fashion as have estimated \AAA in \eqref{grad_estimate_for_homotopies}, and by \eqref{local_family_of_maps_sufficient_for_conformal_case}$(ii)$ again\EEE,
\begin{equation*}
\d2\left|\t(u\circ\phi)-P_T\right|^2 \leq 2\big(\theta^2+\d2 \left|\t\phi-P_T\right|^2 \big)\leq 2(\theta^2+C_{s_\theta})\,,
\end{equation*}
where
\begin{equation}\label{constant_Cstheta}
C_{s_\theta}:=\max_{\phi\in\overline{\mathcal{U}_{s_\theta}}}\|\phi-\mathrm{id}_{\s}\|^2_{W^{1,2}(\s)}\,.
\end{equation}	
\textit{Since all topologies in the finite dimensional manifold $Conf_+(\s)$ are equivalent} and $\lim_{\theta\to 0^+}s_{\theta}=0$, \AAA by \eqref{constant_Cstheta} \EEE we also have that $\lim_{\theta\to 0^+}C_{s_{\theta}}=0$. Hence, we can first take \AAA $\sigma_0\in (0,1)$ small enough \EEE and then $\theta\in(0,1)$ small enough \AAA(see also \eqref{theta_small})\EEE, so that after replacing $u\circ\phi$  with $(u\circ\phi-\d2 u\circ\phi)$ if necessary to fix its mean value to 0, we have (by using again the conformal invariance of the deficit) that $u\circ\phi\in \mathcal{B}_{\tilde \theta,\varepsilon_0}$, where $\tilde \theta:=\sqrt{2(\theta^2+C_{s_\theta})}>0$ is again small accordingly. \AAA This finishes the proof of \eqref{identifying_the_correct_Moebius}. \EEE\qedhere
\end{proof}

\subsection{Proof of Theorem \ref{main_thm_conformal_case}}\label{subsec: 4.4}
We can now combine all the previous steps to complete the proof of the main theorem for the conformal case in dimension 3.
\begin{proof}[\bf{Proof of Theorem \ref{main_thm_conformal_case}}]
\AAA In view of Corollary \ref{it_suffices_to_prove_the_local_coformal_case}, \EEE for $\theta\in(0,1)$ and $\varepsilon_0\in (0,1)$ that will be chosen sufficiently small in the end, let us consider a map $u\in \mathcal{B}_{\theta,\varepsilon_0}$. By first using Lemma \ref{fixingMobius}
and then Lemma \ref{fixing_center_scale}, 
 we can find a Möbius transformation $\phi\in Conf_+(\s)$ such that the map 
\begin{equation*}%\label{new_map}
\tilde u:=\frac{1}{\d2 \langle u\circ \phi,x\rangle }\left(u\circ\phi-\d2 u\circ\phi\right)
\end{equation*}
has all the desired properties, i.e.,
\begin{equation}\label{desired_properties}
\tilde u\in \mathcal{B}_{\theta,\varepsilon_0}, \  \mathrm{\ with\ \ \ } \mathcal{E}_2(\tilde u)=\mathcal{E}_2(u),\ \ \ \d2 \langle\tilde u,x\rangle=1 \ \mathrm{\ \ and\ \ } \  \Pi_{3,0}(\tilde u-\mathrm{id}_{\s})=0\,,
\end{equation}
where we have abused notation by not replacing $\theta$ with $\tilde{\theta}$. Setting $\tilde w:=\tilde u-\mathrm{id}_{\s}$, we can again expand the deficit around the $\mathrm{id}_{\s}$ and arrive at \eqref{reduction_to_quadratic_estimate}. This estimate, together with \eqref{Korn_type_inequality_for_Q_3} and the fact that $\tilde w\in H_3$ is such that $\Pi_{3,0}\tilde w=0$ \AAA(by \eqref{H_space} and \eqref{desired_properties}) \EEE yield
\begin{equation}\label{final_estimate_conformal}
\frac{1}{4}\d2 |\t \tilde w|^2\leq Q_3(\tilde w)\leq \mathcal{E}_2(u)+\beta\d2|\t \tilde w|^2\,.
\end{equation}
\AAA By Lemma \ref{quadratization_of_conformal_deficit}, \EEE we can then choose $\varepsilon_0\in (0,1)$ small enough and subsequently $\theta\in (0,1)$ small enough, so that $\beta\leq\frac{1}{8}$, \EEE in order to absorb the last term on the right hand side \AAA of \eqref{final_estimate_conformal} \EEE in the one on the left, and conclude for $\phi$ as above and $\lambda:=\d2 \langle u\circ \phi,x\rangle>0$\,.
\end{proof}

\section{Linear stability estimates for $Isom(\S)$ and $Conf(\S)$}\label{sec: 5 linear_stability}
As we have seen in the Section \ref{Section 4}, the key step in proving Theorem \ref{main_thm_conformal_case} consists in establishing the corresponding linear estimate, i.e., Theorem \ref{main_coercivity_estimate_3_d_conformal}. What we would like to present in this section, is how the eigenvalue decomposition of $H_n$ into eigenspaces of $A$, \AAA as provided by Theorem \ref{eigenvalue_decomposition}, \EEE which is valid in every dimension (actually even in dimension $n=2$), can be used to prove the analogous linear estimate for the quadratic form $Q_n$ introduced in \eqref{Q_n_definition}, also in dimensions $n\geq 4$. As explained in the Introduction, this quadratic form is associated with the combined conformal-isoperimetric deficit $\mathcal{E}_{n-1}$ defined in \eqref{def_of_combined_deficit} \AAA(cf.~Appendix \ref{sec:B} for the precise calculations). \EEE We discuss in detail this case first, since the proof of the corresponding estimate for the isometric case, i.e., Theorem \ref{coercivity_estimate_for_Q_isom_Q_isop}, is essentially the same and is discussed in Subsection \ref{Subsection 5.3.}.
\subsection{Proof of Theorem \ref{main_coercivity_estimate_conformal_general_dimension}}\label{Subsection 5.1.}
For $n=3$, Theorem \ref{main_coercivity_estimate_conformal_general_dimension} is precisely Theorem \ref{main_coercivity_estimate_3_d_conformal}, whose proof was given in Subsection \ref{Subsection 4.2}, and actually the optimal constant for the linear estimate \AAA \eqref{Korn_type_inequality_for_Q_3} \EEE was calculated. In the higher dimensional case $n\geq 4$, the quadratic form $Q_n$ \AAA in \eqref{Q_n_definition} \EEE has an extra $\ds (\mathrm{div}_{\S}w)^2$-term, and the study of its coercivity properties is slightly more complicated than before.

\begin{remark}\label{Remark_for_L}
An abstract way to obtain the estimate \AAA \eqref{Korn_type_inequality_for_Q_n} \EEE of Theorem \ref{main_coercivity_estimate_conformal_general_dimension} would be to \textit{identify the kernel of $Q_n$ in $H_n$} \AAA(see \eqref{H_space}) \EEE (and prove that it is exactly the subspace $H_{n,0}$) and then \textit{use a standard contradiction$\backslash$compactness argument} (that we describe in Subsection \ref{Subsection 5.3.} for the corresponding result in the isometric case). Notice that \textit{$w\in H_n$ lies in the kernel of the nonnegative quadratic form $Q_n$ iff} $Q_n(v,w)=0\ \ \forall v\in H_n$, i.e., iff  $w\in H_n$ lies in the kernel of the associated Euler-Lagrange operator 
\begin{equation}\label{L_operator}
\hspace{-0.5em}\mathcal{L}(w):=-\frac{1}{n-1}\Delta_{\S}w-\frac{n-3}{(n-1)^2}\big(\t\mathrm{div}_{\S}w-(n-1)(\mathrm{div}_{\S}w)x\big)-\big((\mathrm{div}_{\S}w)x-\sum_{j=1}^n x_j\t w^j\big)\,,
\end{equation}  	
in the sense of distributions. When $n=3$, the second term \AAA in \eqref{L_operator} \EEE is dropping out, and since $\mathcal{L}$ leaves the subspaces $(H_{3,k,\mathrm{sol}})_{k\geq 1},(H_{3,k,\mathrm{sol}}^{\bot})_{k\geq 1}$ invariant in this case, the Euler-Lagrange equation can be solved explicitely, showing that $\mathrm{ker}\mathcal{L}=H_{3,0}$ \AAA(as Theorem \ref{main_coercivity_estimate_3_d_conformal} describes quantitatively)\EEE. Although in slightly hidden form, this was essentially the point of Lemma \ref{A_invariances} and the subsequent results in Subsection \ref{Subsection 4.2}.

In higher dimensions $n\geq 4$, since the operator $\big(\t\mathrm{div}_{\S}w-(n-1)(\mathrm{div}_{\S}w)x\big)$ neither commutes with $A$ \AAA (\eqref{Aoperator}), \EEE nor leaves the subspaces $(H_{n,k,\mathrm{sol}})_{k\geq 1},(H_{n,k,\mathrm{sol}}^{\bot})_{k\geq 1}$ invariant, it is not clear if there is a straightforward argument to solve the equation $\mathcal{L}(w)=0$ explicitely, and show that indeed $\mathrm{ker}\mathcal{L}=H_{n,0}$ in this case as well. In particular, \AAA(comparing with \eqref{Q_3_eigenspace_decomposition}), \EEE \textit{mixed terms of special type are expected to be present in the Fourier decomposition of $Q_n$ into the eigenspaces of $A$}\AAA, which were identified in Theorem \ref{eigenvalue_decomposition}\EEE. Nevertheless, it will turn out that we can still use the latter to show that \textit{the presence of the mixed divergence-terms is harmless}, i.e., it does not produce any further zeros (other than $H_{n,0}$) in $Q_n$. Simultaneously, we obtain the desired coercivity estimate \AAA \eqref{Korn_type_inequality_for_Q_n} \EEE (with an \textit{explicit lower bound for the optimal constant}) by examining how $Q_n$ behaves in each one of the eigenspaces $(H_{n,k,i})_{k\geq, i=1,2,3}$ of $A$ separately. 
\end{remark}
\vspace{-0.5em}
Following the notation we had in Subsection \ref{Subsection 4.2}, we first present two auxiliary lemmata that entail most of the essential ingredients for the proof \AAA of Theorem \ref{main_coercivity_estimate_conformal_general_dimension} \EEE also in dimensions $n\geq 4$. 

\begin{lemma}\label{div_Q_n_in_the_eigenspaces}
For $n\geq 3$ and $k\geq 1$, let us denote by $\lambda_{n,k}:=k(k+n-2)$ the eigenvalues of $-\Delta_{\S}$ \AAA$\rm{(}$see \eqref{properties}$\rm{)}$ \EEE and let $i=1,2,3$. For every $w\in H_{n,k,i}$ \AAA$\rm{(}$as in Theorem \ref{eigenvalue_decomposition}$\rm{)}$, \EEE  we have 
\begin{align}\label{Q_v_n_div_n_Q_n_in_eigenspaces}
\begin{split}
Q_{V_n}(w)&=c_{n,k,i}\ds |\t w|^2, \quad \text{where} \ \   c_{n,k,i}:=\frac{n\sigma_{n,k,i}}{2\lambda_{n,k}}\,,\\
\ds (\mathrm{div}_{\S}w)^2&=\alpha_{n,k,i}\ds |\t w|^2,\quad \text{where} \ \  \alpha_{n,k,i}:=\frac{\sigma_{n,k,i}^2(2\lambda_{n,k}c_{n,k,i}-n)}{n\lambda_{n,k}(2\sigma_{n,k,i}-n)}\,,\\
Q_n(w)&=C_{n,k,i}\ds |\t w|^2,\quad \text{where} \ \    C_{n,k,i}:=\frac{n}{2(n-1)}+\frac{n(n-3)}{2(n-1)^2}\alpha_{n,k,i}-c_{n,k,i}\,. 
\end{split}
\end{align}
\end{lemma}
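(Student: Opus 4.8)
The plan is to carry out the computation announced in the statement by restricting all the relevant quadratic forms to a single eigenspace $H_{n,k,i}$ of the operator $A$ and exploiting two facts: first, that on $H_{n,k}$ the Dirichlet energy and the $L^2$-norm are proportional with constant $\lambda_{n,k}=k(k+n-2)$ (this is the standard spectral identity for spherical harmonics, recalled in Appendix \ref{sec:C} via \eqref{properties}); and second, that on $H_{n,k,i}$ the operator $A$ acts as the scalar $\sigma_{n,k,i}$, by Theorem \ref{eigenvalue_decomposition}. The first identity in \eqref{Q_v_n_div_n_Q_n_in_eigenspaces} is then immediate: for $w\in H_{n,k,i}$,
\begin{equation*}
Q_{V_n}(w)=\frac{n}{2}\ds\langle w,A(w)\rangle=\frac{n\sigma_{n,k,i}}{2}\ds|w|^2=\frac{n\sigma_{n,k,i}}{2\lambda_{n,k}}\ds|\t w|^2,
\end{equation*}
which gives $c_{n,k,i}=\tfrac{n\sigma_{n,k,i}}{2\lambda_{n,k}}$ exactly as in \eqref{quadratic_form_of_volume}.

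The second identity, for $\ds(\mathrm{div}_{\S}w)^2$, is the computational heart of the lemma. Here I would use the eigenvalue equation \eqref{Equation_for_divs}, namely $\mathrm{div}_{\S}w=\sigma_{n,k,i}\langle w,x\rangle$ on $\S$, to reduce the divergence term to the normal component $\langle w,x\rangle$. It then remains to express $\ds\langle w,x\rangle^2$ in terms of $\ds|\t w|^2$. For this I would pair the eigenvalue equation $A(w)=\sigma_{n,k,i}w$ against $w$ itself and integrate by parts, exactly as in the computation \eqref{very_same_calculation} but now keeping the normal part: $-\ds\langle\sum_j x_j\t w^j,w\rangle=\ds|w|^2-n\ds\langle w,x\rangle^2$, together with $\ds\langle(\mathrm{div}_{\S}w)x,w\rangle=\ds\mathrm{div}_{\S}w\,\langle w,x\rangle=\sigma_{n,k,i}\ds\langle w,x\rangle^2$ by \eqref{Equation_for_divs}. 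Combining these gives $\sigma_{n,k,i}\ds|w|^2=\sigma_{n,k,i}\ds\langle w,x\rangle^2+\ds|w|^2-n\ds\langle w,x\rangle^2$, hence $(n-2\sigma_{n,k,i})\ds\langle w,x\rangle^2=(1-\sigma_{n,k,i})\ds|w|^2$, whence $\ds\langle w,x\rangle^2=\tfrac{1-\sigma_{n,k,i}}{n-2\sigma_{n,k,i}}\ds|w|^2$ (the denominator is nonzero for each of the three eigenvalues $-k,1,k+n-2$, the case $\sigma=1$ being excluded since then $\langle w,x\rangle\equiv0$ anyway, consistent with $H_{n,k,2}$ being tangential). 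Then
\begin{equation*}
\ds(\mathrm{div}_{\S}w)^2=\sigma_{n,k,i}^2\ds\langle w,x\rangle^2=\frac{\sigma_{n,k,i}^2(1-\sigma_{n,k,i})}{\lambda_{n,k}(n-2\sigma_{n,k,i})}\ds|\t w|^2,
\end{equation*}
and a short algebraic manipulation, using $c_{n,k,i}=\tfrac{n\sigma_{n,k,i}}{2\lambda_{n,k}}$ to rewrite $1-\sigma_{n,k,i}=1-\tfrac{2\lambda_{n,k}c_{n,k,i}}{n}$ and $n-2\sigma_{n,k,i}=-(2\sigma_{n,k,i}-n)$, converts this into the stated expression $\alpha_{n,k,i}=\tfrac{\sigma_{n,k,i}^2(2\lambda_{n,k}c_{n,k,i}-n)}{n\lambda_{n,k}(2\sigma_{n,k,i}-n)}$; I would double-check the sign bookkeeping here since it is the only place an error could creep in. Finally, the third identity is pure bookkeeping: plug the first two into the definition \eqref{Q_n_definition} of $Q_n$, i.e. $Q_n(w)=\tfrac{n}{2(n-1)}\big(\ds|\t w|^2+\tfrac{n-3}{n-1}\ds(\mathrm{div}_{\S}w)^2\big)-Q_{V_n}(w)$, to read off $C_{n,k,i}=\tfrac{n}{2(n-1)}+\tfrac{n(n-3)}{2(n-1)^2}\alpha_{n,k,i}-c_{n,k,i}$.

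The main obstacle is not conceptual but the risk of an algebraic slip in identifying $\alpha_{n,k,i}$, and in checking that the denominators $n-2\sigma_{n,k,i}$ and $2\lambda_{n,k}c_{n,k,i}-n$ are handled consistently; one should verify the formula against the $n=3$ values already computed in \eqref{constants1} (where $\ds(\mathrm{div}_{\S}w)^2$ does not even appear in $Q_3$ because the coefficient $n-3$ vanishes, so $\alpha_{3,k,i}$ is irrelevant and indeed $C_{3,k,i}=\tfrac34-c_{3,k,i}$ matches). Beyond that, one should note for the record that the integration-by-parts identities used above are legitimate in the classical sense: every $w\in H_{n,k,i}$ is the restriction to $\S$ of a homogeneous harmonic polynomial, hence smooth up to the boundary, exactly as observed in the proof of Lemma \ref{A_invariances}.
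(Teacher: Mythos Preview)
Your approach is essentially the paper's own: both use the eigenvalue relation $A(w)=\sigma_{n,k,i}w$, the identity $\mathrm{div}_{\S}w=\sigma_{n,k,i}\langle w,x\rangle$ from \eqref{Equation_for_divs}, and an integrated form of $Q_{V_n}$ to solve for $\ds\langle w,x\rangle^2$ in terms of $\ds|w|^2$ (the paper packages the integration by parts as the alternative formula \eqref{Q_V_n_alternative}, $Q_{V_n}(w)=\tfrac{n}{2}\ds\big(2\mathrm{div}_{\S}w\,\langle w,x\rangle-n\langle w,x\rangle^2+|w|^2\big)$, and then combines it with the first identity). Your final relation $(n-2\sigma_{n,k,i})\ds\langle w,x\rangle^2=(1-\sigma_{n,k,i})\ds|w|^2$ and the resulting $\alpha_{n,k,i}$ are correct.

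There is, however, a slip in the intermediate step you flagged yourself: the identity you quote from \eqref{very_same_calculation}, namely $-\ds\langle\sum_j x_j\t w^j,w\rangle=\ds|w|^2-n\ds\langle w,x\rangle^2$, is \emph{not} valid in general---it holds in \eqref{very_same_calculation} only because $\mathrm{div}_{\S}w=0$ there. The correct general identity carries an extra term $+\ds\mathrm{div}_{\S}w\,\langle w,x\rangle$ (equivalently, it is exactly \eqref{Q_V_n_alternative} minus the $(\mathrm{div}_{\S}w)\langle w,x\rangle$ contribution from the normal part of $A(w)$; test it on $w=x$ to see the discrepancy). With this correction, the two $\sigma_{n,k,i}\ds\langle w,x\rangle^2$ contributions combine to give the factor $2\sigma_{n,k,i}$ in your final line---which is presumably how you actually arrived at $(n-2\sigma_{n,k,i})$ rather than $(n-\sigma_{n,k,i})$. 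Once this is fixed, the argument is complete and matches the paper's.
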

\begin{proof}
The first identity is immediate \AAA from \eqref{quadratic_form_of_volume}, \eqref{Aoperator}, Theorem \ref{eigenvalue_decomposition} and \eqref{sharmonics}\EEE. For the second one, after integration by parts we see that the quadratic form $Q_{V_n}$ can be equivalently rewritten as
\begin{equation}\label{Q_V_n_alternative}
Q_{V_n}(w)=\frac{n}{2}\ \ds \Big(2\mathrm{div}_{\S} w\langle w,x\rangle-n\langle w,x\rangle^2+|w|^2\Big)\,.
\end{equation}
Together with the first identity and \eqref{Equation_for_divs}, \AAA\eqref{Q_V_n_alternative} \EEE yields the desired identity, and then the one for $Q_n$ follows immediately \AAA by its definition in \eqref{Q_n_definition} and the two previous identities that we just checked.\EEE\qedhere
\end{proof}
Since they will again play an important role in the sequel \AAA(as it was the case for the constants in \eqref{constants1})\EEE, let us list below the precise values of the previous constants \AAA appearing in \eqref{Q_v_n_div_n_Q_n_in_eigenspaces}\EEE. The last set of constants in the following table is considered for $k\geq 2$, because in any case $\Pi_{H_{n,1,3}}w=0$ for every $w\in H_n$, \AAA as we have justified in Remark \ref{triviality_of_H_1_3}. \EEE
\begin{align}\label{constants2}
\begin{split}
\mathrm{For \ } k\geq 1;\ \ \ &\left\{\begin{array}{lr}
c_{n,k,1}=\frac{-n}{2(k+n-2)}\\[5pt]
\alpha_{n,k,1}=\frac{k(k+1)}{(k+n-2)(2k+n)}\\[5pt]
C_{n,k,1}=\frac{n}{2}\Big(\frac{1}{n-1}+\frac{1}{k+n-2}+\frac{(n-3)k(k+1)}{(n-1)^2(k+n-2)(2k+n)}\Big)
\end{array}\right\}\,, \\[7pt]
\mathrm{For \ } k\geq 1;\ \ \ &\left\{\begin{array}{lr}
c_{n,k,2}=\frac{n}{2k(k+n-2)}\\[5pt]
\alpha_{n,k,2}=0\\[5pt]
C_{n,k,2}=\frac{n}{2}\frac{(k-1)(k+n-1)}{(n-1)k(k+n-2)}
\end{array}\right\}\,,\ \ \ \ \ \ \ \ \ \ \ \ \ \ \ \ \ \ \ \ \ \ \ \ \ \ \ \ \  \\[7pt]
\mathrm{For \  } k\geq 2;\ \ \ &\left\{\begin{array}{lr}
c_{n,k,3}=\frac{n}{2k}\\[5pt]
\alpha_{n,k,3}=\frac{(k+n-2)(k+n-3)}{k(2k+n-4)}\\[5pt]
C_{n,k,3}=\frac{n(k-2)\big((3n-5)k+(n^2-6n+7)\big)}{2(n-1)^2k(2k+n-4)}
\end{array}\right\}\,.\ \  \ \ \ \ \ \ \ \ \ \ \ \ \ \  
\end{split}
\end{align}
The next ingredient we need is the following.
\pagebreak
\begin{lemma}\label{bd_lemma}
The following statements hold true.	
\begin{itemize} 
\item[$(i)$] Let $n\geq 3$. For every $k,l\geq 1$ and $i,j=1,2,3$ with $(k,i)\neq (l,j)$, the subspaces $H_{n,k,i}$ and $H_{n,l,j}$ \AAA$\rm{(}$introduced in Theorem \ref{eigenvalue_decomposition}$\rm{)}$ \EEE are $Q_{V_n}$- and $\widetilde{Q}_n$-orthogonal, where 
\begin{equation}\label{tilde_Q_n}
\widetilde{Q}_n(w):=\frac{n}{2(n-1)}\ds|\t w|^2-Q_{V_n}(w)\,,
\end{equation}
i.e., for every $w_{n,k,i}\in H_{n,k,i}$ and $w_{n,l,j}\in H_{n,l,j}$,
\begin{equation}\label{H_k_is_Q_voln_tilde_Q_n_orth}
Q_{V_n}(w_{n,k,i},w_{n,l,j})=0 \mathrm{\ \ \ and \ \ \ } \widetilde{Q}_n(w_{n,k,i},w_{n,l,j})=0\,.
\end{equation} 
\item[$(ii)$] Let $n\geq 3$, $w\in H_n$ written in Fourier series as $w=\underset{\underset{(k,i)\neq(1,3)}{(k,i)\in\mathbb{N}^*\times\{1,2,3\}}}{\sum}w_{n,k,i}, \mathrm{\ where  \ } w_{n,k,i}\in H_{n,k,i}.$ 
Then,
\begin{flalign}\label{Q_n_Fourier_expansion_identity}
\begin{split}
\hspace{-3.5em}Q_{n}(w)=&\sum_{\underset{(k,i)\neq(1,3)}{(k,i)\in \mathbb{N}^*\times\{1,2,3\}}}Q_n( w_{n,k,i})+\frac{1}{2}\frac{n(n-3)}{(n-1)^2}\sum_{k\geq 1}\ds \mathrm{div}_{\S}w_{n,k,1}\mathrm{div}_{\S}w_{n,k+2,3} \\
& +\frac{1}{2}\frac{n(n-3)}{(n-1)^2}\sum_{k\geq 3}\ds\mathrm{div}_{\S}w_{n,k,3}\mathrm{div}_{\S}w_{n,k+2,1}\,. 
\end{split}
\end{flalign}
\end{itemize} 
\end{lemma}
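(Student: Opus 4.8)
\textbf{Proof proposal for Lemma \ref{bd_lemma}.}

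The plan is to prove the two parts in order, using the eigenvalue decomposition \eqref{A_eigenspace_decomposition_of_H} as the backbone and exploiting carefully the two distinct bilinear forms appearing in $Q_n$: the ``volume'' part $Q_{V_n}$ and the purely quadratic ``Dirichlet-type'' part coming from $\frac{n}{2(n-1)}\ds(|\t w|^2 + \frac{n-3}{n-1}(\mathrm{div}_{\S}w)^2)$.

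For part $(i)$, I would first observe that $\widetilde Q_n$ differs from $Q_n$ exactly by the divergence term, namely $Q_n(w) = \widetilde Q_n(w) + \frac{n(n-3)}{2(n-1)^2}\ds(\mathrm{div}_{\S}w)^2$, so understanding the cross-terms of $\ds\mathrm{div}_{\S}v\,\mathrm{div}_{\S}w$ is really the only new point relative to Subsection \ref{Subsection 4.2}. The $Q_{V_n}$-orthogonality across distinct pairs $(k,i)\neq(l,j)$ is immediate: since $A$ is $L^2$-self-adjoint (Lemma \ref{A_invariances}) and $H_{n,k,i}$, $H_{n,l,j}$ are eigenspaces with distinct eigenvalues \emph{or} are $L^2$-orthogonal because they consist of spherical harmonics of different degrees (recall \eqref{properties}), the polarized bilinear form $Q_{V_n}(v,w)=\frac{n}{2}\ds\langle v,A(w)\rangle$ vanishes on any such pair. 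For the $\widetilde Q_n$-orthogonality I would use that $\ds\langle\t v,\t w\rangle$ and $\ds\langle v,w\rangle$ both vanish when $v\in H_{n,k}$, $w\in H_{n,l}$ with $k\neq l$ (orthogonality of spherical harmonics of different degrees, plus the factor $\lambda_{n,k}$ relating the $W^{1,2}$- and $L^2$-inner products), and for the same degree $k=l$ with $i\neq j$ one uses the $A$-eigenspace orthogonality within $H_{n,k}$ together with equation \eqref{Equation_for_divs}, which says $\mathrm{div}_{\S}w=\sigma_{n,k,i}\langle w,x\rangle$ on each eigenspace; this reduces every cross-term to a multiple of $\ds\langle v,w\rangle=0$. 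Hence $\widetilde Q_n$ is block-diagonal across $(k,i)$.

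For part $(ii)$, the content is that in $Q_n = \widetilde Q_n + \frac{n(n-3)}{2(n-1)^2}\ds(\mathrm{div}_{\S}w)^2$ the only surviving off-diagonal contributions come from the divergence term, and there only between $H_{n,k,1}$ and $H_{n,k+2,3}$ (and symmetrically $H_{n,k,3}$ and $H_{n,k+2,1}$). To see this I would expand $\ds(\mathrm{div}_{\S}w)^2$ using $w=\sum w_{n,k,i}$, giving $\sum_{(k,i),(l,j)}\ds\mathrm{div}_{\S}w_{n,k,i}\,\mathrm{div}_{\S}w_{n,l,j}$, then argue which of these cross-terms vanish. Using \eqref{Equation_for_divs} again, $\mathrm{div}_{\S}w_{n,k,i}=\sigma_{n,k,i}\langle w_{n,k,i},x\rangle$, so the term is $\sigma_{n,k,i}\sigma_{n,l,j}\ds\langle w_{n,k,i},x\rangle\langle w_{n,l,j},x\rangle$. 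Now $\langle w_{n,k,i},x\rangle$ is, for $k$-homogeneous harmonic $w_h$, a polynomial that decomposes into spherical harmonics of degrees $k-1$ and $k+1$ (this is the standard fact $x_j \cdot (\text{degree } k) = (\text{degree } k{+}1) \oplus (\text{degree } k{-}1)$). Two such functions, coming from degree $k$ and degree $l$, have overlapping harmonic components only when $|k-l|\in\{0,2\}$. The case $k=l$ is already accounted for in the diagonal sum $\sum Q_n(w_{n,k,i})$ (via part $(i)$ applied inside the single degree, together with the constants $\alpha_{n,k,i}$ of Lemma \ref{div_Q_n_in_the_eigenspaces}); the case $|k-l|=2$ survives. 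Finally one checks which index pairs $i,j$ actually contribute when $|k-l|=2$: since $\alpha_{n,k,2}=0$ (the type-2 eigenfunctions are divergence-free on $\S$, indeed $\mathrm{div}_{\S}w_{n,k,2}=\sigma_{n,k,2}\langle w_{n,k,2},x\rangle$ with $\langle w_{n,k,2},x\rangle\equiv 0$ by \eqref{normal_part_0}), the type-2 spaces drop out of all divergence cross-terms; and a degree parity / explicit computation using \eqref{A_eigenspace_decomposition_of_H} shows that among the remaining types $1,3$ the only nonzero pairings at distance $2$ are $(k,1)\leftrightarrow(k+2,3)$ and $(k,3)\leftrightarrow(k+2,1)$. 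Collecting the coefficient $\frac{n(n-3)}{2(n-1)^2}$ and the symmetrization factor $\frac12$ yields exactly \eqref{Q_n_Fourier_expansion_identity}.

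The main obstacle I anticipate is the bookkeeping in the last step: one must verify precisely that no cross-terms other than the advertised $(k,1)\leftrightarrow(k+2,3)$ and $(k,3)\leftrightarrow(k+2,1)$ pairs appear. This requires knowing how the normal component $\langle w,x\rangle$ of an element of each eigenspace $H_{n,k,i}$ decomposes into pure spherical harmonics — in particular that for $H_{n,k,1}$ it is (up to normalization) a degree-$(k-1)$ harmonic while for $H_{n,k,3}$ it is a degree-$(k+1)$ harmonic (consistent with the eigenvalues $\sigma_{n,k,1}=-k$ and $\sigma_{n,k,3}=k+n-2$ via \eqref{Equation_for_full_div}). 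Once that structural fact is pinned down — which follows from \eqref{divequal0}--\eqref{Equation_for_full_div} in the proof of Theorem \ref{eigenvalue_decomposition} — the matching of degrees $k-1$ from $H_{n,k,1}$ with $k+1$ from $H_{n,k+2,3}$ (both being degree-$(k+1)$ harmonics, hence possibly non-orthogonal) becomes transparent, and everything else is forced. The remaining computations (verifying the diagonal identities collapse to Lemma \ref{div_Q_n_in_the_eigenspaces}, checking the constants) are routine and I would not spell them out in full.
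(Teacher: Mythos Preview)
Your approach to part $(i)$ is correct and coincides with the paper's. For part $(ii)$ your strategy---pin down the spherical-harmonic degree of $\mathrm{div}_{\S}w_{n,k,i}$ (equivalently of $\langle w_{n,k,i},x\rangle$) on each eigenspace and then use orthogonality across degrees---is valid and genuinely different from the paper's argument, but your degree assignments are \emph{swapped}. For $w\in H_{n,k,1}\subset H_{n,k,\mathrm{sol}}$ one has $\mathrm{div}w_h\equiv 0$, hence $\Delta\langle w_h,x\rangle=2\,\mathrm{div}w_h=0$, so $\langle w,x\rangle$ is a $(k{+}1)$-spherical harmonic; for $w\in H_{n,k,3}$, \eqref{Equation_for_full_div} gives $\langle w,x\rangle=\tfrac{1}{2k+n-2}\,\mathrm{div}w_h\big|_{\S}$, which is a $(k{-}1)$-spherical harmonic. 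With these corrected degrees the pairings $H_{n,k,1}\leftrightarrow H_{n,k+2,3}$ (both contributing degree $k{+}1$) and $H_{n,k,3}\leftrightarrow H_{n,k+2,1}$ (both degree $k{-}1$) are exactly the survivors, while $(k,1)\leftrightarrow(l,1)$ and $(k,3)\leftrightarrow(l,3)$ force $k=l$, and the same-degree off-diagonal pair $(k,1)\leftrightarrow(k,3)$ involves degrees $k{+}1$ versus $k{-}1$ and hence vanishes. Once this is fixed your argument goes through.

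The paper bypasses the degree analysis entirely: it polarizes the alternative formula \eqref{Q_V_n_alternative} for $Q_{V_n}$, combines $Q_{V_n}(w_{n,k,i},w_{n,l,j})=0$ and $\ds\langle w_{n,k,i},w_{n,l,j}\rangle=0$ with \eqref{Equation_for_divs}, and obtains in one line the relation
\[
(\sigma_{n,k,i}+\sigma_{n,l,j}-n)\ds\mathrm{div}_{\S}w_{n,k,i}\,\mathrm{div}_{\S}w_{n,l,j}=0\,.
\]
The vanishing condition thus reduces to the purely algebraic equation $\sigma_{n,k,i}+\sigma_{n,l,j}=n$, which a four-case check on $i,j\in\{1,3\}$ resolves immediately. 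This is shorter and avoids any harmonic decomposition; your route gives a more concrete geometric picture of \emph{why} only those two pairings survive, at the cost of needing the structural facts about $\langle w,x\rangle$ that you must state correctly.
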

\vspace{-2em}
\AAA \begin{remark}\label{interesting_k_geq_3} 
Before giving the proof of Lemma \ref{bd_lemma}, let us point out an interesting feature in formula \eqref{Q_n_Fourier_expansion_identity}, which will be useful in the proof of Theorem \ref{main_coercivity_estimate_conformal_general_dimension}, \textit{namely that the summation in the last term of the expression starts from $k=3$}. \EEE The reason for this is that in any case $w_{n,1,3}\equiv 0$ whenever $w\in H_n$ \AAA(see Remark \ref{triviality_of_H_1_3})\EEE, but one can also check that
\begin{equation}\label{second_mixed_term_vanishes}
\ds \mathrm{div}_{\S}w_{n,2,3}\mathrm{div}_{\S}w_{n,4,1}=0\,.
\end{equation}
In order to prove \AAA \eqref{second_mixed_term_vanishes}, recall that by definition \AAA(see \eqref{tildeHksol_definition}, \eqref{Hksol_definition} and \eqref{H_n_k_sol_eigenvalue_decomposition}), \EEE $\mathrm{div}(w_{n,4,1})_h\equiv0$ in $\overline{B_1}$, and \AAA by also using \eqref{Equation_for_divs} for $(k,i)=(2,3),(4,1)$ and the divergence theorem, \EEE we obtain 
\begin{equation}\label{second_mixed_term_vanishes_alternative}
\hspace{-0.5em}\ds \hspace{-1em}\mathrm{div}_{\S}w_{n,2,3}\mathrm{div}_{\S}w_{n,4,1}=-4n\ds\hspace{-1em}\langle \langle (w_{n,2,3})_h,x\rangle (w_{n,4,1})_h,x\rangle=-4\dashint_{B_1}\hspace{-0.5em}\Big\langle \nabla\langle(w_{n,2,3})_h,x\rangle, (w_{n,4,1})_h\Big\rangle.
\end{equation}
To justify that the last integral on \AAA the right hand side of \eqref{second_mixed_term_vanishes_alternative} \EEE is zero, observe that
\begin{equation}
%\begin{split}
\dashint_{B_1}\Big\langle \nabla\langle(w_{n,2,3})_h,x\rangle, (w_{n,4,1})_h\Big\rangle
= \sum_{i=1}^n\dashint_{B_1}(w_{n,4,1})_h^i\big\langle\partial_i(w_{n,2,3})_h,x\big\rangle\,,
%\end{split}
\end{equation}
because $\dashint_{B_1}\big\langle (w_{n,4,1})_h, (w_{n,2,3})_h\big\rangle=0$. Moreover, we observe that for every $i=1,\dots,n$,
\begin{equation}\label{Poisson_equation_in_B_1}
\Delta\big(\big\langle \partial_i(w_{n,2,3})_h,x\big\rangle\big)=2\partial_i\mathrm{div}(w_{n,2,3})_h \mathrm{\ \ in\ } B_1\,.
\end{equation}
Since $(w_{n,2,3})_h$ is an $\R^n$-valued 2nd-order homogeneous harmonic polynomial, $\partial_i\mathrm{div}(w_{n,2,3})_h$ is simply a constant. \AAA But then, \eqref{Poisson_equation_in_B_1} implies that \EEE the function $$\big\langle \partial_i(w_{n,2,3})_h,x\big\rangle-\frac{\partial_i\mathrm{div}(w_{n,2,3})_h|x|^2}{n}$$
is a homogeneous harmonic polynomial of degree 2, hence also $L^2$-orthogonal to $(w_{n,4,1})_h^i$. Thus,  
\begin{align}\label{last_integral}
\dashint_{B_1} (w_{n,4,1})_h^i\big\langle\partial_i(w_{n,2,3})_h,x\big\rangle=\frac{\partial_i\mathrm{div}(w_{n,2,3})_h}{n}\dashint_{B_1} (w_{n,4,1})_h^i|x|^2=\frac{\partial_i\mathrm{div}(w_{n,2,3})_h}{n+6}\ds (w_{n,4,1})^i=0\,,
\end{align}
where we used the fact that the function $(w_{n,4,1})_h^i|x|^2$ is $6$-homogeneous, so that we can write its integral over $B_1$ as an integral over $\S$, up to the correct multiplicative constant. The last integral \AAA in \eqref{last_integral} \EEE is of course zero for every nontrivial spherical harmonic. \AAA Therefore, \eqref{second_mixed_term_vanishes_alternative}-\eqref{last_integral} imply \eqref{second_mixed_term_vanishes}. \EEE Note that the previous argument relies on the fact that $\partial_i\mathrm{div}(w_{n,2,3})_h$ is constant, and of course cannot be implemented for the mixed terms of higher order.
\end{remark}
\EEE
\begin{proof}[Proof of Lemma \ref{bd_lemma}]
As in Lemma \ref{Qdiag_in_dim_3}, part $(i)$ is an immediate consequence of the fact that the subspaces $(H_{n,k,i})_{k\geq 1, i=1,2,3}$ are mutually orthogonal in $W^{1,2}(\S;\R^n)$. For part $(ii)$, having established that the form $\widetilde{Q}_n$ \AAA in \eqref{tilde_Q_n} \EEE splits completely in the eigenspaces $(H_{n,k,i})_{(k,i)\in\mathbb{N^*}\times\{1,2,3\}\setminus(1,3)}$, what remains to be checked is that whenever $w_{n,k,i}\in H_{n,k,i}$ and $w_{n,l,j}\in H_{n,l,j}$, there holds
\begin{equation}\label{vanishing_of_mixed_terms}
\ds\mathrm{div}_{\S}w_{n,k,i}\mathrm{div}_{\S}w_{n,l,j}=0
\end{equation}
for all pairs $\{(k,i),(l,j)\}\in\mathbb{N^*}\times\{1,2,3\}\setminus(1,3)$ with $(k,i)\neq(l,j)$, \textit{except those of the form $\{(k,1),(k+2,3)\}$ and $\{(k,3),(k+2,1)\}$}.
	
This can be checked again using the different equivalent formulas for $Q_{V_n}$. %Indeed, let $w_{n,k,i}\in H_{n,k,i}, w_{n,l,j}\in H_{n,l,j}$ with $(k,i)\neq (l,j)$. 
Since $\mathrm{div}_{\S}w\equiv 0$ whenever $w\in H_{n,k,2}$ \AAA(see \eqref{Equation_for_divs} and \eqref{normal_part_0})\EEE, we may suppose without loss of generality that $i,j\in \{1,3\}$, and then \AAA by \eqref{Q_V_n_alternative} (written now in its bilinear expression),\EEE 
\begin{align}\label{how_to_use_different_formulas}
\begin{split}
Q_{V_n}(w_{n,k,i}, w_{n,l,j})&= \frac{n}{2}\ds \mathrm{div}_{\S}w_{n,k,i}\langle w_{n,l,j},x\rangle+\frac{n}{2}\ds\mathrm{div}_{\S}w_{n,l,j}\langle w_{n,k,i},x\rangle\\[3pt]
&\ -\frac{n^2}{2}\ds\langle w_{n,k,i},x\rangle\langle
w_{n,l,j},x\rangle+\frac{n}{2}\ds\langle w_{n,k,i},w_{n,l,j}\rangle\,.
\end{split}
\end{align} 
Actually, as we verified in part $(i)$,
%\begin{equation*}
$Q_{V_n}(w_{n,k,i}, w_{n,l,j})=0$,  %\end{equation*}
and in view of \eqref{Equation_for_divs}, \AAA \eqref{how_to_use_different_formulas} yields\EEE 
\begin{equation*}
%\begin{split}
\left(\sigma_{n,k,i}+\sigma_{n,l,j}-n\right) \ds\mathrm{div}_{\S}w_{n,k,i}\mathrm{div}_{\S}w_{n,l,j}=0\,,
%\end{split}
\end{equation*}
i.e., \eqref{vanishing_of_mixed_terms} holds, unless the pairs $(k,i)\neq(l,j)$ are such that $\sigma_{n,k,i}+\sigma_{n,l,j}=n$. In this respect,
\begin{align*}
(i)\quad & \text{If}\ \  i=j=1, \quad \quad \sigma_{n,k,i}+\sigma_{n,l,j}=-k-l<0<n\,,\\
(ii)\quad & \text{If}\ \ i=j=3, \quad \quad \sigma_{n,k,i}+\sigma_{n,l,j}=k+l+2n-4\geq 2n-2>n\,, \\
(iii)\quad & \text{If}\ \  i=1, j=3, \quad \sigma_{n,k,i}+\sigma_{n,l,j}=n\iff -k+l+n-2=n\iff l=k+2\,,\\
(iv)\quad & \text{If}\ \  i=3, j=1,\quad \sigma_{n,k,i}+\sigma_{n,l,j}=n\iff k+n-2-l=n\iff k=l+2\,,
\end{align*}
which proves the desired claim and then the formula \eqref{Q_n_Fourier_expansion_identity} for $Q_n$ follows by the bilinearity of the expression, \AAA and the observation we made in Remark \ref{interesting_k_geq_3}\EEE. \qedhere
\end{proof}

We now have all the necessary ingredients to prove Theorem \ref{main_coercivity_estimate_conformal_general_dimension}. As a preliminary remark, let us note that by taking a closer look at the values of the constants \AAA \eqref{constants2} \EEE of Lemma \ref{div_Q_n_in_the_eigenspaces}, we see that in \AAA the case $n\geq 4$ \EEE one cannot merely neglect the $\ds(\mathrm{div}_{\S}w)^2$-term \AAA and argue exactly as in the proof of Theorem \ref{main_coercivity_estimate_3_d_conformal} ($n=3$)\EEE, something that was also indicated in Remark \ref{Remark_for_L}. Indeed, although the quadratic form $\widetilde{Q}_n(w)$ \AAA in \eqref{tilde_Q_n} \EEE is splitting among the eigenspaces $(H_{n,k,i})_{k\geq 1, i=1,2,3}$, for $n\geq 4$ it does not have a sign. Actually, $\widetilde{Q}_n(w)$ is negative in $H_{n,k,3}$ for every $k=2,\dots,n-2$, zero in $H_{n,1,2}, H_{n,n-1,3}$ and strictly positive in each one of the other
eigenspaces. On the other hand, we see that the quadratic form \textit{$Q_n$ vanishes again in the space $H_{n,0}:=H_{n,1,2}\oplus H_{n,2,3}$}.

\begin{proof}[Proof of Theorem \ref{main_coercivity_estimate_conformal_general_dimension} $\rm{(}$$n\geq 4\rm{)}$.]
For every $k\geq 1$, \AAA we use the Cauchy-Schwartz inequality, the second identity in \eqref{Q_v_n_div_n_Q_n_in_eigenspaces}, the inequality 
$$|ab|\leq \frac{\varepsilon}{2}a^2+\frac{1}{2\varepsilon}b^2,\quad a,b\in \R,\ \varepsilon>0\,,$$
with weight $\varepsilon_{n,k}=\frac{k+n}{k}>0$, and \eqref{constants2}, \EEE in order to estimate
\begin{flalign}\label{mixed_terms_first_estimate}
\begin{split}
\hspace{-1em}\left|\ds \mathrm{div}_{\S}w_{n,k,1}\mathrm{div}_{\S}w_{n,k+2,3}\right|&\leq\left(\alpha_{n,k,1}\ds |\t w_{n,k,1}|^2\right)^{\frac{1}{2}}\left(\alpha_{n,k+2,3}\ds |\t w_{n,k+2,3}|^2\right)^{\frac{1}{2}} \\[2pt]
&\leq\frac{\alpha_{n,k,1}\varepsilon_{n,k}}{2}\ds |\t w_{n,k,1}|^2+\frac{\alpha_{n,k+2,3}}{2\varepsilon_{n,k}}\ds|\t w_{n,k+2,3}|^2 \\[5pt]
&=\frac{(k+1)(k+n)}{2(k+n-2)(2k+n)}\ds|\t w_{n,k,1}|^2\\[2pt]
&\ \ \ +\frac{k(k+n-1)}{2(k+2)(2k+n)}\ds|\t w_{n,k+2,3}|^2\,.
\end{split}
\end{flalign}
For the last summand in \eqref{Q_n_Fourier_expansion_identity}, we shift the
summation index to start from $k=1$ and estimate as before,
\begin{flalign}\label{mixed_terms_second_estimate}
\begin{split}
\hspace{-2em}
\left|\ds \mathrm{div}_{\S}w_{n,k+2,3}\mathrm{div}_{\S}w_{n,k+4,1}\right|
&\leq\frac{\alpha_{n,k+2,3}}{2\varepsilon_{n,k}}\ds |\t w_{n,k+2,3}|^2+\frac{\alpha_{n,k+4,1}\varepsilon_{n,k}}{2}\ds|\t w_{n,k+4,1}|^2\\[2pt]
&=\frac{k(k+n-1)}{2(k+2)(2k+n)}\ds|\t w_{n,k+2,3}|^2\\[2pt]
&\ \ \ +\frac{(k+4)(k+5)(k+n)}{2k(k+n+2)(2k+n+8)}\ds|\t w_{n,k+4,1}|^2\,.
\end{split}
\end{flalign}
\AAA Note that the choice of the weights $\varepsilon_{n,k}$ was such that the second term in the last line of \eqref{mixed_terms_first_estimate} coincides with the first term in the last equality in \eqref{mixed_terms_second_estimate}. \EEE The series appearing in \AAA \eqref{Q_n_Fourier_expansion_identity} \EEE are all absolutely summable, $Q_n(w_{n,1,2})=Q_n(w_{n,2,3})=0$,  and \AAA using \eqref{mixed_terms_first_estimate} and \eqref{mixed_terms_second_estimate}, \EEE we can estimate the form $Q_n$ from below by,
\begin{align}\label{estimate_from_below_Q_n}
\begin{split}
Q_n(w)&\geq \sum_{\underset{(k,i)\neq(1,2),(1,3),(2,3)}{(k,i)\in \mathbb{N}^*\times\{1,2,3\}}}Q_n(w_{n,k,i})-\frac{1}{4}\frac{n(n-3)}{(n-1)^2}\sum_{k\geq 1}\frac{(k+1)(k+n)}{(k+n-2)(2k+n)}\ds|\t w_{n,k,1}|^2\\[2pt]
&\ -\frac{1}{4}\frac{n(n-3)}{(n-1)^2}\sum_{k\geq 1}\frac{(k+4)(k+5)(k+n)}{k(k+n+2)(2k+n+8)}\ds|\t w_{n,k+4,1}|^2\\[2pt]
&\ -\frac{1}{2}\frac{n(n-3)}{(n-1)^2}\sum_{k\geq 1}\frac{k(k+n-1)}{(k+2)(2k+n)}\ds|\t w_{n,k+2,3}|^2\,.
\end{split}
\end{align} 
After rearranging terms \AAA in \eqref{estimate_from_below_Q_n}\EEE, we arrive at the estimate
\begin{equation}\label{Q_n_new_estimate}
Q_n(w)\geq \sum_{k\geq 1} \tilde{C}_{n,k,1}\ds|\t w_{n,k,1}|^2+\sum_{k\geq 2}\tilde{C}_{n,k,2}\ds|\t w_{n,k,2}|^2+\sum_{k\geq 3}\tilde{C}_{n,k,3}\ds|\t w_{n,k,3}|^2\,,
\end{equation}
where the new constants are defined as
\begin{equation}\label{new_constants}
\begin{cases}
\tilde{C}_{n,k,1}:=\left[C_{n,k,1}-\frac{n(n-3)(k+1)}{4(n-1)^2(k+n-2)(2k+n)}\left(\frac{k(k+n-4)\chi_{m\geq 5}(k)}{k-4}+(k+n)\right)\right],\ \ \  k\geq 1\\
\tilde{C}_{n,k,2}:=\ C_{n,k,2}\,,\ \ \  k\geq 2,\\
\tilde{C}_{n,k,3}:=\left[C_{n,k,3}-\frac{1}{2}\frac{n(n-3)}{(n-1)^2}\frac{(k-2)(k+n-3)}{k(2k+n-4)}\right],\ \ k\geq 3\,.
\end{cases}
\end{equation}
By elementary algebraic calculations that we omit here for the sake of brevity, one can verify that
\begin{equation}\label{minimum_of_constants_first}
\underset{k\geq 1}{\mathrm{min}}\ \tilde C_{n,k,1}=:C_{n,1}>0\,,\quad \underset{k\geq 2}{\mathrm{min}}\ \tilde C_{n,k,2}=:C_{n,2}>0\,,\quad 
\underset{k\geq 3}{\mathrm{min}}\ \tilde C_{n,k,3}=:C_{n,3}>0\,.
\end{equation}
\AAA Indeed, for $i=1,2,3$, one can write $\tilde C_{n,k,i}:=\frac{p_{n,i}(k)}{q_{n,i}(k)}$, where $p_{n,i}(k), q_{n,i}(k)$ are explicit polynomials of the same degree in $k$ (of degree 3 when $i=1$, and degree 2 when $i=2,3$) and verify algebraically that 
\begin{equation}\label{positive_constants}
\tilde C_{n,k,1}>0\ \ \text{for}\ k\geq 1, \quad \tilde C_{n,k,2}>0\ \ \text{for}\ k\geq 2,\ \text{and\ } \tilde C_{n,k,3}>0\ \ \text{for } k\geq 3\,.
\end{equation}
Moreover, directly from \eqref{constants2} and \eqref{new_constants}, one sees that
\begin{equation}\label{limits_at_infty}
\lim_{k\to \infty}\tilde C_{n,k,1}= \lim_{k\to \infty}\tilde C_{n,k,2}=\lim_{k\to \infty}\tilde C_{n,k,3}=\frac{n}{2(n-1)}>0\,,
\end{equation}
and then \eqref{minimum_of_constants_first} follows from \eqref{positive_constants} and \eqref{limits_at_infty}. The precise values of the constants $C_{n,i}$ could be calculated as well, by examining the monotonicity with respect to $k$ of the sequences $(\tilde C_{n,k,i})$ respectively (or alternatively of the corresponding rational functions with respect to the continuum variable). \EEE 
Labelling 
\begin{equation}\label{minimum_of_constants_second}
C_n:=\mathrm{min}\{C_{n,1},C_{n,2},C_{n,3}\}>0\,,
\end{equation}
\AAA we can use \eqref{Q_n_new_estimate}-\eqref{minimum_of_constants_second}, \EEE to further estimate from below,
\begin{align*}
\begin{split}
Q_n(w)&\geq C_n\left(\sum_{k\geq 1}\ds|\t w_{n,k,1}|^2+\sum_{k\geq 2}\ds|\t w_{n,k,2}|^2+\sum_{k\geq 3}\ds|\t w_{n,k,3}|^2\right)\\[3pt]
&=C_n\ds \Big|\t w-\big(\t w_{n,1,2}+\t w_{n,2,3}\big)\Big|^2,
\end{split}
\end{align*}
which finishes the proof \AAA of \eqref{Korn_type_inequality_for_Q_n}\EEE. \textit{For $n\geq 4$, the constant $C_n$ \AAA in \eqref{minimum_of_constants_second} \EEE provides an explicit lower bound for the value of the optimal constant for which \eqref{Korn_type_inequality_for_Q_n} holds.}
\end{proof}
\AAA\begin{remark}\label{from_linear_to_nonlinear_n_bigger_than_3}
As we mentioned in the Introduction, the linear estimate  \eqref{Korn_type_inequality_for_Q_n} would then imply Corollary  \ref{fake_nonlinear_estimate_higher_dim}, by following exactly the same procedure as in Section \ref{Section 4}. In this setting, one can of course reduce to showing
\ref{fake_nonlinear_estimate_conformal_case_high_dimensions} for maps $u\in W^{1,\infty}(\S;\R^n)$ which satisfy the stronger conditions
$$\ds u=0, \quad \|\t u-P_T\|_{L^{\infty}}\leq \theta\ll 1,\quad \mathcal{E}_{n-1}(u)\leq \varepsilon_0\ll 1\,.$$ 
With the aid of Lemma \ref{expansions_n} and Remark \ref{higher_order_terms_negligible}, the reader can easily verify that the rest of the arguments in the proof of Theorem \ref{main_thm_conformal_case} carry over also in dimensions $n\geq 4$, essentially unchanged. \EEE
\end{remark} \EEE
\subsection{Comparison of Theorem \ref{main_coercivity_estimate_conformal_general_dimension} with Korn's inequality for the trace-free symmetrized gradient operator in the bulk}\label{Subsection 5.2.}

Let $n\geq 3$ and $U$ be an open bounded Lipschitz domain of $\R^n$. If $v\in W^{1,n}(U;\R^n)$, then
\begin{equation*}%\label{AM_GM_integral_domains}
\int_U\left(\frac{|\nabla v|^2}{n}\right)^{\frac{n}{2}}\geq \int_U \mathrm{det}\nabla v\,,
\end{equation*}
with equality iff $\nabla v\in \R_+SO(n)$ for a.e. $x \in U$, i.e., according to Liouville's theorem, iff $v$ is the restriction of an orientation-preserving Möbius transformation on $U$. Setting again $v:=w+\mathrm{id}|_U$, and expanding the deficit, one obtains formally
\begin{align}\label{bulk_quadratic_form_related_to_conformal_deficit}
&\hspace{-1.2em} \int_{U}\Bigg(\Big(\frac{|\nabla v|^2}{n}\Big)^{\frac{n}{2}}-\mathrm{det}\nabla v\Bigg)= \int_{U}\left|(\nabla w)_{\mathrm{sym}}-\frac{\mathrm{div}w}{n}I_n\right|^2+\int_{U}\mathcal{O}\left(|\nabla w|^3\right)\,.
\end{align}

Another well known fact regarding the connection of the quadratic form in the right hand side of \AAA \eqref{bulk_quadratic_form_related_to_conformal_deficit} \EEE to the geometry of $\R_+SO(n)$ is the following (see \cite[Chapters 2 and 3]{reshetnyak2013stability},  or \cite{faraco2005geometric} for more details). If $T\R_+SO(n)$ stands for the tangent space to the conformal group $\R_+SO(n)$ at $I_n$, it is immediate that
\begin{equation*}%\label{tangent_to_conformal_group_bulk}
A\in T\R_+SO(n)\iff A_{\mathrm{sym}}=\frac{\mathrm{Tr}A}{n}I_n\,,
\end{equation*}
so that the function $A\mapsto d(A):=\left| A_{\mathrm{sym}}-\frac{\mathrm{Tr}A}{n}I_n\right|$ is equivalent to the distance of $A$ from $T\R_+SO(n)$. Therefore, the linear subspace 
\begin{equation*}%\label{Sigma_n_def}
\Sigma_n:=\Big\{u\in W^{1,2}(\R^n;\R^n)\colon (\nabla u)_{\mathrm{sym}}=\frac{\mathrm{div}u}{n}I_n \Big\}
\end{equation*} 
can be viewed as the Lie algebra of the Möbius group of $\overline{\R^n}$. If $\Pi_{\Sigma_n}: W^{1,2}(U;\R^n)\mapsto \Sigma_n$ is the $W^{1,2}$-projection on this finite-dimensional subspace, the following variant of \textit{Korn's inequality for the trace-free part of the symmetrized gradient} is known to hold. 

\begin{theorem}\textbf{(Y.G. Reshetnyak, cf. \cite[Theorem 3.3, Chapter 3]{reshetnyak2013stability})}\label{Korn_type_inequality_conformal_setting}\
Let $n\geq 3$ and $U$ be an open bounded Lipschitz domain of $\R^n$ that is starshaped with respect to a ball. There exists a constant $C:=C(n,U)>0$ such that for every $w\in W^{1,2}(U;\R^n)$, 
\begin{equation*}%\label{Korn_for_deviatoric_part_of_gradient}
\big\|\nabla w-\nabla (\Pi_{\Sigma_n}w)\big\|_{L^2(U)}\leq C\Big\|(\nabla w)_{\mathrm{sym}}-\frac{\mathrm{div}w}{n}I_n \Big\|_{L^2(U)}\,.
\end{equation*}
\end{theorem}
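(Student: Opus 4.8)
The statement to prove here is Theorem~\ref{Korn_type_inequality_conformal_setting} (Reshetnyak's Korn-type inequality for the trace-free symmetrized gradient on a bulk domain $U\subseteq\R^n$). This is a classical result, so the plan is to reconstruct a standard proof rather than invent something new, and to lean on the structural facts already assembled in the excerpt.

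\textbf{Overall approach.} The natural route is the usual three-step scheme for Korn-type inequalities with a nontrivial kernel: (1) establish a \emph{compactness estimate} with lower-order error, i.e. $\|\nabla w\|_{L^2(U)}\lesssim \|(\nabla w)_{\mathrm{sym}}-\tfrac{\operatorname{div}w}{n}I_n\|_{L^2(U)}+\|w\|_{L^2(U)}$ for all $w\in W^{1,2}(U;\R^n)$; (2) upgrade this to the desired inequality by a contradiction/compactness argument, quotienting out the kernel $\Sigma_n|_U$ (the restrictions to $U$ of infinitesimal Möbius maps), using the fact that $\Sigma_n|_U$ is finite-dimensional so the $W^{1,2}$-projection $\Pi_{\Sigma_n}$ is well defined and compact behaviour is available; (3) identify the kernel of the trace-free symmetrized gradient operator on a domain starshaped with respect to a ball with exactly $\Sigma_n|_U$, which is where the geometric hypothesis on $U$ enters (on such domains, a $W^{1,2}$ solution of $(\nabla w)_{\mathrm{sym}}=\tfrac{\operatorname{div}w}{n}I_n$ is automatically smooth and, by the Euclidean Liouville theorem, an infinitesimal Möbius transformation). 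Step (1) itself is the \emph{second Korn inequality for the conformal Killing operator}; the cleanest way to obtain it is via the algebraic identity expressing $\partial_{ij}w^k$ through first derivatives of the components of the trace-free symmetrized gradient (a Ne\v{c}as/Lions-type argument), giving control of $\nabla w$ in $H^{-1}$ by $L^2$ data and hence, by the Ne\v{c}as inequality on Lipschitz domains, control of $w$ in $L^2$ by $\nabla w$ in $H^{-1}$; combined this yields the compactness estimate.

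\textbf{Steps in order.} First I would record the pointwise identity for the conformal Killing operator $\mathcal{C}w:=(\nabla w)_{\mathrm{sym}}-\tfrac{1}{n}(\operatorname{div}w)I_n$, namely that all second derivatives $\partial^2 w$ are linear combinations of first derivatives of the entries of $\mathcal{C}w$ (valid for $n\geq 3$; this is exactly where dimension three or higher is used). From this, $\partial_i w^j\in H^{-1}(U)$ is bounded by $\|\mathcal{C}w\|_{L^2(U)}$, and by the Ne\v{c}as inequality $\|w\|_{L^2(U)}\lesssim \|w\|_{H^{-1}(U)}+\|\nabla w\|_{H^{-1}(U)}$ on the Lipschitz domain $U$, I get $\|\nabla w\|_{L^2(U)}\lesssim \|\mathcal{C}w\|_{L^2(U)}+\|w\|_{L^2(U)}$. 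Second, I would run the contradiction argument: if the claimed inequality failed there would exist $w_k$ with $\|\nabla w_k-\nabla(\Pi_{\Sigma_n}w_k)\|_{L^2(U)}=1$ and $\|\mathcal{C}w_k\|_{L^2(U)}\to 0$; replacing $w_k$ by $w_k-\Pi_{\Sigma_n}w_k$ (which does not change $\mathcal{C}w_k$, since $\Sigma_n\subseteq\ker\mathcal{C}$, and makes $\Pi_{\Sigma_n}w_k=0$) and subtracting the mean I may assume $w_k$ bounded in $W^{1,2}(U;\R^n)$ and $\perp_{W^{1,2}}\Sigma_n$; by Rellich a subsequence converges strongly in $L^2$ to some $w_\infty$, and the compactness estimate of Step~1 then forces strong $W^{1,2}$-convergence with $\|\nabla w_\infty\|_{L^2}=1$ and $\mathcal{C}w_\infty=0$, while $w_\infty\perp_{W^{1,2}}\Sigma_n$. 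Third, I would invoke Step~(3): on a domain starshaped with respect to a ball, $\ker\mathcal{C}\cap W^{1,2}(U;\R^n)=\Sigma_n|_U$ (interior elliptic regularity makes $w_\infty$ smooth, and the Euclidean Liouville rigidity — cited in the excerpt — identifies the solutions of the conformal Killing equation as infinitesimal Möbius maps; the starshapedness guarantees $U$ is connected and that the germ extends uniquely to all of $U$). Hence $w_\infty\in\Sigma_n|_U$ and $w_\infty\perp_{W^{1,2}}\Sigma_n$, so $w_\infty=0$, contradicting $\|\nabla w_\infty\|_{L^2}=1$.

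\textbf{Main obstacle.} The step I expect to require the most care is Step~(1), the second Korn inequality for $\mathcal{C}$ on a general bounded Lipschitz domain: the pointwise second-derivative identity is elementary, but deducing an $L^2$ bound on $\nabla w$ from an $H^{-1}$ bound requires the Ne\v{c}as inequality, whose proof on Lipschitz (not merely smooth) domains is itself nontrivial; alternatively one can avoid Ne\v{c}as by first proving the estimate on a ball (by an explicit spherical-harmonics computation, which incidentally parallels Theorem~\ref{eigenvalue_decomposition} in this paper) and then using the starshapedness to transfer it to $U$ via a covering/partition-of-unity argument, at the cost of tracking how the constant depends on the geometry of $U$. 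The identification of the kernel in Step~(3) is the only place the precise hypothesis ``starshaped with respect to a ball'' is genuinely needed, and there one must be slightly careful that a local conformal Killing field on a connected open set is the restriction of a \emph{single} global Möbius generator — true here because the space of such germs is $\tfrac{(n+1)(n+2)}{2}$-dimensional and the domain is connected, so unique continuation applies.
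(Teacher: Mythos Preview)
The paper does not actually prove Theorem~\ref{Korn_type_inequality_conformal_setting}: it is stated in Subsection~\ref{Subsection 5.2.} purely as a classical result of Reshetnyak, with an explicit citation to \cite[Theorem 3.3, Chapter 3]{reshetnyak2013stability}, and is included only for the purpose of \emph{comparison} with the paper's own linear estimate on $\S$ (Theorem~\ref{main_coercivity_estimate_conformal_general_dimension}). No argument for it is given or even sketched in the paper, so there is nothing to compare your proposal against on the paper's side.

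That said, your reconstruction is a correct and standard route to this type of result. The three-step scheme (Ne\v{c}as-type second Korn inequality for the conformal Killing operator $\mathcal{C}$, contradiction/compactness to remove the lower-order term, identification of $\ker\mathcal{C}$ with $\Sigma_n|_U$ via Liouville rigidity) is exactly how such estimates are typically obtained, and your identification of the role of $n\geq 3$ (needed for the second-derivative identity) and of the starshapedness/connectedness hypothesis (needed for unique continuation of conformal Killing fields) is accurate. One small caution: in Step~(2) you write ``subtracting the mean I may assume $w_k$ bounded in $W^{1,2}$'', but subtracting a constant does not keep $w_k$ in $\Sigma_n^\perp$ in general; the clean way is to note that constants already lie in $\Sigma_n$, so once you have replaced $w_k$ by $w_k-\Pi_{\Sigma_n}w_k$ the mean is already controlled (e.g.\ via Poincar\'e on the component of $w_k$ orthogonal to constants), and the $W^{1,2}$-bound follows from the compactness estimate of Step~(1) combined with the normalisation $\|\nabla w_k\|_{L^2}=1$.
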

In a certain sense, Theorem \ref{main_coercivity_estimate_conformal_general_dimension} is the analogue of Theorem \ref{Korn_type_inequality_conformal_setting} for maps from $\S$ to $\R^n$. In particular, as an upshot of it we have encountered the following fact. \textit{Although the kernels of the nonnegative quadratic forms arising as the second derivatives of the conformal deficit $\big[D_{n-1}(u)\big]^{\frac{n}{n-1}}-\big[P_{n-1}(u)\big]^{\frac{n}{n-1}}$ and the isoperimetric deficit $\big[P_{n-1}(u)\big]^{\frac{n}{n-1}}-V_n(u)$ at the $\mathrm{id_{\S}}$ are both infinite-dimensional, the intersection of the two kernels is finite-dimensional and actually isomorphic to the Lie algebra of infinitesimal Möbius transformations of $\S$}. 

Indeed, by the calculations that we exhibit in \AAA Lemma \ref{expansions_n} in Appendix \ref{sec:B}, \EEE we have that
\begin{equation}\label{Q_n_conformal_sphere}
Q_{n,\mathrm{conf}}(w):=\frac{n}{n-1}\ds\left|(P_T^t\t w)_{\mathrm{sym}}-\frac{\mathrm{div}_{\S}w}{n-1}I_x \right|^2\,,
\end{equation}	
and
\begin{equation}\label{Q_isop_sphere}
\hspace{-0.8em}Q_{n,\mathrm{isop}}(w):=\frac{1}{2}\frac{n}{n-1}\left[\ds|\t w|^2+(\mathrm{div}_{\S}w)^2-2\left|(P_T^t\t w)_{\mathrm{sym}}\right|^2\right]-Q_{V_n}(w)\,.
\end{equation} 	

The nonnegative quadratic form $Q_{n,\mathrm{conf}}$ \AAA in \eqref{Q_n_conformal_sphere} \EEE corresponds to the one associated to the conformal deficit in \eqref{bulk_quadratic_form_related_to_conformal_deficit} for maps defined in the bulk, but the kernel of $Q_{n,\mathrm{conf}}$ is infinite-dimensional. Intuitively, the underlying geometric reason behind this, is the abundance of $C^2$ conformal maps from $\S$ into $\R^n$. Actually, we observe that for every $\phi\in W^{1,2}(\S;\R)$ with $\ds \phi=0$ and  $\ds \phi(x)x=0$, the map $w_\phi(x):=\phi(x)x$ belongs to $H_n$ \AAA(recall \eqref{H_space}) \EEE and one can easily verify that
\begin{align*}
(P_T^t\t w_\phi)_{\mathrm{sym}}-\frac{\mathrm{div}_{\S}w_\phi}{n-1}I_x=0\,.
\end{align*}
Since the space of such $\phi$ is infinite-dimensional, we have in particular that $\mathrm{dim}(\mathrm{ker}(Q_{n,\mathrm{conf}}))=\infty$.

Regarding the quadratic form $Q_{n,\mathrm{isop}}$ \AAA in \eqref{Q_isop_sphere}\EEE, we have that it is also nonnegative and its kernel is also  infinite-dimensional. In fact \AAA(with the notation of Theorem \ref{eigenvalue_decomposition})\EEE, one can directly check that 
\begin{equation}\label{infinite_dimensionality_of_Q_n_isop}
H_{n,2}:=\bigoplus_{k=1}^\infty H_{n,k,2}\subseteq \mathrm{ker}(Q_{n,\mathrm{isop}})\implies \mathrm{dim}(\mathrm{ker}(Q_{n,\mathrm{isop}}))=\infty\,.
\end{equation}
To verify \eqref{infinite_dimensionality_of_Q_n_isop} we use the following identity, referred to as \textit{Korn's identity}, that is interesting in its own right and whose derivation is a simple calculation which is also included at the end of Appendix \ref{sec:B}.

\begin{lemma}{\bf{(Korn's identity on $\S$)}}.\label{Korn's identity_sphere}
For every $w\in W^{1,2}(\S;\R^n)$ the following identity holds
\begin{equation}\label{Korn's identity}
\ds\left|(P_T^t\t w)_{\mathrm{sym}}\right|^2=\frac{1}{2}\ds\Big(\big|P_T^t\t w\big|^2%-\left|\sum_{j=1}^n x_j\t w^j\right|^2
+\left(\mathrm{div}_{\S} w\right)^2\Big)-\frac{n-2}{n}Q_{V_n}(w)\,.
\end{equation}
\end{lemma}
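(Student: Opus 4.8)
The plan is to verify \eqref{Korn's identity} by a direct pointwise-to-integral computation, reducing everything to two ingredients: the algebraic identity $|(P_T^t\t w)_{\mathrm{sym}}|^2 = \frac12(|P_T^t\t w|^2 + \langle P_T^t\t w, (P_T^t\t w)^t\rangle)$, and an integration-by-parts formula on $\S$ that expresses $\ds \langle P_T^t\t w,(P_T^t\t w)^t\rangle$ in terms of $(\mathrm{div}_{\S}w)^2$, a curvature term coming from the second fundamental form of $\S$ (which is $I_x$, since $\S$ is the unit sphere), and the bilinear form $Q_{V_n}$. Concretely, writing $(P_T^t\t w)_{ij} = \langle \t w^i,\tau_j\rangle$ for a local orthonormal frame, one has the pointwise identity
\begin{equation*}
2\big|(P_T^t\t w)_{\mathrm{sym}}\big|^2 = \big|P_T^t\t w\big|^2 + \sum_{i,j}\langle\t w^i,\tau_j\rangle\langle\t w^j,\tau_i\rangle\,,
\end{equation*}
so the whole content of the lemma is the evaluation of $\ds\sum_{i,j}\langle\t w^i,\tau_j\rangle\langle\t w^j,\tau_i\rangle$.

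To compute that integral, I would integrate by parts on $\S$ moving the derivative off $w^j$: schematically $\ds \langle\t w^i,\tau_j\rangle\langle\t w^j,\tau_i\rangle = -\ds w^j\,\mathrm{div}_{\S}\!\big(\langle\t w^i,\tau_j\rangle\,\tau_i\big)$ summed appropriately, and then use the Gauss formula for $\S$, namely that the tangential covariant derivative of $\tau_i$ picks up a normal component equal to $-x$ times the appropriate component (since the second fundamental form of the unit sphere is the identity), together with $\mathrm{div}_{\S}(\t w^i) = \Delta_{\S}w^i$. Collecting the tangential terms reproduces $\ds (\mathrm{div}_{\S}w)^2$ after a further integration by parts, and the normal terms assemble into precisely the expression $\ds\big(2\,\mathrm{div}_{\S}w\,\langle w,x\rangle - n\langle w,x\rangle^2 + |w|^2\big)$, which by the alternative formula \eqref{Q_V_n_alternative} for $Q_{V_n}$ (established in the proof of Lemma \ref{div_Q_n_in_the_eigenspaces}) equals $\frac{2}{n}Q_{V_n}(w)$. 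Substituting back gives $\ds\sum_{i,j}\langle\t w^i,\tau_j\rangle\langle\t w^j,\tau_i\rangle = \ds(\mathrm{div}_{\S}w)^2 - \frac{2(n-2)}{n}Q_{V_n}(w)$, and dividing by $2$ and adding $\frac12\ds|P_T^t\t w|^2$ yields \eqref{Korn's identity}.

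The main obstacle is purely bookkeeping: carefully tracking the curvature (normal-component) contributions when commuting tangential derivatives with the frame $\{\tau_1,\dots,\tau_{n-1}\}$, so that one does not accidentally drop or double-count terms involving $\langle w,x\rangle$. A clean way to avoid index gymnastics is to pass to the harmonic extension $w_h$ on $\overline{B_1}$: using $\nabla w_h^j = \t w^j + k\,w^j x$ on $\S$ for $w\in H_{n,k}$ and the mean-value property (exactly as in \eqref{Equation_for_full_div} and the computations in Lemma \ref{A_invariances}), one can express both $\ds|P_T^t\t w|^2$ and the cross term through bulk integrals of $\nabla w_h$ over $B_1$, where Korn-type identities are standard and the curvature terms are replaced by explicit homogeneity factors; summing over the spherical-harmonic degrees $k$ then recovers the identity for general $w\in W^{1,2}(\S;\R^n)$ by density. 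I would present whichever of the two routes turns out shorter in the write-up, with the intrinsic integration-by-parts version being the more self-contained one. Since the statement is explicitly flagged in the excerpt as having an elementary proof deferred to Appendix \ref{sec:B}, no conceptual difficulty beyond this is expected.
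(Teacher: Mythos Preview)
Your plan is essentially the paper's proof: both start from the pointwise identity $2|(P_T^t\t w)_{\mathrm{sym}}|^2 = |P_T^t\t w|^2 + \mathrm{Tr}\big((P_T^t\t w)^2\big)$ and then evaluate $\ds\mathrm{Tr}\big((P_T^t\t w)^2\big)=\ds\sum_{i,j=1}^{n-1}\langle\partial_{\tau_i}w,\tau_j\rangle\langle\partial_{\tau_j}w,\tau_i\rangle$ via integration by parts on $\S$. The paper is slightly more economical in that it does not redo those integrations by parts but simply recycles the identities \eqref{I_n_2_2}--\eqref{quadratic_volume_second} already established in Lemma~\ref{expansions_n}, which give directly $\ds\mathrm{Tr}\big((P_T^t\t w)^2\big)=\ds(\mathrm{div}_{\S}w)^2-2I_{n,2,2}(w)$ with $I_{n,2,2}(w)=\tfrac{n-2}{n}Q_{V_n}(w)$. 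One small bookkeeping caution for your write-up: your intermediate claim that the normal (curvature) contribution equals $\tfrac{2}{n}Q_{V_n}(w)$ is off by a factor $(n-2)$ relative to your own correct final formula --- the extra factor arises from the number of commutator terms generated when moving the tangential derivatives, exactly as in \eqref{second_subsummand}.
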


The interesting point of this identity is that when $n\geq 3$, the quadratic form $Q_{V_n}$ appears in the right hand side \AAA of \eqref{Korn's identity} \EEE as some short of \textit{curvature contribution} and it is really a \textit{surface identity}, in the sense that the corresponding identity in the bulk is
\begin{equation}\label{Korn's identity_in_the_bulk}
\int_U \left|(\nabla w)_{\mathrm{sym}}\right|^2=\frac{1}{2}\int_U \left(|\nabla w|^2+(\mathrm{div}w)^2\right)-\frac{1}{2}\int_U\left((\mathrm{div}w)^2-\mathrm{Tr}(\nabla w)^2\right)\,,
\end{equation} 
but the last term on the right hand side \AAA of \eqref{Korn's identity_in_the_bulk} \EEE should now be interpreted as a \textit{boundary-term contribution}.

By using Korn's identity, the form $Q_{n,\mathrm{isop}}$ can be rewritten in a simpler form as
\begin{equation*}
Q_{n,\mathrm{isop}}(w)=\frac{n}{2(n-1)}\ds\Big(\Big|\sum_{j=1}^n x_j\t w^j\Big|^2-\big\langle w,A(w)\big\rangle\Big)\,.
\end{equation*}
But if $w\in H_{n,2}$, then $-\sum_{j=1}^nx_j\t w^j=A(w)=w$ on $\S$ \AAA (recall that  \eqref{Eigenvalue_equation_for_A}, \eqref{reduced_equation_on_S} are then satisfied with $\sigma=1$), \EEE and therefore
$Q_{n,\mathrm{isop}}(w)=0$, which proves the implication in \eqref{infinite_dimensionality_of_Q_n_isop}.

\subsection{Proof of Theorem \ref{coercivity_estimate_for_Q_isom_Q_isop}}\label{Subsection 5.3.}
As we have mentioned in the Introduction, if $u\in W^{1,2}(\S;\R^n)$ and $w:=u-\mathrm{id}_{\S}$, then the \textit{full $L^2$-isometric deficit of $u$} is formally expanded around the identity as 
\begin{equation*}\label{definition_of_isometric_deficit_again}
\delta_{\mathrm{isom}}^2(u):=\ds \left|\sqrt{\t u^t\t u}-I_x\right|^2=Q_{n,\mathrm{isom}}(w)+\ds \mathcal{O}(|\t w|^3)\,,
\end{equation*}
where 
\begin{equation}\label{Qisom_nonnegative}
Q_{n,\mathrm{isom}}(w):=\ds\left|(P_T^t\t w)_{\rm{sym}}\right|^2\,.
\end{equation}
For $n=2$ we have that $\mathrm{dim}(\mathrm{ker}(Q_{2,\mathrm{isom}}))=\infty$. Indeed, for $v(x):=\psi(x)\tau(x)+\phi(x)x:\mathbb{S}^1\mapsto \R^2$, where $\phi,\psi\in C^{\infty}(\mathbb{S}^1,\R)$ and $\tau(x):=(-x_2,x_1)$ is the unit tangent vector field on $\mathbb{S}^1$, it is an easy calculation to check that $
(P_T^t\t v)_{\mathrm{sym}}=\phi-\partial_{\tau}\psi$, i.e., for every $\psi\in C^{\infty}(\mathbb{S}^1;\R)$ the map $v_\psi(x):=\psi(x)\tau(x)+\partial_{\tau}\psi(x)x$ lies in the kernel of $ Q_{2,\mathrm{isom}}$.

For $n\geq 3$, and unlike $Q_{n,\mathrm{conf}}$, which as we have seen has infinite-dimensional kernel,  $Q_{n, \mathrm{isom}}$ has finite-dimensional kernel and actually
\begin{equation*}%\label{kernel_of_isom}
\mathrm{ker}(Q_{n,\mathrm{isom}})\simeq Skew(n)\simeq\mathfrak{so}(n)\,,
\end{equation*}
this fact being known as the \textit{infinitesimal rigidity of the sphere}. The reader is referred to \cite[Chapter 12]{spivak1979comprehensive} for a detailed discussion and further references regarding this well known geometric fact that is the linear analogue of the $C^2$-rigidity of the sphere in the Weyl problem.

\begin{remark}\label{Q_n_isom_Q_n_isop_trivial}
Without referring to the classical proof of the infinitesimal rigidity of $\S$, it is very easy to deduce in particular that
\begin{equation}\label{intersection_of_kernels_isom_isop}
\mathrm{ker}(Q_{n,\mathrm{isom}})\cap\mathrm{ker}(Q_{n,\mathrm{isop}})\simeq\mathfrak{so}(n)\,.
\end{equation}
Indeed, if $w\in W^{1,2}(\S;\R^n)$ lies in the common null-space of these two nonnegative forms, we have
\begin{align}\label{null_space_of_Q_isom_Q_isop}
(P_T^t\t w)_{\rm{sym}}=0 \ \ \mathrm{and} \ \  Q_{n,\mathrm{isop}}(w)%:=\frac{n}{n-1}\left[\ds\frac{|\t w|^2+(\mathrm{div}_{\S}w)^2}{2}-\left|(P_T^t\t w)_{\mathrm{sym}}\right|^2\right]-Q_{V_n}(w)
=0\,.
\end{align}
By taking the trace in the first equation \AAA in \eqref{null_space_of_Q_isom_Q_isop}\EEE, we see that $\mathrm{div}_{\S}w\equiv0$ on $\S$, and then the second equation \AAA in \eqref{null_space_of_Q_isom_Q_isop}, in view also of \eqref{Q_isop_sphere}, \EEE reduces to 
\begin{equation}\label{null_space_of_Q_isop_alternative_1}
\frac{1}{2}\frac{n}{n-1}\ds |\t w|^2-Q_{V_n}\Big(w-\ds w\Big)=0\,.
\end{equation} 
By the alternative formula \AAA \eqref{Q_V_n_alternative}, \EEE  
%\begin{equation*}
%Q_{V_n}(w)=\frac{n}{2}\ds\left(2\mathrm{div}_{\S}w\langle w,x\rangle -n\langle w,x\rangle^2+|w|^2\right),
%\end{equation*}
the equation \eqref{null_space_of_Q_isop_alternative_1} results in 
\begin{equation}\label{common_eigenspace_of_Q_isom_Q_isop_final_equation}
\left(\frac{1}{n-1}\ds|\t w|^2-\ds\Big|w-\ds w\Big|^2\right)+n\ds\Big\langle w-\ds w,x\Big\rangle^2=0\,.
\end{equation}
Once again, the quantity in the parenthesis \AAA in \eqref{common_eigenspace_of_Q_isom_Q_isop_final_equation} \EEE is nonnegative, being the $L^2$-Poincare deficit of $w$, and therefore the only solutions to \eqref{common_eigenspace_of_Q_isom_Q_isop_final_equation} are maps $w$ for which 
\begin{align}\label{Lambda_skew_symmetric}
w(x)-\ds w=\Lambda x \mathrm{\ \ for\ } \Lambda \in \R^{n\times n} \mathrm{\ \ and \ \ } \Big\langle w-\ds w,x\Big\rangle \equiv 0 \ \mathrm{\ on\ } \S.
\end{align}
Hence, \AAA by \eqref{Lambda_skew_symmetric}, \EEE %by the last equation, the matrix $A\in \R^{n\times n}$ should satisfy 
%\begin{align*}
%\langle Ax,x\rangle \equiv 0\iff \sum_{i,j=1}^nA_{ij}x_ix_j=0\iff\sum_{1\leq i\leq j\leq n}(A_{ij}+A_{ji})x_ix_j\equiv 0 \mathrm{\ on\ } \S,
%\end{align*}
$\Lambda ^t=-\Lambda $ and reversely any map of the form $w(x)=\Lambda x+b$, where $\Lambda \in Skew(n), b\in \R^n$ is in the null-space of both quadratic forms. \textit{It would be interesting if by some algebraic manipulations one could directly prove that whenever $n\geq 3$, $Q_{n,\mathrm{isom}}(w)=0\implies Q_{n,\mathrm{isop}}(w)=0$. Combined with the argument just presented, this would give an alternative algebraic proof of the infinitesimal rigidity of the sphere, supplementing the differential-geometric one.} 
\end{remark}

\begin{remark}
Even though for $n\geq 3$ we have $\mathrm{ker}(Q_{n,\mathrm{isom}})\simeq \mathfrak{so}(n)$, an estimate of the type
\begin{equation*}
\ds |\t w-FP_T |^2\lesssim Q_{n,\mathrm{isom}}(w) \quad \text{for some } \ F\in Skew(n),
\end{equation*}
does not hold, the obstacle being (loosely speaking) the derivatives of the normal component of $w$. For example, if one considers purely normal displacements
$w_{\phi}(x):= \phi (x)x$ with $\phi\in W^{1,2}(\S;\R)$, then by a straightforward computation one can check that
\begin{equation*}
(P_T^t\t w_\phi)_{\mathrm{sym}} = \phi I_x \implies Q_{n,\mathrm{isom}}(w_{\phi})=(n-1)\ds |\phi|^2\,,
\end{equation*}
whereas the full gradient of $w_{\phi}$ also has derivatives of $\phi$ in it. In coordinates,
\begin{equation*}
(\t w_{\phi})_{ij}= \phi(P_T)_{ij} + x_i \partial_{\tau_j}\phi\,,
\end{equation*}
so if the estimate above was to be valid, it would resemble some short of reverse-Poincare inequality, which is of course generically false. 
Combining additively $Q_{n,\mathrm{isom}}$ and $Q_{n,\mathrm{isop}}$ has though the merit of providing a Korn-type inequality in terms of the full gradient, as described in Theorem \ref{coercivity_estimate_for_Q_isom_Q_isop}.
\end{remark}	

\begin{proof}[Proof of Theorem \ref{coercivity_estimate_for_Q_isom_Q_isop}]
For $\alpha>0$ let us call
\begin{equation}\label{Q_n_alpha_form}
Q_{n,\alpha}(w) := \alpha Q_{n,\mathrm{isom}}(w) + Q_{n,\mathrm{isop}}(w)\,.
\end{equation}
By looking at the formulas \AAA \eqref{Q_isop_sphere} and \eqref{Qisom_nonnegative}, \EEE and rearranging terms, \AAA \eqref{Q_n_alpha_form} \EEE gives
\begin{equation}\label{Q_n_n_n-1_form}
Q_{n,\frac{n}{n-1}}(w) =\frac{n}{2(n-1)}\ds \big(|\t w|^2+(\mathrm{div}_{\S}w)^2\big)-Q_{V_n}(w)\,.
\end{equation}
With the notation we had introduced in Subsection \ref{Subsection 4.2}, the estimate to be proven for $Q_{n,\frac{n}{n-1}}$ reads
\begin{equation}\label{estimate_for_one_combination_of_Q_isom_Q_isop}
Q_{n,\frac{n}{n-1}}(w) \gtrsim \ds \big|\t w-\t w_{1,2}\big|^2 \quad \forall w\in W^{1,2}(\S;\R^n)\,.
\end{equation}
Once we have \eqref{estimate_for_one_combination_of_Q_isom_Q_isop}, the case of general $\alpha>0$ is immediate, since:
\begin{align*}%\label{general_alpha}
\begin{split}
\mathrm{For}\ \alpha>\frac{n}{n-1}>0: \ Q_{n,\alpha}(w)&\geq Q_{n,\frac{n}{n-1}}(w)\gtrsim \ds \big|\t w-\t w_{1,2}\big|^2\,.\\
\mathrm{For}\ 0<\alpha\leq\frac{n}{n-1}: \ Q_{n,\alpha}(w)&=\frac{(n-1)\alpha}{n}\left(\frac{n}{n-1} Q_{n,\mathrm{isom}}(w)+\frac{n}{(n-1)\alpha}Q_{n,\mathrm{isop}}(w)\right)\\
&\geq \frac{(n-1)\alpha}{n}Q_{n,\frac{n}{n-1}}(w)\gtrsim\alpha\ds \big|\t w-\t w_{1,2}\big|^2\,.
\end{split}
\end{align*}
The proof of \eqref{estimate_for_one_combination_of_Q_isom_Q_isop} can now be performed by following exactly the same procedure as in the proof of
Theorem \ref{main_coercivity_estimate_conformal_general_dimension} in Subsection \ref{Subsection 5.1.}. The latter could of course also be phrased in terms of any positive combination of $Q_{n,\mathrm{conf}}$ \AAA in \eqref{Q_n_conformal_sphere} \EEE and $Q_{n,\mathrm{isop}}$ \AAA in \eqref{Q_isop_sphere}, \EEE and not only of $Q_n=Q_{n,\mathrm{conf}}+Q_{n,\mathrm{isop}}$, \AAA as in \eqref{Q_n_definition}\EEE.

The arguments after Lemmata \ref{div_Q_n_in_the_eigenspaces} and \ref{bd_lemma} (handling the mixed
$\mathrm{div}_{\S}$-terms after the expansion in spherical harmonics \AAA as in \eqref{mixed_terms_first_estimate}-\eqref{Q_n_new_estimate}\EEE) are then modified accordingly. A last trivial comment in this case is that for $k \geq 1$ and $i= 1,2, 3$,
if $w\in H_{n,k,i}$ \AAA(recall Theorem \ref{eigenvalue_decomposition} and Lemma \ref{div_Q_n_in_the_eigenspaces})\EEE,
\begin{equation*}%\label{Q_n_n-1_diagonalization}
Q_{n,\frac{n}{n-1}}(w) = C'_{n,k,i}\ds |\t w|^2,  \quad \text{where } C'_{n,k,i}:=\frac{n}{2(n-1)}+\frac{n}{2(n-1)}\alpha_{n,k,i}-c_{n,k,i}\,.
\end{equation*}
Recalling the values of the constants from \AAA \eqref{Q_v_n_div_n_Q_n_in_eigenspaces} \EEE and the table \eqref{constants2}, we have that
\begin{equation}\label{new_constant_bigger_than_old_one}
C'_{n,k,i} > C_{n,k,i} \mathrm{\ \ for\ \ } i=1,3, \ \ \mathrm{while}\ \   C'_{n,k,2} = C_{n,k,2}\ \  (\mathrm{because}\ \ \alpha_{n,k,2}= 0)\,.
\end{equation}
So for the quadratic form \AAA in \eqref{Q_n_n_n-1_form}, one can directly check from \AAA \eqref{new_constant_bigger_than_old_one} \EEE that  $C'_{n,1,2}=0$, but otherwise $
\underset{(k,i)\neq(1,2)}{\min}C'_{n,k,i}>0$, which eventually leads to the desired coercivity estimate \AAA \eqref{estimate_for_one_combination_of_Q_isom_Q_isop}\EEE.
\end{proof}

\begin{remark}
Since in this case we had an easy argument to infer that $\mathrm{ker}(Q_{n,\mathrm{isom}})\cap\mathrm{ker}(Q_{n,\mathrm{isop}})\simeq\mathfrak{so}(n)$ \AAA(see Remark \ref{Q_n_isom_Q_n_isop_trivial})\EEE, the proof of Theorem \ref{coercivity_estimate_for_Q_isom_Q_isop}, for $\alpha=\frac{n}{n-1}$ for example, could also be performed by a standard \textit{contradiction}$\backslash$\textit{compactness argument}, for an abstract constant though.
Indeed, suppose that for $\alpha:=\frac{n}{n-1}$ the estimate \eqref{alpha_Q_isom_Q_isop_coercivity_estimate} \AAA (or equivalently \eqref{estimate_for_one_combination_of_Q_isom_Q_isop}) \EEE is false. By translation and scaling invariance of the estimate, there exists a sequence $(w_k)_{k\in \mathbb{N}}\subset W^{1,2}(\S;\R^n)$ such that for all $k\in \mathbb{N}$,
\begin{equation}\label{properties_of_contradictory_sequence_for_Korn_isom_isop}
\ds w_k=0, \quad \Pi_{H_{n,1,2}}w_k=0, \quad \ds |\t w_k|^2=1\,,
\end{equation}  
and
\begin{equation}\label{further_prop_of_contradictory_sequence_for_Korn_isom_isop}
Q_{n,\frac{n}{n-1}}(w_k)\leq \frac{1}{k}\ds |\t w_k|^2= \frac{1}{k}\,,
\end{equation}
in particular, 
\begin{equation*}%\label{Q_n_to_0}
Q_{n,\frac{n}{n-1}}(w_k)\longrightarrow 0 \quad \text{as } k\to \infty\,.
\end{equation*}
Up to a further nonrelabeled subsequence we can assume that there exists $w\in W^{1,2}(\S;\R^n)$ with $\ds w=0$ such that $w_k\rightharpoonup w$ in $W^{1,2}(\S;\R^n)$, and also pointwise $\mathcal{H}^{n-1}$-a.e. on $\S$. But then (recalling \AAA \eqref{quadratic_form_of_volume} and \eqref{Aoperator})\EEE,
\begin{align}\label{volumes_go_to_volume}
Q_{V_n}(w_k)=\frac{n}{2}\ds \langle w_k,A(w_k)\rangle\longrightarrow \frac{n}{2}\ds \langle w,A(w)\rangle=Q_{V_n}(w) \ \ \mathrm{as}\ k\to \infty,
\end{align}
since $w_k\to w$ strongly in $L^2(\S;\R^n)$ and $A(w_k)\rightharpoonup A(w)$ weakly in $L^2(\S;\R^n)$. Thus, by lower semicontinuity of the first two terms in \eqref{Q_n_n_n-1_form} under weak convergence in $W^{1,2}(\S;\R^n)$, we obtain
\begin{equation*}
0\leq Q_{n,\frac{n}{n-1}}(w)\leq \liminf_{k\to\infty} Q_{n,\frac{n}{n-1}}(w_k)=0\,,
\end{equation*}
i.e. $Q_{n,\frac{n}{n-1}}(w)=0$ and therefore, \AAA by \eqref{intersection_of_kernels_isom_isop}, \EEE
\begin{equation*}%\label{w_in_common_null_space}
w\in \mathrm{ker}(Q_{n,\mathrm{isom}})\cap\mathrm{ker}(Q_{n,\mathrm{isop}})\simeq\mathfrak{so}(n)\,,
\end{equation*} i.e., $w\in H_{n,1,2}$. Moreover, being the ($\mathcal{H}^{n-1}$-a.e.) pointwise limit of $(w_k)_{k\in \mathbb{N}}\subset H_{n,1,2}^{\bot}$, we must also have that $w\in H_{n,1,2}^{\bot}$, since $H_{n,1,2}$ is finite-dimensional and therefore its orthogonal complement in $W^{1,2}(\S;\R^n)$ is a closed subspace. This forces $w\equiv 0$ on $\S$ and in particular, \AAA by \eqref{volumes_go_to_volume}, \EEE 
\begin{equation}\label{Q_V_n_to_0}
Q_{V_n}(w_k)\to 0\ \text{ as }k\to \infty\,.
\end{equation}
But then \AAA\eqref{Q_n_n_n-1_form}, \eqref{further_prop_of_contradictory_sequence_for_Korn_isom_isop} and \eqref{Q_V_n_to_0} \EEE imply that
\begin{equation*}
0\leq \frac{n}{2(n-1)}\ds |\t w_k|^2\leq \frac{1}{k}+Q_{V_n}(w_k)\longrightarrow 0 \ \mathrm{\ as \ } k\to \infty\,,
\end{equation*}
contradicting the assumption \AAA in \eqref{properties_of_contradictory_sequence_for_Korn_isom_isop} \EEE that $\ds |\t w_k|^2=1$.
\end{remark}

\begin{remark}
As we had mentioned in Remark \ref{Remark_for_L}, the argument just described could have also been used to prove Theorem \ref{main_coercivity_estimate_conformal_general_dimension} if we knew already that the kernel of $Q_n$ \AAA(defined in \eqref{Q_n_definition}) \EEE is finite-dimensional (and actually equal to $H_{n,0}$). Nevertheless, as we had remarked therein, there does not seem to be a direct argument to show this fact when $n\geq 4$, neither by trying to solve the Euler-Lagrange equation associated to the operator $\mathcal{L}$ in $\eqref{L_operator}$ explicitely, nor by trying to argue as above.

Indeed, if $w\in \mathrm{ker}(Q_n)\iff w\in \mathrm{ker}(Q_{n,\mathrm{conf}})\cap\mathrm{ker}(Q_{n,\mathrm{isop}})$ (see \eqref{Q_n_conformal_sphere}, \eqref{Q_isop_sphere}), then again the following two equations must be satisfied simultaneously,
\begin{equation}\label{equations_in_the_common_eigenspace_conformal_isoperimetric}
(P_T^t\t w)_{\rm{sym}}=\frac{\mathrm{div}_{\S}w}{n-1}I_x \mathrm{\ \ \ and}\ \ \  
Q_{n,\mathrm{isop}}(w)=0\,.
\end{equation}
Because of the first equation \AAA in \eqref{equations_in_the_common_eigenspace_conformal_isoperimetric}\EEE, the second one therein results in the equation
\begin{align}\label{equations_in_the_common_eigenspace_conformal_isoperimetric_3}
\frac{1}{2}\frac{n}{n-1}\ds\left[|\t w|^2+(\mathrm{div}_{\S}w)^2-2\frac{(\mathrm{div}_{\S}w)^2}{n-1}\right]=Q_{V_n}(w) \nonumber\\[5pt]
\iff \frac{1}{n-1}\ds |\t w|^2+\frac{n-3}{(n-1)^2}\ds(\mathrm{div}_{\S}w)^2-\ds \langle w,A(w)\rangle =0\,,
\end{align}	
i.e., \AAA \eqref{equations_in_the_common_eigenspace_conformal_isoperimetric_3} \EEE ends up back to the original equation $Q_n(w)=0$. Arguing directly with the eigenvalue decomposition with respect to $A$ \AAA(recall Theorem \ref{eigenvalue_decomposition}) \EEE also has the extra benefit of showing explicitely how the form $Q_n$ behaves in each one of the eigenspaces separately \AAA(see Lemma \ref{div_Q_n_in_the_eigenspaces}), \EEE and also gives an explicit lower bound \AAA (\eqref{minimum_of_constants_second}) \EEE for the value of the optimal constant $C_n$ in the coercivity estimate.
\end{remark}

\AAA \section*{Acknowledgements} The second author is supported by the \textit{Deutsche Forschungsgemeinschaft (DFG, German Research Foundation) under Germany's Excellence Strategy EXC 2044-390685587, Mathematics M\"unster: Dynamics--Geometry--Structure} and would also like to thank the \textit{Max Planck Institute for Mathematics in the Sciences in Leipzig}, where his PhD project was carried out.
\EEE

\appendix

\section{A new simple proof of Liouville's theorem on $\S$ and a qualitative analogue}\label{New_proof_of_Liouville_Appendix_A}

\begin{proof}[Proof of Theorem \ref{spherical Liouville's Theorem}]
We present the proof in the case of generalized orientation-preserving maps, since the case of orienation-reversing ones is identical (or it can be retrieved by the former by composing with the \textit{flip} $x:=(x_1,\dots,x_{n-1},x_n)\mapsto(x_1,\dots,x_{n-1},-x_n)$).

For part $(i)$ of the theorem, we have that any map of the form $u(x)=Rx$ with $R\in SO(n)$ is of course an orientation-preserving isometry of $\S$. Conversely, let $n\geq 2$, $p\in [1,\infty]$ and $u\in W^{1,p}(\S;\S)$ be a generalized orientation-preserving isometric map. By definition, this means that at $\mathcal{H}^{n-1}$-a.e. $x\in \S$ the intrinsic gradient of $u$ is an orientation-preserving linear map between $T_x\S$ and $T_{u(x)}\S$, such that $$d_xu^td_xu=I_x\,,$$ or equivalently, in terms of the extrinsic gradient, $$\left(\t u^t\t u\right)(x)=I_x\,.$$ In particular, for $\mathcal{H}^{n-1}$-a.e. $x\in \S$ one has 
\begin{equation}\label{norm_and_pullback}
\frac{\left|\t u(x)\right|^2}{n-1}=1, \mathrm{\ and \ } u^\sharp(\omega)= \omega \mathrm{\ \ for \  every \ } (n-1)\text{-form \ } \omega \mathrm{ \ on \ } \S\,.
\end{equation} 
By the change of variables formula applied to the vector-valued $(n-1)$-form $xdv_g$, we obtain
\begin{equation}\label{isometries_of_S_have_zero_mean}
0 = \ds xdv_g= \ds u^\sharp(xdv_g)=\ds u(x)dv_g(x)\,, \mathrm{\ i.e.,\ } \ds u= 0\,.
\end{equation}
Hence, Poincare's inequality \AAA\eqref{Poincare} \EEE on $\S$, together with \AAA\eqref{norm_and_pullback}, \eqref{isometries_of_S_have_zero_mean}, \EEE and the fact that $|u|\equiv 1$, yield
\begin{equation*}
1\ = \ \ds \frac{\left|\t u\right|^2}{n-1} \geq \ds |u|^2 =\ 1\,.
\end{equation*}
As we have also encountered before \AAA(see also Appendix \ref{sec:C})\EEE, the equality case in the Poincare inequality implies that in the Fourier expansion of $u$ in spherical harmonics, no other spherical harmonics except the first order ones should appear, hence $u(x)=Rx$ for some $R\in \R^{n\times n}$. But this linear map would transform $\S$ into the boundary of an ellipsoid, which after possibly an orthogonal change of coordinates is
\begin{equation*}\label{ellipsoidal_equation}
u(\S)=\Big\{y=(y_1,\dots,y_n) \in \mathbb{R}^n: \frac{y_1^2}{\alpha_1^2}+\dots+\frac{y_n^2}{\alpha_n^2}=1\Big\}\,,
\end{equation*}
where $0\leq\alpha_1 \leq \dots \leq \alpha_n$\  are the eigenvalues of $\sqrt{R^tR}$. By assumption, $u(\S)\equiv\S$ and this forces $\alpha_1=\dots=\alpha_n=1$, i.e., $R\in O(n)$ and in particular, since $u$ is assumed to be orientation-preserving, $R\in SO(n)$.

For part $(ii)$ we can argue similarly, after making use of the following useful fact.

\begin{claim}Given $u\in W^{1,n-1}(\S;\S)$ of degree one, one can always find a Möbius transformation $\phi_{\xi_0,\lambda_0}$ of $\S$ \AAA$\rm{(}$see \eqref{conformal_group_of_S_representation}$\rm{)}$ \EEE so that 
\begin{equation}\label{fixing_mean_value_with_Moebius}
\ds u\circ\phi_{\xi_0,\lambda_0}=0\,.
\end{equation}
\end{claim}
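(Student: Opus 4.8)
The plan is to exploit the transitivity of the conformal group of $\S$ on "mass distributions" through the classical center-of-mass / conformal-balancing argument, which is the standard tool for normalizing a degree-one map up to a Möbius transformation. Concretely, for $\xi\in B_1^n$ (the open unit ball) let $\phi_\xi\colon\S\to\S$ denote the conformal automorphism of the ball restricted to the sphere that sends $0$ to $\xi$ (the standard M\"obius transformation $\phi_{\xi_0,\lambda_0}$ with the parametrization $\xi=\frac{\xi_0}{|\xi_0|}\tanh(\tfrac12\log\lambda_0)$ or similar; the precise dictionary is immaterial). Define the vector field
\begin{equation}\label{center_of_mass_field}
F(\xi):=\ds u\circ\phi_\xi\ \in\ \R^n,\qquad \xi\in B_1^n.
\end{equation}
First I would check that $F$ is continuous on $B_1^n$: this follows because $\phi_\xi$ depends continuously (indeed smoothly) on $\xi$ in, say, the $W^{1,n-1}$-operator topology of precomposition on $W^{1,n-1}(\S;\R^n)$ (here the hypothesis $u\in W^{1,n-1}$ and the conformal invariance of the $(n-1)$-Dirichlet energy are exactly what is needed to make precomposition continuous, by a standard density-plus-equicontinuity argument), and $v\mapsto\ds v$ is a bounded linear functional.

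The heart of the matter is the boundary behaviour of $F$: I would show that as $\xi\to\eta\in\S$, the maps $\phi_\xi$ concentrate, pushing almost all the mass of $\S$ near the single point $\eta$ while the image $u\circ\phi_\xi$ converges (in an averaged sense) to the constant $u$-value "seen from $\eta$". The clean way to phrase this is: $\phi_\xi\to \eta$ in $L^1(\S;\S)$, i.e. $\ds|\phi_\xi-\eta|\to 0$, and consequently, using that $u\in W^{1,n-1}(\S;\S)\hookrightarrow$ (for the trace on generic great subspheres) is well-behaved, one gets $\ds u\circ\phi_\xi\to u_\ast(\eta)$ for a suitable boundary value; but the cheapest statement that still works is simply that $|F(\xi)|\to 1$ as $\xi\to\S$, because the image points $u\circ\phi_\xi(x)$ all lie on $\S$ and the distribution of $\phi_\xi(x)$ concentrates, so the average $F(\xi)$ is asymptotically a convex combination of points of $\S$ that are all close to a single point — hence $|F(\xi)|\to1$. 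Granting this, $F$ extends continuously to $\overline{B_1^n}$ with $F|_{\S}$ a map $\S\to\S$, and moreover $F|_{\S}$ is homotopic (inside $\overline{B_1^n}\setminus\{0\}$-valued maps, once we know $F$ does not vanish near $\S$) to the identity of $\S$ — this is where the degree-one hypothesis on $u$ enters, ensuring the boundary map has degree $1$ rather than $0$ or something else. A standard topological argument (if $F\neq0$ on all of $B_1^n$ then $F/|F|$ would give a retraction-type homotopy showing $\deg(F|_\S)=0$, contradiction) then forces $F(\xi_0)=0$ for some $\xi_0\in B_1^n$, and unwinding the dictionary $\xi_0\leftrightarrow(\xi_0^{\mathrm{sphere}},\lambda_0)$ gives \eqref{fixing_mean_value_with_Moebius}.

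I would carry out the steps in this order: (1) set up the parametrization $\xi\mapsto\phi_\xi$ of $Conf_+(\S)$ by the open ball and record $\phi_0=\mathrm{id}_\S$; (2) prove continuity of $F$ on $B_1^n$; (3) prove the boundary concentration statement $\ds|\phi_\xi-\eta|\to0$ as $\xi\to\eta\in\S$, and deduce $|F(\xi)|\to1$, so that $F$ extends continuously to $\overline{B_1^n}$ with unit-length boundary values; (4) identify the degree of the boundary map with the degree of $u$, using that $\phi_\xi$ is orientation-preserving of degree one and the behaviour near the boundary, so $\deg(F|_\S)=\deg u=1$; (5) invoke the elementary topological lemma that a continuous $F\colon\overline{B_1^n}\to\R^n$ with $F|_\S$ of nonzero degree must vanish somewhere in $B_1^n$, producing $\xi_0$ and hence $\phi_{\xi_0,\lambda_0}$.

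The main obstacle I expect is step (3)–(4): making the boundary concentration rigorous for a merely $W^{1,n-1}(\S;\S)$ map (so that $u$ need not be continuous) and correctly pinning down that the resulting boundary map of $F$ has degree exactly $1$. The continuity statement in (2) is routine once one recalls that precomposition by conformal maps is a continuous action on $W^{1,n-1}$ thanks to conformal invariance of the $(n-1)$-energy, but the degree bookkeeping requires some care: one should either argue that $F$ is nonzero in a collar $\{1-\delta<|\xi|<1\}$ and compute the degree there by an explicit homotopy to $u$ itself (letting $\phi_\xi$ degenerate), or bypass pointwise boundary values entirely by an integral-geometric/Borsuk-type degree argument showing $\deg(F|_\S)=\deg u$ directly. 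I would choose the collar-homotopy route, since the concentration estimate $\ds|\phi_\xi-\eta|\to0$ makes $F|_{\partial B_{1-\delta}}$ a small perturbation of $x\mapsto u(x)$-type averages and the degree can be read off.
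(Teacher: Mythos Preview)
Your approach is essentially the same topological idea as the paper's: parametrise the M\"obius group by (a compactification of) the open ball, define the center-of-mass map $F$, and use a degree/homotopy argument to locate a zero of $F$. The paper uses the cylinder parametrisation $(\xi,\lambda)\in\S\times[0,1]$ rather than the ball, but this is cosmetic.

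The substantive difference is how the low regularity of $u$ is handled. You propose to work directly with $u\in W^{1,n-1}(\S;\S)$ and to establish the boundary behaviour $|F(\xi)|\to 1$ by concentration of $\phi_\xi$; you correctly flag this as the main obstacle, since $W^{1,n-1}$ is the critical space and $u$ need not be continuous, so neither the limit $F(\xi)\to u_\ast(\eta)$ nor the degree identification $\deg(F|_\S)=\deg u$ is immediate. The paper sidesteps this entirely: it first proves the claim for $u\in C^\infty(\S;\S)$ of degree one (where surjectivity gives $F(\xi,0)=u(\xi)$ on the nose, so the homotopy between $u(\S)=\S$ and the point $b_u$ must cross $0$), and then passes to general $u\in W^{1,n-1}$ by the density result of Brezis--Nirenberg (smooth maps of the \emph{same} degree are $W^{1,n-1}$-dense), extracting convergent subsequences $(\xi_j,\lambda_j)\to(\xi_0,\lambda_0)$ and ruling out the degeneration $\lambda_0=0$ by a short dominated-convergence contradiction. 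This approximation-then-limit strategy is considerably cleaner than carrying out your collar-homotopy programme for a discontinuous $u$, and I would recommend adopting it.
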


Indeed, assume first that $u\in C^{\infty}(\S;\S)$ and has degree one, in particular $u$ is surjective. If $b_u:=\ds u=0$, there is nothing to prove. If $b_u\neq 0$, consider the map $F:\S \times [0,1]\mapsto \overline {B_1}$, defined as 
\begin{equation*}\label{homotopy_for_fixing_the_center}
F(\xi,\lambda):=\ds u\circ\phi_{\xi,\lambda} \ \ \mathrm{for}\ \lambda\in(0,1]\,, \ \ \mathrm{and} \ \ F(\xi,0):=\lim_{\lambda\downarrow 0^+}F(\xi,\lambda)\,.
\end{equation*}
The map $F$ is continuous with $F(\xi,0)=u(\xi)$ for every $\xi\in\S$, i.e., $F(\S,0)=u(\S)=\S$, whereas $F(\S,1)=\{b_u\}$.
In other words, $F$ is a continuous homotopy between $\S$ and the point $b_u\in \overline {B_1}\setminus \{0\}$, and therefore there exists $\lambda_0\in(0,1)$ such that $0\in F(\S,\lambda_0)$, i.e., there exists also $\xi_0\in \S$ such that $F(\xi_0,\lambda_0)=0$.

In the general case of a map $u\in W^{1,n-1}(\S;\S)$ of degree 1, by the approximation property given in \cite[Section I.4, Lemma 7]{brezis1995degree}, there exists a sequence $(u_j)_{j\in \mathbb{N}}\subset C^{\infty}(\S;\S)$ with the property that
\begin{equation*}\label{approximation_property}
u_j\rightarrow u \mathrm{\ \ strongly \ in} \ \ W^{1,n-1}(\S;\S) \mathrm{\ and \ } \mathrm{deg}u_j=\mathrm{deg}u=1\ \ \forall j\in \mathbb{N}\,.
\end{equation*} 
Up to passing to a non-relabeled subsequence, we can without loss of generality also suppose that $u_j\rightarrow u$ and $\t u_j\rightarrow \t u$ pointwise $\mathcal{H}^{n-1}$-a.e. on $\S$. Since the maps $(u_j)_{j\in \N}$ are smooth and surjective, by the previous argument there exist $(\xi_j)_{j\in \mathbb{N}}\subset\S$ and $(\lambda_j)_{j\in \mathbb{N}}\subset (0,1]$ so that for every $j\in \mathbb{N}$,
\begin{equation}\label{sequence_of_Möbius_fixing_centers}
\ds u_j\circ \phi_{\xi_j,\lambda_j}=0\,.
\end{equation}
Up to non-relabeled subsequences, we can suppose further that $\xi_j\rightarrow\xi_0\in \S$ and $\lambda_j\rightarrow\lambda_0\in [0,1]$, thus $\phi_{\xi_j,\lambda_j}\to \phi_{\xi_0,\lambda_0}$ pointwise $\mathcal{H}^{n-1}$-a.e. on $\S$, and also weakly in $W^{1,n-1}(\S;\S)$.

In fact $\lambda_0\in (0,1]$, i.e., the Möbius transformations $(\phi_{\xi_j,\lambda_j})_{j\in \mathbb{N}}$ do not converge to the trivial map $\phi_{\xi_0,0}(x)\equiv\xi_0$. Indeed, suppose that this was the case. Then $u_j\circ\phi_{\xi_j,\lambda_j}\to u(\xi_0)$ pointwise $\mathcal{H}^{n-1}$-a.e., and $|u_j\circ\phi_{\xi_j,\lambda_j}|\equiv1$, so we could use the Dominated Convergence Theorem \AAA and \eqref{sequence_of_Möbius_fixing_centers}, \EEE to infer that
\begin{align*}
\begin{split}
u(\xi_0)=\ds u(\xi_0)=\lim_{j\to\infty}\ds u_j\circ\phi_{\xi_j,\lambda_j}=0\,,\\ 
|u(\xi_0)|=\ds |u(\xi_0)|=\lim_{j\to\infty}\ds |u_j\circ\phi_{\xi_j,\lambda_j}|=1\,,
\end{split}
\end{align*}
and derive a contradiction. Having justified that $\lambda_0\in (0,1]$, what we actually obtain by the Dominated Convergence Theorem and \eqref{sequence_of_Möbius_fixing_centers} is that 
\begin{equation}\label{precomposition_mean_value_0}
\d2 u\circ \phi_{\xi_0,\lambda_0}= 0\,.
\end{equation}

Continuing with the proof of \AAA Theorem \ref{spherical Liouville's Theorem}\EEE$(ii)$, we of course have that all the maps given by \AAA \eqref{conformal_group_of_S_representation} \EEE are orientation-preserving conformal diffeomorphisms of $\S$. Conversely, if $u\in W^{1,n-1}(\S;\S)$ is a generalized orientation-preserving conformal map, similarly to $(i)$ we have that $\mathcal{H}^{n-1}$-a.e. on $\S$,  
$$\left(d_xu^td_xu\right)(x)=\frac{|d_x u|^2}{n-1}I_x\,,$$
or equivalently, in terms of the extrinsic gradient,
\begin{equation}\label{conformal_extrinsic}
\left(\t u^t\t u\right)(x)=\frac{|\t u|^2}{n-1}I_x\,.
\end{equation}
By taking the determinant in both sides of \AAA \eqref{conformal_extrinsic}, \EEE we get that $\mathcal{H}^{n-1}$-a.e. on $\S$,
\begin{equation}\label{determinant=n-1_Dirichlet_energy}
\sqrt{\mathrm{det}\left(\t u^t\t u\right)}=\left(\frac{\left|\t u\right|^2}{n-1}\right)^{\frac{n-1}{2}}.
\end{equation}
Precomposing with the Möbius map $\phi_{\xi_0,\lambda_0}\in Conf_+(\S)$ of the previous claim, we have that the map $\tilde u:=u\circ \phi_{\xi_0,\lambda_0}$, whose mean value is 0 \AAA by \eqref{precomposition_mean_value_0}\EEE, is also a generalized orientation-preserving conformal transformation of $\S$ of degree 1, and therefore \AAA by \eqref{determinant=n-1_Dirichlet_energy}, \EEE
\begin{equation*}
\tilde u^\sharp(dv_g)=\sqrt{\mathrm{det}(\t \tilde u^t\t \tilde u)}\ dv_g=\left(\frac{|\t \tilde u|^2}{n-1}\right)^{\frac{n-1}{2}}dv_g\,.
\end{equation*}
By approximation, the analytic formula for the degree in terms of integration of $(n-1)$-forms on $\S$ holds true for $\tilde u$ as well, i.e.,
\begin{equation}\label{n-1_Dirichlet_1}
\ds\left(\frac{|\t \tilde u|^2}{n-1}\right)^{\frac{n-1}{2}}=\ds \tilde u^\sharp(dv_g) = \mathrm{deg}\tilde u\ \ds dv_g = 1\,.
\end{equation}
We can now use \AAA \eqref{n-1_Dirichlet_1}\EEE, Jensen's inequality, and the sharp Poincare inequality on $\S$ \AAA(\eqref{Poincare})\EEE, to obtain the chain of inequalities
\begin{align}\label{basic_chain_of_inequalities_for_Liouvilles_thm}
1=\ds\left(\frac{|\t \tilde u|^2}{n-1}\right)^\frac{n-1}{2}\geq\left(\ds\frac{|\nabla_{T}\tilde u|^2}{n-1}\right)^{\frac{n-1}{2}}\geq\left(\ds|\tilde u|^2\right)^\frac{n-1}{2}=1\,,
\end{align}
since $\ds \tilde u=0$ and $|u|\equiv 1$ $\mathcal{H}^{n-1}$-a.e. on $\S$. Arguing as in part $(i)$, we deduce that $\tilde u=R\mathrm{id}_{\S}$, i.e., $u=R\phi_{\xi,\lambda}$, where $R\in SO(n)$, $\xi:=\xi_0\in \S$ and $\lambda:=\frac{1}{\lambda_0}>0$.\qedhere
\end{proof}
	
\begin{remark}\label{tangent_space_of_conformal_group_to_identity}
The Möbius transformations of $\S$ \AAA given by \eqref{conformal_group_of_S_representation} \EEE could of course alternatively be described by performing an inversion in $\overline{T_\xi\S}$ with respect to some center, say the origin $\xi$ of the affine hyperplane $T_\xi\S$ of $\R^n$, and some radius, say $\sqrt{\lambda}>0$. These maps however would correspond exactly to the Möbius transformations produced by dilation in $T_\xi\S$ by factor $\frac{1}{\lambda}$, composed finally with a flip in $\R^n$, i.e., an orthogonal map that would change back the orientation.
\AAA By Liouville's Theorem \ref{spherical Liouville's Theorem}, the conformal group of the sphere is given by
\begin{equation*}%\label{conformal_group_of_S_n-1}
Conf(\S)=\{O\phi_{\xi,\lambda}\colon\ O\in O(n),\xi\in \S,\lambda>0\}\,,
\end{equation*}
and is a actually a Lie group, i.e., a differentiable manifold (of dimension $\tfrac{n(n+1)}{2}$) with a group structure, given by composition of maps. \EEE
Analytically, the maps $(\phi_{\xi,\lambda})_{\xi\in \S,\lambda>0}$ are given by the formula
\begin{equation}\label{Möbius_transformations_formula}
\phi_{\xi,\lambda}(x):=\frac{-\lambda^2\big(1-\langle x,\xi \rangle\big)\xi+2\lambda\big(x-\langle x,\xi \rangle \xi\big)+\big(1+\langle x,\xi \rangle\big)\xi}{\lambda^2\big(1-\langle x,\xi \rangle\big)+\big(1+\langle x,\xi \rangle\big)}\,.
\end{equation} 	 
\AAA Using \AAA\eqref{Möbius_transformations_formula}, the corresponding Lie algebra of infinitesimal Möbius transformations, i.e., the tangent space of $Conf(\S)$ at the $\mathrm{id}_{\S}$, can easily be identified. \EEE Indeed, it is then an elementary exercise in differential geometry to check that  
\begin{flalign}\label{tangent_space_of_the_conformal_group_at_the_identity_1}
\hspace{-1em}T_{\mathrm{id}_{\S}}Conf(\S)\equiv\left\{Sx+\mu\big(\langle x,\xi\rangle x-\xi\big):\S\mapsto \R^n;\ S\in Skew(n), \xi\in\S, \mu\in \R\right\}\,,
\end{flalign}
which is the representation that we made use of in the proof of Lemma \ref{fixingMobius}.
\end{remark}

\begin{remark}
In the conformal case, the argument for the proof of Theorem \ref{spherical Liouville's Theorem}$(ii)$ can easily be modified in order to give a compactness statement for sequences of orientation-preserving (resp. orientation-reversing) approximately conformal maps on $\S$ of degree $1$ (resp. $-1$)\AAA, see the subsequent Lemma \ref{compactness_of_degree_one_almost_conformal_maps_on_S}\EEE. When $n=3$, the statement therein essentially reduces to a well known compactness result for harmonic maps of degree $\pm 1$ on $\mathbb{S}^2$ (cf.~\cite[Theorem 2.4, Step 1 in the proof]{bernand2020quantitative}, as well as \cite{lin1999mapping}), which was proven using a \textit{concentration-compactness argument} in the spirit of {\sc P.L. Lions} \cite{lions1985concentration}. With the observation that the Möbius transformations can be used to \textit{``globally invert''} a map from $\S$ to itself, \textit{in the sense of fixing its mean value to be 0}, we can give a simpler and more elementary proof of this fact, which can be appropriately generalized in every dimension $n\geq 3$. For further applications of this simple observation the interested reader is also referred to \cite{hirsch2020},\cite{topping2020} for two different and shorter proofs of \cite[Theorem 2.4]{bernand2020quantitative}.
In the following lemma we present again for simplicity the case of orientation-preserving degree 1 maps. 
\end{remark}

\begin{lemma}\label{compactness_of_degree_one_almost_conformal_maps_on_S}
Let $n\geq 3$ and $(u_j)_{j\in\mathbb{N}}\subset W^{1,n-1}(\S;\S)$ be a sequence of generalized orientation-preserving maps of degree 1 which are approximately conformal, in the sense that
\begin{equation*}%\label{conformal_deficit_S_goes_to_0}
\lim_{j\to\infty}\ds\left(\left(\frac{|\t u_j|^2}{n-1}\right)^\frac{n-1}{2}-\sqrt{\mathrm{det}\left(\t u_j^t\t u_j\right)}\right)=0\,,
\end{equation*}
which as a condition is in this case equivalent to 
\begin{equation}\label{n-1_Dirichlet_energy_goes_to_1}
\lim_{j\to\infty}\ds\left(\frac{|\t u_j|^2}{n-1}\right)^\frac{n-1}{2}=1\,.
\end{equation}
Then, there exist Möbius transformations $(\phi_j)_{j\in\mathbb{N}}\subset Conf_+(\S)$ and $R\in SO(n)$ so that up to a non-relabeled subsequence,
\begin{equation}\label{compactness_conformal_maps_sphere}
u_j\circ\phi_j\rightarrow R\mathrm{id}_{\S} \ \ \mathrm{strongly \ in \ } W^{1,n-1}(\S;\S)\,.
\end{equation}
\end{lemma}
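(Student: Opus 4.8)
The plan is to adapt the proof of Theorem~\ref{spherical Liouville's Theorem}$(ii)$ to this approximate setting, exploiting the fact that in that argument the only equality-case information we extracted from conformality was already encoded in the chain of inequalities \eqref{basic_chain_of_inequalities_for_Liouvilles_thm}. First I would invoke the Claim embedded in the proof of Theorem~\ref{spherical Liouville's Theorem}$(ii)$: for each $j$ there exists a Möbius transformation $\phi_j=\phi_{\xi_j,\lambda_j}\in Conf_+(\S)$ such that $\ds u_j\circ\phi_j=0$. Set $v_j:=u_j\circ\phi_j$; since $|v_j|\equiv 1$, the sequence $(v_j)_{j\in\mathbb{N}}$ is bounded in $L^\infty$, and by conformal invariance of the $(n-1)$-Dirichlet energy in dimension $n-1$ --- i.e. the substitution rule $\ds\big(|\t(u_j\circ\phi_j)|^2/(n-1)\big)^{(n-1)/2}=\ds\big(|\t u_j|^2/(n-1)\big)^{(n-1)/2}$, which holds because precomposition with a Möbius transformation preserves both the $(n-1)$-Dirichlet integrand up to the Jacobian factor and the total area --- the hypothesis \eqref{n-1_Dirichlet_energy_goes_to_1} gives $\ds\big(|\t v_j|^2/(n-1)\big)^{(n-1)/2}\to 1$. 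In particular $(v_j)$ is bounded in $W^{1,n-1}(\S;\S)$, so up to a non-relabeled subsequence $v_j\rightharpoonup v$ weakly in $W^{1,n-1}$, strongly in $L^q$ for every $q<\infty$ (hence $\ds v=0$ and $|v|\equiv 1$ $\mathcal{H}^{n-1}$-a.e.), and pointwise $\mathcal{H}^{n-1}$-a.e.

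Next I would run the chain \eqref{basic_chain_of_inequalities_for_Liouvilles_thm} in the limit. By weak lower semicontinuity of $w\mapsto\ds\big(|\t w|^2/(n-1)\big)^{(n-1)/2}$ (a convex functional of $\t w$), together with Jensen's inequality and the sharp Poincaré inequality \eqref{Poincare} applied to $v$ (which has zero mean and $|v|\equiv 1$), one gets
\begin{equation*}
1=\lim_{j\to\infty}\ds\Big(\frac{|\t v_j|^2}{n-1}\Big)^{\frac{n-1}{2}}\geq\ds\Big(\frac{|\t v|^2}{n-1}\Big)^{\frac{n-1}{2}}\geq\Big(\ds\frac{|\t v|^2}{n-1}\Big)^{\frac{n-1}{2}}\geq\Big(\ds|v|^2\Big)^{\frac{n-1}{2}}=1\,.
\end{equation*}
Hence equality holds throughout. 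Equality in the Poincaré step forces $v(x)=Rx$ for some $R\in\mathbb{R}^{n\times n}$ (only first-order spherical harmonics survive in the Fourier expansion, cf.~Appendix~\ref{sec:C}), and then $|v|\equiv 1$ forces $R\in O(n)$; the orientation-preserving degree-$1$ constraint passes to the limit (degree is continuous under strong $W^{1,n-1}$ convergence of the approximants, and here it will force $R\in SO(n)$ once strong convergence is established). Equality in the first (lower semicontinuity) step is the crucial gain: a strictly convex integrand plus convergence of the integrals and weak convergence of the gradients upgrades $v_j\rightharpoonup v$ to $\t v_j\to\t v$ strongly in $L^{n-1}$; combined with $v_j\to v$ strongly in $L^{n-1}$ this yields $v_j\to v=R\,\mathrm{id}_\S$ strongly in $W^{1,n-1}(\S;\S)$, which is exactly \eqref{compactness_conformal_maps_sphere}.

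The main obstacle, and the step deserving the most care, is the strict-convexity/rigidity argument that converts weak convergence of $\t v_j$ into strong convergence once $\ds\big(|\t v_j|^2/(n-1)\big)^{(n-1)/2}\to\ds\big(|\t v|^2/(n-1)\big)^{(n-1)/2}$. The clean way is to note $t\mapsto t^{(n-1)/2}$ is strictly convex and increasing on $[0,\infty)$ for $n\geq 3$, so $F(\xi):=\big(|\xi|^2/(n-1)\big)^{(n-1)/2}$ is strictly convex on $\mathbb{R}^{n\times(n-1)}$; by the standard lemma on equi-integrable sequences realizing the lower semicontinuity bound with equality (a Visintin-type / Young-measure argument, or more elementarily the uniform convexity of $L^{n-1}$ after a duality pairing), the Young measure generated by $(\t v_j)$ must be the Dirac mass at $\t v$ a.e., giving $\t v_j\to\t v$ in measure and then, by equi-integrability coming from boundedness in $L^{n-1}$ together with convergence of the $L^{n-1}$-type norms, strongly in $L^{n-1}$. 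One must also double-check that $\lambda_0\in(0,1]$ (the limiting Möbius map is nondegenerate), which is handled exactly as in the Claim in the proof of Theorem~\ref{spherical Liouville's Theorem}$(ii)$: if $\lambda_j\to 0$ then $v_j$ would converge pointwise a.e. to the constant $v(\xi_0)$, which has $|v(\xi_0)|=1$ yet $\ds v(\xi_0)=0$, a contradiction. The remaining bookkeeping --- subsequence extractions, verifying $\ds v=0$ passes to the limit, and checking the degree constraint survives --- is routine.
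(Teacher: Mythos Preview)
Your proposal is correct and follows essentially the same route as the paper: invoke the Claim to fix the mean of $v_j:=u_j\circ\phi_j$ to zero, use conformal invariance of the $(n-1)$-Dirichlet energy, pass to a weak limit, run the chain of inequalities \eqref{basic_chain_of_inequalities_for_Liouvilles_thm} to identify the limit as $R\,\mathrm{id}_{\S}$, and upgrade to strong convergence via equality in the lower semicontinuity step (which the paper states tersely, while you spell out the strict-convexity/Radon--Riesz reasoning). One small point: the ``double-check that $\lambda_0\in(0,1]$'' is not needed here---that nondegeneracy argument belonged to the Claim's proof (approximating a \emph{single} $u$ by smooth maps), whereas in this lemma you only need each $\phi_j$ to exist and $v_j$ to converge, with no requirement that the $\phi_j$ themselves converge to a nondegenerate Möbius map.
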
	

\begin{proof}
By the degree 1 condition, \AAA as in \eqref{fixing_mean_value_with_Moebius},\EEE we can again find $(\xi_j)_{j\in \mathbb{N}}\subset \S$ and $(\lambda_j)_{j\in \mathbb{N}}\subset (0,1]$, so that after setting $\phi_j:=\phi_{\xi_j,\lambda_j}\in Conf_+(\S)$ and $\tilde{u}_j:=u_j\circ\phi_j$, we have 
\begin{equation}\label{fixing_mean_value_again}
\ds \tilde{u}_j=0\,.
\end{equation}
Thanks to the conformal invariance of the $(n-1)$-Dirichlet energy, \eqref{n-1_Dirichlet_energy_goes_to_1} is left unchanged, i.e.,
\begin{equation}\label{conformal_deficit_goes_to_0_for_tilde_u_alternative}
\lim_{j\to\infty}\ds\left(\frac{|\t \tilde{u}_j|^2}{n-1}\right)^\frac{n-1}{2}=1\,.
\end{equation}
\AAA Because of \eqref{fixing_mean_value_again} and \eqref{conformal_deficit_goes_to_0_for_tilde_u_alternative}, \EEE the sequence $(\tilde{u}_j)_{j\in\mathbb{N}}$ is in particular uniformly bounded in $W^{1,n-1}(\S;\S)$, hence up to a non-relabeled subsequence converges weakly in $W^{1,n-1}(\S;\S)$, and up to a further one
also pointwise $\mathcal{H}^{n-1}$-a.e. to a map $\tilde{u}\in W^{1,n-1}(\S;\S)$. Since $\tilde {u}_j\to \tilde{u}$ strongly in $L^{n-1}(\S;\S)$, we obtain \AAA by \eqref{fixing_mean_value_again} \EEE that in particular, 
\begin{equation}\label{tilde_u_has_mean_zero}
\ds \tilde u=\lim_{j\to\infty} \ds \tilde{u}_j=0\,,
\end{equation}
and by lower semicontinuity of the $(n-1)$-Dirichlet energy under weak convergence \AAA and \eqref{conformal_deficit_goes_to_0_for_tilde_u_alternative}\EEE,
\begin{equation}\label{weak_l_s_c_of_L_n-1_norm}
\ds \left(\frac{|\t \tilde u|^2}{n-1}\right)^{\frac{n-1}{2}}\leq\liminf_{j\to \infty}\ds\left(\frac{|\t \tilde{u}_j|^2}{n-1}\right)^\frac{n-1}{2}=1\,.
\end{equation}
We can then apply the same argument as in \AAA \eqref{basic_chain_of_inequalities_for_Liouvilles_thm} in \EEE the proof of Theorem \ref{spherical Liouville's Theorem}$(ii)$, to obtain the chain of inequalities
\begin{equation}\label{main_line_of_inequalities_compactness}
1\geq \ds \left(\frac{|\t \tilde u|^2}{n-1}\right)^{\frac{n-1}{2}}\geq\left(\ds\frac{|\nabla_{T}\tilde u|^2}{n-1}\right)^{\frac{n-1}{2}}\geq\left(\ds\left|\tilde u\right|^2\right)^\frac{n-1}{2}=1\,,
\end{equation}
and with the same reasoning as in there, we conclude that $\tilde{u}(x)=Rx$ for some $R\in O(n)$. Finally, by \eqref{tilde_u_has_mean_zero}-\eqref{main_line_of_inequalities_compactness} we actually obtain that
%\begin{align}
%\begin{split}
%\tilde{u}_j\rightharpoonup\tilde u \mathrm{\ weakly\ in\ } W^{1,n-1}(\S;\S), \ \ \ds \tilde {u}\ \dh=\ds \tilde{u_j}\ \dh=0, \\[3pt]
%\ds\left(\frac{|\t \tilde u|^2}{n-1}\right)^{\frac{n-1}{2}}\ \dh=1=\lim_{j\to\infty}\ds\left(\frac{|\t \tilde{u}_j|^2}{n-1}\right)^\frac{n-1}{2}\ \dh,
%\end{split}
%\end{align}
$\tilde{u}_j\to\tilde u$ strongly in  $W^{1,n-1}(\S;\S)$. Since the degree is stable under this notion of convergence, 
\begin{equation*}
1=\mathrm{deg}\tilde u=\ds\Big\langle Rx,\bigwedge_{i=1}^{n-1}\partial_{\tau_i}(Rx)\Big\rangle= \mathrm{det}R\,, 
\end{equation*}
i.e., indeed $R\in SO(n)$. \qedhere
\end{proof}
\vspace{-1.5em}
\AAA\section{Integral identities for Jacobians, Taylor expansions of the deficits and proof of Korn's identity}\label{sec:B}
We start this appendix by collecting and proving some integral identities for Jacobians that we used in the bulk of the paper, and especially in the proof of Lemma \ref{separate_determinant_estimate}. 
\begin{lemma}\label{integral_identities}
Let $u\in W^{1,2}(\s;\R^3)$ and let $u_h\colon \overline{B_1}\mapsto \R^3$ be as usual its harmonic continuation in $B_1$, taken componentwise. Then $\rm{(}$with the notation adopted in \eqref{determinant_of_signed_volume} for $n=3$$\rm{)}$,
\begin{equation}\label{determinant_bulk_surface}
\dashint_{B_1}\mathrm{det}\nabla u_h=V_3(u):=\d2\big\langle u, \partial_{\tau_1}u\wedge\partial_{\tau_2}u\big\rangle\,.
\end{equation}
Moreover, if $w\in W^{1,2}(\s;\R^3)$ and $w_h$ is defined analogously, then
\begin{align}\label{determinant_expansion_around_I}
\begin{split}
\dashint_{B_1}\mathrm{det}(I_3+\nabla w_h)%\dashint_{B_1}\mathrm{det}\big(I_3+\nabla w_h(x)\big) \\[3pt]
&=1+\dashint_{B_1}\mathrm{div}w_h+\frac{1}{2}\dashint_{B_1}\big((\mathrm{div}w_h)^2-\mathrm{Tr}(\nabla w_h)^2\big)+\dashint_{B_1}\mathrm{det}\nabla w_h\\
&=1+3\d2\langle w,x\rangle+Q_{V_3}(w)+V_3(w)\,,
\end{split}
\end{align}
where the quadratic form $Q_{V_3}$ is defined as
\begin{equation}\label{quadratic_form_Q_V_3}
Q_{V_3}(w):=\frac{3}{2}\d2 \Big\langle w,(\mathrm{div}_{\s}w)x-\sum_{j=1}^3x_j\nabla_Tw^j\Big\rangle\,.
\end{equation}
\end{lemma}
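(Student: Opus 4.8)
The plan is to deduce both identities from two elementary ``null Lagrangian'' divergence structures together with the divergence theorem on $B_1$, proving them first for smooth maps and then passing to the general case by density. For \eqref{determinant_bulk_surface} I would use that every smooth $f\colon\overline{B_1}\mapsto\R^3$ satisfies the pointwise identity $3\,\mathrm{det}\nabla f=\mathrm{div}\big((\mathrm{cof}\nabla f)^tf\big)$, which follows from the Piola identity $\sum_j\partial_j(\mathrm{cof}\nabla f)_{ij}=0$ and from $(\nabla f)(\mathrm{cof}\nabla f)^t=(\mathrm{det}\nabla f)I_3$. Applying the divergence theorem, using $\partial_{\vec\nu}=x$ on $\s$ and the normalization $\dashint_{B_1}\mathrm{div}F=3\,\d2\langle F,x\rangle$ (a consequence of $\mathcal{H}^2(\s)=3\omega_3$), one gets $\dashint_{B_1}\mathrm{det}\nabla f=\d2\big\langle f,(\mathrm{cof}\nabla f)x\big\rangle$; and since on $\s$ the vector $(\mathrm{cof}\nabla f)x$ equals $\partial_{\tau_1}f\wedge\partial_{\tau_2}f$ (evaluate the cofactor matrix in the positively oriented adapted frame $\{\tau_1,\tau_2,x\}$ and read off its third column), this is exactly $V_3(f|_{\s})$ in the sense of \eqref{signed_volume_3_dim}. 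Taking $f=u_h$ yields \eqref{determinant_bulk_surface} for smooth $u$. (An equivalent route is Stokes' theorem applied to the $2$-form $f^1df^2\wedge df^3$ and its cyclic permutations.)

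For \eqref{determinant_expansion_around_I} I would first use the purely algebraic characteristic-polynomial expansion valid for any $B\in\R^{3\times3}$, namely $\mathrm{det}(I_3+B)=1+\mathrm{Tr}B+\tfrac12\big((\mathrm{Tr}B)^2-\mathrm{Tr}(B^2)\big)+\mathrm{det}B$, with $B=\nabla w_h$; after integration this gives the first line of \eqref{determinant_expansion_around_I}. The linear term is $\dashint_{B_1}\mathrm{div}w_h=3\,\d2\langle w,x\rangle$ and the cubic term is $\dashint_{B_1}\mathrm{det}\nabla w_h=V_3(w)$ by the first part. For the quadratic term I would use the second null Lagrangian $(\mathrm{div}w_h)^2-\mathrm{Tr}(\nabla w_h)^2=\mathrm{div}\big((\mathrm{div}w_h)w_h-(\nabla w_h)w_h\big)$, which is a direct computation in which the second-order derivatives cancel; the divergence theorem then turns $\tfrac12\dashint_{B_1}\big((\mathrm{div}w_h)^2-\mathrm{Tr}(\nabla w_h)^2\big)$ into $\tfrac32\,\d2\big\langle(\mathrm{div}w_h)w_h-(\nabla w_h)w_h,\,x\big\rangle$ evaluated on $\s$. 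The key point is that, decomposing $\nabla w_h|_\s=\t w+(\partial_{\vec\nu}w_h)\otimes x$ and $\mathrm{div}w_h|_\s=\mathrm{div}_{\s}w+\langle\partial_{\vec\nu}w_h,x\rangle$, the two occurrences of the radial derivative $\partial_{\vec\nu}w_h$ cancel identically, leaving $\big\langle(\mathrm{div}w_h)w_h-(\nabla w_h)w_h,x\big\rangle\big|_\s=\big\langle w,(\mathrm{div}_{\s}w)x-\sum_jx_j\t w^j\big\rangle$, so that $\tfrac12\dashint_{B_1}\big((\mathrm{div}w_h)^2-\mathrm{Tr}(\nabla w_h)^2\big)=\tfrac32\,\d2\big\langle w,(\mathrm{div}_{\s}w)x-\sum_jx_j\t w^j\big\rangle=Q_{V_3}(w)$ as in \eqref{quadratic_form_Q_V_3} (equivalently $\tfrac{3}{2}\d2\langle w,A(w)\rangle$ in the notation of \eqref{Aoperator}). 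Summing the four contributions gives \eqref{determinant_expansion_around_I} for smooth maps.

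Finally, since $u_h$ (and $w_h$) are only known to lie in $W^{1,2}(B_1)$ — being harmonic extensions of $W^{1,2}(\s)$ maps — so that $\mathrm{det}\nabla u_h$ need not be in $L^1(B_1)$ a priori and Stokes' theorem on $B_1$ is not directly available, I would pass to general $u,w\in W^{1,2}(\s;\R^3)$ by density of $C^{\infty}(\s;\R^3)$, using continuity of both sides of \eqref{determinant_bulk_surface} and \eqref{determinant_expansion_around_I} under strong $W^{1,2}(\s)$-convergence: on the surface side this is precisely Wente's inequality \eqref{AM-GM-isoperimetric-inequality} applied to differences, which controls $V_3$ and $Q_{V_3}$; on the bulk side it follows from boundedness of the harmonic extension $W^{1,2}(\s)\mapsto W^{1,2}(B_1)$ together with the compensated-compactness (div--curl) structure of $\mathrm{det}\nabla u_h$, which yields $\|\mathrm{det}\nabla u_h\|_{L^1(B_1)}\lesssim\|\nabla u_h\|_{L^2(B_1)}^3$ and continuity of $u\mapsto\dashint_{B_1}\mathrm{det}\nabla u_h$; alternatively one may apply the smooth identities on $B_{1-\varepsilon}$ and let $\varepsilon\to0^+$, using that $u_h((1-\varepsilon)\,\cdot)\to u$ in $W^{1,2}(\s)$. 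I expect this regularity step to be the only genuinely delicate point, the algebraic identities and the integrations by parts being entirely routine.
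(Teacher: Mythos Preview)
Your proposal is correct and follows the same overall strategy as the paper --- exploit null-Lagrangian divergence structures and the divergence theorem --- but the packaging differs in two places. For \eqref{determinant_bulk_surface} the paper writes $\mathrm{det}\nabla u_h=\langle\nabla u_h^1,\nabla u_h^2\times\nabla u_h^3\rangle$ (and its two cyclic permutations), integrates each by parts using $\mathrm{div}(A\times B)=\langle\mathrm{curl}A,B\rangle-\langle A,\mathrm{curl}B\rangle$ and $\mathrm{curl}\nabla f=0$, and then averages the three resulting boundary integrals; your Piola/cofactor identity $3\,\mathrm{det}\nabla f=\mathrm{div}\big((\mathrm{cof}\nabla f)^t f\big)$ does this in one stroke. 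For the quadratic term in \eqref{determinant_expansion_around_I} the paper computes $\int_{B_1}(\mathrm{div}w_h)^2$ and $\int_{B_1}\mathrm{Tr}(\nabla w_h)^2$ separately by integration by parts and observes that the bulk remainders $\int_{B_1}\langle w_h,\nabla\mathrm{div}w_h\rangle$ cancel upon subtraction, together with a boundary term $\int_{\s}\langle w,x\rangle\langle(\nabla w_h)x,x\rangle$; your route recognizes $(\mathrm{div}w_h)^2-\mathrm{Tr}(\nabla w_h)^2$ as a single divergence from the outset and carries out the cancellation of radial derivatives directly on $\s$. Both variants are standard and yield the same result; yours is slightly more economical. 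Finally, the paper does not spell out the density/regularity step you propose --- it simply relies on the harmonic extension being smooth in the interior of $B_1$ (see the Remark following the lemma) --- so your treatment of the $W^{1,2}$ case is, if anything, more careful than the paper's.
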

\begin{remark}
As the reader might already know from the theory of null-Lagrangians or notice from the next proof, the above formulas actually hold true with $B_1$ being replaced by any other open bounded domain $U\subset \R^3$ with sufficiently regular boundary, and $u_h$ (resp. $w_h$) being replaced by any other interior extension of $u$ (resp. $w$), for which the previous bulk integrals are well defined. Since we only used the expressions for the harmonic extension, which is smooth in the interior of the unit ball, we have preferred to state the previous lemma in this particular form.    
\end{remark}
\begin{proof}[Proof of Lemma \ref{integral_identities}]
\EEE Regarding the proof of the identity \eqref{determinant_bulk_surface}, the determinant of $\nabla u_h:=(\partial_ju_h^i)_{1\leq i,j\leq3}$ can be rewritten as
\begin{equation}\label{different_expressions_for_jacobian_determinant}
\mathrm{det}\nabla u_h=\langle \nabla u_h^1,\nabla u_h^2\times \nabla u_h^3\rangle=\langle \nabla u_h^2,\nabla u_h^3\times \nabla u_h^1\rangle=\langle \nabla u_h^3,\nabla u_h^1\times \nabla u_h^2\rangle\,,
\end{equation}
where $a\times b\in \R^3$ denotes the exterior product of two vectors $a,b\in \R^3$.
Using the first identity in \eqref{different_expressions_for_jacobian_determinant}, and integrating by parts, we have
\begin{align}\label{first_expression_for_integral_of_jacobian}
\begin{split}
\int_{B_1}\mathrm{det}\nabla u_h&= \int_{B_1}\langle \nabla u_h^1,\nabla u_h^2\times \nabla u_h^3\rangle=\int_{B_1}\mathrm{div}\big(u_h^1(\nabla u_h^2\times \nabla u_h^3)\big)-\int_{B_1}u_h^1\mathrm{div}(\nabla u_h^2\times \nabla u_h^3)\\
&=\int_{\s}u_h^1\langle \nabla u_h^2\times \nabla u_h^3, x\rangle-\int_{B_1}u_h^1\big(\langle\mathrm{curl}\nabla u_h^2,\nabla u_h^3\rangle-\langle\nabla u_h^2,\mathrm{curl}\nabla u_h^3\rangle\big)\\
&=\int_{\s}u^1\left\langle \big((\partial_{\tau_1}u^2)\tau_1+(\partial_{\tau_2}u^2)\tau_2+(\partial_{\vec{\nu}}u_h^2)x\big)\times\big( (\partial_{\tau_1}u^3)\tau_1+(\partial_{\tau_2}u^3)\tau_2+(\partial_{\vec{\nu}}u_h^3)x\big), x\right\rangle\\
&=\int_{\s}u^1\big(\partial_{\tau_1}u^2\partial_{\tau_2}u^3-\partial_{\tau_2}u^2\partial_{\tau_1}u^3\big)\,.
\end{split}
\end{align}
Here, we have used the vector calculus identities
\begin{equation*}%\label{vector calculus identities}
\mathrm{div}(A\times B)=\langle \mathrm{curl}A,B\rangle-\langle A,\mathrm{curl}B\rangle, \quad \mathrm{curl}\nabla f=0  \quad \forall A,B\in C^2(\R^3;\R^3)\,,\ f\in C^2(\R^3;\R)\,.
\end{equation*} 
and we have written the full gradients $\nabla u_h^2$ and $\nabla u_h^3$ on $\s$ in terms of the local orthonormal oriented frame $\{\tau_1,\tau_2,x\}$ for $\s$, which is such that $\tau_1\times\tau_2=x, \tau_2\times x=\tau_1, x\times\tau_1=\tau_2$. Using the other two expressions from \eqref{different_expressions_for_jacobian_determinant} and arguing in the same manner, we also obtain
\begin{equation}\label{second_expression_for_integral_of_jacobian}
\int_{B_1}\mathrm{det}\nabla u_h=-\int_{\s}u^2\big(\partial_{\tau_1}u^1\partial_{\tau_2}u^3-\partial_{\tau_2}u^1\partial_{\tau_1}u^3\big)\,,
\end{equation}
and 
\begin{equation}\label{third_expression_for_integral_of_jacobian}
\int_{B_1}\mathrm{det}\nabla u_h=\int_{\s}u^3\big(\partial_{\tau_1}u^1\partial_{\tau_2}u^2-\partial_{\tau_2}u^1\partial_{\tau_1}u^2\big)\,.
\end{equation}
Therefore, summing \eqref{first_expression_for_integral_of_jacobian}-\eqref{third_expression_for_integral_of_jacobian}, and recalling the notation \eqref{determinant_of_signed_volume} (for $n=3$), we arrive at \eqref{determinant_bulk_surface}. 
Regarding \eqref{determinant_expansion_around_I}, the first equality is immmediate from the expansion of the determinant around the identity matrix $I_3$. The expression in the second line of \eqref{determinant_expansion_around_I} follows from the fact that the resulting terms can be written as boundary integrals in the following fashion.
Using again Stokes' theorem, we can calculate
\begin{align}\label{linear_term_V3}
\dashint_{B_1}\mathrm{div}w_h=3\d2 \langle w,x\rangle\,,
\end{align}
\begin{align*}
\int_{B_1}(\mathrm{div}w_h)^2&=\int_{B_1}\mathrm{div}\big((\mathrm{div}w_h) w_h\big)-\int_{B_1}\big\langle w_h,\nabla\mathrm{div}w_h\big\rangle=\int_{\s}\big\langle w,(\mathrm{div}w_h)x\big\rangle-\int_{B_1}\big\langle w_h,\nabla\mathrm{div}w_h\big\rangle\\
&=\int_{\s}\big\langle w,(\mathrm{div}_{\s}w)x\big\rangle+\int_{\s}\big\langle w,x\big\rangle\big\langle(\nabla w_h)x,x\big\rangle-\int_{B_1}\big\langle w_h,\nabla\mathrm{div}w_h\big\rangle\,,
\end{align*}
and
\begin{align*}
\int_{B_1}\mathrm{Tr}(\nabla w_h)^2&=\sum_{i,j=1}^3\int_{B_1}\partial_jw_h^i\partial_iw_h^j=\sum_{j=1}^3\int_{B_1}\partial_j\big(\langle w_h,\nabla w_h^j\rangle\big)-\sum_{i=1}^3\int_{B_1}w_h^i\partial_i(\mathrm{div}w_h)\\
&=\sum_{j=1}^3\int_{\s}\langle w,x_j\nabla w_h^j\rangle-\int_{B_1}\langle w_h,\nabla \mathrm{div}w_h\rangle\\
&=\int_{\s}\Big\langle w,\sum_{j=1}^3x_j\nabla_Tw^j\Big\rangle+\int_{\s}\langle w,x\rangle\langle(\nabla w_h)x,x\rangle-\int_{B_1}\langle w_h,\nabla \mathrm{div}w_h\rangle\,.
\end{align*}
Subtracting the last two identities we arrive at
\begin{equation}\label{quadratic_term_V3}
\frac{1}{2}\dashint_{B_1}\big((\mathrm{div}w_h)^2-\mathrm{Tr}(\nabla w_h)^2\big)=\frac{3}{2}\d2 \Big\langle w,(\mathrm{div}_{\s}w)x-\sum_{j=1}^3x_j\nabla_Tw^j\Big\rangle=: Q_{V_3}(w)\,.
\end{equation}
%Of course this calculation can be carried out in every dimension. A more intrinsic calculation for this quadratic form without passing through the expression as a null-Lagrangian, is included in \hyperref[sec:C]{Appendix C}.
Hence, the second equality in \eqref{determinant_expansion_around_I} follows from \eqref{linear_term_V3}, \eqref{quadratic_term_V3} and \eqref{determinant_bulk_surface} for $w$ in the place of $u$ here.% The higher order term in the expansion is of course
%$\label{higher_order_term_V3}
%\dashint_{B_1}\mathrm{det}\nabla w_h= \d2 \big\langle %w,\partial_{\tau_1}w\wedge\partial_{\tau_2}w\big\rangle
%$
%and in this way we obtain the full expansion which we had mentioned and used also in \eqref{expansion_of_volume}.
\end{proof}
\EEE
Next, we calculate in detail the Taylor expansions up to second order of the geometric quantities that we used in the main body of the paper. The computations presented in the following lemma are formal, and we assume without further clarification that the maps in consideration are always regular enough so that we can perform the expansions. In the case $n=3$, we had directly performed the expansion of the $2$-Dirichlet energy of $u$ around the $\mathrm{id}_{\S}$ in the proof of Lemma \ref{quadratization_of_conformal_deficit}, and the one for $V_3(u)$ was performed in the previous Lemma \ref{integral_identities}. Thus, the focus in the next lemma is mostly on the case $n\geq 4$. \AAA Moreover, notice that in Subsection \ref{Subsection 4.2} we had already translated and scaled the initial map $u$ properly \AAA(recall \eqref{local_family_of_maps_sufficient_for_conformal_case} and \eqref{fixing_scale}), which we will also assume for convenience next.\EEE%. Thus, the last term in $Q_{[D_{n-1}]^{\frac{n}{n-1}}}(w)$ was dropping out and also the structure of the higher order terms is simplifying.

\AAA
\begin{lemma}\label{expansions_n}
Let $n\geq 3$, $u:\S\mapsto \R^n$ $\rm{(}$be sufficiently regular$\rm{)}$ and as always, $w:=u-\mathrm{id}_{\S}$. Assuming that $\ds w=0, \ds \langle w,x\rangle =0$, and recalling the notation introduced in  \eqref{AM-GM-isoperimetric-inequality}, we can formally write 
\begin{align}\label{formulas_for_the_expansions}
\begin{split}
(i)&\quad [D_{n-1}(u)]^{\frac{n}{n-1}}:=1%+n\ds\langle w,x\rangle
+ Q_{[D_{n-1}]^{\frac{n}{n-1}}}(w)+R_{1,n}(w)\,,\\ 
(ii)&\quad [P_{n-1}(u)]^{\frac{n}{n-1}}:=1%+n\ds\langle w,x\rangle
+ Q_{[P_{n-1}]^{\frac{n}{n-1}}}(w)+R_{2,n}(w)\,, \\
(iii)&\quad\quad \quad \quad V_n(u):=1%+n\ds\langle w,x\rangle
+ Q_{V_n}(w)+R_{3,n}(w)\,, 
\end{split}
\end{align}
where the corresponding quadratic forms are given by the expressions
\begin{align}\label{formulas_for_the_quadratic_forms}
\begin{split}
(i)&\quad Q_{[D_{n-1}]^{\frac{n}{n-1}}}(w):=\frac{1}{2}\frac{n}{n-1}\ds\Big(| \t w|^2+\frac{n-3}{n-1}(\mathrm{div}_{\S}w)^2\Big)\,,\\ 
(ii)&\quad Q_{[P_{n-1}]^{\frac{n}{n-1}}}(w):=\frac{1}{2}\frac{n}{n-1}\ds\Big(|\t w|^2+(\mathrm{div}_{\S}w)^2-2\left|(P_T^t\nabla_Tw)_{\mathrm{sym}}\right|^2\Big)\,, \\
(iii)&\quad \quad \quad \ \  Q_{V_n}(w):= \frac{n}{2}\ds\big\langle w,(\textup{div}_{\S}w)x-\sum_{j=1}^nx_j\nabla_Tw^j\big\rangle\,,
\end{split}
\end{align}
and the remainder terms $(R_{i,n}(w))_{i=1,2,3}$ are of higher order.

%and the remainder term is of growth
%		\begin{align}\label{growth of R_D_n}
%		&\left|\ds R_{D_n}(\t w)\ \dh\right|= \ds\mathcal{O}(|\t w|^3)+\mathcal{O}\left(\left(\ds|\t w|^2\right)^2\right) \nonumber \\
%		\implies&\left|\ds R_{D_n}(\t w)\ \dh\right|\leq C_1\|\t w\|_{L^{\infty}(\S)}\ds|\t w|^2\ \dh.
%		\end{align}
%	The remainder term has the algebraic structure
%	\begin{equation*}
%	\ds R_{V_n}(w,\t w)\ \dh =\sum_{k=3}^n \ds R_{V_n,k}(w,\t w)\ \dh,\\[3pt] 
%	\end{equation*}
%	where for every $k= 3,...,n$, the $k$-th summand is of the form
%	\begin{equation*}
%	\ds R_{V_n,k}(w,\t w)\ \dh=\ds \Big\langle w,A_{n,k}(w)\Big\rangle\ \dh,
%	\end{equation*}
%	$A_{n,k}$ being a nonlinear first order differential operator that is a ``homogeneous polynomial" of order $k-1$ in the first derivatives of $w$. Regarding its growth behaviour, one always has 
%	\begin{equation}\label{growth_of_remainder_of_V_n}
%	\ds|R_{V_n}(w,\t w)|\ \dh\leq C_2\left(\ds|\t w|^2\ \dh\right)^{\frac{n}{n-1}}.\\[10pt]
%	\end{equation}	
\end{lemma}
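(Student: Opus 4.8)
\textbf{Proof strategy for Lemma \ref{expansions_n}.}

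The plan is to compute the expansions of each of the three geometric quantities $[D_{n-1}(u)]^{\frac{n}{n-1}}$, $[P_{n-1}(u)]^{\frac{n}{n-1}}$ and $V_n(u)$ around the $\mathrm{id}_{\S}$ by writing $u = \mathrm{id}_{\S}+w$ and systematically retaining terms up to second order in $w$. The three computations share a common template: first expand the underlying zeroth-order integral quantity ($D_{n-1}$, $P_{n-1}$, or $V_n$ itself), then compose with the outer power $(\cdot)^{\frac{n}{n-1}}$ where applicable, and finally simplify the quadratic part using the hypotheses $\ds w = 0$ and $\ds \langle w,x\rangle = 0$, which kill the linear terms (recall from \eqref{linear_term_in_the_expansions_is_vanishing} that $\ds\mathrm{div}_{\S}w = (n-1)\ds\langle w,x\rangle = 0$).

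First I would handle $(iii)$, the signed volume $V_n$, since it is the cleanest and feeds into the other two. Using the bulk representation $V_n(u) = \dashint_{B_1}\det\nabla u_h$ (the $n$-dimensional analogue of \eqref{determinant_bulk_surface}, which follows from the same Stokes' theorem / null-Lagrangian argument, or alternatively the intrinsic calculation referenced in the paper), I expand $\det(I_n + \nabla w_h)$ via the standard polynomial identity: the $0$-th order term is $1$, the first-order term is $\dashint_{B_1}\mathrm{div}\,w_h = n\ds\langle w,x\rangle = 0$, the second-order term is $\tfrac12\dashint_{B_1}\big((\mathrm{div}\,w_h)^2 - \mathrm{Tr}(\nabla w_h)^2\big)$, and the remaining terms (cubic and higher in $\nabla w_h$) collect into $R_{3,n}(w)$. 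Converting the second-order bulk integral to a surface integral by integration by parts — exactly as in \eqref{quadratic_term_V3} but now in $\R^n$, where the identical argument gives $\tfrac12\dashint_{B_1}\big((\mathrm{div}\,w_h)^2 - \mathrm{Tr}(\nabla w_h)^2\big) = \tfrac{n}{2}\ds\big\langle w, (\mathrm{div}_{\S}w)x - \sum_{j=1}^n x_j\nabla_Tw^j\big\rangle$ — yields precisely $Q_{V_n}(w)$ as in \eqref{formulas_for_the_quadratic_forms}$(iii)$, consistent with \eqref{quadratic_form_of_volume}--\eqref{Aoperator}.

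For $(i)$, I write $D_{n-1}(u) = \tfrac{1}{n-1}\ds\big(\tfrac12|\t u|^2\big)^{\frac{n-1}{2}}$... more carefully, $D_{n-1}(u) = \ds\big(\tfrac{|\t u|^2}{n-1}\big)^{\frac{n-1}{2}}\cdot(\text{normalization})$; expanding $|\t u|^2 = |P_T + \t w|^2 = (n-1) + 2\,\t w:P_T + |\t w|^2$ and noting $\ds \t w:P_T = \ds\mathrm{div}_{\S}w = 0$, a Taylor expansion of $t\mapsto (1+t)^{\frac{n-1}{2}}$ followed by the outer $(1+\cdot)^{\frac{n}{n-1}}$ (using $\tfrac{d}{dt}\big|_0(1+t)^{\frac{n}{n-1}} = \tfrac{n}{n-1}$ and the second-derivative contribution) gives the quadratic form $\tfrac12\tfrac{n}{n-1}\ds\big(|\t w|^2 + \tfrac{n-3}{n-1}(\mathrm{div}_{\S}w)^2\big)$ after collecting the $(\mathrm{div}_{\S}w)^2$ terms coming from squaring the first-order Jensen correction. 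Here the key algebraic point is that the mean-square of the linear term $\ds(\t w:P_T)^2 = \ds(\mathrm{div}_{\S}w)^2$ survives even though its mean vanishes, and tracking the exact rational coefficient $\tfrac{n-3}{n-1}$ is the one step requiring care. For $(ii)$ I proceed analogously with $P_{n-1}(u) = \ds\sqrt{\det(\t u^t\t u)}$: expanding $\t u^t\t u = I_x + 2(P_T^t\t w)_{\mathrm{sym}} + \t w^t\t w$, using $\log\det(I+X) = \mathrm{Tr}\,X - \tfrac12\mathrm{Tr}(X^2) + O(|X|^3)$ to second order, and keeping track that $\mathrm{Tr}(P_T^t\t w)_{\mathrm{sym}} = \mathrm{div}_{\S}w$ and $\mathrm{Tr}\big((P_T^t\t w)_{\mathrm{sym}}^2\big) = |(P_T^t\t w)_{\mathrm{sym}}|^2$; the outer power then produces \eqref{formulas_for_the_quadratic_forms}$(ii)$. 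The main obstacle is purely bookkeeping: correctly isolating which cross-terms between the (mean-zero but not pointwise-zero) linear contributions and the genuine quadratic contributions land in the quadratic form versus the remainder, and verifying the rational coefficients match — no conceptual difficulty arises once the bulk-to-boundary integration by parts from Lemma \ref{integral_identities} is invoked in general dimension. The consistency check $Q_{[D_{n-1}]^{\frac{n}{n-1}}} - Q_{[P_{n-1}]^{\frac{n}{n-1}}} = Q_{n,\mathrm{conf}}$ and $Q_{[P_{n-1}]^{\frac{n}{n-1}}} - Q_{V_n} = Q_{n,\mathrm{isop}}$ (hence their sum is $Q_n$ of \eqref{Q_n_definition}) provides a useful internal verification at the end.
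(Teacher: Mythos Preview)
Your proposal is correct and for parts $(i)$ and $(ii)$ follows essentially the same route as the paper (the paper expands $\det(I_x+K(w))$ directly rather than via $\log\det$, but this is cosmetic). The one genuine difference is in part $(iii)$: you propose the \emph{extrinsic} route, extending $w$ harmonically to $B_1$, using $V_n(u)=\dashint_{B_1}\det\nabla u_h$, expanding $\det(I_n+\nabla w_h)$, and converting the quadratic bulk term $\tfrac12\dashint_{B_1}\big((\mathrm{div}\,w_h)^2-\mathrm{Tr}(\nabla w_h)^2\big)$ to the surface expression via the $n$-dimensional analogue of \eqref{quadratic_term_V3}. The paper instead computes $V_n(u)$ \emph{intrinsically} on $\S$, expanding the multilinear form $\big\langle w+x,\bigwedge_{i}(\partial_{\tau_i}w+\tau_i)\big\rangle$ degree by degree and evaluating the resulting wedge products in the local frame $\{\tau_1,\dots,\tau_{n-1},x\}$; this splits the quadratic contribution as $I_{n,2,1}+I_{n,2,2}$ and recovers $Q_{V_n}$ after an integration by parts on $\S$. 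Your approach is shorter and reuses Lemma~\ref{integral_identities} verbatim, at the cost of invoking the bulk identity $V_n(u)=\dashint_{B_1}\det\nabla u_h$ in general dimension (a standard null-Lagrangian fact, but not proved in the paper for $n\geq 4$). The paper's intrinsic computation is more self-contained and, as a byproduct, exhibits the explicit polynomial structure \eqref{algebraic_structure_of_V_n_remainder} of the remainder $R_{3,n}(w)$, which is what is actually needed later in Remark~\ref{higher_order_terms_negligible} to justify the passage from the linear to the nonlinear estimate in Corollary~\ref{fake_nonlinear_estimate_higher_dim}.
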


\begin{proof}
\EEE Regarding the expansion of the $(n-1)$-Dirichlet energy, in the case $n=3$ the calculation was performed in the proof of Lemma \ref{quadratization_of_conformal_deficit} (see \eqref{D_3_expansion}). For $n\geq 4$, by using the fact that
\begin{equation}\label{average_div_0}
\ds \mathrm{div}_{\S}w=(n-1)\ds \langle w,x\rangle =0\,,
\end{equation}
we can formally calculate \EEE	
\begin{flalign}\label{Qc}
\begin{split}
[D_{n-1}(u)]^{\frac{n}{n-1}}=&\ \left(\ds \left(\frac{|\t u|^2}{n-1}\right)^\frac{n-1}{2}\right)^\frac{n}{n-1}=\left(\ds \left(1+\frac{2}{n-1}\mathrm{div}_{\S}w+\frac{|\t w|^2}{n-1}\right)^\frac{n-1}{2}\right)^\frac{n}{n-1} \\
=&\left(\ds \left(1+\mathrm{div}_{\S}w+\frac{1}{2}|\t w|^2+\frac{1}{2}\frac{n-3}{n-1}(\mathrm{div}_{\S}w)^2+\mathcal{O}\big(|\t w |^3\big)\right)\right)^\frac{n}{n-1} \\
=&\left[1+\ds\left(\frac{1}{2}|\t w|^2+\frac{1}{2}\frac{n-3}{n-1}(\mathrm{div}_{\S}w)^2\right)+\ds\mathcal{O}\big(|\t w |^3\big)\right]^{\frac{n}{n-1}} \\
=&\ 1+\frac{n}{2(n-1)}\ds |\t w|^2+\frac{n(n-3)}{2(n-1)^2}\ds(\mathrm{div}_{\S}w)^2  \\
&\ \ +\ds\mathcal{O}\big(|\t w|^3\big)+\mathcal{O}\Bigg(\left(\ds|\t w|^2\right)^2\Bigg)\,.
\end{split}
\end{flalign}
Therefore, \AAA \eqref{Qc} gives the expansion \eqref{formulas_for_the_expansions}$(i)$, with the formula \eqref{formulas_for_the_quadratic_forms}$(i)$ for the quadratic term and the growth behaviour for the higher order term $R_{1,n}(w)$. \EEE

For the expansion of the generalized perimeter-term around the $\mathrm{id}_{\S}$, for every $n\geq 3$ we have, 
\begin{align}\label{perimeter_expansion_1}
\ds \sqrt{\mathrm{det}(\t u^t\t u)}&=\ds\sqrt{\mathrm{det}(I_x+K(w))}\,,
\end{align}
where 
\begin{equation}\label{symmetrized_nonlinear_part}
K(w):=2(P_T^t\nabla_Tw)_{\mathrm{sym}}+\nabla_Tw^t\nabla_Tw\,.
\end{equation}
The Taylor expansion of the determinant around $I_x$ gives
\begin{equation}\label{Taylor_of_determinant}
\mathrm{det}(I_x+K(w))=1+\mathrm{Tr}K(w)+\frac{1}{2}\Big((\mathrm{Tr}K(w))^2-\mathrm{Tr}(K(w)^2)\Big)+\mathcal{O}(|K(w)|^3)\,,
\end{equation}
and since in our case,
\begin{equation}\label{trace_identity}
\mathrm{Tr}K(w)=2\mathrm{div}_{\S}w+|\nabla_Tw|^2, (\mathrm{Tr}K(w))^2-\mathrm{Tr}(K(w)^2)=4\big((\mathrm{div}_{\S}w)^2-|(P_T^t\nabla_Tw)_{\mathrm{sym}}|^2\big)+\mathcal{O}(|\nabla_Tw|^3)\,,
\end{equation}
\AAA by the identities \eqref{perimeter_expansion_1}-\eqref{trace_identity}, \EEE we obtain the formal expansion 
\begin{equation}\label{perimeter_expansion_2}
\ds \sqrt{\mathrm{det}(\t u^t\t u)}=\ds\sqrt{1+\Theta(w)+\mathcal{O}(|\nabla_Tw|^3)}\,,
\end{equation}
where
\begin{equation}\label{Theta_equation}
\Theta(w):=2\mathrm{div}_{\S}w+|\nabla_Tw|^2+2(\mathrm{div}_{\S}w)^2-2\left|(P_T^t\nabla_Tw)_{\mathrm{sym}}\right|^2\,.
\end{equation}
Since $(\Theta(w))^2=4(\mathrm{div}_{\S}w)^2+\mathcal{O}(|\t w|^3)$, we can perform a Taylor expansion of the square root inside the integral \AAA in \eqref{perimeter_expansion_2} and use \eqref{average_div_0}, \eqref{Theta_equation}, \EEE to get
\begin{equation*}%\label{perimeter_expansion_3}
\ds \sqrt{\mathrm{det}(\t u^t\t u)}
=1+\frac{1}{2}\ds \Big(|\t w|^2+(\mathrm{div}_{\S}w)^2-2\left|(P_T^t\nabla_Tw)_{\mathrm{sym}}\right|^2\big)+\ds\mathcal{O}\left(|\t w|^3\right)\,.
\end{equation*}
A final Taylor expansion of the function $t\mapsto (1+t)^{\frac{n}{n-1}}$ around 0 gives,
\begin{equation}\label{Qp}
[P_{n-1}(u)]^{\frac{n}{n-1}}=1+\frac{1}{2}\frac{n}{n-1}\ds\left(|\t w|^2+(\mathrm{div}_{\S}w)^2-2\left|(P_T^t\nabla_Tw)_{\mathrm{sym}} \right|^2\right) +\ds\mathcal{O}\left(|\t w|^3\right)\,.
%\end{split}
\end{equation}
Therefore, \AAA \eqref{Qp} gives the expansion \eqref{formulas_for_the_expansions}$(ii)$, with the formula \eqref{formulas_for_the_quadratic_forms}$(ii)$ for the corresponding quadratic term and the growth behaviour for the higher order term $R_{2,n}(w)$. 

The expansion of the generalized signed-volume $V_n(u)$ in \eqref{definition_of_signed_volume},
\eqref{determinant_of_signed_volume} around the  $\mathrm{id}_{\S}$, \AAA for $n=3$ was given in Lemma \ref{integral_identities}. An \textit{intrinsic way} to perform the calculation in every dimension $n\geq 3$ is the following.  
\begin{align}\label{V_n_first}
\begin{split}
V_n(u):&= \ds \Big\langle u, \bigwedge_{i=1}^{n-1}\partial_{\tau_i}u\Big\rangle=\ \ds \Big\langle w+x, \bigwedge_{i=1}^{n-1}\left(\partial_{\tau_i}w+\partial_{\tau_i}x\right)\Big\rangle\\
&= \ds\sum_{k=0}^{n-1}\sum_{|\alpha|=k}\sigma(\alpha,\bar\alpha)\ \Big\langle w+x, \Big(\bigwedge_{\alpha}\partial_{\tau_{\alpha}}
w\Big)\wedge\ \big(\bigwedge_{\bar\alpha}\partial_{\tau_{\bar\alpha}}x\big)\Big\rangle \\
&= I_{n,0}(w)+I_{n,1}(w)+I_{n,2}(w)+I_{n,3}(w)\,. 
\end{split}
\end{align}

Here, we have used standard multiindex notation. For every $k\in \{0,1,\dots,n-1\}$ and for every multiindex $\alpha:=(\alpha_1,\dots,\alpha_k)$, where $(a_i)_{i=1,\dots,k}\in \mathbb{N}$ with $1\leq\alpha_1<\dots<\alpha_k\leq n-1$ we denote $\bar\alpha$ its complementary multiindex (with its entries also in increasing order), $\sigma(\alpha,\bar\alpha)$ the sign of the permutation that maps $(\alpha,\bar\alpha)$ to the standard ordering $(1,\dots,n)$ and $$\partial_{\tau_{\alpha}}w:=\partial_{\tau_{\alpha_1}}w\wedge\dots\wedge\partial_{\tau_{\alpha_k}}w\,.$$ We have also denoted by $(I_{n,i}(w))_{i=0,1,2}$ the zeroth, first and second order terms with respect to $w$ and $\t w$ in the expansion of $V_n(u)$ around the $\mathrm{id}_{\S}$ respectively, and by $I_{n,3}(w)$ the remaining term which is a \textit{polynomial} of order at least $3$ and at most $n$ in $w$ and its first derivatives.\
Keeping in mind that $\partial_{\tau_i}x=\tau_i$ for $i=1,\dots,n-1$ and that by an abuse of notation, $\tau_1\wedge\dots\wedge\tau_{n-1}\equiv x$, we can compute each term separately.
\begin{align}\label{I_0_I_1}
I_{n,0}(w)&=\ \ds \langle x, \partial_{\tau_1}x\wedge\dots\wedge\partial_{\tau_{n-1}}x\rangle=\ds |x|^2=1\,.\\
I_{n,1}(w) &=\ \ds \langle w,x\rangle+\sum_{i=1}^{n-1}\ds \Big\langle x, \Big(\bigwedge_{l=1}^{i-1}\tau_l\Big)\wedge\partial_{\tau_i}w\wedge\Big(\bigwedge_{m=i+1}^{n-1}\tau_m\Big)\Big\rangle\nonumber\\
&=\ds \langle w,x\rangle\ +\ds \sum_{i=1}^{n-1}\langle\partial_{\tau_i}w,\tau_i\rangle =\ds \langle w,x\rangle+\ds \mathrm{div}_{\S}w=0\,, 
\end{align}
the last equality following from \eqref{average_div_0}.
For the quadratic term, we can write it as 
\begin{equation}\label{quadratic_term_split}
I_{n.2}(w):=I_{n,2,1}(w)+I_{n,2,2}(w)\,, \quad \rm{where}
\end{equation}
\begin{align}\label{I_n_2_1}
\hspace{-1.5em}I_{n,2,1}(w)&= \sum_{i=1}^{n-1}\ds \Big\langle w, \Big(\bigwedge_{l=1}^{i-1}\partial_{\tau_l}x\Big)\wedge\partial_{\tau_i}w\wedge\Big(\bigwedge_{m=i+1}^{n-1}\partial_{\tau_m}x\Big)\Big\rangle\nonumber\\
\ &=\ \sum_{i=1}^{n-1}\ds \Big\langle w, \Big(\bigwedge_{l=1}^{i-1}\tau_l\Big)\wedge\Big(\sum_{j=1}^{n-1}\langle\partial_{\tau_i}w,\tau_j\rangle\tau_j+\langle\partial_{\tau_i}w,x\rangle x\Big)\wedge\Big(\bigwedge_{m=i+1}^{n-1}\tau_m\Big)\Big\rangle \nonumber\\
\ &=\ \ds \Big(\mathrm{div}_{\S}w\langle w,x\rangle- \sum_{i=1}^{n-1}\langle w,\tau_i \rangle\langle\partial_{\tau_i}w,x\rangle\Big) =\ \ds \Big\langle w, (\mathrm{div}_{\S}w)x-\sum_{j=1}^n x_j\t w^j\Big\rangle\,.
\end{align}
The change of sign in the last line of \eqref{I_n_2_1} is due to orientation reasons, since we have taken the local orthonormal basis $\{ \tau_1,\dots,\tau_{n-1}\}$ of $T_x\S$ in such a way that at every $x\in \S$ the set of vectors $\{ \tau_1(x),\dots,\tau_{n-1}(x),x\}$ is a positively oriented frame of $\R^n$. Similarly, 
\begin{align}\label{I_n_2_2}
I_{n,2,2}(w)&=\ \ds \sum_{1\leq i<j\leq n-1}\Big\langle x, \Big(\bigwedge_{k=1}^{i-1}\partial_{\tau_k}x\Big)\wedge\partial_{\tau_i}w\wedge\Big(\bigwedge_{l=i+1}^{j-1}\partial_{\tau_l}x\Big)\wedge\partial_{\tau_j}w\wedge\Big(\bigwedge_{m=j+1}^{n-1}\partial_{\tau_m}x \Big) \Big\rangle\nonumber \\
&=\ \frac{1}{2}\ \ds \sum_{1\leq i,j\leq n-1}\left(\langle \partial_{\tau_i}w,\tau_i\rangle\langle \partial_{\tau_j}w,\tau_j\rangle-\langle\partial_{\tau_i}w,\tau_j\rangle\langle\partial_{\tau_j}w,\tau_i\rangle\right)\,.
\end{align}
After integrating by parts it is easy to see that the first term in the last line of \eqref{I_n_2_2} is
\begin{equation}\label{first_subsummand}
\ds \sum_{1\leq i,j\leq n-1}\langle \partial_{\tau_i}w,\tau_i\rangle\langle \partial_{\tau_j}w,\tau_j\rangle=\ \ds\langle w,(n-1)(\mathrm{div}_{\S}w)x-\t \mathrm{div}_{\S}w\rangle\,,
\end{equation}
while the second one therein is
\begin{equation}\label{second_subsummand}
\ds \sum_{1\leq i,j\leq n-1}\langle \partial_{\tau_i}w,\tau_j\rangle\langle \partial_{\tau_j}w,\tau_i\rangle \ = \ \ds \langle w,(\mathrm{div}_{\S}w)x-\t\mathrm{div}_{\S}w + (n-2)\sum_{j=1}^n x_j\t w^j\rangle\,.
\end{equation}
Subtracting \eqref{first_subsummand} and \eqref{second_subsummand} by parts, we infer  that \eqref{I_n_2_2} implies 
\begin{equation}\label{quadratic_volume_second}
I_{n,2,2}(w)=\ \left(\frac{n}{2}-1\right) \ds \big\langle w, \ (\mathrm{div}_{\S}w)x-\sum_{j=1}^nx_j\t w^j\big\rangle\,,
\end{equation}
and therefore, \eqref{I_0_I_1}-\eqref{quadratic_volume_second} give the desired expansion \eqref{formulas_for_the_expansions}$(iii)$, with the formula \eqref{formulas_for_the_quadratic_forms}$(iii)$ for the corresponding quadratic term $Q_{V_n}$. Regarding the remainder term $R_{3,n}$ it is easy to see from \eqref{V_n_first} that it has the algebraic structure
\begin{equation}\label{algebraic_structure_of_V_n_remainder}
R_{3,n}(w) =\sum_{k=2}^{n-1} \ds \big\langle w,A_{n,k}(w)\big\rangle 
\end{equation}
where for each $k=2,\dots,n-1$, $A_{n,k}$ is a nonlinear first order differential operator that is a \textit{homogeneous polynomial} of order $k$ in the first derivatives of $w$. \qedhere
\end{proof}

\begin{remark}\label{higher_order_terms_negligible}
From the growth behaviour of the higher order terms $(R_{i,n})_{i=1,2,3}$ in \eqref{formulas_for_the_expansions}, as these can be derived from \eqref{Qc}, \eqref{Qp} and \eqref{algebraic_structure_of_V_n_remainder}, it is immediate to deduce the following simple fact. Even in the case $n\geq 4$, if $u\in W^{1,\infty}(\S;\R^n)$ is such that $\|\t u-P_T\|_{L^{\infty}(\S)}\leq \theta$ for some $\theta\in (0,1)$ sufficiently small (as in the setting of Corollary \ref{fake_nonlinear_estimate_higher_dim}, see also \eqref{expansion_conf_higher_dim}), and moreover $\ds u=0,\ \ds\langle u,x\rangle=1$, then for $w:=u-\mathrm{id}_{\S}$, one has
\begin{equation*}
|R_{1,n}(w)|+|R_{2,n}(w)|+|R_{3,n}(w)|\leq c_\theta\ds |\t w|^2\,,
\end{equation*}
for some constant $c_\theta\in (0,1)$, such that $c_\theta\to 0$ as $\theta\to0$.
\end{remark}
\EEE
%Regarding its growth behaviour, one always has 
%\begin{equation}\label{growth_of_remainder_of_V_n}
%\ds|R_{V_n}(w,\t w)|\ \dh\leq C_2\left(\ds|\t w|^2\ \dh\right)^{\frac{n}{n-1}}.\\[10pt]
%\end{equation}

Let us conclude this appendix by giving a proof of Korn's identity on $\S$. 

\begin{proof}[\bf{Proof of Lemma \ref{Korn's identity_sphere}}] We have
\begin{equation*}
\ds\left|(P_T^t\t w)_{\mathrm{sym}}\right|^2=\frac{1}{2}\ds |P_T^t\t w|^2+\frac{1}{2}\ds\mathrm{Tr}\big((P_T^t\t w)^2\big)\,,
\end{equation*}
and \AAA recalling \eqref{I_n_2_2}, \EEE
\begin{equation*}
\ds \mathrm{Tr}\big((P_T^t\t w)^2\big)
=\ds \sum_{i,j=1}^{n-1}\langle\partial_{\tau_i}w,\tau_j\rangle\langle \partial_{\tau_j} w,\tau_i\rangle= \ds (\mathrm{div}_{\S} w)^2-2I_{n,2,2}(w)\,.%\\[3pt]
%\ & = \ds (\mathrm{div}_{\S} w)^2\ \dh-\frac{2(n-2)}{n}Q_{V_n}(w),
\end{equation*}
%where 
%\begin{equation*}
%I(w):=\frac{1}{2}\ \ds \sum_{1\leq i,j\leq n-1}\left(\langle \partial_{\tau_i}w,\tau_i\rangle\langle \partial_{\tau_j}w,\tau_j\rangle-\langle\partial_{\tau_i}w,\tau_j\rangle\langle\partial_{\tau_j}w,\tau_i\rangle\right).
%\end{equation*}
%After integrating by parts, it is easy to see that the first term is
%\begin{equation*}
%\ds \sum_{1\leq i,j\leq n-1}\big\langle \partial_{\tau_i}w,\tau_i\big\rangle\big\langle \partial_{\tau_j}w,\tau_j\big\rangle=\ \ds\big\langle w,(n-1)(\mathrm{div}_{\S}w)x-\t \mathrm{div}_{\S}w\big\rangle,
%\end{equation*}
%while
%\begin{equation*}
%\ds \sum_{1\leq i,j\leq n-1}\big\langle \partial_{\tau_i}w,\tau_j\big\rangle\big\langle \partial_{\tau_j}w,\tau_i\big\rangle =  \ds \big\langle w,(\mathrm{div}_{\S}w)x-\t\mathrm{div}_{\S}w + (n-2)\sum_{j=1}^n x_j\t w^j\big\rangle.
%\end{equation*}
\AAA By the definition of $Q_{V_n}(w)$ in \eqref{formulas_for_the_quadratic_forms}($iii$) and \eqref{quadratic_volume_second} we have that $
I_{n,2,2}(w)=\ \frac{n-2}{n}Q_{V_n}(w)$, \EEE and Korn's identity \eqref{Korn's identity_sphere} follows immediately.
\end{proof}

\AAA \section{Spherical Harmonics}\label{sec:C} \EEE
We first recall that in local coordinates, the spherical Laplace-Beltrami operator $-\Delta_{\S}$ is given for every $f\in C^2(\S;\R^n)$ through the expression
\begin{equation}\label{laplace_Beltrami_definition}
-\Delta_{\S}f:=-\frac{1}{\sqrt{\mathrm{det}(g)}}\sum_{i,j=1}^{n-1}\frac{\partial}{\partial \tau_i}\Big(\sqrt{|g|}g^{ij}\frac{\partial f}{\partial \tau_j}\Big)\,.
\end{equation}
Here, $g:=(g_{ij})_{i,j=1,\dots,n-1}$ denotes the standard round metric on $\S$, $(g^{ij})_{i,j=1,\dots,n-1}$ its inverse, and $\{\tau_1,\dots,\tau_{n-1}\}$ is the local orthonormal frame on $\S$ as introduced in Section \ref{Notation}. It is well known that $L^2(\S;\R^n)$ admits an orthonormal basis consisting of eigenfunctions of $-\Delta_{\S}$. In particular, for every $k \in \mathbb{N}$  there exists a finite number (denoted by $G_{n,k}$) of \textit{mutually $L^2$- $\mathrm{(}$and actually $W^{1,2}$-$\mathrm{)}$ orthogonal functions $(\psi_{n,k,j})_{j=1,\dots,G_{n,k}}\subset W^{1,2}(\S;\R^n)$}, which are called the \textit{vector valued $k$-th order spherical harmonics}, are restrictions on $\S$ of ($\R^n$-valued) homogeneous harmonic polynomials in $\mathbb{R}^n$ of degree $k$ respectively, and satisfy 
\begin{align}\label{properties}
%\int_{\S}\langle\psi_{n,k,j},\psi_{n,k',j'}\rangle\ d\mathcal{H}^{n-1}=\delta^{kk'}\delta^{jj'} \quad \forall k,k' \in \mathbb{N},\ \forall j=1,2,\dots,G_{n,k},\ j'=1,2,\dots,G_{n,k'}\,,\\
-\Delta_{\S}\psi_{n,k,j}=\lambda_{n,k} \psi_{n,k,j} \quad \forall k \in \mathbb{N} \ \mathrm{and }\ j=1,2,\dots,G_{n,k}, \ \mathrm{where }\ \lambda_{n,k}:= k(k+n-2)\,.
%or, in distributional formulation,
%\begin{equation}%\tag{B.3}\label{dsh}
%\int_{\S}\langle \nabla_{T}\psi_{n,k,j}, \nabla_{T}\phi\rangle\ \dh=\lambda_{n,k}\int_{\S}\langle\psi_{n,k,j},\phi\rangle \ \ \ \forall\phi \in W^{1,2} (\S). 
\end{align}
In particular,
\begin{equation}\label{sharmonics}
\ds|\nabla_{T}\psi_{n,k,j}|^2=\lambda_{n,k} \ds|\psi_{n,k,j}|^2\quad \forall k\in \mathbb{N}\ \mathrm{and}\ j=1,2,\dots,G_{n,k}\,.
\end{equation}
In the scalar case, $G_{n,0}=1$ (with trivial eigenfunction the constant 1),\ $G_{n,1}=n$ (with the first order spherical harmonics being the coordinate functions $\psi_{n,1,j}(x):=\frac{x_j}{\sqrt{\omega_n}}$), and $G_{n,k}$ = ${n+k-1}\choose{k}$-${n+k-3}\choose{k-2}$ for $k\geq 2$. The reader can refer to \cite{notesspherical}, \cite{groemer1996geometric} for more information on spherical harmonics.

%
%\begin{remark}\label{linear part of u}
%For every vector field $u:=(u^1,u^2,...,u^n)\in W^{1,2}(\S; \mathbb{R}^n)$, we have a formal expansion of each one of its components into a Fourier series as 
%\begin{equation*}
%u^i=\sum_{k=0}^{\infty}\sum_{j=1}^{G(n,k)}a^i_{n,k,j}\psi_{n,k,j}, \ \ \mathrm{where\ } a^i_{n,k,j}:=\int_{\S}u^i\psi_{n,k,j}\ \dh,
%\end{equation*}
%where now by an abuse of notation, $(\psi_{n,k,j})_{k\geq 0,j\in G(n,k)}$ are now the scalar spherical harmonics. Let $P_{n,k,j}$ be the $k$-th homogeneous harmonic polynomial in $\mathbb{R}^n$ whose restriction on $\S$ is $\psi_{n,k,j}$. In polar coordinates $(r,\theta)\in[0,\infty)\times\S$ we can write these polynomials as $P_{n,k,j}(r,\theta)=r^k\psi_{n,k,j}(\theta)$. \\[10pt]
%For each $i=1,...,n$, the harmonic extension $u^i_h$ has the same power series expansion in the interior of the unit ball, namely
%\begin{equation*}\label{psuh}
%u^i_{h}=\sum_{k=0}^{\infty}\sum_{j=1}^{G(n,k)}a^i_{n,k,j}P_{n,k,j} \mathrm{\ \ in \ } B^n.
%\end{equation*}
%If the vector field $u$ has zero average on $\S$, then 
%\begin{equation*}
%u^i_{h}(0)=\dashint_{\S}u^i \ \dh=0 \mathrm{\ \ for \ every \ } i=1,2,...,n.
%\end{equation*}
%In view of the homogeneity of $P_{n,k,j}$, this is equivalent to $a^i_{n,0}=0 \mathrm{\ for \ all \ } i=1,2,...,n $.\\[-10pt]
	
%Another immediate but as we saw, useful observation which is based on the fact that \textit{the first order spherical harmonics are the coordinate functions} , is that the \textit{linear part of $u$} is given by the linear map $x\mapsto \nabla u_h(0)x$. \\[-12pt]
%\end{remark}

\begin{remark}\label{L2_orthogonality_of_spherical_harmonics}	
The following \textit{Parseval identities on $\S$} hold true: If $u \in W^{1,2}(\S;\R^n)$ with its Fourier expansion in spherical harmonics being $u=\sum_{k=0}^{\infty}\sum_{j=1}^{G_{n,k}}\alpha_{n,k,j}\psi_{n,k,j}$, then
\begin{equation}\label{Parseval}%\tag{B.4}
\int_{\S}|u|^2=\sum_{k=0}^{\infty}\sum_{j=1}^{G_{n,k}}(\alpha_{n,k,j})^2 \ \mathrm{\ and\ } \ \int_{\S}|\nabla_{T} u|^2=\sum_{k=1}^{\infty}\sum_{j=1}^{G_{n,k}}\lambda_{n,k}(\alpha_{n,k,j})^2\,.  
\end{equation}
The \textit{sharp Poincare inequality} for maps $u\in W^{1,2}(\S;\R^n)$ is then easily deduced. Since $\lambda_{n,k}\geq n-1$ for every $k\geq 1$, from \eqref{Parseval} we obtain
\begin{equation}\label{Poincare}%\tag{B.6} 
\int_{\S}|\nabla_{T}u|^2 \geq(n-1)\sum_{k=1}^{\infty}\sum_{j=1}^{G_{n,k}}(\alpha_{n,k,j})^2= (n-1)\int_{\S}\Big|u-\ds u\Big|^2\,.
\end{equation}
\end{remark}
Of course, depending on the number of vanishing first \textit{Fourier modes} in the expansion of $u$, the constant in the above inequality can be improved in an obvious way. By expanding a function in spherical harmonics one can often obtain useful estimates. In the next lemma, we mention two of them that we have used earlier in the paper.
\begin{lemma}\label{harmest}
If $u\in W^{1,2}(\S;\R^n)$, and $u_h:\overline {B_1}\mapsto \R^n$ denotes its $\rm{(}$componentwise$\rm{)}$ harmonic extension, the following estimates hold true:
\begin{align}\label{basic_harmonic_estimates}
\dashint_{B_1} |\nabla u_h|^2\ &\leq \frac{n}{n-1}\ \ds |\t u|^2\,,\\
\frac{n}{n-1}\ \ds |\t u|^2 &\leq \ds |\nabla u_h|^2 \leq 2\ \ds |\t u|^2\,.
\end{align}
\end{lemma}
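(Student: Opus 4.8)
The plan is to expand $u$ (componentwise) into vector-valued spherical harmonics, $u=\sum_{k\geq 0}\sum_{j=1}^{G_{n,k}}\alpha_{n,k,j}\psi_{n,k,j}$, and exploit the fact that the harmonic extension of a $k$-th order spherical harmonic is an explicit homogeneous harmonic polynomial of degree $k$. First I would record that if $\psi$ is a $k$-th order spherical harmonic then $\psi_h(x)=|x|^k\psi(x/|x|)$, so that on $\S$ one has $\partial_{\vec\nu}\psi_h=k\psi$ and hence the decomposition of the full Euclidean gradient into radial and tangential parts $\nabla\psi_h=\nabla_T\psi+k\psi x$ (an identity already used in the paper, e.g.\ just before \eqref{Equation_for_full_gradient_general}). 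Taking squared norms and using orthogonality of the tangential and radial directions gives $|\nabla\psi_h|^2=|\nabla_T\psi|^2+k^2|\psi|^2$ pointwise on $\S$, and then $|\nabla\psi_h|^2=|x|^{2k-2}\big(|\nabla_T\psi|^2+k^2|\psi|^2\big)$ on all of $\overline{B_1}$ by homogeneity (the integrand being $(2k-2)$-homogeneous).

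Next I would integrate. Using $\int_{B_1}|x|^{2k-2}\,dx=\frac{n}{2k+n-2}\int_{B_1}dx$ (polar coordinates) together with \eqref{sharmonics}, namely $\ds|\nabla_T\psi|^2=\lambda_{n,k}\ds|\psi|^2$ with $\lambda_{n,k}=k(k+n-2)$, one gets for a single mode
\begin{equation*}
\dashint_{B_1}|\nabla\psi_{h}|^2=\frac{n}{2k+n-2}\,\frac{\lambda_{n,k}+k^2}{\lambda_{n,k}}\,\ds|\nabla_T\psi|^2,\qquad \ds|\nabla\psi_{h}|^2=\frac{\lambda_{n,k}+k^2}{\lambda_{n,k}}\,\ds|\nabla_T\psi|^2.
\end{equation*}
A short computation shows $\frac{\lambda_{n,k}+k^2}{\lambda_{n,k}}=\frac{2k+n-2}{k+n-2}$, so the first quantity equals $\frac{n}{k+n-2}\,\ds|\nabla_T\psi|^2$ and the second equals $\frac{2k+n-2}{k+n-2}\,\ds|\nabla_T\psi|^2$. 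Now I would sum over all modes $k\geq1$ (the $k=0$ mode contributes nothing to any gradient), using Parseval \eqref{Parseval} and orthogonality of distinct spherical harmonics in $W^{1,2}(B_1)$ as well, and bound the $k$-dependent coefficients uniformly: $\frac{n}{k+n-2}\leq\frac{n}{n-1}$ for all $k\geq1$, which yields the first inequality $\dashint_{B_1}|\nabla u_h|^2\leq\frac{n}{n-1}\ds|\nabla_T u|^2$; and $\frac{n}{n-1}\leq\frac{2k+n-2}{k+n-2}\leq 2$ for all $k\geq1$, with the lower bound attained as $k\to1$ and the upper bound attained as $k\to\infty$, which gives the two-sided estimate $\frac{n}{n-1}\ds|\nabla_T u|^2\leq\ds|\nabla u_h|^2\leq 2\,\ds|\nabla_T u|^2$.

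I do not expect a genuine obstacle here; the only points requiring a little care are the density/convergence issue (the identities are first proved for finite sums of spherical harmonics, then extended to general $u\in W^{1,2}(\S;\R^n)$ by the usual density and monotone convergence argument, noting that harmonic extension is continuous from $W^{1,2}(\S)$ to $W^{1,2}(B_1)$) and the elementary verification that the rational functions $k\mapsto\frac{n}{k+n-2}$ and $k\mapsto\frac{2k+n-2}{k+n-2}$ are monotone in $k$ on $[1,\infty)$, so that their extrema over $k\in\mathbb{N}^{*}$ are located at the endpoints $k=1$ and $k\to\infty$. Everything else is bookkeeping with homogeneous polynomials and the normalization constants $\int_{B_1}|x|^{2k-2}$.
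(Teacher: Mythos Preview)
Your proposal is correct and follows essentially the same route as the paper: expand in spherical harmonics, use that the harmonic extension of a $k$-th mode is the $k$-homogeneous polynomial $|x|^k\psi(x/|x|)$, compute the mode-wise coefficients, and bound them uniformly in $k$. The only cosmetic difference is that the paper obtains the bulk identity via the divergence theorem, $\dashint_{B_1}|\nabla u_h|^2=n\ds u\,\partial_{\vec\nu}u_h=\sum_{k,j} nk(\alpha_{n,k,j})^2$, and writes the surface identity as $\ds|\nabla u_h|^2=\ds|\nabla_T u|^2+\ds|\partial_{\vec\nu}u_h|^2$, whereas you integrate $|x|^{2k-2}$ directly; the resulting coefficients $\tfrac{n}{k+n-2}$ and $\tfrac{2k+n-2}{k+n-2}$ and the final bounds are identical.
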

\begin{proof}
Let us give the proof of these two simple estimates in the case that $u$ is scalar-valued, the case of vector-valued $u$ being an immediate consequence. We write again 
\begin{equation*}
u=\sum_{k=0}^{\infty}\sum_{j=1}^{G_{n,k}}\alpha_{n,k,j}\psi_{n,k,j}\,,
\end{equation*}
and therefore, its harmonic extension can be written in polar coordinates $(r,\theta)\in [0,1]\times\S$ as
\begin{equation*}
u_h(r,\theta)=\sum_{k=0}^{\infty}\sum_{j=1}^{G_{n,k}}r^k\alpha_{n,k,j}\psi_{n,k,j}(\theta)\,.
\end{equation*}
For \ref{basic_harmonic_estimates}, we write
\begin{align*}
\dashint_{B_1} |\nabla u_h|^2 &=\dashint_{B_1} \mathrm{div}(u_h\nabla u_h) = n\ds u\ \partial_{\vec \nu}u_h=\sum_{k=0}^{\infty}\sum_{j=1}^{G_{n,k}}nk(\alpha_{n,k,j})^2=\sum_{k=1}^{\infty}\sum_{j=1}^{G_{n,k}}
\frac{n\lambda_{n,k}}{k+n-2}(\alpha_{n,k,j})^2\,,
%&\leq \frac{n}{n-1}\sum_{k=1}^{\infty}\sum_{j=1}^{G(n,k)}\lambda_{n,k}(a_{n,k,j})^2=\frac{n}{n-1} \ \ds |\t u|^2 \ \dh,\\[-15pt]
\end{align*}
while for \hyperref[basic_harmonic_estimates]{(C.7)}, we write
\begin{align*}
\ds |\nabla u_h|^2= \ds |\t u|^2+\ds |\partial_{\vec \nu}u_h|^2&%= \ds |\t u|^2+\sum_{k=1}^{\infty}\sum_{j=1}^{G_{n,k}}k^2(\alpha_{n,k,j})^2
=\ds |\t u|^2+\sum_{k=1}^{\infty}\sum_{j=1}^{G_{n,k}}\frac{k}{k+n-2}\lambda_{n,k}(\alpha_{n,k,j})^2\,.
\end{align*}
Since $\frac{1}{n-1}\leq \frac{k}{k+n-2}\leq 1$ for every $k\geq 1$, the desired estimates follows immediately by the above identities and \eqref{Parseval}.
\end{proof}

\EEE

\typeout{References}

%\bibliographystyle{acm}
%\bibliography{Rigidity_estimates_for_isometric_and_conformal_maps_from_S_n-1__to_R_n}

\vspace{0.7em}

$^1$ Institut für Mathematik, Universität Leipzig, %Leipzig, 
Germany\\ 
(Stephan.Luckhaus@math.uni-leipzig.de)
\\[-10pt] 

$^2$ Institut für Numerische und Angewandte Mathematik, Universität Münster, %  Münster, 
Germany\\
(konstantinos.zemas@uni-muenster.de %\ \ and \ \ zemas@mis.mpg.de
)
\end{document}